\documentclass[11pt, twoside, leqno]{article}

\usepackage{amssymb}
\usepackage{amsmath}
\usepackage{amsthm}
\usepackage{color}
\usepackage{mathrsfs}
\usepackage{indentfirst}
\usepackage{txfonts}
\usepackage{anysize}

\allowdisplaybreaks

\pagestyle{myheadings}\markboth{\footnotesize\rm\sc
Dachun Yang, Wen Yuan and Mingdong Zhang}
{\footnotesize\rm\sc Matrix-Weighted Besov--Triebel--Lizorkin
Spaces of Optimal Scale}

\textwidth=15cm
\textheight=24cm
\oddsidemargin 0.46cm
\evensidemargin 0.46cm

\newtheorem{theorem}{Theorem}[section]
\newtheorem{lemma}[theorem]{Lemma}
\newtheorem{corollary}[theorem]{Corollary}

\newtheorem{proposition}[theorem]{Proposition}
\theoremstyle{definition}
\newtheorem{example}[theorem]{Example}
\newtheorem{remark}[theorem]{Remark}
\newtheorem{definition}[theorem]{Definition}

\numberwithin{equation}{section}

\begin{document}
\title{\bf\Large Matrix-Weighted Besov--Triebel--Lizorkin Spaces of
Optimal Scale: Boundedness of Pseudo-Differential, Trace, and
Calder\'{o}n--Zygmund Operators
\footnotetext{\hspace{-0.35cm} 2020 {\it Mathematics Subject Classification}.
Primary 42B20; Secondary 46E35, 35S05, 47A56, 46E40, 42B35.
\endgraf {\it Key words and phrases.}
matrix weight, generalized Besov--Triebel--Lizorkin-type space,
pseudo-differential operator, trace operator, Calder\'{o}n--Zygmund operator.
\endgraf This project is supported by
the National Key Research and Development Program of China
(Grant No. 2020YFA0712900),
the National Natural Science Foundation of China
(Grant Nos. 12431006 and 12371093),
the Fundamental Research Funds
for the Central Universities (Grant No. 2233300008).}}
\date{\small\emph{This article is dedicated to Professor Serge\u{\i} 
Mikha\u{\i}lovich Nikol'ski\u{\i} 
on his 120th birth anniversary}}
\author{Dachun Yang\footnote{Corresponding author, E-mail:
\texttt{dcyang@bnu.edu.cn}/{\color{red}\today}/Final version.}
,\ Wen Yuan and Mingdong Zhang}
\maketitle
\date{}
\maketitle

\vspace{-0.7cm}

\begin{center}
\begin{minipage}{13cm}
{\small {\bf Abstract}\quad
This article is a continuation of our work on generalized
matrix-weighted Besov--Triebel--Lizorkin-type
spaces with matrix $\mathcal{A}_{\infty}$ weights.
In this article, we establish the boundedness of
pseudo-differential, trace, and Calder\'{o}n--Zygmund operators
on these spaces. The main tools involved in this article are
the molecular and the wavelet characterizations of these spaces.
Since generalized matrix-weighted
Besov--Triebel--Lizorkin-type spaces
include many classical function spaces such as
matrix-weighted Besov--Triebel--Lizorkin spaces,
all the results in this article are of wide generality.}
\end{minipage}
\end{center}


\vspace{0.2cm}

\section{Introduction\label{s0}}
This article is a continuation of \cite{byyz24}
in which we introduced  the generalized matrix-weighted
Besov--Triebel--Lizorkin-type spaces with matrix
$\mathcal{A}_{\infty}$ weights  and established their  $\varphi$-transform characterization,
molecular, and wavelet characterizations, as well as
the Sobolev-type embedding properties. Based on these work,
in this article, we aim to establish the boundedness of
pseudo-differential, trace, and Calder\'{o}n--Zygmund operators
on these spaces.
Throughout the whole article, we work in $\mathbb{R}^n$ and,
unless necessary,
we will not explicitly specify this underlying space.

We first present a brief history of matrix weights. Recall that the  matrix-weighted
Lebesgue space $L^2(W)$ on $[0,2\pi]$ was first introduced by Wiener and Masani
\cite[Section 4]{wm58} in their study on the prediction theory of
multivariate stochastic processes. Later,
Treil and Volberg \cite{tv97} formulated
the matrix $\mathcal{A}_2$ condition on $\mathbb{R}$
and showed that the Hilbert transform is bounded on
$L^2(W)$ if and only if $W$ satisfies the matrix
$\mathcal{A}_2$ condition. Subsequently,
for any $p\in(1, \infty)$, Nazarov and Treil \cite{nt97}
and Volberg \cite{vol97}  determined the
matrix $\mathcal{A}_p$ condition on $\mathbb{R}$
and generalized the above results
from $p=2$ to any $p\in(1,\infty)$.
For more results related to the matrix $\mathcal{A}_p$
weights, we refer to \cite{dhl20, gol03, llor23, llor24, nie25}.
It is worth pointing out that, for any $p\in(1, \infty)$,
Volberg \cite{vol97} also introduced the
matrix $\mathcal{A}_{p,\infty}$ class,
which can be regarded as a natural counterpart of the
scalar $A_{\infty}$ class of Muckenhoupt
in the matrix-weighted setting. Recently,
Bu et al. \cite{bhyy4} studied the matrix
$\mathcal{A}_{p,\infty}$ class with $p\in(0,\infty)$  and
obtained several characterizations and properties of
matrix $\mathcal{A}_{p,\infty}$ weights.

It is well known that Besov spaces $\dot{B}^s_{p,q}$
and Triebel--Lizorkin spaces $\dot{F}^s_{p,q}$
are fundamental and important in harmonic analysis
and partial differential equations
(see, for example, \cite{hms22,hs12,hs13,ky94,lxy14,
ly13,maz03}). These spaces
unify many classical spaces such as Lebesgue spaces,
Lipschitz spaces, Hardy spaces, and $\operatorname{BMO}$
(the space of all locally integrable functions on $\mathbb{R}^n$
with bounded mean oscillation). For more details about
Besov--Triebel--Lizorkin spaces, we refer to the
monographs \cite{tri83, tri92, tri06} of Triebel.
Recently, BTL spaces were also extended to the mixed-norm
and product settings (see, for example, \cite{cgn19,gjn17,gkp21,gn16}).
Later, to solve an open question raised in \cite{dx04}
on the relation between $Q$ spaces and classical function spaces,
Yang et al. \cite{yy08,yy10} introduced
Besov-type spaces $\dot{B}^{s,\tau}_{p,q}$
and Triebel--Lizorkin-type spaces $\dot{F}^{s,\tau}_{p,q}$
with the new Morrey index $\tau\in[0, \infty)$
and developed the real-variable theory of these spaces.
For more study of Besov--Triebel--Lizorkin-type spaces,
we refer to \cite{yy13,yyz14,ysy10,yhmsy15,yhsy15a,yhsy15b}.

Based on the study of $L^p(W)$, a natural question is
to extend the theory of Besov--Triebel--Lizorkin spaces
and further Besov--Triebel--Lizorkin-type spaces
to the matrix-weighted setting. For any $s\in\mathbb{R}$,
$p\in(0, \infty)$, $q\in(0,\infty]$,
and $W\in\mathcal{A}_{p}$,
Frazier and Roudenko \cite{fr04, rou03, rou04} first studied
the matrix-weighted Besov space $\dot{B}^{s}_{p, q}(W)$.
Later, for any $s\in\mathbb{R}$, $p\in(0,\infty)$,
$q\in(0,\infty]$, and $W\in\mathcal{A}_{p}$,
Frazier and Roudenko \cite{fr21}
investigated the matrix-weighted Triebel--Lizorkin space
$\dot{F}^{s}_{p, q}(W)$ and obtained the
Littlewood--Paley theory of $L^p(W)$ for any $p\in(1, \infty)$.
Recently, for any $A\in\{B, F\}$, $s\in\mathbb{R}$, $\tau\in[0,\infty)$,
$p\in(0,\infty)$, $q\in(0,\infty]$, and $W\in\mathcal{A}_{p}$,
Bu et al. \cite{bhyy1,bhyy2,bhyy3}
introduced the matrix-weighted Besov--Triebel--Lizorkin-type space
$\dot{A}^{s,\tau}_{p,q}(W)$ and developed their real-variable theory including
the $\varphi$-transform characterization,
the molecular and the wavelet characterizations, and the
boundedness of pseudo-differential, trace, and Calder\'{o}n--Zygmund
operators. In particular, $\dot{A}^{s,0}_{p,q}(W)$
is exactly the matrix-weighted Besov--Triebel--Lizorkin space
$\dot{A}^{s}_{p,q}(W)$. For more progress on function spaces
associated with matrix $\mathcal{A}_p$ weights, we refer to
\cite{bx24a,bx24b,bx24c,byy23,lyy24a,lyy24b,wgx25a,wgx25b,wyy23}.
Recently, for any $A\in\{B, F\}$, $s\in\mathbb{R}$,
$\tau\in[0,\infty)$, $p\in(0,\infty)$, $q\in(0,\infty]$,
and $W\in\mathcal{A}_{p, \infty}$,
Bu et al. \cite{bhyy5} also studied matrix-weighted
Besov--Triebel--Lizorkin-type spaces $A^{s,\tau}_{p,q}(W)$.
To establish the well-known invariance property
of Triebel--Lizorkin spaces on the integrable index
(see \cite[Corollary 5.7]{fj90}) in the matrix-weighted setting, for any $s\in\mathbb{R}$,
$p\in(0,\infty)$, $q\in(0,\infty]$, and $W\in\mathcal{A}_{p, \infty}$,
we \cite{byyz24} introduced the generalized
matrix-weighted Besov--Triebel--Lizorkin-type space $\dot{A}^{s,\upsilon}_{p,q}(W)$ and
characterized them in terms of the $\varphi$-transform,
the Peetre-type maximal function, the Littlewood--Paley functions,
as well as  molecules and  wavelets,
where  $A\in\{B, F\}$ and $\upsilon$ is a growth function.
The spaces $\dot{A}^{s,\upsilon}_{p,q}(W)$
contain matrix-weighted Besov--Triebel--Lizorkin-type
spaces $\dot{A}^{s,\tau}_{p,q}(W)$, particularly matrix-weighted
Besov--Triebel--Lizorkin spaces $\dot{A}^{s}_{p,q}(W)$, as special cases,
and hence are of wide generality.

As a continuation of the study of $\dot{A}^{s,\upsilon}_{p,q}(W)$
in \cite{byyz24}, in this article, using
the molecular and the wavelet characterizations of
$\dot{A}^{s,\upsilon}_{p,q}(W)$ obtained in \cite{byyz24},
we establish the boundedness of some important operators
on $\dot{A}^{s,\upsilon}_{p,q}(W)$, including pseudo-differential,
trace, and Calder\'{o}n--Zygmund operators.
Due to the generality of $\dot{A}^{s,\upsilon}_{p,q}(W)$
(see \cite[Subsection 2.3]{byyz24}), these
boundedness results of operators  are  of wide generality and, moreover,
even reduced to the scalar-valued setting, they improve
some related known results in \cite{syy10}
on the spaces $\dot{A}^{s,\tau}_{p,q}$.

The organization of the remainder of this article is as follows.

In Section \ref{s1}, we first recall the concepts of matrix
$\mathcal{A}_{p,\infty}$ weights and growth functions.
Next, we present the definition of
generalized matrix-weighted Besov--Triebel--Lizorkin-type spaces
$\dot{A}^{s,\upsilon}_{p,q}(W)$
with $W\in\mathcal{A}_{p,\infty}$. In Section \ref{s2},
we first recall the defintion of  generalized matrix-weighted
Besov--Triebel--Lizorkin-type sequence spaces
$\dot{a}^{s,\upsilon}_{p,q}(W)$ corresponding to
$\dot{A}^{s,\upsilon}_{p,q}(W)$ and the result of
the boundedness of almost diagonal operators on
$\dot{a}^{s,\upsilon}_{p,q}(W)$ obtained in  \cite{byyz24}.
Applying this, in Subsection \ref{s-2-1},
we establish the boundedness of pseudo-differential operators
on $\dot{A}^{s,\upsilon}_{p,q}(W)$ (see Theorem \ref{thm-T-pd})
whose proof relies on the molecular characterization of
$\dot{A}^{s,\upsilon}_{p,q}(W)$.
In Subsection \ref{s-2-2}, via the wavelet characterization of $\dot{A}^{s,\upsilon}_{p,q}(W)$,
we obtain the boundedness of the trace operator and the extension
operator on $\dot{A}^{s,\upsilon}_{p,q}(W)$
(see Theorems \ref{thm-Tr-B}, \ref{thm-Ext-B}, \ref{thm-Tr-F}, and  \ref{thm-Ext-F}). In Subsection \ref{s-2-3}, we establish the boundedness of Calder\'{o}n--Zygmund operators on $\dot{A}^{s,\upsilon}_{p,q}(W)$
(see Theorem \ref{thm-CZ}) by determining the conditions on
Calder\'{o}n--Zygmund operators which ensure that  these operators map
sufficient smooth atoms to synthesis molecules for
$\dot{A}^{s,\upsilon}_{p,q}(W)$.

At the end of this section, we make some conventions on notation.
Let $\mathbb{N}:=\{1,2,\dots\}$, $\mathbb{Z}_+:=\mathbb{N}\cup\{0\}$, and $\mathbb{Z}$ be the set of all integers.
All the cubes $Q\subset\mathbb{R}^n$ in this article
are always assumed to have edges parallel to the coordinate
axes. For any cube $Q\subset\mathbb{R}^n$,
let $c_Q$ be its \emph{center}, $\ell(Q)$
be its \emph{edge length}, and $j_Q:=-\log_{2}\ell(Q)$.
For any cube $Q\subset\mathbb{R}^n$ and any $r\in(0,\infty)$,
let $rQ$ be the cube with the same center as $Q$
and the edge length $r\ell(Q)$.
Let $\mathcal{D}:=\{Q_{j, k}\}_{j\in\mathbb{Z}, k\in{\mathbb{Z}}^n}
:=\{2^{-j}([0, 1)^n+k)\}_{j\in\mathbb{Z}, k\in{\mathbb{Z}}^n}$
be the set of all \emph{dyadic cubes} in $\mathbb{R}^n$.
For any $j\in\mathbb{Z}$, let
$\mathcal{D}_j:=\{Q_{j, k}:\ k\in{\mathbb{Z}}^n\}$.
Let $\varphi$ be a complex-valued function on $\mathbb{R}^n$.
For any $j\in\mathbb{Z}$ and $x\in\mathbb{R}^n$,
let $\varphi_j(x):=2^{jn}\varphi(2^j x)$.
For any $j\in\mathbb{Z}$, $k\in\mathbb{Z}^n$,
and $Q:=Q_{j, k}\in\mathcal{D}$, let
$x_Q:=2^{-j}k$ and, for any $x\in\mathbb{R}^n$, let
\begin{align}\label{eq-phi_Q}
\varphi_Q(x):=2^{\frac{jn}{2}} \varphi\left(2^j x-k\right)=|Q|^{\frac{1}{2}}
\varphi_j\left(x-x_Q\right).
\end{align}
For any $p,q\in\mathbb{R}$,
let $p\wedge q:=\min\{p, q\}$ and $p\vee q:=\max\{p, q\}$.
Let $\mathbf{0}$ denote the \emph{origin}
of $\mathbb{R}^n$ or $\mathbb{C}^m$.
For any measurable set $E\subset\mathbb{R}^n$
with $|E|\in(0,\infty)$ and any
measurable function $f$ on $\mathbb{R}^n$, let
$\fint_E f(x)\,dx:=\frac{1}{|E|}\int_{E}f(x)\,dx$.
For any $p\in(0,\infty]$ and any measurable set
$E\subset\mathbb{R}^n$, the \emph{Lebesgue space} $L^p(E)$
is defined to be the set of all complex-valued
measurable functions $f$ on $E$ such that
\begin{align*}
\|f\|_{L^p(E)}:=
\begin{cases}
\displaystyle{\left[\int_E|f(x)|^p\, dx\right]^{\frac{1}{p}}}
& \text{if } p \in(0, \infty),\\
\displaystyle{\mathop{\operatorname{ess\,sup}}_{x\in E}|f(x)|}
& \text{if } p=\infty
\end{cases}
\end{align*}
is finite. In accordance with our previous agreement,
we denote $L^p(\mathbb{R}^n)$ simply by $L^p$.
For any $x\in\mathbb{R}^n$ and $r\in(0,\infty)$,
let $B(x,r):=\{y\in\mathbb{R}^n:\,|x-y|<r\}$.
For any multi-index $\gamma:=(\gamma_1,\dots,\gamma_n)\in\mathbb{Z}_{+}^n$
and any $x:=(x_1,\dots,x_n)\in\mathbb{R}^n$, let
$|\gamma|:=\gamma_1+\dots+\gamma_n$,
$x^{\gamma}:=x^{\gamma_1}_1\cdots x^{\gamma_n}_n$, and
$\partial^{\gamma}:=(\frac{\partial}{\partial x_1})^{\gamma_1}
\cdots(\frac{\partial}{\partial x_n})^{\gamma_n}$.
The symbol $C$ denotes a positive constant that is independent of the main
parameters involved, but may vary from line to line.
The symbol $A\lesssim B$ means that $A\leq CB$ for
some positive constant $C$, while $A\sim B$ means $A\lesssim B\lesssim A$.
Finally, in all proofs, we consistently retain the notation
introduced in the original theorem (or related statement).

\section{Generalized Matrix-Weighted Besov-Type and Triebel--Lizorkin-type Spaces\label{s1}}

In this section, we first  recall the concepts of matrix
$\mathcal{A}_{p,\infty}$ weights and growth functions. Then we give the definition of
generalized matrix-weighted Besov-type and Triebel--Lizorkin-type spaces.

We begin with some concepts related to square matrices.
In what follows, we always use $m\in\mathbb{N}$ to
denote the dimension of vectors or the order of matrices.
The set of all $m\times m$ complex-valued matrices
is denoted by $M_m(\mathbb{C})$. For any $A\in M_m(\mathbb{C})$,
let $A^*$ be the conjugate transpose of $A$.
For any $A\in M_m(\mathbb{C})$, we say that
$A$ is a \emph{unitary matrix} if $A^*A=I_m$,
where $I_m$ is the identity matrix of the order $m$.
A matrix $A\in M_m(\mathbb{C})$ is said to
be \emph{positive semidefinite} if,
for any $\vec{z}\in\mathbb{C}^m$, $\vec{z}^*A\vec{z}\geq0$
and to be \emph{positive definite}
if, for any $\vec{z}\in\mathbb{C}^m\setminus
\{\mathbf{0}\}$, $\vec{z}^*A\vec{z}>0$ (see, for example,
\cite[(7.1.1a) and (7.1.1b)]{hj13}).
Using \cite[Theorems 2.5.6 and 7.2.1]{hj13},
we conclude that, for any given
positive definite matrix $A\in M_m(\mathbb{C})$,
there exists a unitary matrix $U\in M_m(\mathbb{C})$ such that
$A=U\operatorname{diag}(\lambda_1,\ldots,\lambda_m)U^{*}$,
where $\{\lambda_i\}_{i=1}^m$ in $(0,\infty)$ are
all the eigenvalues of $A$.
For any $\alpha\in\mathbb{R}$, let
$A^{\alpha}:=U\operatorname{diag}
(\lambda^{\alpha}_1,\ldots,\lambda^{\alpha}_m)U^{*}$.
By the argument in \cite[p.\,408]{hj94},
we find that $A^\alpha$ is independent of the
choice of $U$ and hence well-defined.

We denote the set of all $m\times m$ positive semidefinite
complex-valued matrices by $D_m(\mathbb{C})$.
A matrix-valued function $W:\,\mathbb{R}^n\to D_m(\mathbb{C})$
is called a \emph{matrix weight} if,
for almost every $x\in\mathbb{R}^n$,
$W(x)$ is positive definite and all the entries of
$W$ are locally integrable functions on $\mathbb{R}^n$
(see, for example, \cite{nt97,vol97}).
For any $A\in M_m(\mathbb{C})$,
the \emph{operator norm} $\|A\|$ of $A$ is defined by setting
$\|A\|:=\sup_{\vec z\in\mathbb{C}^m, |\vec z|=1}|A\vec z|$.
For any $p\in(1,\infty)$, the matrix $\mathcal{A}_{p,\infty}$
class was introduced in \cite[(2.2)]{vol97}.
The following equivalent definition of the
matrix $\mathcal{A}_{p,\infty}$ class for any $p\in(0,\infty)$
was established in \cite[Proposition 3.7]{bhyy4}.

\begin{definition}
Let $p\in(0,\infty)$. A matrix weight
$W$ is called an $\mathcal{A}_{p,\infty}(\mathbb{R}^n,
\mathbb{C}^m)$-\emph{matrix weight},
denoted by $W\in\mathcal{A}_{p,\infty}(\mathbb{R}^n,
\mathbb{C}^m)$, if $W$ satisfies that,
for any cube $Q\subset\mathbb{R}^n$,
\begin{align*}
\max\left\{\log\left(\fint_Q\left\|W^{\frac{1}{p}}(x)
W^{-\frac{1}{p}}(\cdot)\right\|^p\,dx\right), 0\right\}\in L^1(Q)
\end{align*}
and
\begin{align*}
[W]_{\mathcal{A}_{p,\infty}(\mathbb{R}^n,\mathbb{C}^m)}
:=\sup_{\mathrm{cube}\,Q\subset\mathbb{R}^n}\exp\left(\fint_Q\log
\left(\fint_Q\left\|W^{\frac{1}{p}}(x)W^{-\frac{1}{p}}(y)
\right\|^p\,dx\right)\,dy\right)<\infty.
\end{align*}
When no confusion arises, we simply write
$W\in\mathcal{A}_{p,\infty}$.
\end{definition}

Recall that a locally integrable function $w$ on $\mathbb{R}^n$
is called a \emph{scalar weight} if, for almost every
$x\in\mathbb{R}^n$, $w(x)\in(0, \infty)$
(see, for example, \cite[p.\,499]{gra14a}).
In the study of scalar weights, an important concept
is the scalar $A_{\infty}$ class of Muckenhoupt,
which is defined to be the set of all scalar weights $w$ such that
\begin{align*}
\left[w\right]_{A_{\infty}}&:=
\sup_{\mathrm{cube}\,Q\subset\mathbb{R}^n}\fint_Qw(x)\,dx
\exp\left(\fint_Q \log\left(\left[w(x)
\right]^{-1}\right)\, dx\right)<\infty
\end{align*}
(see, for example, \cite[Definition 7.3.1 and Theorem 7.3.3]{gra14a}
for more equivalent definitions of $A_\infty$).
Observe that, for any $p\in(0,\infty)$,
$\mathcal{A}_{p,\infty}(\mathbb{R}^n,\mathbb{C})=A_\infty$.
Based on this fact, for any $p\in(0,\infty)$,
the matrix $\mathcal{A}_{p,\infty}$ class
can be regarded as a natural counterpart of $A_{\infty}$
in the matrix-weighted setting.

To give the definition of
generalized matrix-weighted Besov--Triebel--Lizorkin-type spaces
$\dot{A}^{s,\upsilon}_{p,q}(W)$,
we also need the following concept of growth functions, which
was introduced in \cite[Definition 2.3]{byyz24}.

\begin{definition}\label{def-grow-func}
Let $\delta_1, \delta_2,\omega\in\mathbb{R}$.
A function $\upsilon:\,\mathcal{D}\to (0, \infty)$
is called a \emph{$(\delta_1, \delta_2; \omega)$-order
growth function} if there exists a positive constant
$C$ such that, for any $Q, R\in\mathcal{D}$,	
\begin{align*}
\frac{\upsilon(Q)}{\upsilon(R)}\leq C
\left[1+\frac{|x_Q-x_R|}
{\ell(Q)\vee \ell(R)}\right]^{\omega}
\begin{cases}
\displaystyle{\left(\frac{|Q|}{|R|}\right)^{\delta_1}}
& \text{if } \ell(Q) \leq \ell(R), \\
\displaystyle{\left(\frac{|Q|}{|R|}\right)^{\delta_2}}
& \text{if } \ell(R)<\ell(Q).
\end{cases}
\end{align*}
The class of all $(\delta_1, \delta_2; \omega)$-order
growth functions is denoted by
$\mathcal{G}(\delta_1, \delta_2; \omega)$.
\end{definition}

There are some typical examples of  growth functions (see \cite[Example 2.14]{byyz24}).

\begin{example}\label{examp}
\begin{itemize}
\item[{\rm (i)}]Let $\tau\in[0, \infty)$ and,
for any $Q\in\mathcal{D}$, let $\upsilon(Q):={|Q|}^{\tau}$.
Then $\upsilon\in\mathcal{G}(\tau, \tau; 0)$.
\item[{\rm (ii)}]
Let $p\in(0, \infty)$ and $\mathcal{G}_p$
be the set of all nondecreasing functions
$g:\,(0, \infty)\to(0, \infty)$ such that,
for any $t_1, t_2\in(0,\infty)$ with $t_1\leq t_2$,
$g(t_1){t_1}^{-\frac{n}{p}}\geq g(t_2){t_2}^{-\frac{n}{p}}$.
Let $g\in\mathcal{G}_p$ and, for any $Q\in\mathcal{D}$,
let $\upsilon(Q):=g(\ell(Q))$.
Then $\upsilon\in\mathcal{G}(0, \frac1{p}; 0)$.
\item[{\rm (iii)}] Let $w\in A_\infty$ and, for any
$Q\in\mathcal{D}$, let $\upsilon(Q):=\int_{Q}w(x)\,dx$.
Then there exist $p\in[1, \infty)$ and $\delta\in(0, 1)$
such that $\upsilon\in\mathcal{G}(\delta, p; n(p-\delta))$.
\end{itemize}
\end{example}

Let $\mathcal{S}$ be the set of
all Schwartz functions on $\mathbb{R}^n$
equipped with the well-known topology
induced by a countable family of norms
(see, for example, \cite[Proposition 8.2]{fol99})
and $\mathcal{S}'$ be the set of
all tempered distributions on $\mathbb{R}^n$
equipped with the weak-$\ast$ topology.
For any function $f$ on $\mathbb{R}^n$,
let $\operatorname{supp}f:=\overline{\{x\in\mathbb{R}^n:\,f(x)\neq0\}}$
be the \emph{support} of $f$. For any $f\in L^1$,
its \emph{Fourier transform} $\widehat{f}$
is defined by setting, for any $\xi\in\mathbb{R}^n$,
$\widehat{f}(\xi):=\int_{\mathbb{R}^n}f(x)
e^{-i x\cdot \xi}\,dx$, where $i:=\sqrt{-1}$.
Let $\varphi\in\mathcal{S}$ satisfy
\begin{align}\label{cond1}
\operatorname{supp}\widehat{\varphi}\subset
\left\{\xi\in\mathbb{R}^n:\,\frac{1}{2}\leq|\xi|\leq2\right\}
\ \text{and}\ \min\left\{\left|\widehat{\varphi}(\xi)\right|:\,
\frac{3}{5}\leq|\xi|\leq\frac{5}{3}\right\}>0.
\end{align}
From \cite[Lemma (6.9)]{fjw91}, it follows that
there also exists $\psi\in\mathcal{S}$
satisfying \eqref{cond1} such that,
for any $\xi\in\mathbb{R}^n\backslash\{\mathbf{0}\}$,
\begin{align}\label{cond3}
\sum_{j\in\mathbb{Z}}\overline{\widehat{\varphi}\left(2^j\xi\right)}
\widehat{\psi}\left(2^j\xi\right)=1.
\end{align}

Suppose that $p,q\in(0,\infty]$. For any sequence
$\{f_j\}_{j\in\mathbb{Z}}$ of measurable functions
on $\mathbb{R}^n$, let
\begin{align*}
\|\{f_j\}_{j\in\mathbb{Z}}\|_{L\dot{B}_{p, q}}
:=\left[\sum_{j\in\mathbb{Z}}\|f_j\|
_{L^p}^q\right]^{\frac{1}{q}}
\text{\ \ and\ \ }
\|\{f_j\}_{j\in\mathbb Z}\|_{L\dot{F}_{p, q}}
:=\left\|\left(\sum_{j\in\mathbb{Z}}|f_j|^q
\right)^{\frac{1}{q}}\right\|_{L^p}
\end{align*}
(with the usual modification made if $q=\infty$).
In what follows, for any $j_0\in\mathbb{Z}$,
let $\mathbf{1}_{j\geq j_0}:=\mathbf{1}_{[j_0,\infty)}(j)$.
Furthermore, assume that $A\in\{B, F\}$ and
$\upsilon$ is a positive function defined on
$\mathcal{D}$. For any sequence
$\{f_j\}_{j\in\mathbb{Z}}$ of measurable functions
on $\mathbb{R}^n$, let
\begin{align}\label{LA_nu}
\|\{f_j\}_{j\in\mathbb{Z}}\|_{L\dot{A}_{p, q}^{\upsilon}}
:=\sup_{P\in\mathcal{D}}\frac{1}{\upsilon(P)}\|\{f_j\mathbf{1}_P
\mathbf{1}_{j\geq j_P}\}_{j\in\mathbb Z}\|_{L\dot{A}_{p, q}}.
\end{align}
By \cite[Propositions 2.7 and 2.11]{byyz24}, we find that,
when considering \eqref{LA_nu} with
$\upsilon$ being a growth function  as in
Definition \ref{def-grow-func}, it is enough to
suppose $\upsilon\in\mathcal{G}(\delta_1,
\delta_2; \omega)$ with
\begin{align}\label{eq-delta1<0}
\delta_2\in[0, \infty),\ \delta_1\in(-\infty, \delta_2],
\text{ and } \omega\in[0, n(\delta_2-\delta_1)].
\end{align}

As in \cite{fjw91}, let
\begin{align*}
\mathcal{S}_{\infty}:=
\left\{\varphi\in\mathcal{S}
:\,\int_{\mathbb{R}^n}\varphi(x)x^{\gamma}\,dx=0
\ {\rm for\ any}\ \gamma\in{\mathbb{Z}}_{+}^n\right\}
\end{align*}
equipped with the same topology as $\mathcal{S}$
and let $\mathcal{S}_{\infty}'$ be the
\emph{dual space} of $\mathcal{S}_{\infty}$
equipped with the weak-$\ast$ topology.
It is well known that $\mathcal{S}_{\infty}'
=\mathcal{S}'/\mathcal{P}$
as topological spaces, where $\mathcal{P}$ denotes
the set of all polynomials on $\mathbb{R}^n$
(see \cite[Propostion 1.1.3]{gra14b} and
\cite[Proposition 8.1]{ysy10} for more details).
For any $\vec{f}:=(f_1, \dots, f_m)^{T}
\in(\mathcal{S}'_{\infty})^m$
and $\varphi\in\mathcal{S}_{\infty}$,
let $\varphi*\vec{f}:=(\varphi*f_1, \dots, \varphi*f_m)^{T}$.

We now recall the definition of generalized matrix-weighted
Besov--Triebel--Lizorkin-type spaces $\dot{A}^{s,\upsilon}_{p,q}(W)$ introduced in \cite[Definition 2.1]{byyz24}.

\begin{definition}\label{MWBTL}
Let $s\in\mathbb{R}$, $p\in(0, \infty)$,
$q\in(0, \infty]$, and $W\in\mathcal{A}_{p,\infty}$.
Assume that $\delta_1,\delta_2,\omega$
satisfy \eqref{eq-delta1<0},
$\upsilon\in\mathcal{G}(\delta_1, \delta_2; \omega)$,
and $\varphi\in\mathcal{S}$ satisfies \eqref{cond1}.
The \emph{generalized matrix-weighted Besov-type space}
$\dot{B}_{p,q}^{s, \upsilon}(W)$ and the
\emph{generalized matrix-weighted Triebel--Lizorkin-type space}
$\dot{F}_{p,q}^{s, \upsilon}(W)$ are respectively defined to be the sets
of all $\vec{f}\in(\mathcal{S}'_{\infty})^m$ with
\begin{align*}	
\left\|\vec{f}\right\|_{\dot{A}_{p,q}^{s, \upsilon}(W)}	
:=\left\|\left\{2^{js}\left|W^{\frac1{p}}
\left(\varphi_j*\vec{f}\right)\right|\right\}
_{j\in\mathbb{Z}}\right\|_{L\dot{A}_{p, q}^{\upsilon}}<\infty,
\end{align*}
where $A\in\{B, F\}$ and $\|\cdot\|_{L\dot{A}_{p, q}^{\upsilon}}$
is as in \eqref{LA_nu}.
\end{definition}

\begin{remark}
Let all the symbols be the same as in Definition \ref{MWBTL}.
\begin{itemize}
\item[{\rm (i)}]  We first point out that
the space $\dot{A}_{p,q}^{s, \upsilon}(W)$ is indeed
independent of the choice of $\varphi$
(see \cite[Theorem 2.5(ii)]{byyz24} for more details).
\item[{\rm (ii)}] Let $\tau\in[0, \infty)$ and,
for any $Q\in\mathcal{D}$, let $\upsilon(Q):=|Q|^{\tau}$.
From Example \ref{examp}(i), it follows that
$\upsilon\in\mathcal{G}(\tau, \tau; 0)$; then
the space $\dot{A}_{p,q}^{s,\upsilon}(W)$ in this case
reduces to the space $\dot{A}_{p,q}^{s, \tau}(W)$
introduced in \cite[Definition 3.5]{bhyy1}.
Furthermore, the spaces $\dot{B}_{p,q}^{s, 0}(W)$
and $\dot{F}_{p,q}^{s,0}(W)$ are precisely matrix-weighted
Besov--Triebel--Lizorkin spaces
$\dot{B}_{p,q}^{s}(W)$ and $\dot{F}_{p,q}^{s}(W)$
introduced, respectively, in \cite[Definition 1.1]{rou03}
and \cite[p.\,489, (i)]{fr21}.
For more examples of spaces in Definition \ref{MWBTL},
we refer to \cite[Subsection 2.3]{byyz24}.
Moreover, when $m=1$ (the scalar-valued setting) and $W\equiv1$,
the space $\dot{A}_{p,q}^{s, \tau}(W)$ reduces to
the classical Besov--Triebel--Lizorkin-type space
$\dot{A}_{p,q}^{s, \tau}$
studied in \cite{yy08, yy10, ysy10}.
\end{itemize}
\end{remark}

\section{Boundedness of Pseudo-Differential, Trace, and \\
Calder\'{o}n--Zygmund Operators on
$\dot{A}_{p, q}^{s, \upsilon}(W)$}\label{s2}

In this section, we establish
the boundedness of pseudo-differential, trace,
and Calder\'{o}n--Zygmund operators on $\dot{A}_{p, q}^{s, \upsilon}(W)$,
respectively, in Subsections \ref{s-2-1}, \ref{s-2-2}, and \ref{s-2-3}.
To this end, we fully use the molecular and
the wavelet characterizations of $\dot{A}_{p, q}^{s, \upsilon}(W)$
established in \cite{byyz24}.

It is well known that many real-variable characterizations
of function spaces can be reduced to
the boundedness of almost diagonal operators on
their corresponding sequence spaces
(see, for example, \cite{bow05, bow07, bh06,
bhyy2, bhyy3, bhyy5, fj90, syy24}).
Following this idea, Bu et al. \cite{byyz24} used
the boundedness of almost diagonal operators
on $\dot{a}_{p, q}^{s, \upsilon}(W)$ [sequence spaces
corresponding to $\dot{A}_{p, q}^{s, \upsilon}(W)$]
to obtain the molecular and
the wavelet characterizations of $\dot{A}_{p, q}^{s, \upsilon}(W)$.
On the other hand, the indices of almost diagonal operators,
for which these operators are bounded on $\dot{a}_{p, q}^{s, \upsilon}(W)$,
are also important because these indices naturally
appear in the molecular and the wavelet characterizations
of $\dot{A}_{p, q}^{s, \upsilon}(W)$ and hence in
the results on the boundedness of
pseudo-differential, trace, and Calder\'{o}n--Zygmund
operators on $\dot{A}_{p, q}^{s, \upsilon}(W)$.

To give the result on the boundedness of
almost diagonal operators on $\dot{a}_{p,q}^{s,\upsilon}(W)$,
we start with some symbols. Let $U:=\{u_{Q,R}\}_{Q,R\in\mathcal{D}}$
be a sequence in $\mathbb{C}$.
For any sequence $\vec{t}:=\{\vec{t}_R\}_{R\in\mathcal{D}}$
in $\mathbb{C}^m$,
we define $U\vec{t}:=\{(U\vec{t})_Q\}_{Q\in\mathcal{D}}$
by setting, for any $Q\in\mathcal{D}$,
$(U\vec{t})_Q:=\sum_{R\in\mathcal{D}}u_{Q,R}\vec{t}_R$
if this summation is absolutely convergent.
The following concept of almost diagonal operators
was introduced in \cite[Definition 4.1]{bhyy2},
which is a slight generalization of the traditional one
as in \cite[(3.1)]{fj90}.

\begin{definition}
Let $D,E,F\in\mathbb{R}$.
The infinite matrix $U^{DEF}:=
\{u^{DEF}_{Q,R}\}_{Q, R\in\mathcal{D}}$ is
defined by setting,
for any $Q, R\in\mathcal{D}$,
\begin{align*}
u^{DEF}_{Q,R}:=\left[1+\frac{|x_Q-x_R|}
{\ell(Q)\vee \ell(R)}\right]^{-D}
\begin{cases}
\displaystyle{\left[\frac{\ell(Q)}{\ell(R)}\right]^E}
& \text{if } \ell(Q)\leq\ell(R), \\
\displaystyle{\left[\frac{\ell(R)}{\ell(Q)}\right]^F}
& \text{if } \ell(R)<\ell(Q).
\end{cases}
\end{align*}
A complex infinite matrix $U:=\{u_{Q,R}\}_{Q,R\in\mathcal{D}}$
is said to be $(D, E, F)$-\emph{almost diagonal}
if there exists a positive constant $C$ such that,
for any $Q, R\in\mathcal{D}$, $|u_{Q,R}| \leq C u^{DEF}_{Q,R}$.
\end{definition}

Before presenting the  result on the boundedness of
almost diagonal operators on $\dot{a}_{p,q}^{s,\upsilon}(W)$,
we also need to recall the concept of reducing operators,
which was introduced in \cite[(3.1)]{vol97}.

\begin{definition}\label{def-red-ope}
Let $p\in(0,\infty)$ and $W$ be a matrix weight.
A sequence $\{A_Q\}_{Q\in\mathcal{D}}$ of
positive definite matrices is called
\emph{a sequence of reducing operators of order $p$ for $W$}
if, for any $Q\in\mathcal{D}$ and $\vec{z}\in\mathbb{C}^m$,
\begin{align*}
\left|A_Q\vec z\right|
\sim\left[\fint_Q\left|W^{\frac{1}{p}}(x)\vec{z}\right|^p
\,dx\right]^{\frac{1}{p}},
\end{align*}
where the positive equivalence constants
are independent of $Q$ and $\vec{z}$.
\end{definition}

For the existence of reducing operators, we refer to
\cite[Proposition 1.2]{gol03} for any $p\in(1, \infty)$
and \cite[p.\,1237]{fr04} for any $p\in(0, 1]$.
For any $p\in(0, \infty)$, any $W\in\mathcal{A}_{p,\infty}$,
and any sequence $\{A_Q\}_{Q\in\mathcal{D}}$
of reducing operators of order $p$ for $W$,
a sharp estimate for $\|A_QA^{-1}_{R}\|$ was
established in \cite{bhyy4}. To present it,
we first recall the concept  of
$\mathcal{A}_{p,\infty}$ dimensions  introduced in \cite[Definition 6.2]{bhyy4}.

\begin{definition}\label{def-upp-low-dimension}
Let $p\in(0,\infty)$ and $d\in\mathbb{R}$.
A matrix weight $W$ is said to have
\emph{$\mathcal{A}_{p,\infty}$-lower dimension $d$}
if there exists a positive constant $C$ such that,
for any $\lambda\in[1,\infty)$ and any cube $Q\subset\mathbb{R}^n$,
\begin{align*}
\exp\left(\fint_{\lambda Q}\log\left(\fint_Q
\left\|W^{\frac{1}{p}}(x)W^{-\frac{1}{p}}(y)
\right\|^p\,dx\right)\,dy\right)
\leq C{\lambda}^d.
\end{align*}
A matrix weight $W$ is said to have
\emph{$\mathcal{A}_{p,\infty}$-upper dimension $d$}
if there exists a positive constant $C$ such that,
for any $\lambda\in[1,\infty)$ and any cube $Q\subset\mathbb{R}^n$,
\begin{align*}
\exp\left(\fint_Q\log\left(\fint_{\lambda Q}
\left\|W^{\frac{1}{p}}(x)W^{-\frac{1}{p}}(y)
\right\|^p\,dx\right)\,dy\right)\leq C\lambda^d.
\end{align*}
\end{definition}

Suppose $p\in(0,\infty)$. From \cite[Propositions 6.4(ii)
and 6.5(ii)]{bhyy4}, it follows that,
for any $W\in\mathcal{A}_{p,\infty}$,
$W$ has $\mathcal{A}_{p,\infty}$-lower dimension $d_1\in[0,n)$
and $\mathcal{A}_{p,\infty}$-upper dimension $d_2\in[0,\infty)$.
Applying \cite[Propositions 6.4(i) and 6.5(i)]{bhyy4},
we conclude that the $\mathcal{A}_{p,\infty}$-lower and
$\mathcal{A}_{p,\infty}$-upper dimensions are both nonnegative.
By these results, for any $W\in\mathcal{A}_{p,\infty}$, let
\begin{align}\label{eq-low-dim}
d_{p,\infty}^{\mathrm{lower}}(W):=
\inf\left\{d\in[0,n):\ W\text{ has }\mathcal{A}_{p,\infty}
\text{-lower dimension } d\right\}
\end{align}
and
\begin{align}\label{eq-upp-dim}
d_{p,\infty}^{\mathrm{upper}}(W)
:=\inf\left\{d\in[0,\infty):\ W\text{ has }\mathcal{A}_{p,\infty}
\text{-upper dimension } d\right\}.
\end{align}
Furthermore, let
\begin{align*}
[\![d_{p,\infty}^{\mathrm{lower}}(W),n):=
\begin{cases}
[d_{p,\infty}^{\mathrm{lower}}(W),n)
&\text{if } W\text{ has the }\mathcal{A}_{p,\infty}
\text{-lower dimension }d_{p,\infty}^{\mathrm{lower}}(W),\\
(d_{p,\infty}^{\mathrm{lower}}(W),n)
&\text{otherwise}
\end{cases}
\end{align*}
and
\begin{align*}
[\![d_{p,\infty}^{\mathrm{upper}}(W),\infty):=
\begin{cases}
[d_{p,\infty}^{\mathrm{upper}}(W),\infty)
&\text{if } W\text{ has the }\mathcal{A}_{p,\infty}
\text{-upper dimension }d_{p,\infty}^{\mathrm{upper}}(W),\\
(d_{p,\infty}^{\mathrm{upper}}(W),\infty)
&\text{otherwise}.
\end{cases}
\end{align*}

The following sharp estimate for reducing operators generated
by matrix $\mathcal{A}_{p,\infty}$ weights was
established in \cite[Lemma 6.8(i)]{bhyy4}.

\begin{lemma}\label{growEST}
Let $p\in(0,\infty), W\in\mathcal{A}_{p,\infty}$,
and $\{A_Q\}_{Q\in\mathcal{D}}$
be a sequence of reducing operators of order $p$ for $W$.
If $\beta_1 \in \llbracket d_{p, \infty}^{\mathrm{lower}}(W),\infty)$
and $\beta_2\in \llbracket d_{p, \infty}^{\mathrm{upper}}(W),\infty)$,
then there exists a positive constant $C$ such that,
for any $Q, R\in\mathcal{D}$,
$$
\left\|A_Q A_R^{-1}\right\|^p\leq C\max
\left\{\left[\frac{\ell(R)}{\ell(Q)}\right]^{\beta_1},
\left[\frac{\ell(Q)}{\ell(R)}\right]^{\beta_2}\right\}
\left[1+\frac{|x_Q-x_R|}{\ell(Q)\vee\ell(R)}\right]^{\beta_1+\beta_2}.
$$
\end{lemma}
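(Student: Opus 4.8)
The plan is to reduce the operator-norm bound to a comparison of the matrix-weighted $L^p$ averages attached to $Q$ and $R$, and then to bridge $Q$ and $R$ through concentric dilations, to which the $\mathcal{A}_{p,\infty}$-dimension hypotheses apply directly. For any cube $S\subset\mathbb{R}^n$ and any $\vec z\in\mathbb{C}^m$, write $a_S(\vec z):=[\fint_S|W^{\frac1p}(x)\vec z|^p\,dx]^{\frac1p}$. By Definition \ref{def-red-ope}, $|A_Q\vec z|\sim a_Q(\vec z)$ and $|A_R\vec z|\sim a_R(\vec z)$ with constants independent of $Q,R,\vec z$, so the substitution $\vec w=A_R\vec z$ in $\|A_QA_R^{-1}\|=\sup_{\vec w\neq\mathbf0}|A_QA_R^{-1}\vec w|/|\vec w|$ gives $\|A_QA_R^{-1}\|^p\sim\sup_{\vec z\neq\mathbf0}a_Q(\vec z)^p/a_R(\vec z)^p$, and it suffices to bound the latter supremum.

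Fixing $Q,R\in\mathcal{D}$ (the case $Q=R$ being trivial), set $\lambda:=\max\{1,[\ell(R)+2|x_Q-x_R|]/\ell(Q)\}$ and let $S$ be the cube with center $x_Q$ and edge length $\lambda\ell(Q)$. Then $R\subseteq S$, and one checks that $\lambda\sim(\frac{\ell(R)}{\ell(Q)}\vee1)[1+\frac{|x_Q-x_R|}{\ell(Q)\vee\ell(R)}]$ and $\mu:=\ell(S)/\ell(R)\sim(\frac{\ell(Q)}{\ell(R)}\vee1)[1+\frac{|x_Q-x_R|}{\ell(Q)\vee\ell(R)}]$. Since $\sup_{\vec z}(fg)\leq(\sup_{\vec z}f)(\sup_{\vec z}g)$ for nonnegative functions, matters reduce to proving the two estimates $\sup_{\vec z}a_Q(\vec z)^p/a_S(\vec z)^p\lesssim\lambda^{\beta_1}$ and $\sup_{\vec z}a_S(\vec z)^p/a_R(\vec z)^p\lesssim\mu^{\beta_2}$; a short case check on whether $\ell(Q)\leq\ell(R)$ then identifies $\lambda^{\beta_1}\mu^{\beta_2}$ with the right-hand side of the lemma, the factor $[1+|x_Q-x_R|/(\ell(Q)\vee\ell(R))]^{\beta_1+\beta_2}$ arising as the product of the two bracketed factors in $\lambda$ and $\mu$.

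For the first estimate I would use that, for a.e.\ $x$ and a.e.\ $y$, $|W^{\frac1p}(x)\vec z|\leq\|W^{\frac1p}(x)W^{-\frac1p}(y)\|\,|W^{\frac1p}(y)\vec z|$, hence $a_Q(\vec z)^p\leq|W^{\frac1p}(y)\vec z|^p\,h_Q(y)$ for a.e.\ $y$, where $h_Q(y):=\fint_Q\|W^{\frac1p}(x)W^{-\frac1p}(y)\|^p\,dx$. Taking logarithms and averaging over $y\in S$, the concavity of $\log$ (Jensen) gives $\fint_S\log|W^{\frac1p}(y)\vec z|^p\,dy\leq\log a_S(\vec z)^p$, while — because $S$ is the concentric $\lambda$-dilation of $Q$ and $\beta_1\in\llbracket d_{p,\infty}^{\mathrm{lower}}(W),\infty)$, so that $W$ has $\mathcal{A}_{p,\infty}$-lower dimension $\beta_1$ — the lower-dimension part of Definition \ref{def-upp-low-dimension} gives $\fint_S\log h_Q(y)\,dy\leq\log(C\lambda^{\beta_1})$; exponentiating, $a_Q(\vec z)^p\leq C\lambda^{\beta_1}a_S(\vec z)^p$ uniformly in $\vec z$.

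The second estimate needs one extra move, since $S$ is not a concentric dilation of $R$ and so the upper-dimension property does not apply to the pair $(R,S)$ directly. The remedy is to insert a further cube: fixing an absolute constant $c$ so that $S\subseteq R'$, where $R'$ is the cube concentric with $R$ of edge length $c\mu\,\ell(R)$ (possible since $\mu\gtrsim|x_Q-x_R|/\ell(R)$), one has $a_S(\vec z)^p\leq(|R'|/|S|)\,a_{R'}(\vec z)^p\lesssim a_{R'}(\vec z)^p$ because $|R'|\sim|S|$, which handles the pair $(S,R')$ with an absolute constant; and for the pair $(R,R')$ I would rerun the logarithm-and-Jensen argument of the previous paragraph, now averaging over $y\in R$ and using that $R'$ is the concentric $(c\mu)$-dilation of $R$ and that $\beta_2\in\llbracket d_{p,\infty}^{\mathrm{upper}}(W),\infty)$, so that the upper-dimension part of Definition \ref{def-upp-low-dimension} applies, to obtain $a_{R'}(\vec z)^p\lesssim(c\mu)^{\beta_2}a_R(\vec z)^p\sim\mu^{\beta_2}a_R(\vec z)^p$. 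Multiplying the three bounds completes the proof. I expect the main obstacle to be essentially technical: keeping the dilation bookkeeping consistent so that every logarithmic average taken is that of a cube against a concentric dilation of it — matching exactly one of the two functionals in Definition \ref{def-upp-low-dimension} — and verifying that each such average is finite, which follows from $W\in\mathcal{A}_{p,\infty}$ together with the finiteness of its $\mathcal{A}_{p,\infty}$ dimensions recorded before \eqref{eq-low-dim}.
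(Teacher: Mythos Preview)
The paper does not prove this lemma; it records it as \cite[Lemma 6.8(i)]{bhyy4} and uses it as a black box. Your argument is the natural one and is essentially the proof given in that reference: pass from $\|A_QA_R^{-1}\|$ to the ratio of weighted averages via the reducing-operator equivalence, then bridge $Q$ and $R$ through concentric dilations and apply the lower/upper $\mathcal{A}_{p,\infty}$-dimension inequalities after taking logarithms and using Jensen. The chain $Q\subset S=\lambda Q$, $S\subset R'=c\mu R$, together with the trivial bound $a_S(\vec z)^p\lesssim a_{R'}(\vec z)^p$ coming from $|R'|\sim|S|$, is exactly the right scaffolding.

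Two small remarks. First, in this paper's conventions $x_Q$ denotes the corner $2^{-j}k$ of $Q=Q_{j,k}$, not its center $c_Q$; your dilations must be taken about $c_Q$ so that $S=\lambda Q$ matches Definition~\ref{def-upp-low-dimension}, and the containment $R\subset\lambda Q$ then needs a harmless dimensional constant in the choice of $\lambda$. Second, the measure-theoretic worry you flag at the end is handled most cleanly by observing that the pointwise inequality $\log a_Q(\vec z)^p\leq\log h_Q(y)+\log|W^{1/p}(y)\vec z|^p$ has a finite constant on the left, so the sum on the right is bounded below; combined with the upper bound $\fint_S\log h_Q\leq\log(C\lambda^{\beta_1})$ from the dimension hypothesis, this forces $\fint_S\log|W^{1/p}(y)\vec z|^p>-\infty$, and the splitting of the integral is then legitimate. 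With these cosmetic fixes your proof goes through.
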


We next present sequence spaces corresponding to
$\dot{A}_{p, q}^{s, \upsilon}(W)$, which were introduced in
\cite[Definition 2.4]{byyz24}.

\begin{definition}
Let $s\in\mathbb{R}$, $p\in(0, \infty)$,
$q\in(0, \infty]$, and $W\in\mathcal{A}_{p,\infty}$.
Assume that $\delta_1,\delta_2$, and $\omega$ satisfy \eqref{eq-delta1<0}
and $\upsilon\in\mathcal{G}(\delta_1, \delta_2; \omega)$.
The \emph{generalized matrix-weighted Besov-type sequence space}
$\dot{b}_{p,q}^{s,\upsilon}(W)$ and the
\emph{generalized matrix-weighted
Triebel--Lizorkin-type sequence space}
$\dot{f}_{p,q}^{s,\upsilon}(W)$ are respectively
defined to be the sets of all sequences
$\vec{t}:=\{\vec{t}_Q\}_{Q\in\mathcal{D}}$
in $\mathbb{C}^m$ such that
\begin{align*}	
\left\|\vec{t}\right\|_{\dot{a}_{p,q}^{s,\upsilon}(W)}	
:=\left\|\left\{2^{js}\left|W^{\frac1{p}}
\vec{t}_j\right|\right\}_{j\in\mathbb{Z}}
\right\|_{L\dot{A}_{p, q}^{\upsilon}}<\infty,
\end{align*}
where $(A, a)\in\{(B, b), (F, f)\}$,
$\|\cdot\|_{L\dot{A}_{p, q}^{\upsilon}}$
is as in \eqref{LA_nu}, and, for any
$j\in\mathbb{Z}$ and $Q\in\mathcal{D}$,
$\widetilde{\mathbf{1}}_Q:=|Q|^{-\frac{1}{2}}\mathbf{1}_Q$ and
\begin{align}\label{veclambda_j}
\vec{t}_j:=\sum_{Q\in\mathcal{D}_j}
\widetilde{\mathbf{1}}_Q\vec{t}_Q.
\end{align}
\end{definition}

Suppose that $a\in\{b, f\}$, $p, q\in(0, \infty]$,
$\delta_1,\delta_2$, and $\omega$ satisfy \eqref{eq-delta1<0},
and $\upsilon\in\mathcal{G}(\delta_1, \delta_2; \omega)$.
The symbol $J_{\dot{a}_{p,q}^{s,\upsilon}}$,
introduced in \cite[(5.2)]{byyz24}, is given by
\begin{align}\label{J_nu}
J_{\dot{a}_{p,q}^{s,\upsilon}}:=
\begin{cases}
n
&{\rm if}\ \delta_1>\frac1{p}
\ {\rm or\ if}\ (\delta_1, q)=(\frac1{p}, \infty)
\ (\text{``supercritical case''}),\\
\displaystyle\frac{n}{\min \{1, q\}}	
&{\rm if}\ a=f,\  \delta_1=\delta_2=\frac1{p}, \text{ and } q<\infty
\ (\text{``critical case''}),\\
\displaystyle\frac{n}{1\wedge\Gamma_{p,q}}
&{\rm if}\ \delta_1<\frac1{p},\
{\rm or\ if}\ a=b,\ \delta_1=\delta_2=\frac1{p}, \text{ and } q<\infty,\\
&{\rm\quad or\ if}\ \delta_2>\delta_1=\frac1{p} \text{ and } q<\infty
\ (\text{``subcritical case''}).
\end{cases}
\end{align}
where
\begin{align*}
\Gamma_{p,q}:=
\begin{cases}
\displaystyle p	
&\text{if } a=b,\\
\displaystyle p\wedge q
&\text{if } a=f.
\end{cases}
\end{align*}

The following theorem is exactly \cite[Theorem 5.6]{byyz24},
which establishes the boundedness of
almost diagonal operators on $\dot{a}_{p,q}^{s,\upsilon}(W)$.

\begin{theorem}\label{a(W)adopebound}
Let $a\in\{b, f\}$, $s\in\mathbb{R}$, $p\in(0,\infty)$,
$q\in(0, \infty]$, $W\in \mathcal{A}_{p,\infty}$, and
$D,E,F\in\mathbb{R}$. Assume that $\delta_1,\delta_2$, and $\omega$
satisfy \eqref{eq-delta1<0} and
$\upsilon\in\mathcal{G}(\delta_1, \delta_2; \omega)$. Let
\begin{align*}
&\Delta:=\left[\delta_2-\frac1{p}+
\frac{d^{\operatorname{lower}}_{p, \infty}(W)}{np}\right]_{+},
\ D_{\dot{a}_{p,q}^{s,\upsilon}(W)}:=
J_{\dot{a}_{p,q}^{s,\upsilon}}+
\left[n\Delta\wedge\left(\omega+
\frac{d^{\operatorname{lower}}_{p, \infty}(W)}{p}\right)\right]
+\frac{d^{\operatorname{upper}}_{p, \infty}(W)}{p},\\
&E_{\dot{a}_{p,q}^{s,\upsilon}(W)}:=
\frac{n}{2}+s+n\Delta,\text{\ and\ }
F_{\dot{a}_{p,q}^{s,\upsilon}(W)}:=
J_{\dot{a}_{p,q}^{s,\upsilon}}
-\frac{n}{2}-s-n\left(\delta_1-\frac{1}{p}\right)_+
+\frac{d^{\operatorname{upper}}_{p, \infty}(W)}{p},
\end{align*}
where $J_{\dot{a}_{p,q}^{s,\upsilon}},
d^{\operatorname{lower}}_{p, \infty}(W)$,
and $d^{\operatorname{ upper}}_{p, \infty}(W)$ are
as, respectively, in \eqref{J_nu}, \eqref{eq-low-dim},
and \eqref{eq-upp-dim}. If
\begin{align*}
\displaystyle D>D_{\dot{a}_{p,q}^{s,\upsilon}(W)},\
\displaystyle E>E_{\dot{a}_{p,q}^{s,\upsilon}(W)},\
{\rm and}\
\displaystyle F>F_{\dot{a}_{p,q}^{s,\upsilon}(W)},	
\end{align*}	
then any $(D, E, F)$-almost diagonal operator
is bounded on $\dot{a}_{p,q}^{s,\upsilon}(W)$.
\end{theorem}

\begin{remark}\label{rmk-compare}
Let $p\in(0, \infty)$ and $\tau\in[0, \infty)$.
Let $\mathcal{A}_p$ denote the matrix Muckenhoupt class (see, for example, \cite[Definition 2.10]{bhyy4} for its definition).
By \cite[Proposition 4.2]{bhyy4}, we conclude that,
$\mathcal{A}_{p}\subsetneqq\mathcal{A}_{p,\infty}$.

Recall that, when $W\in\mathcal{A}_{p}$ and
$\upsilon(Q):=|Q|^{\tau}$ for any $Q\in\mathcal{D}$,
the space $\dot{a}_{p,q}^{s,\upsilon}(W)$
is precisely $\dot{a}_{p,q}^{s,\tau}(W)$ studied in \cite{bhyy1,bhyy2,bhyy3}.
In \cite[Theorem 13.1]{bhyy2}, for any    $W\in\mathcal{A}_{p}$, Bu et al. proved that
any $(D, E, F)$-almost diagonal operator
is bounded on $\dot{a}_{p,q}^{s,\tau}(W)$ if
\begin{align*}
D>J_{\dot{a}_{p,q}^{s,\tau}}+
\left[\left(n\widehat{\tau}\right)\wedge\frac{d}{p}\right],\
E>\frac{n}{2}+s+n\widehat{\tau},
\text{\ and\ }
F>J_{\dot{a}_{p,q}^{s,\tau}}
-\frac{n}{2}-s-n\left(\tau-\frac{1}{p}\right)_+,
\end{align*}
where
\begin{align*}
\widehat{\tau}:=\left[\tau-\frac{1}{p}
+\frac{d}{n p}\right]_{+},	
\end{align*}
$J_{\dot{a}_{p,q}^{s,\tau}}$ is as in \eqref{J_nu},
and $d\in[0, n)$ is an $\mathcal{A}_p$-dimension of $W$
(see \cite[Definition 2.22]{bhyy1} for the details).

By Example \ref{examp}(i), we find that $\upsilon\in\mathcal{G}(\tau,\tau;0)$.
In this case,
Theorem \ref{a(W)adopebound} gives that
any $(D, E, F)$-almost diagonal operator
is bounded on $\dot{a}_{p,q}^{s,\tau}(W)$ if
\begin{align*}
D>J_{\dot{a}_{p,q}^{s,\tau}}+
\left[n\Delta\wedge\frac{d^{{\rm lower}}_{p,\infty}(W)}{p}\right]
+\frac{d^{{\rm upper}}_{p, \infty}(W)}{p},\
E>\frac{n}{2}+s+n\Delta,
\end{align*}
and
\begin{align*}
F>J_{\dot{a}_{p,q}^{s,\tau}}
-\frac{n}{2}-s-n\left(\tau-\frac{1}{p}\right)_+
+\frac{d^{{\rm upper}}_{p, \infty}(W)}{p},
\end{align*}
where
$$
\Delta:=\left[\tau-\frac1{p}+
\frac{d^{{\rm lower}}_{p, \infty}(W)}{np}\right]_{+}
$$
and $J_{\dot{a}_{p,q}^{s,\tau}}, d^{{\rm lower}}_{p,\infty}(W)$,
and $d^{{\rm upper}}_{p,\infty}(W)$ are as, respectively, in
\eqref{J_nu}, \eqref{eq-low-dim}, and \eqref{eq-upp-dim}.

%
%
Comparing $\Delta$ in Theorem \ref{a(W)adopebound}
and $\widehat{\tau}$ in \cite[Theorem 13.1]{bhyy2},
we find that the $\mathcal{A}_p$-dimension $d$ of $W$
in $\widehat{\tau}$ is replaced by $d_{p, \infty}^{\text{lower}}(W)$
in $\Delta$. To understand the relation between $d$
and $d_{p, \infty}^{\text {lower}}(W)$, let
$$
d_p(W):=\inf \left\{d\in[0, n): W \text { has } \mathcal{A}_p
\text{-dimension}\ d\right\}.
$$
From Definition \ref{def-upp-low-dimension}, \eqref{eq-low-dim},
the definition of $\mathcal{A}_{p}$-dimensions,
and Jensen's inequality, it follows that,
for any $d\in[0, n)$ such that $W$ has
$\mathcal{A}_p$-dimension $d$, $W$ has
$\mathcal{A}_{p,\infty}$-lower dimension $d$ and hence
$d_{p, \infty}^{\mathrm{lower}}(W) \leq d_p(W)$.
Consequently, the replacement of $d$ by
$d_{p, \infty}^{\text {lower}}(W)$ in $\Delta$
at least does not negatively affect
the boundedness of almost diagonal operators on $\dot{a}_{p,q}^{s,\tau}(W)$
in Theorem \ref{a(W)adopebound}.

Notice that, compared to \cite[Theorem 13.1]{bhyy2},
the ranges of $D$ and $F$ in Theorem \ref{a(W)adopebound}
contain an additional term $\frac{d_{p, \infty}^{\mathrm{upper}}(W)}{p}$.
Using \cite[Proposition 2.27]{bhyy1},
we conclude that, for any $p\in(0, \infty)$ and $W\in\mathcal{A}_p$,
\begin{align}\label{eq-d-upp}
\frac{d_{p, \infty}^{\mathrm{upper}}(W)}{p}\leq
\begin{cases}
\displaystyle\frac{d_{p^{\prime}}(W^{-\frac{p'}{p}})}{p^{\prime}}
&\displaystyle\text{if }p\in(1, \infty),\\
0&\text{if }p\in(0, 1],
\end{cases}
\end{align}
where, for any $p\in(1, \infty)$,
$W^{-\frac{p'}{p}}\in\mathcal{A}_{p'}$
is the dual weight of $W$ and
$d_{p^{\prime}}(W^{-\frac{p'}{p}})\in[0,n)$ is an
$\mathcal{A}_{p'}$-dimension of $W^{-\frac{p'}{p}}$.
By \eqref{eq-d-upp}, we find that,
for any $p\in(0,1]$ and $W\in\mathcal{A}_{p}$,
$d_{p, \infty}^{\mathrm{upper}}(W)=0$,
which further implies that, in the case where $p\in(0,1]$,
Theorem \ref{a(W)adopebound} coincides with \cite[Theorem 13.1]{bhyy2}.
However, when $p\in(1, \infty)$,
the term $d_{p, \infty}^{\mathrm{upper}}(W)$ is probably
positive and hence the ranges of $D$ and $F$
in Theorem \ref{a(W)adopebound} may not
coincide with those in \cite[Theorem 13.1]{bhyy2}.

The reason yielding the above phenomenon is that,
compared to \cite[Theorem 13.1]{bhyy2}, Theorem
\ref{a(W)adopebound} is proved to be valid
for the larger matrix weight class $\mathcal{A}_{p,
\infty}\supsetneqq\mathcal{A}_p$ .
At the technical level, the origin of
the additional term $\frac{d_{p, \infty}^{\mathrm{upper}}(W)}{p}$
can be traced back to \cite[Theorem 5.6]{byyz24}
and more essentially to \cite[Proposition 6.6]{bhyy4}
which establishes the estimate for reducing operators generated
by matrix $\mathcal{A}_{p,\infty}$ weights.
From this perspective, when considering
the boundedness of almost diagonal operators
on $\dot{a}_{p,q}^{s,\tau}(W)$ for any
$p\in(1, \infty)$ and $W\in\mathcal{A}_p$,
\cite[Theorem 13.1]{bhyy2} gives better results
than Theorem \ref{a(W)adopebound}.
The same deficiency appears in the boundedness of
pseudo-differential, trace, and Calder\'{o}n--Zygmund operators
on $\dot{A}_{p, q}^{s, \upsilon}(W)$ when compared to
the corresponding results in \cite{bhyy3}.
This is because, to establish the above boundedness,
we use the molecular and the wavelet characterizations of
$\dot{A}_{p, q}^{s, \upsilon}(W)$ obtained in \cite{byyz24}
which strongly depend on Theorem \ref{a(W)adopebound}.
\end{remark}

\subsection{Pseudo-Differential Operators}\label{s-2-1}

We begin with the concept of pseudo-differential
operators with homogeneous symbols (see, for example, \cite[(1)]{gt99}).
\begin{definition}
Let $\eta\in\mathbb{Z}_{+}$ and $\dot{S}_{1,1}^\eta$
be the set of all infinitely differentiable functions
$\theta$ on $\mathbb{R}^n\times(\mathbb{R}^n \backslash\{\mathbf{0}\})$
such that, for any $\alpha,\beta\in\mathbb{Z}_{+}^n$,
\begin{align*}
\sup _{x \in \mathbb{R}^n,\,\xi\in\mathbb{R}^n
\backslash\{\mathbf{0}\}}|\xi|^{-\eta-|\alpha|+|\beta|}
\left|\partial_{\xi}^\beta \partial_x^\alpha
\theta(x, \xi)\right|<\infty.
\end{align*}
For any $\theta\in\dot{S}_{1,1}^\eta$,
the \emph{pseudo-differential operator}
$T_{\theta}$ with symbol
$\theta$ is defined by setting, for any
$f\in\mathcal{S}_{\infty}$
and $x\in\mathbb{R}^n$,
$$
(T_{\theta}f)(x):=\int_{\mathbb{R}^{n}} \theta(x, \xi)
\widehat{f}(\xi) e^{i x\cdot\xi}d\,\xi.
$$
\end{definition}

The next lemma is exactly \cite[Lemma 2.1]{gt99}.

\begin{lemma}\label{lem-T-pd-pre}
Let $\eta\in\mathbb{Z}_{+}$, $\theta\in\dot{S}_{1,1}^\eta$,
and  $T_{\theta}$ be the pseudo-differential operator
with symbol $\theta$. Then $T_{\theta}$ is a continuous linear map from
$\mathcal{S}_{\infty}$ to $\mathcal{S}$.
In particular, the \emph{formal adjoint}
$T_{\theta}^*$ of $T_{\theta}$ defined by setting,
for any $f\in\mathcal{S}'$
and $\phi\in\mathcal{S}_{\infty}$,
$$
\left\langle T_{\theta}^*f, \phi\right\rangle
:=\langle f, T_{\theta}\phi\rangle
$$
is  a continuous linear map from $\mathcal{S}'$
to $\mathcal{S}'_{\infty}$.
\end{lemma}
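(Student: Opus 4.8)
The plan is to prove Lemma~\ref{lem-T-pd-pre} by reducing it to two separate continuity claims: first that $T_\theta:\mathcal{S}_\infty\to\mathcal{S}$ is well defined and continuous, and then that, granting this, the formal adjoint $T_\theta^*$ is automatically a continuous linear map from $\mathcal{S}'$ to $\mathcal{S}_\infty'$. The second part is soft functional analysis: since $T_\theta^*$ is defined by the dual pairing $\langle T_\theta^*f,\phi\rangle:=\langle f,T_\theta\phi\rangle$, its continuity from $\mathcal{S}'$ (weak-$*$) to $\mathcal{S}_\infty'$ (weak-$*$) follows directly from the continuity of $T_\theta:\mathcal{S}_\infty\to\mathcal{S}$ by the standard transpose-of-a-continuous-map argument --- for each fixed $\phi\in\mathcal{S}_\infty$, the functional $f\mapsto\langle f,T_\theta\phi\rangle$ is weak-$*$ continuous on $\mathcal{S}'$ because $T_\theta\phi\in\mathcal{S}$, and this is exactly the statement that $f\mapsto T_\theta^*f$ is weak-$*$--weak-$*$ continuous. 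So essentially no work is needed there once the first part is in hand, and I would simply cite the duality principle (or \cite[Lemma 2.1]{gt99} directly, as the statement asserts).

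The substance is the first part. For $f\in\mathcal{S}_\infty$, the Fourier transform $\widehat f$ is a Schwartz function all of whose derivatives vanish to infinite order at the origin (this is the Fourier-side characterization of $\mathcal{S}_\infty$), so the integrand $\theta(x,\xi)\widehat f(\xi)e^{ix\cdot\xi}$ is a smooth, rapidly decaying function of $\xi$ despite the singularity of $\theta$ at $\xi=\mathbf{0}$: near the origin, the symbol estimate with $\eta\ge 0$ gives $|\theta(x,\xi)|\lesssim|\xi|^\eta$, which is bounded, while $\widehat f(\xi)$ and all its $\xi$-derivatives are $O(|\xi|^N)$ for every $N$. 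Hence $(T_\theta f)(x)$ is well defined by an absolutely convergent integral. To see $T_\theta f\in\mathcal{S}$ and that the map is continuous, I would estimate, for arbitrary multi-indices $\mu,\nu$, the quantity $x^\mu\partial_x^\nu(T_\theta f)(x)$: differentiating under the integral sign brings down factors of $\xi$ and $x$-derivatives of $\theta$ (controlled by the $\dot S_{1,1}^\eta$ estimates), and the $x^\mu$ factor is handled by integration by parts in $\xi$ against $e^{ix\cdot\xi}$, each such integration by parts landing either on $\theta$ (again controlled by symbol bounds, now with the $\xi$-derivative count raising the allowable power of $|\xi|$) or on $\widehat f$ (controlled by the Schwartz seminorms of $\widehat f$, equivalently of $f$). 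Splitting the $\xi$-integral into $|\xi|\le 1$ and $|\xi|>1$: on the bounded piece the positive powers of $|\xi|$ from the symbol are harmless and the infinite-order vanishing of $\widehat f$ at $0$ absorbs any negative powers produced by $\xi$-differentiation of the symbol; on the unbounded piece the rapid decay of $\widehat f$ and its derivatives beats all polynomial growth from the symbol. Collecting, one bounds each Schwartz seminorm of $T_\theta f$ by a finite constant times a finite sum of Schwartz seminorms of $f$, with the constant depending only on $n$, $\eta$, and finitely many of the symbol constants of $\theta$ --- which is precisely continuity of $T_\theta:\mathcal{S}_\infty\to\mathcal{S}$.

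The main obstacle is the bookkeeping near $\xi=\mathbf{0}$: one must be careful that differentiating $\theta$ in $\xi$ worsens the singularity (the bound becomes $|\xi|^{\eta-|\alpha|}$, which blows up once $|\alpha|>\eta$), and this has to be matched against the order of vanishing of $\widehat f$ at the origin. The clean way to manage this is to exploit that $\mathcal{S}_\infty$ consists of functions whose Fourier transforms vanish to \emph{infinite} order at $0$, so for any prescribed number of $\xi$-integrations by parts there is always enough vanishing of $\widehat f$ to kill the resulting negative powers of $|\xi|$; one never needs a uniform lower bound independent of the number of integrations by parts because each fixed seminorm estimate only requires finitely many. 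A secondary point worth a sentence is justifying differentiation under the integral sign and Fubini/integration-by-parts (no boundary terms), which follows from the absolute convergence established above together with dominated convergence. Since all of this is classical and identical to the argument in \cite{gt99}, I would present it compactly, emphasizing the $|\xi|\le1$ versus $|\xi|>1$ split and citing \cite[Lemma 2.1]{gt99} for the full details.
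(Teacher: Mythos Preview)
Your proposal is correct and matches the paper's approach: the paper does not prove this lemma at all but simply states that it ``is exactly \cite[Lemma 2.1]{gt99}'', so your sketch of the classical argument together with the citation to \cite[Lemma 2.1]{gt99} is already more than what the paper provides.
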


The following theorem is the main result of this subsection
in which we establish the boundedness of
pseudo-differential operators on $\dot{A}_{p, q}^{s, \upsilon}(W)$.

\begin{theorem}\label{thm-T-pd}
Let $(A, a)\in\{(B, b), (F, f)\}$, $s\in\mathbb{R}$,
$p\in(0,\infty)$, $q\in(0, \infty]$, and $W\in \mathcal{A}_{p,\infty}$.
Suppose that $\delta_1,\delta_2$, and $\omega$ satisfy \eqref{eq-delta1<0},
$\upsilon\in\mathcal{G}(\delta_1, \delta_2; \omega)$, and
$\varphi, \psi\in\mathcal{S}$ satisfy \eqref{cond1} and \eqref{cond3}.
Assume that $\eta\in\mathbb{Z}_{+}$, $\theta\in\dot{S}_{1,1}^\eta$,
and $T_{\theta}$ is the pseudo-differential operator
with symbol $\theta$. Then the following two statements hold.
\begin{itemize}
\item[{\rm (i)}] If $F_{\dot{a}_{p,q}^{s,\upsilon}(W)}<\frac{n}{2}$,
then, for any $\vec{f}\in\dot{A}_{p, q}^{s+\eta, \upsilon}(W)$,
\begin{align}\label{eq-T-pd}
\widetilde{T}_{\theta}\left(\vec{f}\right):=\sum_{Q\in\mathcal{D}}
\left\langle \vec{f},\varphi_Q\right\rangle T_{\theta}\left(\psi_Q\right)
\end{align}
converges in $(\mathcal{S}'_{\infty})^m$,
where $F_{\dot{a}_{p,q}^{s,\upsilon}(W)}$
is as in Theorem \ref{a(W)adopebound}.
Moreover, $\widetilde{T}_{\theta}$ is bounded from
$\dot{A}_{p, q}^{s+\eta, \upsilon}(W)$
to $\dot{A}_{p, q}^{s, \upsilon}(W)$ and
$\widetilde{T}_{\theta}=T_{\theta}$ on
$(\mathcal{S}_{\infty})^m$.
\item[{\rm (ii)}] If $F_{\dot{a}_{p,q}^{s,\upsilon}(W)}\geq\frac{n}{2}$
and, for any $\gamma\in\mathbb{Z}_{+}^n$ with
$|\gamma|\leq F_{\dot{a}_{p,q}^{s,\upsilon}(W)}-\frac{n}{2}$,
$T_{\theta}^*\left(x^\gamma\right)=0$,
then, for any $\vec{f}\in\dot{A}_{p, q}^{s+\eta, \upsilon}(W)$,
\eqref{eq-T-pd} also converges in
$(\mathcal{S}'_{\infty})^m$.
Furthermore, $\widetilde{T}_{\theta}$ is bounded from
$\dot{A}_{p, q}^{s+\eta, \upsilon}(W)$
to $\dot{A}_{p, q}^{s, \upsilon}(W)$ and
$\widetilde{T}_{\theta}=T_{\theta}$ on
$(\mathcal{S}_{\infty})^m$.
\end{itemize}
\end{theorem}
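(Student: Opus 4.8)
The plan is to combine the $\varphi$-transform characterization of $\dot{A}_{p,q}^{s,\upsilon}(W)$ with its synthesis molecular characterization, both from \cite{byyz24}, after showing that the functions $T_{\theta}(\psi_Q)$, suitably renormalized, form a uniform family of synthesis molecules for $\dot{A}_{p,q}^{s,\upsilon}(W)$. Concretely, given $\vec{f}\in\dot{A}_{p,q}^{s+\eta,\upsilon}(W)$, put $\vec{t}:=\{\langle\vec{f},\varphi_Q\rangle\}_{Q\in\mathcal{D}}$; the $\varphi$-transform characterization gives $\vec{t}\in\dot{a}_{p,q}^{s+\eta,\upsilon}(W)$ with $\|\vec{t}\|_{\dot{a}_{p,q}^{s+\eta,\upsilon}(W)}\lesssim\|\vec{f}\|_{\dot{A}_{p,q}^{s+\eta,\upsilon}(W)}$, together with the reproducing formula $\vec{f}=\sum_{Q\in\mathcal{D}}\langle\vec{f},\varphi_Q\rangle\psi_Q$ in $(\mathcal{S}_\infty')^m$, which follows from \eqref{cond1} and \eqref{cond3}. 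For each $Q\in\mathcal{D}$ set $\widetilde{\vec{t}}_Q:=2^{j_Q\eta}\vec{t}_Q$ and $m_Q:=2^{-j_Q\eta}T_{\theta}(\psi_Q)$, so that formally $\widetilde{T}_{\theta}(\vec{f})=\sum_{Q\in\mathcal{D}}\widetilde{\vec{t}}_Q m_Q$; moreover, since the definitions of $\dot{a}_{p,q}^{s,\upsilon}(W)$ and $\dot{a}_{p,q}^{s+\eta,\upsilon}(W)$ differ only by the factor $2^{j\eta}$ on the $j$-th layer and $\widetilde{\vec{t}}_j=2^{j\eta}\vec{t}_j$ for any $j\in\mathbb{Z}$, we have $\|\widetilde{\vec{t}}\|_{\dot{a}_{p,q}^{s,\upsilon}(W)}=\|\vec{t}\|_{\dot{a}_{p,q}^{s+\eta,\upsilon}(W)}$. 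Thus, once $\{m_Q\}_{Q\in\mathcal{D}}$ is known to be a uniform family of synthesis molecules for $\dot{A}_{p,q}^{s,\upsilon}(W)$, the synthesis molecular characterization from \cite{byyz24} (which itself rests on Theorem \ref{a(W)adopebound}) shows that the series \eqref{eq-T-pd} converges in $(\mathcal{S}_\infty')^m$ and that $\|\widetilde{T}_{\theta}(\vec{f})\|_{\dot{A}_{p,q}^{s,\upsilon}(W)}\lesssim\|\widetilde{\vec{t}}\|_{\dot{a}_{p,q}^{s,\upsilon}(W)}\lesssim\|\vec{f}\|_{\dot{A}_{p,q}^{s+\eta,\upsilon}(W)}$.

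It remains to verify that $\{m_Q\}_{Q\in\mathcal{D}}$ consists of uniform synthesis molecules. By \eqref{cond1} and \eqref{eq-phi_Q}, $\widehat{\psi_Q}$ is supported in the annulus $\{2^{j_Q-1}\le|\xi|\le2^{j_Q+1}\}$, and $\psi_Q$ obeys, for every $L\in\mathbb{Z}_+$ and every multi-index $\gamma$, the scale-invariant bump bounds $|\partial^{\gamma}\psi_Q(x)|\lesssim 2^{j_Q(|\gamma|+n/2)}(1+2^{j_Q}|x-x_Q|)^{-L}$ with constants independent of $Q$. Applying Lemma \ref{lem-T-pd-pre} and the defining symbol estimates of $\dot{S}_{1,1}^\eta$ in Definition \ref{def-PDO}, one gets, uniformly in $Q$, pointwise bounds $|\partial^{\gamma}T_{\theta}(\psi_Q)(x)|\lesssim 2^{j_Q(\eta+|\gamma|+n/2)}(1+2^{j_Q}|x-x_Q|)^{-L}$ for all $L\in\mathbb{Z}_+$ and all $\gamma$ — the standard fact that a $\dot{S}_{1,1}^\eta$ operator maps an $L^2$-normalized frequency-localized bump at scale $2^{-j_Q}$ centered at $x_Q$ to such a bump with a size gain $2^{j_Q\eta}$ and one derivative gained at each differentiation. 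Dividing by $2^{j_Q\eta}$ shows $m_Q$ meets the decay and smoothness requirements of a synthesis molecule for $\dot{A}_{p,q}^{s,\upsilon}(W)$ of any prescribed finite order. For the vanishing-moment condition, which is where the dichotomy enters, a synthesis molecule must satisfy $\int_{\mathbb{R}^n}m_Q(x)x^{\gamma}\,dx=0$ for all $\gamma\in\mathbb{Z}_+^n$ with $|\gamma|\le F_{\dot{a}_{p,q}^{s,\upsilon}(W)}-\frac{n}{2}$. In case (i), $F_{\dot{a}_{p,q}^{s,\upsilon}(W)}<\frac{n}{2}$, so no such $\gamma$ exists and the condition is vacuous — which is essential, since symbols in $\dot{S}_{1,1}^\eta$ carry no cancellation in general. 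In case (ii), for each such $\gamma$, using that $T_{\theta}(\psi_Q)\in\mathcal{S}$ by Lemma \ref{lem-T-pd-pre}, that $x^\gamma\in\mathcal{S}'$, and that $\psi_Q\in\mathcal{S}_\infty$, we obtain
\begin{align*}
\int_{\mathbb{R}^n}T_{\theta}(\psi_Q)(x)x^{\gamma}\,dx
=\left\langle x^{\gamma}, T_{\theta}(\psi_Q)\right\rangle
=\left\langle T_{\theta}^*(x^{\gamma}),\psi_Q\right\rangle=0
\end{align*}
by the hypothesis $T_{\theta}^*(x^{\gamma})=0$; hence $m_Q$ has the required moments and $\{m_Q\}_{Q\in\mathcal{D}}$ is again a uniform family of synthesis molecules, so the argument of the first paragraph applies.

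Finally, that $\widetilde{T}_{\theta}=T_{\theta}$ on $(\mathcal{S}_\infty)^m$ follows because, for $\vec{f}\in(\mathcal{S}_\infty)^m$, the reproducing formula $\vec{f}=\sum_{Q\in\mathcal{D}}\langle\vec{f},\varphi_Q\rangle\psi_Q$ converges in $(\mathcal{S}_\infty)^m$, and $T_{\theta}$ is continuous from $\mathcal{S}_\infty$ to $\mathcal{S}$ by Lemma \ref{lem-T-pd-pre}, so applying $T_{\theta}$ term by term yields $T_{\theta}(\vec{f})=\sum_{Q\in\mathcal{D}}\langle\vec{f},\varphi_Q\rangle T_{\theta}(\psi_Q)=\widetilde{T}_{\theta}(\vec{f})$ with convergence in $\mathcal{S}$, hence in $(\mathcal{S}_\infty')^m$. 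The main obstacle is the middle step: establishing, uniformly in $Q$, the pointwise decay and derivative estimates for $T_{\theta}(\psi_Q)$ with the correct gain $2^{j_Q\eta}$ — i.e., that a $\dot{S}_{1,1}^\eta$ pseudo-differential operator sends the normalized frequency-localized bumps $\psi_Q$ to rescaled synthesis molecules — and then matching these against the precise smoothness, decay, and moment parameters demanded by the molecular characterization of \cite{byyz24} (equivalently, by $D_{\dot{a}_{p,q}^{s,\upsilon}(W)}$, $E_{\dot{a}_{p,q}^{s,\upsilon}(W)}$, $F_{\dot{a}_{p,q}^{s,\upsilon}(W)}$ in Theorem \ref{a(W)adopebound}), so that the only genuine constraint on $T_{\theta}(\psi_Q)$ is the moment count $F_{\dot{a}_{p,q}^{s,\upsilon}(W)}-\frac{n}{2}$.
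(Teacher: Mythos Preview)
Your proposal is correct and follows essentially the same route as the paper: rescale the coefficients by $2^{j_Q\eta}=|Q|^{-\eta/n}$ so that $\{\langle\vec f,\varphi_Q\rangle|Q|^{-\eta/n}\}\in\dot a_{p,q}^{s,\upsilon}(W)$ via the $\varphi$-transform, verify that $\{|Q|^{\eta/n}T_\theta(\psi_Q)\}_{Q\in\mathcal D}$ is a uniform family of synthesis molecules for $\dot A_{p,q}^{s,\upsilon}(W)$ (the paper isolates this as Lemma~\ref{lem-psi-mole}, proved by the derivative-decay estimate from \cite[pp.~265--266]{gt99}, which is precisely the ``standard fact'' you invoke), and then apply the synthesis half of the molecular characterization; the moment condition is vacuous in case~(i) and supplied by $T_\theta^*(x^\gamma)=0$ in case~(ii), and the identity $\widetilde T_\theta=T_\theta$ on $(\mathcal S_\infty)^m$ follows from the Calder\'on reproducing formula together with the continuity of $T_\theta:\mathcal S_\infty\to\mathcal S$.
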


Before giving the proof of Theorem \ref{thm-T-pd},
we first present a remark to compare Theorem \ref{thm-T-pd}
with some known results.
\begin{remark}
Let all the symbols be the same as in Theorem \ref{thm-T-pd}.
Let $\tau\in[0,\infty)$ and, for any $Q\in\mathcal{D}$,
let $\upsilon(Q):=|Q|^{\tau}$. In this case,
the space $\dot{A}_{p, q}^{s, \upsilon}(W)$
is precisely the matrix-weighted Besov--Triebel--Lizorkin-type space
$\dot{A}_{p, q}^{s,\tau}(W)$ with $W$ being a
matrix $\mathcal{A}_{p, \infty}$
weight. On the one hand, Theorem \ref{thm-T-pd}
is a homogeneous variant of \cite[Theorem 5.13]{bhyy5}
in which Bu et al. established the boundedness of
pseudo-differential operators on
the inhomogeneous space $A_{p, q}^{s,\tau}(W)$;
on the other hand, Bu et al. \cite[Theorem 3.23]{bhyy3} also obtained
the boundedness of pseudo-differential operators
on $\dot{A}_{p, q}^{s, \tau}(W)$
with $W$ being a matrix $\mathcal{A}_{p}$ weight.
However, Theorem \ref{thm-T-pd} only coincides with
\cite[Theorem 3.23]{bhyy3} for any $p\in(0, 1]$ and
$W\in\mathcal{A}_p$. When $p\in(1, \infty)$ and
$W\in\mathcal{A}_p$, the index $F_{\dot{a}_{p,q}^{s,\tau}(W)}$
in \cite[Theorem 3.23]{bhyy3} may be better;
see Remark \ref{rmk-compare} for the reason.
In particular, even for  the case where $\dot{A}^{s,0}_{p,q}(W)
=\dot{A}^{s}_{p,q}(W)$ with $W$ being a
matrix $\mathcal{A}_{p, \infty}$ weight, Theorem \ref{thm-T-pd} is  new.

Moreover, when $m=1$ (the scalar-valued setting) and $W\equiv1$,
Theorem \ref{thm-T-pd} improves \cite[Theorem 1.5]{syy10}
in which Sawano et al. obtained the boundedness of
pseudo-differential operators on
$\dot{A}_{p, q}^{s, \tau}$ with $\tau$
having an upper bound. In particular, when $\tau=0$,
Theorem \ref{thm-T-pd} coincides with \cite[Theorems 1.1 and 1.2]{gt99}.
\end{remark}

To prove Theorem \ref{thm-T-pd}, we need the molecular
characterization of $\dot{A}_{p,q}^{s,\upsilon}(W)$. To present it,
we first recall some symbols. For any $r\in\mathbb{R}$, let
\begin{align}\label{eq-ceil}
\begin{cases}
\lceil\!\lceil r\rceil\!\rceil:=\min\{k\in\mathbb Z:\, k>r\},\
\lceil r\rceil:=\min\{k\in\mathbb Z:\, k\geq r\},\\
\lfloor\!\lfloor r\rfloor\!\rfloor:=\max\{k\in\mathbb Z:\, k< r\},\
\lfloor r\rfloor:=\max\{k\in\mathbb Z:\, k\leq r\}
\end{cases}
\end{align}
and
\begin{align}\label{eq-r**}
r^{**}:=r-\lfloor\!\lfloor r\rfloor\!\rfloor\in(0,1].
\end{align}
For any $M\in[0,\infty)$ and $x\in\mathbb{R}^n$, let
$u_M(x):=(1+|x|)^{-M}$. The following concept of smooth molecules
was introduced in \cite[Definition 3.4]{bhyy3},
which is a slight generalization of the traditional
one as in \cite[(3.7)-(3.10)]{fj90}.

\begin{definition}\label{KLMNmole}
Let $K,M\in[0,\infty)$, $L,N\in\mathbb{R}$, and
$Q\in\mathcal{D}$. A function $m_Q$ on $\mathbb{R}^n$
is called a \emph{(smooth) $(K,L,M,N)$-molecule supported near $Q$}
if, for any $x,y\in\mathbb{R}^n$,
\begin{itemize}
\item[{\rm (i)}] $|m_Q(x)|\leq(u_{K})_Q(x)$;
\item[{\rm (ii)}] $\int_{\mathbb R^n} m_Q(x)x^\gamma\,dx=0
\text{ if }\gamma\in\mathbb{Z}_+^n$ and $|\gamma|\leq L$;
\item[{\rm (iii)}] $|\partial^\gamma m_Q(x)|
\leq[\ell(Q)]^{-|\gamma|}(u_{M})_Q(x)\text{ if }
\gamma\in\mathbb{Z}_+^n$ and $|\gamma|<N$;
\item[{\rm (iv)}]
$$\left|\partial^\gamma m_Q(x)-\partial^\gamma m_Q(y)\right|
\leq\left[\ell(Q)\right]^{-|\gamma|}\left[\frac{|x-y|}{\ell(Q)}\right]^{N^{**}}
\sup_{|z|\leq|x-y|}(u_{M})_Q(x+z)$$
if $\gamma\in\mathbb{Z}_+^n$ and $|\gamma|=\lfloor\!\lfloor N\rfloor\!\rfloor$,
where $\lfloor\!\lfloor N\rfloor\!\rfloor$ and $N^{**}$
are as, respectively, in \eqref{eq-ceil} and \eqref{eq-r**}
and $(u_{M})_Q$ is as in \eqref{eq-phi_Q} with $\varphi$
replaced by $u_{M}$.
\end{itemize}
For brevity, we also call $m_Q$ a $(K, L, M, N)$-molecule.
\end{definition}

The following concepts were introduced in \cite[Definition 5.13]{byyz24}
when establishing the molecular characterization of
$\dot{A}_{p,q}^{s,\upsilon}(W)$.

\begin{definition}\label{def-anasynmole}
Let $(A, a)\in\{(B, b), (F, f)\}$, $s\in\mathbb{R}$,
$p\in(0,\infty)$, $q\in(0, \infty]$, and $W\in \mathcal{A}_{p,\infty}$.
Suppose that $\delta_1,\delta_2$, and $\omega$ satisfy \eqref{eq-delta1<0},
$\upsilon\in\mathcal{G}(\delta_1, \delta_2; \omega)$,
and $D_{\dot{a}_{p,q}^{s,\upsilon}(W)},
E_{\dot{a}_{p,q}^{s,\upsilon}(W)}$, and
$F_{\dot{a}_{p,q}^{s,\upsilon}(W)}$ are
as in Theorem \ref{a(W)adopebound}.
We call $m:=\{m_Q\}_{Q\in\mathcal{D}}$ a
\emph{family of analysis molecules}
for $\dot{A}_{p,q}^{s,\upsilon}(W)$ if there exist
\begin{align*}
K_m>D_{\dot{a}_{p,q}^{s,\upsilon}(W)}\vee
\left[E_{\dot{a}_{p,q}^{s,\upsilon}(W)}+\frac{n}{2}\right],
\ L_m \geq E_{\dot{a}_{p,q}^{s,\upsilon}(W)}-\frac{n}{2},
\ M_m>D_{\dot{a}_{p,q}^{s,\upsilon}(W)},
\text{ and } N_m>F_{\dot{a}_{p,q}^{s,\upsilon}(W)}-\frac{n}{2}	
\end{align*}
such that, for any $Q\in\mathcal{D}$,
$m_Q$ is a $(K_m, L_m, M_m, N_m)$-molecule.
We call $g:=\{g_Q\}_{Q\in\mathcal{D}}$ a
\emph{family of synthesis molecules}
for $\dot{A}_{p,q}^{s,\upsilon}(W)$ if there exist
\begin{align}\label{eq-syn-mole}
K_g>D_{\dot{a}_{p,q}^{s,\upsilon}(W)}\vee
\left[F_{\dot{a}_{p,q}^{s,\upsilon}(W)}+\frac{n}{2}\right],
\ L_g\geq F_{\dot{a}_{p,q}^{s,\upsilon}(W)}-\frac{n}{2},
\ M_g>D_{\dot{a}_{p,q}^{s,\upsilon}(W)},
\text{ and } N_g>E_{\dot{a}_{p,q}^{s,\upsilon}(W)}-\frac{n}{2}
\end{align}
such that, for any $Q\in\mathcal{D}$,
$g_Q$ is a $(K_g, L_g, M_g, N_g)$-molecule.
In particular, for any $Q\in\mathcal{D}$, $m_Q$ (resp. $g_Q$)
is called an \emph{analysis} (resp. a \emph{synthesis}) \emph{molecule}
for $\dot{A}_{p,q}^{s,\upsilon}(W)$.
\end{definition}

\begin{remark}\label{rmk-mole-phi}
In Definition \ref{def-anasynmole},
for any $\varphi\in\mathcal{S}_{\infty}$,
it is obvious that there exists a positive
constant $C$ such that $\{C\varphi_{Q}\}_{Q\in\mathcal{D}}$
is a family of both analysis and synthesis molecules
for $\dot{A}_{p,q}^{s,\upsilon}(W)$; we omit the details.
\end{remark}

In what follows, for any $f\in\mathcal{S}'_{\infty}$
and $\varphi\in\mathcal{S}_{\infty}$, let
$\langle f, \varphi\rangle:=f(\overline{\varphi})$,
where $f(\cdot)$ denotes the dual action.
The following result was established in \cite[Lemma 5.16]{byyz24},
which gives the reasonable action of elements
in $\dot{A}_{p,q}^{s,\upsilon}(W)$ on analysis molecules
for $\dot{A}_{p,q}^{s,\upsilon}(W)$.

\begin{lemma}
Let $A\in\{B, F\}$, $s\in\mathbb{R}$, $p\in(0, \infty)$,
$q\in(0,\infty]$, and $W\in \mathcal{A}_{p, \infty}$.
Assume that $\delta_1,\delta_2$, and $\omega$ satisfy \eqref{eq-delta1<0},
$\upsilon\in\mathcal{G}(\delta_1, \delta_2; \omega)$, and
$\varphi, \psi\in\mathcal{S}$ satisfy \eqref{cond1} and \eqref{cond3}.
If $\{m_Q\}_{Q\in\mathcal{D}}$ is a family of analysis molecules
for $\dot{A}_{p,q}^{s,\upsilon}(W)$, then, for
any $\vec{f}\in\dot{A}_{p,q}^{s,\upsilon}(W)$ and $Q\in\mathcal{D}$,
\begin{align}\label{eq-f-m}
\left\langle\vec{f}, m_Q\right\rangle_*
:=\sum_{R\in\mathcal{D}}\left\langle \psi_R,
m_Q\right\rangle\left\langle\vec{f},
\varphi_R\right\rangle	
\end{align}
converges absolutely and its value is independent of
the choice of $\varphi$ and $\psi$.
\end{lemma}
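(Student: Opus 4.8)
The plan is to establish \eqref{eq-f-m} by reducing everything to an almost diagonal estimate and then invoking Theorem \ref{a(W)adopebound}. First I would observe that, by the reproducing formula \eqref{cond3}, the sequence $\{\langle\vec f,\varphi_R\rangle\}_{R\in\mathcal D}$ is (essentially) the $\varphi$-transform of $\vec f$, so that $\vec t:=\{\vec t_R\}_{R\in\mathcal D}$ with $\vec t_R:=|R|^{-1/2}\langle\vec f,\varphi_R\rangle$ lies in $\dot a_{p,q}^{s,\upsilon}(W)$ with norm controlled by $\|\vec f\|_{\dot A_{p,q}^{s,\upsilon}(W)}$; this is the $\varphi$-transform characterization from \cite{byyz24}. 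The heart of the matter is then to show that the pairing coefficients $\langle\psi_R,m_Q\rangle$ behave like an almost diagonal matrix of the appropriate order. Concretely, I would prove that there is a constant $C$ such that, for all $Q,R\in\mathcal D$,
\begin{align*}
\left|\langle\psi_R,m_Q\rangle\right|\leq C\,|Q|^{-1/2}|R|^{1/2}\,u^{DEF}_{Q,R}
\end{align*}
for suitable $D,E,F$ determined by the molecule parameters $K_m,L_m,M_m,N_m$. This is the standard ``molecule meets bump'' estimate: when $\ell(R)\le\ell(Q)$ one uses the smoothness/decay of $m_Q$ (conditions (iii)--(iv) of Definition \ref{KLMNmole}) together with the vanishing moments of $\psi_R$ (which has all moments vanishing since $\psi\in\mathcal S_\infty$), and when $\ell(R)>\ell(Q)$ one uses the vanishing moments of $m_Q$ (condition (ii)) and the smoothness of $\psi_R$. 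The exponents come out as $D$ governed by $K_m\wedge M_m$, $E$ by $L_m+n/2$ on the fine side, and $F$ by $N_m+n/2$ on the coarse side; by the lower bounds imposed in Definition \ref{def-anasynmole} on $K_m,L_m,M_m,N_m$, these exceed $D_{\dot a_{p,q}^{s,\upsilon}(W)}$, $E_{\dot a_{p,q}^{s,\upsilon}(W)}$, $F_{\dot a_{p,q}^{s,\upsilon}(W)}$ respectively.

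Given this bound, the matrix $U:=\{u_{Q,R}\}$ with $u_{Q,R}:=|Q|^{1/2}|R|^{-1/2}\langle\psi_R,m_Q\rangle$ is $(D,E,F)$-almost diagonal in the sense of Definition \ref{DEFadope}; rephrasing \eqref{eq-f-m} in sequence-space language, $\langle\vec f,m_Q\rangle_*=|Q|^{1/2}(U\vec t)_Q$ up to the normalization, so the absolute convergence of the sum in \eqref{eq-f-m} follows from the fact, contained in the proof of Theorem \ref{a(W)adopebound}, that $(D,E,F)$-almost diagonal operators map $\dot a_{p,q}^{s,\upsilon}(W)$ into itself with \emph{absolutely convergent} defining sums (one needs the quantitative off-diagonal decay, not merely boundedness; this is where one must be a little careful and perhaps cite the internal estimate rather than the clean statement). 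Independence of the value from the choice of $\varphi$ and $\psi$ would then be handled by a second reproducing-identity argument: if $(\varphi,\psi)$ and $(\widetilde\varphi,\widetilde\psi)$ are two admissible pairs, one inserts \eqref{cond3} for the second pair into the sum built from the first and uses Fubini (justified by the absolute convergence just established) to rearrange, together with the Calder\'on-type identity that $\sum_R\langle\psi_R,\widetilde\varphi_S\rangle\langle\widetilde\psi_R,\varphi_T\rangle$ telescopes appropriately.

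The main obstacle I anticipate is not the almost diagonal estimate itself---that is classical in spirit---but rather bookkeeping the \emph{asymmetry} between analysis and synthesis molecules and making sure the parameter inequalities line up. Specifically, $m_Q$ is only an \emph{analysis} molecule, so its vanishing-moment order $L_m$ is tied to $E_{\dot a_{p,q}^{s,\upsilon}(W)}-n/2$ and its smoothness order $N_m$ to $F_{\dot a_{p,q}^{s,\upsilon}(W)}-n/2$, whereas $\psi$ (being in $\mathcal S_\infty$) is as good as one wants on both counts; one must verify that the resulting $(D,E,F)$ indeed satisfies \eqref{DEFa(W)adope}, which forces one to track how $D_{\dot a_{p,q}^{s,\upsilon}(W)}$ incorporates the term $n\Delta\wedge(\omega+d^{\mathrm{lower}}_{p,\infty}(W)/p)$ and the extra $d^{\mathrm{upper}}_{p,\infty}(W)/p$. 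A secondary subtlety is that $\vec f\in\dot A_{p,q}^{s,\upsilon}(W)\subset(\mathcal S'_\infty)^m$, so $\langle\vec f,\varphi_R\rangle$ must be interpreted componentwise via the dual pairing $f(\overline{\varphi})$, and one should note at the outset that this is well-defined because $\varphi_R\in\mathcal S_\infty$. Beyond that the argument is a routine---if somewhat lengthy---transcription of the Frazier--Jawerth smooth-molecule machinery into the matrix-weighted, growth-function setting, for which all the needed estimates (Lemma \ref{growEST} on reducing operators, Theorem \ref{a(W)adopebound} on almost diagonal operators) are already available from \cite{bhyy4,byyz24}.
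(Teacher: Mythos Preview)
Your overall strategy is correct and is the standard Frazier--Jawerth approach: pair the molecule estimate $|\langle\psi_R,m_Q\rangle|\lesssim u^{DEF}_{Q,R}$ with the $\varphi$-transform bound $S_\varphi\vec f\in\dot a_{p,q}^{s,\upsilon}(W)$, then invoke almost-diagonal boundedness. Note, however, that the paper does not actually prove this lemma---it is quoted verbatim from \cite[Lemma 5.16]{byyz24}, so there is no in-paper proof to compare against. That said, the argument in \cite{byyz24} is certainly along the lines you describe, and in fact the almost-diagonal estimate you propose to derive by hand is available here as Lemma \ref{lem-anasynmole} (take $g_R:=\psi_R$, which is a synthesis molecule by Remark \ref{rmk-mole-phi}); so you could shorten your outline considerably by citing that lemma rather than redoing the ``molecule meets bump'' computation.

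Two small corrections on normalization. First, the $\varphi$-transform in this paper is $S_\varphi\vec f=\{\langle\vec f,\varphi_R\rangle\}_{R\in\mathcal D}$ \emph{without} the $|R|^{-1/2}$ factor you inserted; Lemma \ref{lem-phitransMWBTL} places this un-renormalized sequence directly in $\dot a_{p,q}^{s,\upsilon}(W)$. Second, since both $\psi_R$ and $m_Q$ are $L^2$-normalized (see \eqref{eq-phi_Q} and Definition \ref{KLMNmole}), the correct bound is $|\langle\psi_R,m_Q\rangle|\leq C\,u^{DEF}_{Q,R}$ with no prefactor $|Q|^{-1/2}|R|^{1/2}$; your extra factors would in fact spoil the almost-diagonality. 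These are bookkeeping slips, not conceptual errors, and once fixed your sketch goes through.
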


We now present the molecular characterization
of $\dot{A}_{p,q}^{s,\upsilon}(W)$  obtained
in \cite[Theorem 5.17]{byyz24}.

\begin{lemma}\label{lem-moledecomp}
Let $(A, a)\in\{(B, b), (F, f)\}$, $s\in\mathbb{R}$,
$p\in(0, \infty)$, $q\in(0,\infty]$,
and $W\in \mathcal{A}_{p,\infty}$. Suppose that $\delta_1,\delta_2$,
and $\omega$ satisfy \eqref{eq-delta1<0} and
$\upsilon\in\mathcal{G}(\delta_1, \delta_2; \omega)$.
Then the following statements hold.
\begin{itemize}
\item[{\rm (i)}]
If $\{m_Q\}_{Q\in\mathcal{D}}$ is a family of analysis molecules
for $\dot{A}^{s,\upsilon}_{p,q}(W)$,
then there exists a positive constant $C$ such that,
for any $\vec{f}\in\dot{A}^{s,\upsilon}_{p,q}(W)$,
$\|\{\langle\vec{f},m_Q\rangle_*\}_{Q\in\mathcal{D}}
\|_{\dot{a}^{s,\upsilon}_{p,q}(W)}
\leq C\|\vec{f}\|_{\dot{A}^{s,\upsilon}_{p,q}(W)}$,
where $\langle\cdot, \cdot\rangle_*$ is as in \eqref{eq-f-m}.
\item[{\rm (ii)}]
If $\{g_Q\}_{Q\in\mathcal{D}}$ is a family of synthesis molecules
for $\dot{A}^{s,\upsilon}_{p,q}(W)$,
then, for any $\vec{t}:=\{\vec{t}_Q\}_{Q\in\mathcal{D}}
\in\dot{a}^{s,\upsilon}_{p,q}(W)$,
$\vec{f}:=\sum_{Q\in\mathcal{D}}\vec{t}_Q
g_Q\in(\mathcal{S}'_{\infty})^m$
and there exists a positive constant $C$,
independent of $\vec{t}$, such that
$\|\vec{f}\|_{\dot{A}^{s,\upsilon}_{p,q}(W)}
\leq C\|\vec{t}\|_{\dot{a}^{s,\upsilon}_{p,q}(W)}$.
\end{itemize}
\end{lemma}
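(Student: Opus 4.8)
The plan is to deduce both parts of Lemma \ref{lem-moledecomp} from the boundedness of a suitable almost diagonal operator on $\dot{a}^{s,\upsilon}_{p,q}(W)$ (Theorem \ref{a(W)adopebound}), combined with the $\varphi$-transform characterization of $\dot{A}^{s,\upsilon}_{p,q}(W)$ established in \cite{byyz24}. The technical core is a single almost-orthogonality estimate: if $\phi\in\mathcal{S}_{\infty}$ and $h_Q$ is a $(K,L,M,N)$-molecule in the sense of Definition \ref{KLMNmole}, then, for all $Q,R\in\mathcal{D}$,
\begin{align*}
\left|\left\langle\phi_R,h_Q\right\rangle\right|\lesssim u^{DEF}_{Q,R}
\end{align*}
with $u^{DEF}_{Q,R}$ as in \eqref{DEFmatrix}, for any $D<M\wedge K$, $E<\frac{n}{2}+\lfloor L\rfloor+1$, and $F<\frac{n}{2}+N$. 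To prove this I would split into the two scale regimes. When $\ell(R)\le\ell(Q)$, I would subtract from $h_Q$ its Taylor polynomial at $x_R$ of degree $\lfloor\!\lfloor N\rfloor\!\rfloor$, annihilate it using the (infinitely many) vanishing moments of $\phi_R$, and estimate the remainder on a near region by the H\"older condition (iv) of Definition \ref{KLMNmole} (recalling $\lfloor\!\lfloor N\rfloor\!\rfloor+N^{**}=N$) and on a far region by the pointwise bound (i) together with the rapid decay of $\phi_R$; this yields the gain $[\ell(R)/\ell(Q)]^{n/2+N}$ times the spatial factor $[1+|x_Q-x_R|/\ell(Q)]^{-(M\wedge K)}$. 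When $\ell(Q)<\ell(R)$, I would instead subtract from $\phi_R$ its Taylor polynomial at $x_Q$ of degree $\lfloor L\rfloor$, annihilate it by the moment condition (ii) of $h_Q$, and control the remainder using the smoothness of $\phi_R$ on a near region and the decay condition (i) plus the Schwartz decay of $\phi$ on a far region, getting the gain $[\ell(Q)/\ell(R)]^{n/2+\lfloor L\rfloor+1}$ times the spatial factor $[1+|x_Q-x_R|/\ell(R)]^{-K}$.

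Granting this estimate, part (i) is almost immediate. By definition $\langle\vec{f},m_Q\rangle_*=\sum_{R\in\mathcal{D}}\langle\psi_R,m_Q\rangle\langle\vec{f},\varphi_R\rangle=(U\vec{t})_Q$, where $\vec{t}:=\{\langle\vec{f},\varphi_R\rangle\}_{R\in\mathcal{D}}$ and $U:=\{\langle\psi_R,m_Q\rangle\}_{Q,R\in\mathcal{D}}$, the absolute convergence of the inner sum being the content of the lemma containing \eqref{eq-f-m}; note $\psi\in\mathcal{S}_{\infty}$ since $\widehat{\psi}$ vanishes near the origin by \eqref{cond1}. Because $m:=\{m_Q\}_{Q\in\mathcal{D}}$ is a family of analysis molecules, the defining constraints $K_m>D_{\dot{a}^{s,\upsilon}_{p,q}(W)}$, $M_m>D_{\dot{a}^{s,\upsilon}_{p,q}(W)}$, $L_m\ge E_{\dot{a}^{s,\upsilon}_{p,q}(W)}-\frac{n}{2}$, and $N_m>F_{\dot{a}^{s,\upsilon}_{p,q}(W)}-\frac{n}{2}$ are exactly what the orthogonality estimate needs (using $\lfloor L_m\rfloor+1>L_m$) to make $U$ a $(D,E,F)$-almost diagonal operator with $D>D_{\dot{a}^{s,\upsilon}_{p,q}(W)}$, $E>E_{\dot{a}^{s,\upsilon}_{p,q}(W)}$, and $F>F_{\dot{a}^{s,\upsilon}_{p,q}(W)}$, i.e., satisfying \eqref{DEFa(W)adope}. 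Theorem \ref{a(W)adopebound} then gives
\begin{align*}
\left\|\left\{\left\langle\vec{f},m_Q\right\rangle_*\right\}_{Q\in\mathcal{D}}\right\|_{\dot{a}^{s,\upsilon}_{p,q}(W)}
=\left\|U\vec{t}\right\|_{\dot{a}^{s,\upsilon}_{p,q}(W)}
\lesssim\left\|\vec{t}\right\|_{\dot{a}^{s,\upsilon}_{p,q}(W)}
\lesssim\left\|\vec{f}\right\|_{\dot{A}^{s,\upsilon}_{p,q}(W)},
\end{align*}
the last step being the analysis part of the $\varphi$-transform characterization from \cite{byyz24}.

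For part (ii), I would first prove the norm bound under the temporary hypothesis that $\vec{f}:=\sum_{Q\in\mathcal{D}}\vec{t}_Q g_Q$ converges in $(\mathcal{S}'_{\infty})^m$; testing against $\varphi_P$ and using \eqref{cond3} gives $\langle\vec{f},\varphi_P\rangle=\sum_{Q\in\mathcal{D}}\langle g_Q,\varphi_P\rangle\vec{t}_Q=(V\vec{t})_P$ with $V:=\{\langle g_Q,\varphi_P\rangle\}_{P,Q\in\mathcal{D}}$. Now the orthogonality estimate applies with the test function $\varphi_P\in\mathcal{S}_{\infty}$ and the molecule $g_Q$, and the constraints defining synthesis molecules in \eqref{eq-syn-mole} — which interchange the roles of the $E$- and $F$-thresholds relative to analysis molecules — are precisely those making $V$ a $(D,E,F)$-almost diagonal operator satisfying \eqref{DEFa(W)adope}; hence by Theorem \ref{a(W)adopebound} and the reconstruction part of the $\varphi$-transform characterization,
\begin{align*}
\left\|\vec{f}\right\|_{\dot{A}^{s,\upsilon}_{p,q}(W)}
\lesssim\left\|\left\{\left\langle\vec{f},\varphi_P\right\rangle\right\}_{P\in\mathcal{D}}\right\|_{\dot{a}^{s,\upsilon}_{p,q}(W)}
=\left\|V\vec{t}\right\|_{\dot{a}^{s,\upsilon}_{p,q}(W)}
\lesssim\left\|\vec{t}\right\|_{\dot{a}^{s,\upsilon}_{p,q}(W)}.
\end{align*}
To discharge the temporary hypothesis, I would show that for every $\phi\in\mathcal{S}_{\infty}$ the series $\sum_{Q\in\mathcal{D}}|\vec{t}_Q|\,|\langle g_Q,\phi\rangle|$ converges: the orthogonality estimate (now with $\phi$ supplying extra moments, smoothness, and decay) yields $|\langle g_Q,\phi\rangle|\lesssim\min\{\ell(Q),\ell(Q)^{-1}\}^{c}(1+|x_Q|)^{-K_0}$ for some fixed $c>0$ and arbitrarily large $K_0$ (implicit constant depending on $\phi$ and $K_0$), while membership $\vec{t}\in\dot{a}^{s,\upsilon}_{p,q}(W)$ forces a merely polynomial bound on $|\vec{t}_Q|$ in $\ell(Q)$, $\ell(Q)^{-1}$, and $|x_Q|$ — obtained by restricting the $\dot{a}^{s,\upsilon}_{p,q}(W)$-norm to the single term indexed by $Q$, inverting the reducing operator $A_Q$ by comparison to a fixed cube via Lemma \ref{growEST}, and invoking the growth bound of Definition \ref{def-grow-func} for $\upsilon(Q)$. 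Summing the rapidly decreasing factor against the polynomially growing one yields absolute convergence, so the partial sums of $\sum_{Q\in\mathcal{D}}\vec{t}_Q g_Q$ converge in $(\mathcal{S}'_{\infty})^m$ independently of the enumeration, legitimizing the above computation of $\langle\vec{f},\varphi_P\rangle$ and closing the argument.

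The step I expect to be the main obstacle is the almost-orthogonality estimate together with the attendant exponent bookkeeping: one must check that the strict inequalities $D>D_{\dot{a}^{s,\upsilon}_{p,q}(W)}$, $E>E_{\dot{a}^{s,\upsilon}_{p,q}(W)}$, $F>F_{\dot{a}^{s,\upsilon}_{p,q}(W)}$ in \eqref{DEFa(W)adope} genuinely follow from the partly strict, partly non-strict thresholds on $(K_m,L_m,M_m,N_m)$ and $(K_g,L_g,M_g,N_g)$ in Definition \ref{def-anasynmole}, and that the non-integer cases of $N$ (where condition (iv) of Definition \ref{KLMNmole}, rather than (iii), is the operative smoothness hypothesis) and of $L$ still deliver the claimed gains up to the endpoints; the near/far decomposition has to be arranged so that neither the decay nor the smoothness is wasted. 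By contrast, the matrix weight $W$ causes no additional difficulty here: all of its influence is packaged into Theorem \ref{a(W)adopebound} — through $D_{\dot{a}^{s,\upsilon}_{p,q}(W)}$, $E_{\dot{a}^{s,\upsilon}_{p,q}(W)}$, $F_{\dot{a}^{s,\upsilon}_{p,q}(W)}$ and the reducing-operator estimate of Lemma \ref{growEST} — while the orthogonality estimate itself is entirely scalar.
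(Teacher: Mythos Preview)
Your proposal is correct and follows essentially the same route as the paper's framework: the paper does not prove Lemma \ref{lem-moledecomp} here but cites it from \cite[Theorem 5.17]{byyz24}, and the surrounding tools it quotes --- Lemma \ref{lem-anasynmole} (the pairing of analysis and synthesis molecules is $\dot{a}^{s,\upsilon}_{p,q}(W)$-almost diagonal), Remark \ref{rmk-mole-phi} ($\{\varphi_Q\}$ and $\{\psi_Q\}$ are constant multiples of both analysis and synthesis molecules), Theorem \ref{a(W)adopebound}, and Lemma \ref{lem-phitransMWBTL} --- assemble into exactly the argument you outline. The only cosmetic difference is that you prove the almost-orthogonality estimate in the special case Schwartz-function-versus-molecule, whereas the paper's Lemma \ref{lem-anasynmole} records the more general molecule-versus-molecule pairing; either suffices here, and your exponent bookkeeping for matching the $(K,L,M,N)$ thresholds in Definition \ref{def-anasynmole} to the strict inequalities \eqref{DEFa(W)adope} is correct.
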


The following lemma guarantees that
pseudo-differential operators
map smooth functions to synthesis molecules
for $\dot{A}^{s,\upsilon}_{p,q}(W)$,
which is crucial for proving Theorem \ref{thm-T-pd}.

\begin{lemma}\label{lem-psi-mole}
Let $\eta, M, N\in\mathbb{Z}_+$, $\psi\in\mathcal{S}$
satisfy \eqref{cond1}, $\theta\in\dot{S}_{1,1}^\eta$,
and $T_{\theta}$ be the pseudo-differential operator with
symbol $\theta$. Then, for any $Q\in\mathcal{D}$,
$|Q|^{\frac{\eta}{n}}T_{\theta}(\psi_Q)$
is a constant multiple of an $(M,-1,M,N)$-molecule,
where the constant is independent of $Q$.	
\end{lemma}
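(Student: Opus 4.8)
The plan is to verify, directly from Definition \ref{KLMNmole}, the four properties of an $(M,-1,M,N)$-molecule for the function $|Q|^{\eta/n}T_\theta(\psi_Q)$ supported near $Q$, with all constants uniform in $Q\in\mathcal{D}$. By dilation and translation it suffices to treat one reference cube (say $Q=[0,1)^n$, i.e.\ $\psi_Q=\psi$ up to the normalization in \eqref{eq-phi_Q}) and then track how the symbol estimates scale under the affine change of variables $x\mapsto 2^{j_Q}x-k$; the symbol class $\dot S_{1,1}^\eta$ is designed so that $T_\theta$ interacts correctly with such dilations, and the factor $|Q|^{\eta/n}=2^{-\eta j_Q}$ is precisely the normalization needed to cancel the $\eta$-order growth in $\xi$. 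The vanishing-moment condition (ii) with $L=-1$ is vacuous, so nothing is required there.

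First I would establish the pointwise and derivative decay. Since $\psi\in\mathcal{S}$ has $\widehat\psi$ supported in an annulus away from the origin, $\psi\in\mathcal{S}_\infty$, and $T_\theta\psi$ is again a Schwartz function by Lemma \ref{lem-T-pd-pre}. The key quantitative point is that one gets Schwartz seminorm bounds on $T_\theta\psi_Q$ that scale correctly: writing $T_\theta(\psi_Q)(x)=\int\theta(x,\xi)\widehat{\psi_Q}(\xi)e^{ix\cdot\xi}\,d\xi$, one uses the frequency localization of $\widehat{\psi_Q}$ (an annulus of size $\sim 2^{j_Q}$) together with the symbol bounds $|\partial_x^\alpha\partial_\xi^\beta\theta(x,\xi)|\lesssim|\xi|^{\eta+|\alpha|-|\beta|}$ and repeated integration by parts in $\xi$ (using $e^{ix\cdot\xi}=|x-x_Q|^{-2N}(\text{suitable differential operator})e^{ix\cdot\xi}$ after re-centering) to produce, for each multi-index $\gamma$ and each large $M$, a bound of the form $|\partial^\gamma T_\theta(\psi_Q)(x)|\lesssim 2^{\eta j_Q}[\ell(Q)]^{-|\gamma|}(u_M)_Q(x)$. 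Multiplying by $|Q|^{\eta/n}=2^{-\eta j_Q}$ removes the exponential factor and yields exactly conditions (i) (the $\gamma=0$ case, noting $(u_M)_Q\le(u_M)_Q$ and that we may take $K=M$) and (iii) for all $|\gamma|<N$. Condition (iv), the Hölder-type estimate at $|\gamma|=\lfloor\!\lfloor N\rfloor\!\rfloor$, follows from the same derivative bounds at order $|\gamma|+1$ via the mean value theorem when $|x-y|\le\ell(Q)$, and directly from condition (iii) at order $|\gamma|$ (triangle inequality, crudely bounding $[|x-y|/\ell(Q)]^{N^{**}}\ge 1$) when $|x-y|>\ell(Q)$; here one has to be slightly careful to land on the $\sup_{|z|\le|x-y|}(u_M)_Q(x+z)$ form rather than $(u_M)_Q(x)$, but since $N\in\mathbb{Z}_+$ we have $N^{**}=1$ and $\lfloor\!\lfloor N\rfloor\!\rfloor=N-1$, so this is routine.

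The main obstacle is bookkeeping the dilation scaling cleanly: making precise that the symbol estimates for $\theta$ are uniform in $x$ and that, after the substitution reducing $\psi_Q$ to $\psi$, the resulting "effective symbol'' still lies in $\dot S_{1,1}^\eta$ with seminorms controlled by those of $\theta$, so that a single application of the $Q=[0,1)^n$ estimate transfers to all $Q$ with a constant depending only on $n,\eta,M,N$ and finitely many seminorms of $\theta$ and $\psi$. Concretely, if $a_\xi=2^{-j_Q}\xi$ and one writes the kernel of $T_\theta$ acting on $\psi_Q$, a change of variables shows $T_\theta(\psi_Q)(x)=2^{\eta j_Q}\cdot 2^{j_Q n/2}\,[T_{\theta_Q}\psi](2^{j_Q}x-k)$ for a symbol $\theta_Q(x,\xi):=2^{-\eta j_Q}\theta(2^{-j_Q}(x+k),2^{j_Q}\xi)$, and one checks $\theta_Q\in\dot S_{1,1}^\eta$ with seminorms bounded uniformly in $j_Q,k$ directly from the defining inequalities (the powers of $2^{j_Q}$ cancel exactly because of the $1,1$-type homogeneity and the $\eta$-normalization). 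Once this uniformity is in hand, the estimates on $T_\theta\psi$ — a fixed Schwartz function with all the required decay and moment-free structure — give the molecule bounds for the reference cube, and the proof concludes by collecting constants. I would also remark that the choice $K=M$ and $L=-1$ means no cancellation is needed, which is consistent with the fact that $T_\theta$ need not preserve vanishing moments.
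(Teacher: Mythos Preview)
Your proposal is correct and follows essentially the same approach as the paper. The paper's proof is very terse—it simply invokes the argument in \cite[pp.~265--266]{gt99} to obtain the derivative estimate
\[
\left|\partial^\gamma T_{\theta}(\psi_Q)(x)\right|
\lesssim |Q|^{-\frac{\eta}{n}-\frac{1}{2}-\frac{|\gamma|}{n}}
\left(1+[\ell(Q)]^{-1}|x-x_Q|\right)^{-M}
\]
for all $|\gamma|\le N$, and then reads off the molecule conditions from Definition~\ref{KLMNmole}; your proposal spells out exactly how that estimate is proved (scaling reduction $\theta\mapsto\theta_Q$, checking $\theta_Q\in\dot S_{1,1}^\eta$ uniformly, integration by parts in $\xi$) and how condition~(iv) follows from it via the mean value theorem, but the content is the same.
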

\begin{proof}
Notice that there is no need for $(M,-1,M,N)$-molecules to satisfy
Definition \ref{KLMNmole}(ii). Thus,
to prove the present lemma, it suffices to verify the decay properties of
$|Q|^{\frac{\eta}{n}}T_{\theta}(\psi_Q)$ for any $Q\in\mathcal{D}$.
Let $\gamma\in\mathbb{Z}_{+}^n$ with $|\gamma|\leq N$.
Applying the argument used in \cite[pp.\,265--266]{gt99},
we obtain, for any $Q\in\mathcal{D}$ and $x\in\mathbb{R}^n$,
\begin{align*}
\left|\left[\partial^\gamma T_{\theta}\left(\psi_Q\right)\right](x)\right|
\lesssim|Q|^{-\frac{\eta}{n}-\frac{1}{2}-\frac{|\gamma|}{n}}
\left(1+[\ell(Q)]^{-1}\left|x-x_Q\right|\right)^{-M},
\end{align*}
which, together with Definition \ref{KLMNmole},
further implies that, for any $Q\in\mathcal{D}$,
$|Q|^{\frac{\eta}{n}}T(\psi_Q)$ is a constant multiple
of an $(M,-1,M,N)$-molecule, where the constant is independent of $Q$.	
This finishes the proof of Lemma \ref{lem-psi-mole}.
\end{proof}

To prove Theorem \ref{thm-T-pd}, we also need
the following Calder\'on reproducing formulae,
which is exactly \cite[Lemma 2.1]{yy10}.

\begin{lemma}\label{lem-Cdreproform}
Let $\varphi,\psi\in\mathcal{S}$ satisfy \eqref{cond3}
such that both $\operatorname{supp}\widehat{\varphi}$
and $\operatorname{supp}\widehat{\psi}$
are compact and bounded away from the origin.
Then, for any $f\in\mathcal{S}_\infty$,
\begin{align}\label{eq-Cdreproform}
f=\sum_{j\in\mathbb{Z}}2^{-jn}\sum_{k\in\mathbb{Z}^n}
\left(\widetilde{\varphi}_j*f\right)\left(2^{-j}k\right)
\psi_j\left(\cdot-2^{-j}k\right)
=\sum_{Q\in\mathcal{D}}\left\langle f,\varphi_Q\right\rangle\psi_Q
\end{align}
in $\mathcal{S}_\infty$,
where $\widetilde{\varphi}(x):=\overline{\varphi(-x)}$
for any $x\in\mathbb{R}^n$. Moreover,
for any $f\in\mathcal{S}_\infty'$,
\eqref{eq-Cdreproform} also holds in $\mathcal{S}_\infty'$.
\end{lemma}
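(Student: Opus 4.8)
The plan is to establish \eqref{eq-Cdreproform} first for $f\in\mathcal{S}_\infty$ by a periodization (discrete sampling) argument on the Fourier side, and then transfer it to $\mathcal{S}_\infty'$ by duality. Fix $f\in\mathcal{S}_\infty$. Replacing $j$ by $-j$ in \eqref{cond3} gives $\sum_{j\in\mathbb{Z}}\overline{\widehat\varphi(2^{-j}\xi)}\widehat\psi(2^{-j}\xi)=1$ for $\xi\neq\mathbf{0}$, hence
\[
\widehat f=\sum_{j\in\mathbb{Z}}\widehat\psi(2^{-j}\cdot)\,\widehat{\widetilde\varphi_j*f},\qquad
\widehat{\widetilde\varphi_j*f}(\xi)=\overline{\widehat\varphi(2^{-j}\xi)}\,\widehat f(\xi).
\]
For each $j$ the function $\widehat{\widetilde\varphi_j*f}$ is smooth, compactly supported in the annulus $\{2^{j-1}\le|\xi|\le2^{j+1}\}$, and this annulus lies strictly inside the cube $(-2^j\pi,2^j\pi)^n$ since $2<\pi$. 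Expanding $\widehat{\widetilde\varphi_j*f}$ in a Fourier series over that cube produces frequencies in $2^{-j}\mathbb{Z}^n$ and, by Fourier inversion together with $\widetilde\varphi_j(x)=\overline{\varphi_j(-x)}$, coefficients exactly $c_{j,k}=2^{-jn}(\widetilde\varphi_j*f)(2^{-j}k)$; the resulting periodic series coincides with $\widehat{\widetilde\varphi_j*f}$ on a neighbourhood of $\operatorname{supp}\widehat\psi(2^{-j}\cdot)$. Since $\widetilde\varphi_j*f\in\mathcal{S}$ the coefficients $\{c_{j,k}\}_{k}$ decay rapidly in $k$, so $\sum_k c_{j,k}\widehat\psi(2^{-j}\cdot)e^{-i2^{-j}k\cdot}$ converges in $\mathcal{S}$ and may be inverted term by term, yielding $\psi_j*\widetilde\varphi_j*f=\sum_k c_{j,k}\,\psi_j(\cdot-2^{-j}k)$ in $\mathcal{S}$.

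Summing this over $j$ and using the Fourier-side identity above gives $f=\sum_{j}2^{-jn}\sum_k(\widetilde\varphi_j*f)(2^{-j}k)\psi_j(\cdot-2^{-j}k)$, and it remains to justify convergence in $\mathcal{S}_\infty$. On the frequency side the $j\to+\infty$ tail is supported where $|\xi|\gtrsim 2^{N}$, controlled by the rapid decay of the Schwartz function $\widehat f$, while the $j\to-\infty$ tail is supported where $|\xi|\lesssim 2^{-N}$, controlled by the vanishing of $\widehat f$ (and all its derivatives) at the origin, which holds because $f\in\mathcal{S}_\infty$; the standard estimates give $\sum_j\psi_j*\widetilde\varphi_j*f\to f$ in $\mathcal{S}$. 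As each $\psi_j(\cdot-2^{-j}k)$ lies in $\mathcal{S}_\infty$ (its Fourier transform still vanishes near $\mathbf{0}$) and $\mathcal{S}_\infty$ is a closed subspace of $\mathcal{S}$ with the subspace topology, both the inner sum over $k$ (for fixed $j$) and the outer sum over $j$ converge in $\mathcal{S}_\infty$. Finally, writing $Q=Q_{j,k}$, $x_Q=2^{-j}k$, $|Q|=2^{-jn}$, and using \eqref{eq-phi_Q} together with $(\widetilde\varphi_j*f)(x_Q)=\langle f,\varphi_j(\cdot-x_Q)\rangle$ shows $2^{-jn}(\widetilde\varphi_j*f)(2^{-j}k)\psi_j(\cdot-2^{-j}k)=\langle f,\varphi_Q\rangle\psi_Q$, which is the second equality in \eqref{eq-Cdreproform}.

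For the distributional statement, note that conjugating \eqref{cond3} shows the pair $(\psi,\varphi)$ also satisfies the hypotheses of the lemma, so the case already proved applies to it: $g=\sum_{Q}\langle g,\psi_Q\rangle\varphi_Q$ in $\mathcal{S}_\infty$ for every $g\in\mathcal{S}_\infty$. Given $f\in\mathcal{S}_\infty'$ and a test function $\phi\in\mathcal{S}_\infty$, apply this expansion to $\overline\phi$ and then pair with $f$; since $f$ is continuous on $\mathcal{S}_\infty$ it passes through the $\mathcal{S}_\infty$-convergent sum, which simultaneously shows that $\sum_{Q}\langle f,\varphi_Q\rangle\langle\psi_Q,\phi\rangle$ converges and that its value equals $\langle f,\phi\rangle$ (after the elementary bookkeeping $\overline{\langle\phi,\psi_Q\rangle}=\langle\psi_Q,\phi\rangle$). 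Hence $\sum_{Q}\langle f,\varphi_Q\rangle\psi_Q$ converges in $\mathcal{S}_\infty'$ to $f$, which is \eqref{eq-Cdreproform} in $\mathcal{S}_\infty'$.

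I expect the main obstacle to be the discretization in the first step: producing the series with the \emph{exact} constant $2^{-jn}$ from periodization, and verifying that multiplying the globally periodic Fourier series by $\widehat\psi(2^{-j}\cdot)$ recovers precisely $\psi_j*\widetilde\varphi_j*f$ rather than a periodized approximation. This hinges on the geometric fact that the annular supports of $\widehat\varphi(2^{-j}\cdot)$ and $\widehat\psi(2^{-j}\cdot)$ sit strictly inside the sampling cube, i.e.\ on $\operatorname{supp}\widehat\varphi,\operatorname{supp}\widehat\psi\subset\{1/2\le|\xi|\le2\}$ from \eqref{cond1} and the inequality $2<\pi$. Once this is in place, the $\mathcal{S}_\infty$-convergence bookkeeping and the duality argument are routine.
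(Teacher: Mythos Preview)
The paper does not give its own proof of this lemma; it simply records it as \cite[Lemma 2.1]{yy10}. Your argument is the standard Frazier--Jawerth one (periodize $\widehat{\widetilde\varphi_j*f}$ on a cube, read off the sampling coefficients, multiply back by $\widehat\psi(2^{-j}\cdot)$, then pass to $\mathcal{S}_\infty'$ by duality with the roles of $\varphi$ and $\psi$ exchanged), and the steps you describe are correct.

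One caveat worth flagging: as written, the lemma assumes only that $\operatorname{supp}\widehat\varphi$ and $\operatorname{supp}\widehat\psi$ are compact and bounded away from the origin, whereas your discretization step --- as you yourself note at the end --- needs the nonzero $2\pi$-translates of $\operatorname{supp}\widehat\varphi$ to avoid $\operatorname{supp}\widehat\psi$. That follows from \eqref{cond1} via $4<2\pi$, but it is not implied by the hypotheses actually stated in the lemma, and it would fail for supports reaching beyond $|\xi|=\pi$. Since every invocation of this lemma in the paper is made with $\varphi,\psi$ obeying \eqref{cond1}, nothing downstream is affected; your proof matches the intended use rather than the literal phrasing. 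If one insisted on the stated generality one would have to refine the sampling lattice, which would change the index set $\mathcal{D}$ in \eqref{eq-Cdreproform}, so the slack is in the paper's formulation rather than in your argument.
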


Assume that $\varphi, \psi\in\mathcal{S}$
satisfy \eqref{cond1}.
The \emph{$\varphi$-transform} $S_{\varphi}$ is
defined by setting, for any $\vec{f}:=(f_1,\dots,f_m)^T
\in(\mathcal{S}'_{\infty})^m$, $S_{\varphi}\vec{f}
:=\{\langle \vec{f},\varphi_Q\rangle\}_{Q\in\mathcal{D}}
:=\{(\langle f_1, \varphi_Q\rangle,\dots,\langle f_m,
\varphi_Q\rangle)^T\}_{Q\in\mathcal{D}}$. Moreover,
the \emph{inverse $\varphi$-transform} $T_{\psi}$
is defined by setting, for any sequence $\vec{t}
:=\{\vec{t}_Q\}_{Q\in\mathcal{D}}$ in ${\mathbb{C}}^m$,
$T_{\psi}\vec{t}:=\sum_{Q\in\mathcal{D}}\vec{t}_Q\psi_{Q}$ if
this summation converges in $(\mathcal{S}'_{\infty})^m$.
We  recall the following $\varphi$-transform
characterization of $\dot{A}_{p,q}^{s, \upsilon}(W)$
established in \cite[Theorem 2.5(i)]{byyz24}.

\begin{lemma}\label{lem-phitransMWBTL}
Let $(A, a)\in\{(B, b), (F, f)\}$, $s\in\mathbb{R}$,
$p\in(0, \infty)$, $q\in(0,\infty]$,
and $W\in\mathcal{A}_{p,\infty}$. Suppose that
$\delta_1,\delta_2$, and $\omega$ satisfy \eqref{eq-delta1<0},
$\upsilon\in\mathcal{G}(\delta_1, \delta_2; \omega)$, and
$\varphi, \psi\in\mathcal{S}$ satisfy \eqref{cond1}.
Then the maps $S_{\varphi}:\,\dot{A}_{p,q}^{s,\upsilon}(W)
\to\dot{a}_{p,q}^{s,\upsilon}(W)$
and $T_{\psi}:\,\dot{a}_{p,q}^{s,\upsilon}(W)\to
\dot{A}_{p,q}^{s,\upsilon}(W)$ are bounded.
Moreover, if $\varphi, \psi$ further satisfy \eqref{cond3},
then $T_\psi \circ S_{\varphi}$ is the identity
on $\dot{A}_{p,q}^{s,\upsilon}(W)$.
\end{lemma}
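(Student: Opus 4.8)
The plan is to establish the two boundedness assertions by the matrix-weighted, growth-function adaptation of the Frazier--Jawerth $\varphi$-transform machinery, and to read off the identity $T_\psi\circ S_\varphi=\mathrm{id}$ from the discrete Calder\'on reproducing formula (Lemma \ref{lem-Cdreproform}). (Alternatively, since the molecular characterization of Lemma \ref{lem-moledecomp} is already available, one can shortcut both boundedness statements using Remark \ref{rmk-mole-phi}, as explained at the end of the second paragraph; I describe the direct route first because it is where the genuine work lies.)

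For the boundedness of $S_\varphi$, I would first record that, for $\vec{f}\in\dot{A}_{p,q}^{s,\upsilon}(W)$ and $Q\in\mathcal{D}_j$, one has $\langle\vec{f},\varphi_Q\rangle=|Q|^{1/2}(\widetilde{\varphi}_j*\vec{f})(x_Q)$ with $\widetilde{\varphi}(x):=\overline{\varphi(-x)}$, so that for $x$ in the dyadic cube $Q\in\mathcal{D}_j$ the $Q$-term of the $j$th layer of $S_\varphi\vec{f}$ equals $|W^{1/p}(x)(\widetilde{\varphi}_j*\vec{f})(x_Q)|$, which is dominated by the Peetre-type maximal function $\sup_{y\in\mathbb{R}^n}|W^{1/p}(x)(\widetilde{\varphi}_j*\vec{f})(y)|(1+2^j|x-y|)^{-a}$ for every $a\in(0,\infty)$. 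Two facts then close the estimate: (a) since $\widetilde{\varphi}$ also satisfies \eqref{cond1}, the $\varphi$-independence of $\dot{A}_{p,q}^{s,\upsilon}(W)$ noted after Definition \ref{MWBTL} (see also \cite[Theorem 2.5(ii)]{byyz24}) shows that the quasi-norm of $\vec{f}$ computed with $\widetilde{\varphi}$ is equivalent to $\|\vec{f}\|_{\dot{A}_{p,q}^{s,\upsilon}(W)}$; and (b) a matrix-weighted Peetre maximal inequality in the $L\dot{A}_{p,q}^{\upsilon}$-scale, which for $a$ large enough controls the above maximal function (multiplied by $2^{js}$ and measured in $\|\cdot\|_{L\dot{A}_{p,q}^{\upsilon}}$) by $\|\{2^{js}|W^{1/p}(\widetilde{\varphi}_j*\vec{f})|\}_{j\in\mathbb{Z}}\|_{L\dot{A}_{p,q}^{\upsilon}}=\|\vec{f}\|_{\dot{A}_{p,q}^{s,\upsilon}(W)}$. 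To prove (b) I would replace $W^{1/p}(x)$ on each $Q\in\mathcal{D}_j$ by a reducing operator $A_Q$ of order $p$ for $W$, absorb the resulting factors $\|A_QA_R^{-1}\|$ via Lemma \ref{growEST}, and then invoke the vector-valued Fefferman--Stein maximal inequality on $L\dot{A}_{p,q}^{\upsilon}$ (with the usual power trick when $p\wedge q\le1$), where $a$ is chosen above the exponents $n/p$, $d_{p,\infty}^{\mathrm{lower}}(W)/p$, $d_{p,\infty}^{\mathrm{upper}}(W)/p$, and the growth parameters $\delta_1,\delta_2,\omega$.

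For the boundedness of $T_\psi$, given $\vec{t}\in\dot{a}_{p,q}^{s,\upsilon}(W)$ I would set $\vec{f}:=\sum_{Q\in\mathcal{D}}\vec{t}_Q\psi_Q$ and write $\varphi_j*\vec{f}=\sum_{R\in\mathcal{D}}\vec{t}_R(\varphi_j*\psi_R)$; the spectral supports force $\ell(R)\sim2^{-j}$ in this sum, and $|(\varphi_j*\psi_R)(x)|\lesssim_M 2^{jn/2}(1+2^j|x-x_R|)^{-M}$ for every $M\in(0,\infty)$. Estimating $\|W^{1/p}(\varphi_j*\vec{f})\|_{L^p(Q)}$ on each $Q\in\mathcal{D}_j$ through the same reducing operators $A_Q$ and through Lemma \ref{growEST}, and summing over $R$ and then over the dyadic layers in the $L\dot{A}_{p,q}^{\upsilon}$-quasi-norm, one obtains $\|T_\psi\vec{t}\|_{\dot{A}_{p,q}^{s,\upsilon}(W)}\lesssim\|\vec{t}\|_{\dot{a}_{p,q}^{s,\upsilon}(W)}$, the convergence of the defining series in $(\mathcal{S}'_{\infty})^m$ being read off from the same pointwise bounds. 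Alternatively, by Remark \ref{rmk-mole-phi} the families $\{C\psi_Q\}_{Q\in\mathcal{D}}$ and $\{C\varphi_Q\}_{Q\in\mathcal{D}}$ are, respectively, synthesis and analysis molecules for $\dot{A}_{p,q}^{s,\upsilon}(W)$, so the boundedness of $T_\psi$ is immediate from Lemma \ref{lem-moledecomp}(ii) and that of $S_\varphi$ from Lemma \ref{lem-moledecomp}(i) once one checks, via the two Calder\'on formulas attached to the pair $(\varphi,\psi)$, that $\langle\vec{f},\varphi_Q\rangle=\langle\vec{f},\varphi_Q\rangle_*$ for all $\vec{f}\in(\mathcal{S}'_{\infty})^m$ and $Q\in\mathcal{D}$.

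Finally, assume that $\varphi,\psi$ also satisfy \eqref{cond3}. Applying Lemma \ref{lem-Cdreproform} componentwise to $\vec{f}\in\dot{A}_{p,q}^{s,\upsilon}(W)\subset(\mathcal{S}'_{\infty})^m$ gives $\vec{f}=\sum_{Q\in\mathcal{D}}\langle\vec{f},\varphi_Q\rangle\psi_Q$ in $(\mathcal{S}'_{\infty})^m$, that is, $T_\psi(S_\varphi\vec{f})=\vec{f}$, where the right-hand side makes sense because $S_\varphi\vec{f}\in\dot{a}_{p,q}^{s,\upsilon}(W)$ and $T_\psi$ is already known to act boundedly on it. The main obstacle is step (b) above: because $W^{1/p}$ does not commute with averaging, the scalar Peetre/Fefferman--Stein argument must be routed through reducing operators and the sharp two-sided control of $\|A_QA_R^{-1}\|$ from Lemma \ref{growEST}, and the admissible truncation exponent $a$ (equivalently, the ranges of $D,E,F$ that make almost diagonal operators bounded on $\dot{a}_{p,q}^{s,\upsilon}(W)$ in Theorem \ref{a(W)adopebound}) has to be calibrated precisely against $d_{p,\infty}^{\mathrm{lower}}(W)$, $d_{p,\infty}^{\mathrm{upper}}(W)$, and $(\delta_1,\delta_2,\omega)$ to cover the full optimal-scale range \eqref{eq-delta1<0}.
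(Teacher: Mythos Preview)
The paper does not prove this lemma; it is quoted from \cite[Theorem 2.5(i)]{byyz24}, so there is no in-paper argument to compare against. Your direct route via a matrix-weighted Peetre maximal inequality and reducing operators is the standard Frazier--Jawerth machinery and is essentially what one expects \cite{byyz24} to carry out; the sketch is sound, with the calibration of the truncation exponent against $d_{p,\infty}^{\mathrm{lower}}(W)$, $d_{p,\infty}^{\mathrm{upper}}(W)$, and the growth parameters $(\delta_1,\delta_2,\omega)$ being the genuine technical content, and the identity $T_\psi\circ S_\varphi=\mathrm{id}$ following exactly as you say from Lemma \ref{lem-Cdreproform}.

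Your proposed alternative via Lemma \ref{lem-moledecomp} and Remark \ref{rmk-mole-phi}, however, is circular. In \cite{byyz24} the molecular characterization (Theorem 5.17 there) is established well after, and logically depends on, the $\varphi$-transform characterization (Theorem 2.5 there): the standard proof of Lemma \ref{lem-moledecomp}(i) bounds $\|\{\langle\vec{f},m_Q\rangle_*\}_{Q\in\mathcal{D}}\|_{\dot{a}^{s,\upsilon}_{p,q}(W)}$ first by $\|\{\langle\vec{f},\varphi_R\rangle\}_{R\in\mathcal{D}}\|_{\dot{a}^{s,\upsilon}_{p,q}(W)}$ via almost diagonal operators and then invokes the boundedness of $S_\varphi$ --- which is exactly the statement you are trying to prove. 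Similarly, the convergence assertion in Lemma \ref{lem-moledecomp}(ii) ultimately rests on knowing that $T_\psi$ is well defined and bounded. So the alternative should be dropped and only the direct argument retained.
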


Based on the above five lemmas,
we are able to prove Theorem \ref{thm-T-pd} as follows.

\begin{proof}[Proof of Theorem \ref{thm-T-pd}]
We first prove (i). Applying Lemma \ref{lem-phitransMWBTL} and
the definitions of $\|\cdot\|_{\dot{a}_{p, q}^{s, \upsilon}(W)}$
and $S_{\varphi}$,
we conclude that, for any $\vec{f}\in\dot{A}_{p, q}^{s+\eta, \upsilon}(W)$,
\begin{align}\label{eq-f-1}
\left\|\left\{\left\langle \vec{f},
\varphi_Q\right\rangle|Q|^{-\frac{\eta}{n}}
\right\}_{Q\in\mathcal{D}}
\right\|_{\dot{a}_{p, q}^{s, \upsilon}(W)}
&=\left\|\left\{\left\langle \vec{f},
\varphi_Q\right\rangle\right\}_{Q\in\mathcal{D}}
\right\|_{\dot{a}_{p, q}^{s+\eta, \upsilon}(W)}\\
&=\left\|S_{\varphi}\vec{f}
\right\|_{\dot{a}_{p, q}^{s+\eta, \upsilon}(W)}
\lesssim\left\|\vec{f}\right\|_{\dot{A}_{p, q}^{s+\eta, \upsilon}(W)}.\nonumber
\end{align}
Since $F_{\dot{a}_{p,q}^{s,\upsilon}(W)}<\frac{n}{2}$,
there is no need for synthesis molecules for $\dot{A}_{p,q}^{s,\upsilon}(W)$
to satisfy Definition \ref{KLMNmole}(ii).
By this and Lemma \ref{lem-psi-mole} with $M$
and $N$ being sufficiently large, we find that
there exists a positive constant $C$ such that
$\{C|Q|^{\frac{\eta}{n}}T_{\theta}(\psi_Q)\}_{Q\in\mathcal{D}}$ is a
family of synthesis molecules for $\dot{A}_{p,q}^{s,\upsilon}(W)$.
For any $\vec{f}\in\dot{A}_{p, q}^{s+\eta, \upsilon}(W)$,
from this, \eqref{eq-f-1}, and Lemma \ref{lem-moledecomp}(ii)
with $\{\vec{t}_Q\}_{Q\in\mathcal{D}}$ and
$\{g_Q\}_{Q\in\mathcal{D}}$ replaced, respectively,
by $\{\langle \vec{f},\varphi_Q\rangle|Q|^{-\frac{\eta}{n}}
\}_{Q\in\mathcal{D}}$ and
$\{|Q|^{\frac{\eta}{n}}T_{\theta}(\psi_Q)\}_{Q\in\mathcal{D}}$,
we infer that
\begin{align*}
\widetilde{T}_{\theta}\left(\vec{f}\right)=\sum_{Q\in\mathcal{D}}
\left\langle \vec{f},\varphi_Q\right\rangle|Q|^{-\frac{\eta}{n}}
|Q|^{\frac{\eta}{n}}T_{\theta}\left(\psi_Q\right)
\end{align*}
converges in $(\mathcal{S}'_{\infty})^m$ and
\begin{align*}
\left\|\widetilde{T}_{\theta}\left(\vec{f}\right)
\right\|_{\dot{A}_{p, q}^{s, \upsilon}(W)}
\lesssim\left\|\left\{\left\langle \vec{f},
\varphi_Q\right\rangle|Q|^{-\frac{\eta}{n}}
\right\}_{Q\in\mathcal{D}}\right\|
_{\dot{a}_{p, q}^{s, \upsilon}(W)}
\lesssim\left\|\vec{f}\right\|_{\dot{A}_{p, q}^{s+\eta, \upsilon}(W)},
\end{align*}
which further implies that $\widetilde{T}_{\theta}$ is bounded from
$\dot{A}_{p, q}^{s+\eta, \upsilon}(W)$
to $\dot{A}_{p, q}^{s, \upsilon}(W)$.
To verify $\widetilde{T}_{\theta}=T_{\theta}$ on
$(\mathcal{S}_{\infty})^m$,
using Lemmas \ref{lem-T-pd-pre} and \ref{lem-Cdreproform},
we conclude that, for any $\vec{f}
\in(\mathcal{S}_{\infty})^m$,
\begin{align*}
\widetilde{T}_{\theta}\left(\vec{f}\right)=
\sum_{Q\in\mathcal{D}}\left\langle\vec{f},
\varphi_Q\right\rangle T_{\theta}\left(\psi_Q\right)
\end{align*}
also converges in $(\mathcal{S})^m$
and equals $T_{\theta}(\vec{f})$.
This finishes the proof of (i).

Next, we show (ii). Notice that,
under the assumptions of Theorem \ref{thm-T-pd}(ii),
for any $Q\in\mathcal{D}$, $|Q|^{\frac{\eta}{n}}T_{\theta}(\psi_Q)$
satisfies Definition \ref{KLMNmole}(ii) for any $\gamma\in\mathbb{Z}_{+}^n$
with $|\gamma|\leq F_{\dot{a}_{p,q}^{s,\upsilon}(W)}-\frac{n}{2}$.
From this and Lemma \ref{lem-psi-mole} with $M$
and $N$ being sufficiently large, it follows that
there exists a positive constant $C$ such that
$\{C|Q|^{\frac{\eta}{n}}T_{\theta}(\psi_Q)\}_{Q\in\mathcal{D}}$
is a family of synthesis molecules for $\dot{A}_{p,q}^{s,\upsilon}(W)$.
Based on this, to prove (ii), it suffices to repeat the same argument
as that used in the proof of (i); we omit the details.
This finishes the proof of (ii) and hence Theorem \ref{thm-T-pd}.
\end{proof}

\subsection{Trace and Extension Operators}\label{s-2-2}

The main tool involved in establishing the boundedness
of the trace and the extension operators on
$\dot{A}_{p,q}^{s,\upsilon}(W)$ is
the wavelet characterization of $\dot{A}_{p,q}^{s,\upsilon}(W)$.
To give the wavelet characterization of $\dot{A}_{p,q}^{s,\upsilon}(W)$,
we first recall the concept of Daubechies wavelets
(see, for example, \cite{dau88} and
\cite[Sections 3.8 and 3.9]{mey92}).
In what follows, for any $k\in\mathbb{N}$,
let $C^{k}$ denote the set of all
$k$ times continuously differentiable functions on $\mathbb{R}^n$.

\begin{definition}\label{def-Dauwav}
Let $k\in\mathbb{N}$. A family of real-valued
functions $\{\theta^{(\lambda)}\}_{\lambda=1}^{2^n-1}$
in $C^{k}$ with bounded supports
are called the \emph{Daubechies wavelets of class}
$C^{k}$ if $\{\theta^{(\lambda)}_Q :\,\lambda\in\{1,\ldots,2^n-1\},
\ Q\in\mathcal{D}\}$ is an orthonormal basis of $L^2$.
\end{definition}

Let $k\in\mathbb{N}$ and $\{\theta^{(\lambda)}\}_{\lambda=1}^{2^n-1}$
be the Daubechies wavelets of class $C^{k}$.
From \cite[Corollary 5.5.2]{dau92}, it follows that,
for any $\lambda\in\{1,\ldots,2^n-1\}$ and
$\gamma\in\mathbb{Z}_+^n$ with $|\gamma|\leq k$,
\begin{align}\label{eq-int=0}
\int_{\mathbb{R}^n}\theta^{(\lambda)}(x)x^{\gamma}\,dx=0.
\end{align}

The next lemma gives a method to generate the Daubechies wavelets
as in Definition \ref{def-Dauwav} (see, for example, \cite{dau88}).

\begin{lemma}\label{def-Dauwavv2}
For any $k\in\mathbb{N}$, there exist two real-valued
functions $\varphi,\psi\in C^k(\mathbb{R})$
with bounded supports such that
$\{\theta^{(\lambda)}_Q :\ Q\in\mathcal{D},\
\lambda:=(\lambda_1,\dots,\lambda_n)\in\Lambda_n:=
\{0, 1\}^n\setminus\{\mathbf{0}\}\}$
is an orthonormal basis of $L^2$, where,
for any $\lambda:=(\lambda_1,\dots,\lambda_n)\in\Lambda_n$
and $x:=(x_1,\dots,x_n)\in\mathbb{R}^{n}$,
\begin{align}\label{eq-theta}
\theta^{(\lambda)}(x):=\prod_{i=1}^{n}\phi^{(\lambda_i)}(x_i)
\end{align}
with $\phi^{(0)}:=\varphi$ and $\phi^{(1)}:=\psi$.
In other words, $\{\theta^{(\lambda)}\}_{\lambda\in\Lambda_n}$
are the Daubechies wavelets of class $C^{k}$ as
in Definition \ref{def-Dauwav}.
\end{lemma}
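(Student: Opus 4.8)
The plan is to reduce the construction of $n$-dimensional Daubechies wavelets to the classical one-dimensional theory and then verify that the tensor-product functions in \eqref{eq-theta} indeed form an orthonormal basis. First I would invoke the one-dimensional Daubechies construction: for any $k\in\mathbb{N}$, there exist a compactly supported scaling function $\varphi\in C^{k}(\mathbb{R})$ and an associated compactly supported wavelet $\psi\in C^{k}(\mathbb{R})$ such that $\{\psi_{j,\ell}:=2^{j/2}\psi(2^{j}\cdot-\ell)\}_{j\in\mathbb{Z},\,\ell\in\mathbb{Z}}$ is an orthonormal basis of $L^{2}(\mathbb{R})$, while $\{\varphi(\cdot-\ell)\}_{\ell\in\mathbb{Z}}$ together with $\{\psi_{j,\ell}\}_{j\in\mathbb{Z}_{+},\,\ell\in\mathbb{Z}}$ forms another orthonormal basis of $L^{2}(\mathbb{R})$; moreover both $\varphi,\psi$ can be taken real-valued, and the smoothness can be made as large as desired by increasing the number of vanishing moments. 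These are exactly the facts recorded in \cite{dau88} (see also \cite[Chapter 6]{dau92}), so I would simply cite them.

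Next I would pass to the multiresolution analysis generated by the tensor product. Let $V_{j}\subset L^{2}(\mathbb{R})$ be the closed span of $\{\varphi_{j,\ell}\}_{\ell\in\mathbb{Z}}$ and $W_{j}$ its orthogonal complement in $V_{j+1}$, so that $V_{j+1}=V_{j}\oplus W_{j}$ and $L^{2}(\mathbb{R})=\bigoplus_{j\in\mathbb{Z}}W_{j}$. Tensoring $n$ copies gives $\mathbf{V}_{j}:=V_{j}\otimes\cdots\otimes V_{j}$, and the standard identity
\begin{align*}
\mathbf{V}_{j+1}=\bigotimes_{i=1}^{n}(V_{j}\oplus W_{j})
=\mathbf{V}_{j}\oplus\bigoplus_{\lambda\in\Lambda_{n}}W_{j}^{(\lambda)},
\end{align*}
where $W_{j}^{(\lambda)}:=\bigotimes_{i=1}^{n}X_{j}^{(\lambda_{i})}$ with $X_{j}^{(0)}:=V_{j}$ and $X_{j}^{(1)}:=W_{j}$, shows that $L^{2}(\mathbb{R}^{n})=\bigoplus_{j\in\mathbb{Z}}\bigoplus_{\lambda\in\Lambda_{n}}W_{j}^{(\lambda)}$. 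An orthonormal basis of $W_{j}^{(\lambda)}$ is given by the dilated translates of $\theta^{(\lambda)}$ as in \eqref{eq-theta}, because an orthonormal basis of a tensor product is the tensor product of orthonormal bases of the factors; indexing $j\in\mathbb{Z}$ and $k\in\mathbb{Z}^{n}$ by dyadic cubes $Q=Q_{j,k}\in\mathcal{D}$ exactly yields $\{\theta^{(\lambda)}_{Q}\}$. Orthonormality across different scales $j$ is automatic from the direct-sum decomposition, and within a fixed scale from the one-dimensional orthonormality, so $\{\theta^{(\lambda)}_{Q}:Q\in\mathcal{D},\ \lambda\in\Lambda_{n}\}$ is an orthonormal basis of $L^{2}$.

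It remains to check the regularity and support statements. Since $\varphi,\psi\in C^{k}(\mathbb{R})$ have bounded supports, each $\theta^{(\lambda)}$ in \eqref{eq-theta} is a finite product of $C^{k}$ functions in separate variables, hence lies in $C^{k}(\mathbb{R}^{n})$ with bounded support; being built from real-valued factors, it is real-valued. Therefore $\{\theta^{(\lambda)}\}_{\lambda\in\Lambda_{n}}$ satisfies all the requirements of Definition \ref{def-Dauwav}, so these are Daubechies wavelets of class $C^{k}$, and (via \cite[Corollary 5.5.2]{dau92}, or directly from the vanishing moments of the one-dimensional $\psi$) relation \eqref{eq-int=0} holds. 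The main obstacle is essentially bookkeeping rather than analysis: one must carefully track how the product index set $\Lambda_{n}=\{0,1\}^{n}\setminus\{\mathbf{0}\}$ arises from expanding $\bigotimes_{i}(V_{j}\oplus W_{j})$ and discarding the all-$\varphi$ term, and confirm that the resulting doubly indexed family is precisely $\{\theta^{(\lambda)}_{Q}\}$ under the identification $Q=Q_{j,k}$. No genuinely hard estimate is involved; the content is that tensor products of one-dimensional MRA bases give an $n$-dimensional MRA basis, a classical fact for which I would cite \cite{dau88} and \cite[Sections 3.8 and 3.9]{mey92}.
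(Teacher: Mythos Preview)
Your proposal is correct and follows the standard tensor-product multiresolution argument. The paper itself does not give a proof of this lemma at all; it merely states the result with a parenthetical citation to \cite{dau88} (and the surrounding discussion points to \cite[Sections 3.8 and 3.9]{mey92}), treating it as a known fact from the literature. Your outline is precisely the content behind those citations, so there is nothing to compare beyond noting that you have written out what the paper leaves implicit.
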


\begin{remark}\label{rmk-Dauwavv2}
Let $\varphi$ be as in Lemma \ref{def-Dauwavv2}.
Using \cite[(4.1) and (4.2)]{dau88}, we conclude that
there exists a sequence $\{h_j\}_{j\in\mathbb{Z}}$ in $\mathbb{C}$
such that, for any $t\in\mathbb{R}$,
\begin{align*}
\varphi(t)=\sqrt{2}\sum_{j\in\mathbb{Z}}h_j\varphi(2t-j).
\end{align*}
This, together with the continuity of $\varphi$,
further implies that there exists
$k_0\in\mathbb{Z}$ such that $\varphi(-k_0)\neq0$.
\end{remark}

Based on Daubechies wavelets,
the following wavelet characterization
of $\dot{A}^{s,\upsilon}_{p,q}(W)$
was established in \cite[Theorem 5.20]{byyz24}.

\begin{lemma}\label{lem-Dauwav decomp}
Let $(A, a)\in\{(B, b), (F, f)\}$, $s\in\mathbb{R}$,
$p\in(0,\infty)$, $q\in(0, \infty]$, and $W\in \mathcal{A}_{p,\infty}$.
Assume that $\delta_1,\delta_2$, and $\omega$ satisfy \eqref{eq-delta1<0},
$\upsilon\in\mathcal{G}(\delta_1, \delta_2; \omega)$,
and $k\in\mathbb{N}$ satisfies
\begin{align}\label{eq-k}
k>\max\left\{E_{\dot{a}_{p,q}^{s,\upsilon}(W)}-\frac{n}{2},
F_{\dot{a}_{p,q}^{s,\upsilon}(W)}-\frac{n}{2}\right\},
\end{align}
where $E_{\dot{a}_{p,q}^{s,\upsilon}(W)}$
and $F_{\dot{a}_{p,q}^{s,\upsilon}(W)}$ are as in
Theorem \ref{a(W)adopebound}.
If $\{\theta^{(\lambda)}\}_{\lambda=1}^{2^n-1}$ are
the Daubechies wavelets of class $C^{k}$,
then, for any $\vec{f}\in\dot{A}^{s,\upsilon}_{p,q}(W)$,
\begin{align*}
\vec{f}=\sum_{\lambda=1}^{2^n-1}\sum_{Q\in\mathcal{D}}
\left\langle\vec{f},\theta^{(\lambda)}_Q\right\rangle_*\theta^{(\lambda)}_Q
\end{align*}
in $(\mathcal{S}'_{\infty})^m$ and
$$
\left\|\vec{f}\right\|_{\dot{A}^{s,\upsilon}_{p,q}(W)}
\sim\sum_{\lambda=1}^{2^n-1}\left\|\left\{\left\langle
\vec{f},\theta^{(\lambda)}_Q \right\rangle_*\right\}
_{Q\in\mathcal{D}}\right\|_{\dot{a}^{s,\upsilon}_{p,q}(W)},
$$
where the positive equivalence constants are independent
of $\vec{f}$ and $\langle\cdot,\cdot \rangle_*$ is as in \eqref{eq-f-m}.
\end{lemma}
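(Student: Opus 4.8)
The plan is to deduce this wavelet characterization from the molecular characterization in Lemma \ref{lem-moledecomp} together with the fact that the Daubechies system $\{\theta^{(\lambda)}_Q\}_{\lambda,Q}$ is an orthonormal basis of $L^2$. First I would verify that, for each fixed $\lambda\in\{1,\dots,2^n-1\}$, there exists a positive constant $C$, independent of $Q$, such that $\{C\theta^{(\lambda)}_Q\}_{Q\in\mathcal{D}}$ is simultaneously a family of analysis molecules and a family of synthesis molecules for $\dot{A}^{s,\upsilon}_{p,q}(W)$ in the sense of Definition \ref{def-anasynmole}. Indeed, since $\theta^{(\lambda)}\in C^k$ has bounded support, the function $\theta^{(\lambda)}_Q$ and its derivatives up to order $k$ are bounded and supported near $Q$, so conditions (i), (iii), and (iv) of Definition \ref{KLMNmole} hold with arbitrarily large parameters $K,M,N\le k$ once a suitable normalizing constant is inserted; and \eqref{eq-int=0} supplies the moment condition (ii) up to order $k$, so $L$ may be taken as large as $k$ as well. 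The hypothesis \eqref{eq-k}, namely $k>\max\{E_{\dot{a}_{p,q}^{s,\upsilon}(W)}-\frac n2,\,F_{\dot{a}_{p,q}^{s,\upsilon}(W)}-\frac n2\}$, is exactly what lets one meet, with parameters bounded by $k$, the requirements $L\ge E_{\dot{a}_{p,q}^{s,\upsilon}(W)}-\frac n2$, $N>F_{\dot{a}_{p,q}^{s,\upsilon}(W)}-\frac n2$ for analysis molecules and $L\ge F_{\dot{a}_{p,q}^{s,\upsilon}(W)}-\frac n2$, $N>E_{\dot{a}_{p,q}^{s,\upsilon}(W)}-\frac n2$ for synthesis molecules.

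Next I would establish the reproducing identity $\vec f=\sum_{\lambda=1}^{2^n-1}\sum_{Q\in\mathcal{D}}\langle\vec f,\theta^{(\lambda)}_Q\rangle_*\theta^{(\lambda)}_Q$ in $(\mathcal{S}'_\infty)^m$, where $\langle\cdot,\cdot\rangle_*$ is the pairing from \eqref{eq-f-m}, which is well defined and independent of the auxiliary $\varphi,\psi$ precisely because the $\theta^{(\lambda)}_Q$ are analysis molecules. Starting from the Calder\'on reproducing formula $\vec f=\sum_{R\in\mathcal{D}}\langle\vec f,\varphi_R\rangle\psi_R$ in $(\mathcal{S}'_\infty)^m$ (i.e. the identity $T_\psi\circ S_\varphi=\mathrm{id}$ on $\dot{A}^{s,\upsilon}_{p,q}(W)$ from Lemma \ref{lem-phitransMWBTL}), I would expand each $\psi_R\in\mathcal{S}_\infty\subset L^2$ in the Daubechies basis, $\psi_R=\sum_{\lambda,Q}\langle\psi_R,\theta^{(\lambda)}_Q\rangle\theta^{(\lambda)}_Q$ (convergent in $L^2$, hence in $(\mathcal{S}'_\infty)^m$), substitute, interchange the two summations, collect terms, and recognize $\sum_R\langle\psi_R,\theta^{(\lambda)}_Q\rangle\langle\vec f,\varphi_R\rangle=\langle\vec f,\theta^{(\lambda)}_Q\rangle_*$ by \eqref{eq-f-m}. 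The interchange would be justified by testing against an arbitrary $\phi\in(\mathcal{S}_\infty)^m$ and invoking the standard almost-orthogonality decay of $\langle\psi_R,\theta^{(\lambda)}_Q\rangle$ and of $\langle\phi,\theta^{(\lambda)}_Q\rangle$ together with the controlled growth of $\langle\vec f,\varphi_R\rangle$ coming from $\vec f\in\dot{A}^{s,\upsilon}_{p,q}(W)$, which makes the resulting triple series absolutely convergent.

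With these two ingredients the norm equivalence is formal. The estimate $\sum_{\lambda}\|\{\langle\vec f,\theta^{(\lambda)}_Q\rangle_*\}_{Q\in\mathcal{D}}\|_{\dot{a}^{s,\upsilon}_{p,q}(W)}\lesssim\|\vec f\|_{\dot{A}^{s,\upsilon}_{p,q}(W)}$ follows by applying Lemma \ref{lem-moledecomp}(i) to each analysis-molecule family $\{\theta^{(\lambda)}_Q\}_{Q\in\mathcal{D}}$ and summing over the $2^n-1$ values of $\lambda$. Conversely, inserting the reproducing identity and applying Lemma \ref{lem-moledecomp}(ii) to each synthesis-molecule family $\{\theta^{(\lambda)}_Q\}_{Q\in\mathcal{D}}$, followed by the triangle inequality in $\dot{A}^{s,\upsilon}_{p,q}(W)$ over $\lambda$, yields $\|\vec f\|_{\dot{A}^{s,\upsilon}_{p,q}(W)}\lesssim\sum_{\lambda}\|\{\langle\vec f,\theta^{(\lambda)}_Q\rangle_*\}_{Q\in\mathcal{D}}\|_{\dot{a}^{s,\upsilon}_{p,q}(W)}$; Lemma \ref{lem-moledecomp}(ii) also certifies that the right-hand sum of the expansion indeed defines an element of $(\mathcal{S}'_\infty)^m$.

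I expect the main obstacle to be the reproducing formula in $(\mathcal{S}'_\infty)^m$: one must check carefully that the $*$-pairing coefficients genuinely recover $\vec f$ modulo polynomials, which requires combining the Calder\'on reproducing formula (Lemmas \ref{lem-Cdreproform} and \ref{lem-phitransMWBTL}), the $L^2$ Daubechies expansion, and the definition \eqref{eq-f-m}, all while working in $\mathcal{S}'/\mathcal{P}$ and bookkeeping the absolute convergence needed to justify the interchange of sums. Once this is in place, the norm estimates are a direct consequence of the molecular characterization, whose proof in turn packages the genuinely hard analytic input, namely the boundedness of almost diagonal operators on $\dot{a}^{s,\upsilon}_{p,q}(W)$ from Theorem \ref{a(W)adopebound}.
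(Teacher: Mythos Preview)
The paper does not prove this lemma; it is quoted verbatim from \cite[Theorem 5.20]{byyz24} and used as a black box. Your proposal outlines exactly the standard argument one would expect to find there: identify the Daubechies functions $\theta^{(\lambda)}_Q$ as both analysis and synthesis molecules using compact support, $C^k$ regularity, and the vanishing moments \eqref{eq-int=0}, then invoke the molecular machinery of Lemma \ref{lem-moledecomp} in both directions. One small slip: you write that conditions (i), (iii), (iv) of Definition \ref{KLMNmole} hold with parameters $K,M,N\le k$, but in fact $K$ and $M$ may be taken arbitrarily large because $\theta^{(\lambda)}$ has compact support; only $N$ and $L$ are capped by $k$, and this is precisely why the hypothesis \eqref{eq-k} involves only $E_{\dot{a}_{p,q}^{s,\upsilon}(W)}-\tfrac n2$ and $F_{\dot{a}_{p,q}^{s,\upsilon}(W)}-\tfrac n2$ and not $D_{\dot{a}_{p,q}^{s,\upsilon}(W)}$.

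Your identification of the reproducing formula in $(\mathcal{S}'_\infty)^m$ as the delicate step is correct, and your plan---substitute the $L^2$ wavelet expansion of $\psi_R$ into the Calder\'on formula and justify the interchange via almost-diagonal decay of the matrix $\{\langle\psi_R,\theta^{(\lambda)}_Q\rangle\}$---is the right one; this is essentially how such identities are handled in \cite{fj90} and its descendants, and Lemma \ref{lem-anasynmole} provides exactly the decay you need. So your outline is sound and matches the expected proof, though the paper itself simply imports the result.
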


Following the approach as in \cite{bhyy3, fra24, fr08},
we next use Daubechies wavelets to establish
the boundedness of the trace and the extension operators on
$\dot{A}_{p,q}^{s,\upsilon}(W)$. Suppose $f$ is a function
defined on $\mathbb{R}^{n+1}$. For any $x'\in\mathbb{R}^{n}$, let
\begin{align}\label{defTr}
\operatorname{Tr}f(x'):=f(x',0).
\end{align}
For any functions $g$ and $h$ defined, respectively,
on $\mathbb{R}^{n}$ and $\mathbb{R}^{m}$
and any $z:=(x,y)\in\mathbb{R}^{n+m}$, let
\begin{align}\label{eq-g-h}
(g\otimes h)(z):=g(x)h(y).
\end{align}
Let $n\in\mathbb{N}$. In this subsection,
let $\mathcal{D}(\mathbb{R}^n)$
be the set of all dyadic cubes in $\mathbb{R}^n$
and, for any $j\in\mathbb{Z}$, let
$\mathcal{D}_j(\mathbb{R}^n):=\{Q\in\mathcal{D}
(\mathbb{R}^n):\ \ell(Q)=2^{-j}\}$.
Let $k\in\mathbb{N}$ and $\varphi,\psi\in
C^k(\mathbb{R})$ be as in Lemma \ref{def-Dauwavv2}.
For any $\lambda'\in\Lambda_{n}$,
let $\theta^{(\lambda')}$ be as in \eqref{eq-theta}.
For any $Q'\in\mathcal{D}(\mathbb{R}^n)$ and $i\in\mathbb{Z}$,
let $P(Q',i):=Q'\times[i\ell(Q'),(i+1)\ell(Q'))$.
For any $\lambda'\in\Lambda_{n}$,
$Q'\in\mathcal{D}(\mathbb{R}^n)$,
and $x:=(x',x_{n+1})\in\mathbb{R}^{n+1}$, let
\begin{align}\label{defExt}
\operatorname{Ext}\theta^{(\lambda')}_{Q'}(x)
:=&\frac{[\ell(Q')]^{\frac{1}{2}}}{\varphi(-k_0)}
\left[\theta^{(\lambda')}\otimes\varphi\right]_{P(Q',k_0)}(x)
=\frac{[\ell(Q')]^{\frac12}}{\varphi(-k_0)}
\theta^{(\lambda',0)}_{P(Q',k_0)}(x)\\
=&\frac{1}{\varphi(-k_0)}\theta^{(\lambda')}_{Q'}(x')
\varphi\left(\frac{x_{n+1}}{\ell(Q')}-k_0\right),\notag
\end{align}
where $k_0$ is as in Remark \ref{rmk-Dauwavv2}.
From \eqref{defTr} and \eqref{defExt}, we infer that,
for any $\lambda'\in\Lambda_{n}$,
$Q'\in\mathcal{D}(\mathbb{R}^n)$, and $x'\in\mathbb{R}^n$,
\begin{align}\label{TrExt}
\operatorname{Tr}\circ\operatorname{Ext}
\left(\theta^{(\lambda')}_{Q'}\right)(x')
=\frac{[\ell(Q')]^{\frac12}}{\varphi(-k_0)}
\operatorname{Tr}\theta^{(\lambda',0)}_{P(Q',k_0)}(x')
=\theta^{(\lambda')}_{Q'}(x').
\end{align}

Let ${\upsilon}^{(n+1)}$ be a positive function
defined on $\mathcal{D}(\mathbb{R}^{n+1})$.
For any $Q'\in\mathcal{D}(\mathbb{R}^n)$, let
$${\upsilon}^{(n)}(Q'):={\upsilon}^{(n+1)}
\left(P\left(Q',0\right)\right).$$
We say that ${\upsilon}^{(n)}$ is the
\emph{restriction} of ${\upsilon}^{(n+1)}$
on $\mathcal{D}(\mathbb{R}^n)$.
Assume that $\delta_2\in[0, \infty)$, $\delta_1\in(-\infty,\delta_2]$,
$\omega\in[0,(n+1)(\delta_2-\delta_1)]$, and
$\upsilon^{(n+1)}$ is a $(\delta_1, \delta_2; \omega)$-order
growth function on $\mathcal{D}(\mathbb{R}^{n+1})$.
By the definition of growth functions, we
find that ${\upsilon}^{(n)}$, the restriction of
${\upsilon}^{(n+1)}$ on $\mathcal{D}(\mathbb{R}^n)$,
is a $(\frac{n+1}{n}\delta_1, \frac{n+1}{n}\delta_2; \omega)$-order
growth function on $\mathcal{D}(\mathbb{R}^n)$.

We now give the first main result of this subsection,
which establishes the boundedness of the trace operator
on generalized matrix-weighted Besov-type spaces.

\begin{theorem}\label{thm-Tr-B}
Let $p,\gamma\in(0,\infty)$, $q\in(0,\infty]$,
$W\in\mathcal{A}_{p,\infty}(\mathbb{R}^{n+1},\mathbb{C}^m)$,
and $V\in \mathcal{A}_{p,\infty}(\mathbb{R}^{n},\mathbb{C}^m)$.
Suppose that $\delta_2\in[0, \infty)$, $\delta_1\in[0, \delta_2]$,
$\omega\in[0,(n+1)(\delta_2-\delta_1)]$,
$\upsilon^{(n+1)}$ is a $(\delta_1, \delta_2; \omega)$-order
growth function on $\mathcal{D}(\mathbb{R}^{n+1})$,
and ${\upsilon}^{(n)}$ is the restriction of ${\upsilon}^{(n+1)}$
on $\mathcal{D}(\mathbb{R}^n)$. If $s\in(\frac{\gamma}{p}+
E+\frac{d_{p,\infty}^{\mathrm{upper}}(V)}{p}, \infty)$ with
\begin{align}\label{eq-E-B}
E:=\begin{cases}
\displaystyle n\left(\frac{1}{p}-\delta_1\right)
&\displaystyle\text{if}\,\,\delta_1>\frac{1}{p}\text{\ or\ if\ }
\delta_1=\frac{1}{p}\text{ and }q=\infty,\\
\displaystyle n\left(\frac{1}{p}-1\right)_+&\text{otherwise}
\end{cases}
\end{align}
and $d_{p,\infty}^{\mathrm{upper}}(V)$ being as in \eqref{eq-upp-dim},
then the trace operator $\operatorname{Tr}$
in \eqref{defTr} can be extended to a
bounded linear operator from $\dot{B}^{s,\upsilon^{(n+1)}}_{p,q}(W)$
to $\dot{B}^{s-\frac{\gamma}{p},\upsilon^{(n)}}_{p,q}(V)$
if and only if there exists a positive constant $C$ such that,
for any $Q'\in\mathcal{D}(\mathbb{R}^n)$ and $\vec z\in\mathbb{C}^m$,
\begin{equation}\label{eq-tr-B-B}
\int_{Q'}\left|V^{\frac{1}{p}}(x)\vec z\right|^p\,dx
\leq C 2^{j_{Q'}\gamma}\int_{P(Q',0)}\left|W^{\frac{1}{p}}(x)\vec z\right|^p\,dx.
\end{equation}
\end{theorem}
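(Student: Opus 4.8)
The plan is to run the whole argument at the level of the Daubechies wavelet sequence spaces, using Lemma~\ref{lem-Dauwav decomp} simultaneously on $\mathbb{R}^{n+1}$ (for $\dot{B}^{s,\upsilon^{(n+1)}}_{p,q}(W)$) and on $\mathbb{R}^n$ (for $\dot{B}^{s-\frac{\gamma}{p},\upsilon^{(n)}}_{p,q}(V)$). Fix once and for all $k\in\mathbb{N}$ large enough that \eqref{eq-k} holds for both of these spaces, and let $\varphi,\psi\in C^{k}(\mathbb{R})$ and the tensor-product system $\{\theta^{(\mu)}\}$ be as in Lemma~\ref{def-Dauwavv2}, with $\phi^{(0)}=\varphi$, $\phi^{(1)}=\psi$. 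The elementary computation driving everything is that, for any dyadic cube $P=Q'\times I$ with $Q'\in\mathcal{D}_j(\mathbb{R}^n)$, $I\in\mathcal{D}_j(\mathbb{R})$, and any $\mu=(\lambda',\lambda_{n+1})\in\Lambda_{n+1}$, the pointwise trace in \eqref{defTr} satisfies $\operatorname{Tr}\theta^{(\mu)}_{P}=\phi^{(\lambda_{n+1})}(-k_I)\,|I|^{-\frac12}\,\theta^{(\lambda')}_{Q'}$, where $k_I$ is the integer with $I=2^{-j}([0,1)+k_I)$; this vanishes unless $k_I$ lies in a fixed finite set, and, when $\lambda'=\mathbf{0}$ (which forces $\lambda_{n+1}=1$), it is a constant multiple of the tensor-product scaling function adapted to $Q'$ rather than of a genuine $\mathbb{R}^n$-wavelet.

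For the sufficiency I would \emph{define} $\operatorname{Tr}\vec{f}$ by inserting the wavelet expansion of $\vec{f}$ from Lemma~\ref{lem-Dauwav decomp} into the formula $\operatorname{Tr}\vec f=\sum_{\mu,P}\langle\vec f,\theta^{(\mu)}_{P}\rangle_*\operatorname{Tr}\theta^{(\mu)}_{P}$ and collecting, for each fixed $Q'\in\mathcal{D}(\mathbb{R}^n)$, the finitely many nonzero contributions. This rewrites $\operatorname{Tr}\vec{f}$ as a finite sum, over finitely many ``molecule types'', of series $\sum_{Q'\in\mathcal{D}(\mathbb{R}^n)}\vec{r}_{Q'}\,g_{Q'}$ in which each $g_{Q'}$ is a fixed constant multiple of either $\theta^{(\lambda')}_{Q'}$ with $\lambda'\in\Lambda_n$ or the scaling function adapted to $Q'$, and $\vec{r}_{Q'}$ is a fixed constant multiple of $[\ell(Q')]^{-\frac12}\langle\vec{f},\theta^{(\mu)}_{Q'\times I}\rangle_*$ for suitable $\mu$ and $I$. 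The hypothesis $s\in(\frac{\gamma}{p}+E+\frac{d_{p,\infty}^{\mathrm{upper}}(V)}{p},\infty)$ is used precisely here: unwinding the definitions of $J_{\dot a}$, $E$ in \eqref{eq-E-B}, and $F_{\dot a}$ in Theorem~\ref{a(W)adopebound}, together with the fact that ${\upsilon}^{(n)}\in\mathcal{G}(\frac{n+1}{n}\delta_1,\frac{n+1}{n}\delta_2;\omega)$, it forces $F_{\dot{a}^{s-\gamma/p,\upsilon^{(n)}}_{p,q}(V)}<\frac{n}{2}$, so that \emph{no} vanishing-moment condition is required of synthesis molecules for $\dot{B}^{s-\gamma/p,\upsilon^{(n)}}_{p,q}(V)$ in Definition~\ref{def-anasynmole}; in particular the scaling-function terms $g_{Q'}$, which are $C^{k}$ and compactly supported but have no vanishing moments, genuinely qualify as synthesis molecules by Definition~\ref{KLMNmole} (with $L_g<0$). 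It then remains to prove the sequence estimate $\|\{\vec{r}_{Q'}\}_{Q'}\|_{\dot{b}^{s-\gamma/p,\upsilon^{(n)}}_{p,q}(V)}\lesssim\|\{\langle\vec{f},\theta^{(\mu)}_{P}\rangle_*\}_{P}\|_{\dot{b}^{s,\upsilon^{(n+1)}}_{p,q}(W)}$: granting it, Lemma~\ref{lem-moledecomp}(ii) yields both convergence of the series in $(\mathcal{S}'_{\infty})^m$ and $\|\operatorname{Tr}\vec f\|_{\dot B^{s-\gamma/p,\upsilon^{(n)}}_{p,q}(V)}\lesssim\|\vec f\|_{\dot B^{s,\upsilon^{(n+1)}}_{p,q}(W)}$, and running the same expansion on a Schwartz input recovers $\operatorname{Tr}\vec f(x')=f(x',0)$, so the extension is consistent with \eqref{defTr}. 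To prove the sequence estimate, I would pass to reducing operators $\{B_{Q'}\}_{Q'}$ of order $p$ for $V$ on $\mathbb{R}^n$ and $\{A_{P}\}_{P}$ of order $p$ for $W$ on $\mathbb{R}^{n+1}$; after dividing by $|Q'|$, condition \eqref{eq-tr-B-B} reads exactly as $\|B_{Q'}A_{P(Q',0)}^{-1}\|\lesssim 2^{j_{Q'}(\gamma-1)/p}$, Lemma~\ref{growEST} (applied to $W$) absorbs the bounded vertical shifts $P(Q',0)\leftrightarrow P(Q',k_I)$, the growth-function inequality matches ${\upsilon}^{(n)}(Q')={\upsilon}^{(n+1)}(P(Q',0))$ against the value of ${\upsilon}^{(n+1)}$ on a dyadic $(n+1)$-cube of comparable size containing the relevant vertical slab, and a direct bookkeeping of $|Q'|=2^{-j_{Q'}n}$ versus $|P|=2^{-j_{Q'}(n+1)}$ (together with the $[\ell(Q')]^{\pm1/2}$ factors) shows that the gain $2^{j_{Q'}(\gamma-1)/p}$ converts the smoothness index $s$ into $s-\frac{\gamma}{p}$ exactly.

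For the necessity I would test the assumed boundedness on $\vec{f}_{Q',\vec{z}}:=(\operatorname{Ext}\theta^{(\lambda')}_{Q'})\,\vec{z}$ for a fixed $\lambda'\in\Lambda_n$, an arbitrary $Q'\in\mathcal{D}(\mathbb{R}^n)$, and an arbitrary $\vec{z}\in\mathbb{C}^m$. By \eqref{defExt} this is $\frac{[\ell(Q')]^{1/2}}{\varphi(-k_0)}\theta^{(\lambda',0)}_{P(Q',k_0)}\vec z$, a single $(n+1)$-dimensional Daubechies wavelet times a constant multiple of $\vec z$, so orthonormality identifies its unique nonzero wavelet coefficient and Lemma~\ref{lem-Dauwav decomp} gives $\|\vec{f}_{Q',\vec z}\|_{\dot{B}^{s,\upsilon^{(n+1)}}_{p,q}(W)}$ comparable to a one-term $\dot{b}^{s,\upsilon^{(n+1)}}_{p,q}(W)$-norm. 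Since $\delta_1\ge 0$, the supremizing cube in that one-term norm may be taken to be $P(Q',k_0)$, and Lemma~\ref{growEST} together with the growth-function property replaces $k_0$ by $0$, giving $\|\vec{f}_{Q',\vec z}\|_{\dot{B}^{s,\upsilon^{(n+1)}}_{p,q}(W)}\sim\frac{[\ell(Q')]^{1/2}}{\upsilon^{(n+1)}(P(Q',0))}\,2^{j_{Q'}s}\,|P(Q',0)|^{-\frac12}(\int_{P(Q',0)}|W^{1/p}\vec z|^p\,dx)^{1/p}$. On the other side \eqref{TrExt} gives $\operatorname{Tr}\vec{f}_{Q',\vec z}=\theta^{(\lambda')}_{Q'}\vec z$, whose norm is $\sim\frac{1}{\upsilon^{(n)}(Q')}\,2^{j_{Q'}(s-\gamma/p)}\,|Q'|^{-\frac12}(\int_{Q'}|V^{1/p}\vec z|^p\,dx)^{1/p}$; using ${\upsilon}^{(n)}(Q')={\upsilon}^{(n+1)}(P(Q',0))$ and $|P(Q',0)|=\ell(Q')\,|Q'|$, the assumed inequality $\|\operatorname{Tr}\vec{f}_{Q',\vec z}\|\lesssim\|\vec{f}_{Q',\vec z}\|$ collapses after cancellation to exactly \eqref{eq-tr-B-B} with $j=j_{Q'}$.

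The main obstacle is the sufficiency direction, and inside it two points require the most care. First, justifying that the reorganized series for $\operatorname{Tr}\vec{f}$ really is a legitimate synthesis-molecule decomposition for $\dot{B}^{s-\gamma/p,\upsilon^{(n)}}_{p,q}(V)$: this is where the exact shape of the hypothesis on $s$ is needed, guaranteeing $F_{\dot{a}^{s-\gamma/p,\upsilon^{(n)}}_{p,q}(V)}<\frac{n}{2}$ so that the moment-free scaling-function terms are admissible molecules and the index matching behind Lemma~\ref{lem-moledecomp}(ii) goes through. Second, the dimensional bookkeeping that turns the $(n+1)$-dimensional weighted norms and the growth function ${\upsilon}^{(n+1)}$ into their $n$-dimensional counterparts, where \eqref{eq-tr-B-B} must interlock precisely with the scaling factor $[\ell(Q')]^{1/2}$ built into \eqref{defExt}--\eqref{defTr} and with Lemma~\ref{growEST} applied to the finitely many vertical translates $P(Q',k_I)$. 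By contrast, the necessity direction is essentially forced once the single-wavelet test functions are normalized correctly.
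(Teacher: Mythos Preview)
Your proposal is correct and follows essentially the same route as the paper's proof: both directions are handled via the Daubechies wavelet characterization (Lemma~\ref{lem-Dauwav decomp}), the sufficiency goes through the observation that only finitely many vertical translates contribute and that the lower bound on $s$ kills the vanishing-moment requirement on synthesis molecules for $\dot{B}^{s-\gamma/p,\upsilon^{(n)}}_{p,q}(V)$, and the sequence estimate is proved exactly as you outline, via reducing operators, \eqref{eq-tr-B-B}, Lemma~\ref{growEST} for the bounded vertical shifts, and the growth-function comparison in Lemma~\ref{lem-P_R}/\eqref{eq-vRi}. The only cosmetic differences are that the paper treats all traces $\theta^{(\lambda)}_{P(Q',i)}(\cdot,0)$ uniformly as synthesis molecules rather than separating the scaling-function case $\lambda'=\mathbf{0}$, and for the necessity it tests directly on $\theta^{(\lambda_1)}_{P(Q',0)}\vec z$ with $\lambda_1=(1,\dots,1)$ rather than going through $\operatorname{Ext}$ and shifting $k_0\to 0$.
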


Before presenting the proof of Theorem \ref{thm-Tr-B},
we need first to give two lemmas. To this end,
we recall the concept of generalized averaging
sequence spaces introduced in \cite[Definition 3.3]{byyz24}.

\begin{definition}\label{averBTLseq}
Let $s\in\mathbb{R}$, $p, q\in(0,\infty]$,
and $\mathbb{A}:=\{A_Q\}_{Q\in\mathcal{D}(\mathbb{R}^n)}$
be a sequence of positive definite matrices.
Assume that $\delta_1,\delta_2$, and $\omega$ satisfy
\eqref{eq-delta1<0} and $\upsilon\in\mathcal{G}
(\delta_1, \delta_2; \omega)$. The
\emph{generalized averaging
Besov-type sequence space}
$\dot{b}_{p,q}^{s,\upsilon}(\mathbb{A})$ and,
when $p\in(0, \infty)$, the
\emph{generalized averaging
Triebel--Lizorkin-type sequence space}
$\dot{f}_{p,q}^{s,\upsilon}(\mathbb{A})$
are respectively defined to be the sets of
all sequences $\vec{t}:=\{\vec{t}_Q\}
_{Q\in\mathcal{D}(\mathbb{R}^n)}$ in
${\mathbb{C}}^m$ such that
\begin{align*}	
\left\|\vec{t}\right\|
_{\dot{a}_{p,q}^{s,\upsilon}(\mathbb{A})}	
:=\left\|\left\{2^{js}\left|\mathbb{A}_j
\vec{t}_j\right|\right\}
_{j\in\mathbb{Z}}\right\|_{L\dot{A}
_{p, q}^{\upsilon}}<\infty,
\end{align*}
where $(A, a)\in\{(B, b), (F, f)\}$,
$\|\cdot\|_{L\dot{A}_{p, q}^{\upsilon}}$
is as in \eqref{LA_nu}, and, for any $j\in\mathbb{Z}$,
\begin{align*}
\mathbb{A}_j:=\sum_{Q\in\mathcal{D}_j(\mathbb{R}^n)}A_Q\mathbf{1}_Q
\end{align*}
and $\vec{t}_j$ is as in \eqref{veclambda_j}.
\end{definition}

\begin{remark}\label{rmk-a(A)-a}
Let all the symbols be the same as in Definition \ref{averBTLseq}.
If $m=1$ (the scalar-valued setting) and
$\mathbb{A}:=\{1\}_{Q\in\mathcal{D}(\mathbb{R}^n)}$, we
denote the space $\dot{a}_{p,q}^{s,\upsilon}(\mathbb{A})$
by $\dot{a}_{p,q}^{s,\upsilon}(\mathbb{R}^n)$. Moreover,
it is easy to find that, for any sequence
$\vec{t}:=\{\vec{t}_Q\}_{Q\in\mathcal{D}(\mathbb{R}^n)}$
in ${\mathbb{C}}^m$, $\|\vec{t}\|
_{\dot{a}_{p,q}^{s,\upsilon}(\mathbb{A})}=
\|\{|A_Q\vec{t}_Q|\}_{Q\in\mathcal{D}(\mathbb{R}^n)}\|
_{\dot{a}_{p,q}^{s,\upsilon}(\mathbb{R}^n)}$.	
\end{remark}

The following lemma obtained in \cite[Corollary 3.15]{byyz24}
gives the equivalence between $\dot{a}_{p,q}^{s,\upsilon}(W)$
and $\dot{a}_{p,q}^{s,\upsilon}(\mathbb{A})$,
which is useful in the following proofs.

\begin{lemma}\label{a(A)=a(W)}
Let $a\in\{b, f\}$, $s\in\mathbb{R}$, $p\in(0, \infty)$, and $q\in(0,\infty]$.
Suppose that $\delta_1,\delta_2$, and $\omega$ satisfy \eqref{eq-delta1<0}
and $\upsilon\in\mathcal{G}(\delta_1, \delta_2; \omega)$.
If $W\in\mathcal{A}_{p,\infty}$ and $\mathbb{A}$
is a sequence of reducing operators of order $p$ for $W$,
then, for any sequence $\vec{t}:=\{\vec{t}_Q\}
_{Q\in\mathcal{D}(\mathbb{R}^n)}$ in ${\mathbb{C}}^m$,
\begin{align*}
\left\|\vec{t}\right\|_{\dot{a}_{p,q}^{s,\upsilon}(\mathbb{A})}
\sim\left\|\vec{t}\right\|_{\dot{a}_{p,q}^{s,\upsilon}(W)},
\end{align*}
where the positive equivalence
constants are independent of $\vec{t}$.
\end{lemma}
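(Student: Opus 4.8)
The plan is to peel the matrix structure off the identity, reducing it to a scalar statement about dyadic $L^p$-averages, and then to exploit the self-improvement of the $\mathcal{A}_{p,\infty}$ condition. \emph{Step 1 (pointwise reduction).} By Definition \ref{def-red-ope}, $|A_Q\vec z|\sim(\fint_Q|W^{\frac1p}(x)\vec z|^p\,dx)^{1/p}$ with constants independent of $Q\in\mathcal{D}(\mathbb{R}^n)$ and $\vec z\in\mathbb{C}^m$. Since $\mathbb{A}_j\equiv A_Q$ and $\vec{t}_j\equiv|Q|^{-\frac12}\vec{t}_Q$ on each $Q\in\mathcal{D}_j(\mathbb{R}^n)$, choosing $\vec z=\vec t_Q$ yields, uniformly in $j$,
\begin{align*}
\left|\mathbb{A}_j\vec{t}_j(x)\right|\sim\left(\fint_{Q_j(x)}\left|W^{\frac1p}(y)\vec{t}_j(y)\right|^p\,dy\right)^{1/p}=:\left(\mathcal{E}_j^{(p)}g_j\right)(x),\qquad g_j:=\left|W^{\frac1p}\vec{t}_j\right|,
\end{align*}
where $Q_j(x)$ is the cube of $\mathcal{D}_j(\mathbb{R}^n)$ containing $x$ and $\mathcal{E}_j^{(r)}h:=\sum_{Q\in\mathcal{D}_j(\mathbb{R}^n)}(\fint_Q|h|^r)^{1/r}\mathbf{1}_Q$ for $r\in(0,\infty)$. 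Hence it suffices to prove $\|\{2^{js}\mathcal{E}_j^{(p)}g_j\}_j\|_{L\dot{A}_{p,q}^{\upsilon}}\sim\|\{2^{js}g_j\}_j\|_{L\dot{A}_{p,q}^{\upsilon}}$.

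\emph{Step 2 (Besov case and self-improvement).} For $a=b$ this is an exact identity: for every dyadic $P$ and every $j\ge j_P$, $P$ is a disjoint union of cubes of $\mathcal{D}_j(\mathbb{R}^n)$, so $\int_P|\mathcal{E}_j^{(p)}g_j|^p=\sum_{Q\subseteq P}|Q|\fint_Q g_j^p=\int_P g_j^p$; thus $\|\mathcal{E}_j^{(p)}g_j\,\mathbf{1}_P\|_{L^p}=\|g_j\mathbf{1}_P\|_{L^p}$ for all admissible pairs, and the Besov functional (an $\ell^q$-sum over such $j$ followed by $\sup_P\upsilon(P)^{-1}$) is literally unchanged; together with Step 1 this settles $a=b$. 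For $a=f$ I first record the scalar ingredient: the weights $w_{\vec z}(x):=|W^{\frac1p}(x)\vec z|^p$ satisfy $[w_{\vec z}]_{A_\infty}\le[W]_{\mathcal{A}_{p,\infty}}$ uniformly in $\vec z$, which follows by taking logarithms in $|W^{\frac1p}(x)\vec z|^p\le\|W^{\frac1p}(x)W^{-\frac1p}(y)\|^p|W^{\frac1p}(y)\vec z|^p$ and integrating in $y$ over $Q$. Combined with the power-mean chain $\exp(\fint_Q\log w)\le(\fint_Q w^{\tau})^{1/\tau}\le\fint_Q w$, valid for $\tau\in(0,1]$, this gives $\fint_Q w_{\vec z}\sim(\fint_Q w_{\vec z}^{\tau})^{1/\tau}$ for every $\tau\in(0,1]$, uniformly; equivalently, for any fixed $\tau\in(0,\min\{p,q\})$, $\mathcal{E}_j^{(p)}g_j\sim\mathcal{E}_j^{(\tau)}g_j$ uniformly in $j$.

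\emph{Step 3 (Triebel--Lizorkin case).} By Steps 1 and 2 it remains, for $a=f$, to compare $\|\{2^{js}\mathcal{E}_j^{(\tau)}g_j\}_j\|_{L\dot{F}_{p,q}^{\upsilon}}$ with $\|\{2^{js}g_j\}_j\|_{L\dot{F}_{p,q}^{\upsilon}}$, where $\tau<\min\{p,q\}$. For ``$\lesssim$'': if $x\in P$ and $j\ge j_P$ then $Q_j(x)\subseteq P$, so $\mathcal{E}_j^{(\tau)}g_j\cdot\mathbf{1}_P\le[\mathcal{M}^{\mathcal{D}}((g_j\mathbf{1}_P)^{\tau})]^{1/\tau}$ with $\mathcal{M}^{\mathcal{D}}$ the dyadic Hardy--Littlewood maximal function; since $\tau<\min\{p,q\}$, the Fefferman--Stein vector-valued maximal inequality in $L^p(\ell^q)$ applies, and dividing by $\upsilon(P)$ and taking the supremum over dyadic $P$ gives the bound. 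For ``$\gtrsim$'' one argues symmetrically: writing $g_j(x)=|Q_j(x)|^{-\frac12}|W^{\frac1p}(x)A_{Q_j(x)}^{-1}(A_{Q_j(x)}\vec{t}_{Q_j(x)})|$ and using $|W^{\frac1p}(x)\vec z|\le\|W^{\frac1p}(x)W^{-\frac1p}(y)\|\,|W^{\frac1p}(y)\vec z|$ followed by H\"older in $y$ over $Q_j(x)$, one dominates $g_j$ on each $Q\in\mathcal{D}_j(\mathbb{R}^n)$ by $\Phi_{j,Q}(x)\,|\mathbb{A}_j\vec{t}_j(x)|$, where $\Phi_{j,Q}$ is a dyadic average in $y$ of $\|W^{\frac1p}(x)W^{-\frac1p}(y)\|$; the contribution of the multiplier $\{\Phi_{j,Q}\}$ is then absorbed using the $\mathcal{A}_{p,\infty}$ control of reducing operators (Lemma \ref{growEST} and the estimates of \cite{bhyy4}) together with the same Fefferman--Stein machinery, after which, combined with Step 2, one obtains the equivalence of the two $\dot{f}$-quasi-norms.

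\emph{Main obstacle.} The hard part is the lower bound in Step 3, i.e.\ recovering $|W^{\frac1p}\vec{t}_j|$ from $|\mathbb{A}_j\vec{t}_j|$: because a matrix $\mathcal{A}_{p,\infty}$ weight can be locally very large, there is no pointwise domination of $g_j$ by $\mathcal{E}_j^{(p)}g_j$, so the averaging cannot simply be inverted and the estimate must be extracted only in the integrated, $\ell^q$-averaged sense. This is precisely where the two-sided self-improvement of the $\mathcal{A}_{p,\infty}$ condition and a Fefferman--Stein-type bound for multiplication by factors with controlled low-order dyadic averages become essential. (When $m=1$ and $\mathbb{A}=\{1\}_{Q\in\mathcal{D}(\mathbb{R}^n)}$ the statement collapses to Remark \ref{rmk-a(A)-a} and none of this difficulty is visible; it is the presence of the matrix weight that makes the Triebel--Lizorkin case genuinely substantial.)
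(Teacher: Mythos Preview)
Your reduction in Step~1, the Besov case in Step~2, and the ``$\lesssim$'' half of Step~3 via Fefferman--Stein are all correct. The gap is exactly where you yourself place the ``Main obstacle'': the ``$\gtrsim$'' direction for $a=f$. Your sketch there writes $g_j\le\Phi_{j,Q}(x)\,|\mathbb{A}_j\vec t_j(x)|$ and proposes to absorb $\Phi_{j,Q}$ via Lemma~\ref{growEST} and ``the same Fefferman--Stein machinery'', but neither tool applies. Lemma~\ref{growEST} estimates $\|A_QA_R^{-1}\|$ for two \emph{dyadic cubes} $Q,R$; it says nothing about the pointwise size of $\|W^{1/p}(x)A_Q^{-1}\|$ or of any $y$-average of $\|W^{1/p}(x)W^{-1/p}(y)\|$, which is what your multiplier actually is. And Fefferman--Stein runs the wrong way here---it dominates an averaging operator by the identity, not the identity by an averaging operator---so on its own it cannot recover $g_j$ from $\mathcal{E}_j^{(\tau)}g_j$. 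Your final paragraph diagnoses the difficulty accurately; the paragraph before it restates it rather than resolving it.

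The ingredient you are missing is the \emph{reverse H\"older} side of $A_\infty$. In Step~2 you established $\mathcal{E}_j^{(p)}g_j\sim\mathcal{E}_j^{(\tau)}g_j$ only for $\tau\le p$; the hard direction needs the extension to some $\sigma>p$. From the uniform $A_\infty$ bound on each scalar weight $|W^{1/p}(\cdot)\vec z|^p$ one obtains a fixed $\sigma>p$, depending only on $[W]_{\mathcal{A}_{p,\infty}}$, with $(\fint_Q\|W^{1/p}A_Q^{-1}\|^{\sigma})^{1/\sigma}\lesssim 1$ uniformly in $Q$. Since $|\mathbb{A}_j\vec t_j|$ is \emph{constant} on each $Q\in\mathcal{D}_j$, this excess integrability of the multiplier, combined with a H\"older step on each cube, is what actually drives the bound; Lemma~\ref{growEST} plays no role. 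Note finally that the present paper does not prove this lemma at all---it is imported from \cite[Corollary~3.15]{byyz24}---so there is no in-paper argument to compare against directly.
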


The next lemma follows from an observation of dyadic
cubes in $\mathbb{R}^n$; we omit the details.

\begin{lemma}\label{lem-P_R}
Let $i\in\mathbb{Z}$. For any $Q'\in\mathcal{D}(\mathbb{R}^n)$,
there exists $Q\in\mathcal{D}(\mathbb{R}^{n+1})$ such that,
for any $R\in\mathcal{D}(\mathbb{R}^n)$ with $R\subset Q'$,
$P(R, i)\subset Q$ and $\ell(Q')\leq\ell(Q)\lesssim\ell(Q')$,
where the implicit positive constant is
independent of $Q'$ but may depend on $i$.
\end{lemma}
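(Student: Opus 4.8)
The assertion is purely a statement about the nesting of dyadic cubes, so the plan is to construct $Q$ explicitly. Fix $i\in\mathbb{Z}$ and $Q'\in\mathcal{D}(\mathbb{R}^n)$, and write $\ell:=\ell(Q')=2^{-j_{Q'}}$. The key observation is that, by the definition $P(R,i)=R\times\ell(R)[i,i+1)$, the vertical (i.e.\ $(n+1)$-st coordinate) slab of $P(R,i)$ is scaled by $\ell(R)$, \emph{not} by $\ell(Q')$. Hence, whenever $R\in\mathcal{D}(\mathbb{R}^n)$ satisfies $R\subseteq Q'$, one has $\ell(R)\le\ell$, and a short case check (treating $i\ge 0$ and $i\le-1$ separately, with attention to the sign) shows that $\ell(R)[i,i+1)\subseteq J_i$, where $J_i:=[0,(i+1)\ell)$ if $i\ge 0$ and $J_i:=[i\ell,0)$ if $i\le-1$. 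In either case $J_i$ is a subinterval of $\mathbb{R}$ having an endpoint at the origin and length at most $(|i|+1)\ell$.

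The next step is to enlarge $J_i$ to a dyadic interval of $\mathbb{R}$. Let $m:=\lceil\log_2(|i|+1)\rceil\in\mathbb{Z}_+$, which depends only on $i$ and satisfies $|i|+1\le 2^m\le 2(|i|+1)$. Since $J_i$ touches $0$, the interval $J:=[0,2^m\ell)=2^{-(j_{Q'}-m)}[0,1)$ (when $i\ge 0$) or $J:=[-2^m\ell,0)=2^{-(j_{Q'}-m)}([0,1)-1)$ (when $i\le-1$) is dyadic in $\mathbb{R}$ of generation $j_{Q'}-m$ and contains $J_i$.

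Finally, I would let $\widetilde{Q'}$ be the unique dyadic ancestor of $Q'$ in $\mathbb{R}^n$ of side length $2^{-(j_{Q'}-m)}=2^m\ell$; it exists because $j_{Q'}-m\le j_{Q'}$ and the dyadic cubes of a fixed generation partition $\mathbb{R}^n$. Set $Q:=\widetilde{Q'}\times J$. Since $\widetilde{Q'}$ and $J$ are dyadic of the \emph{same} generation $j_{Q'}-m$ in $\mathbb{R}^n$ and $\mathbb{R}$ respectively, their product $Q$ is a dyadic cube in $\mathbb{R}^{n+1}$ with $\ell(Q)=2^m\ell(Q')$, so that $\ell(Q')\le\ell(Q)\le 2(|i|+1)\ell(Q')$, with the implied constant $2(|i|+1)$ depending only on $i$ and not on $Q'$. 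Moreover, for any $R\in\mathcal{D}(\mathbb{R}^n)$ with $R\subseteq Q'$, one has $R\subseteq Q'\subseteq\widetilde{Q'}$ in the first $n$ coordinates and $\ell(R)[i,i+1)\subseteq J_i\subseteq J$ in the last coordinate, whence $P(R,i)=R\times\ell(R)[i,i+1)\subseteq\widetilde{Q'}\times J=Q$, which is exactly the claim.

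There is no substantial obstacle here; the argument is routine dyadic combinatorics. The only points needing a little care are the sign bookkeeping when $i<0$ (so that $J_i$, and then $J$, are placed on the correct side of the origin and the length bound $(|i|+1)\ell$ still holds) and the verification that the horizontal ancestor $\widetilde{Q'}$ and the vertical interval $J$ are chosen at the same dyadic generation, since this is precisely what makes $\widetilde{Q'}\times J$ a genuine dyadic cube of $\mathbb{R}^{n+1}$ rather than merely a rectangle.
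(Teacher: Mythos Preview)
Your construction is correct and complete. The paper itself omits the proof entirely, remarking only that the lemma ``follows from an observation of dyadic cubes in $\mathbb{R}^n$''; your explicit choice of the vertical interval $J$ and the ancestor $\widetilde{Q'}$, together with the sign-dependent containment $\ell(R)[i,i+1)\subseteq J_i\subseteq J$, is precisely the routine dyadic argument the authors had in mind.
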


Now, we give the proof of Theorem \ref{thm-Tr-B}.

\begin{proof}[Proof of Theorem \ref{thm-Tr-B}]
To prove the present theorem, let $k\in\mathbb{N}$
satisfy \eqref{eq-k} in Lemma \ref{lem-Dauwav decomp}
on both $\dot{B}^{s,\upsilon^{(n+1)}}_{p,q}(W)$
and $\dot{B}^{s-\frac{\gamma}{p},\upsilon^{(n)}}_{p,q}(V)$ and let
$\varphi,\psi\in C^k(\mathbb{R})$ be as in Lemma \ref{def-Dauwavv2}.
For any $\lambda'\in\Lambda_{n}$ and $\lambda\in\Lambda_{n+1}$,
let $\theta^{(\lambda')}$ and $\theta^{(\lambda)}$ be as in \eqref{eq-theta}.

We first show the necessity. To this end, let
$\lambda'_1:=(1,\dots, 1)\in\Lambda_{n}$
and $\lambda_1:=(\lambda'_1, 1)\in\Lambda_{n+1}$.
For any $Q'\in\mathcal{D}(\mathbb{R}^n)$ and $\vec{z}\in\mathbb{C}^m$,
we define the sequence $\vec{t}:=\{\vec{t}_Q\}_{Q\in\mathcal{D}
(\mathbb{R}^{n+1})}$ in ${\mathbb{C}}^m$ by setting,
for any $Q\in\mathcal{D}(\mathbb{R}^{n+1})$,
\begin{align}\label{eq-nece-t}
\vec{t}_Q:=\begin{cases}
\vec{z}
&\text{if }Q=P(Q', 0),\\
\mathbf{0}&\text{otherwise}.
\end{cases}
\end{align}
Observe that the assumption $\delta_1\in[0, \delta_2]$
guarantees that there exists a positive constant $C$ such that,
for any $Q,P\in\mathcal{D}(\mathbb{R}^{n+1})$ with $Q\subset P$,
$\upsilon^{(n+1)}(Q)\leq C\upsilon^{(n+1)}(P)$.
From this, \eqref{eq-nece-t}, the definition of
$\|\cdot\|_{\dot{b}^{s,\upsilon^{(n+1)}}_{p,q}(W)}$,
and the assumption that ${\upsilon}^{(n)}$
is the restriction of ${\upsilon}^{(n+1)}$, we infer that,
for any $Q'\in\mathcal{D}(\mathbb{R}^n)$
and $\vec z\in\mathbb{C}^m$,
\begin{align}\label{eq-t-z}
\left\|\vec{t}\right\|
_{\dot{b}^{s,\upsilon^{(n+1)}}_{p,q}(W)}
&=\sup_{P\in\mathcal{D}(\mathbb{R}^{n+1}),
P\supset P(Q', 0)}\frac{2^{j_{Q'}(s+\frac{n+1}{2})}}
{\upsilon^{(n+1)}(P)}\left[\int_{P(Q', 0)}\left|W^{\frac{1}{p}}(x)
\vec{z}\right|^p\, dx\right]^{\frac{1}{p}}\\
&\sim\frac{2^{j_{Q'}(s +\frac{n+1}{2})}}{\upsilon^{(n+1)}
(P(Q', 0))}\left[\int_{P(Q', 0)}
\left|W^{\frac{1}{p}}(x)\vec{z}\right|^p\, dx\right]^{\frac{1}{p}}\nonumber\\
&=\frac{2^{j_{Q'}(s +\frac{n+1}{2})}}{\upsilon^{(n)}(Q')}\left[\int_{P(Q', 0)}
\left|W^{\frac{1}{p}}(x)\vec{z}\right|^p\, dx\right]^{\frac{1}{p}}.\nonumber
\end{align}
If $\operatorname{Tr}$ is bounded from
$\dot{B}^{s,\upsilon^{(n+1)}}_{p,q}(W)$
to $\dot{B}^{s-\frac{\gamma}{p},\upsilon^{(n)}}_{p,q}(V)$,
then, by the definition of $\|\cdot\|_{\dot{b}^{s-\frac{\gamma}{p},
\upsilon^{(n)}}_{p,q}(V)}$, \eqref{eq-nece-t}, \eqref{defTr},
\eqref{eq-t-z}, and Lemma \ref{lem-Dauwav decomp},
we find that, for any $Q'\in\mathcal{D}(\mathbb{R}^n)$
and $\vec z\in\mathbb{C}^m$,
\begin{align*}
&\frac{2^{j_{Q'}(s-\frac{\gamma}{p}+
\frac{n+1}{2})}}{\upsilon^{(n)}(Q')}
\left[\int_{Q'}\left|V^{\frac{1}{p}}(x)
\vec{z}\right|^p\, dx\right]^{\frac{1}{p}}\\
&\quad\leq\left\|\left\{2^{\frac{j_{R}}{2}}
\vec{t}_{P(R,0)}\right\}_{R\in\mathcal{D}(\mathbb{R}^n)}\right\|
_{\dot{b}^{s-\frac{\gamma}{p},\upsilon^{(n)}}_{p,q}(V)}
\sim\left\|\left[\ell\left(Q'\right)\right]^{-\frac{1}{2}}
\theta^{(\lambda'_1)}_{Q'}\vec{z}\right\|
_{\dot{B}^{s-\frac{\gamma}{p},\upsilon^{(n)}}_{p,q}(V)}\\
&\quad=\left\|\operatorname{Tr}
\left(\theta_{P(Q',0)}^{(\lambda_1)}\right)\vec{z}
\right\|_{\dot{B}^{s-\frac{\gamma}{p},\upsilon^{(n)}}_{p,q}(V)}
\lesssim\left\|\theta_{P(Q',0)}^{(\lambda_1)}\vec{z}
\right\|_{\dot{B}^{s,\upsilon^{(n+1)}}
_{p,q}(W)}\sim\left\|\vec{t}\right\|
_{\dot{b}^{s,\upsilon^{(n+1)}}_{p,q}(W)}\\
&\quad\sim\frac{2^{j_{Q'}(s +\frac{n+1}{2})}}{\upsilon^{(n)}(Q')}
\left[\int_{P(Q', 0)}\left|W^{\frac{1}{p}}(x)\vec{z}
\right|^p\, dx\right]^{\frac{1}{p}},
\end{align*}
which, together with the assumption
$\upsilon^{(n)}(Q')\in(0,\infty)$ for any
$Q'\in\mathcal{D}(\mathbb{R}^n)$,
further implies that \eqref{eq-tr-B-B} holds and hence
completes the proof of the necessity.

Next, we prove the sufficiency.
From Lemma \ref{lem-Dauwav decomp}, it follows that,
for any $\vec{f}\in\dot{B}_{p, q}^{s, \upsilon^{(n+1)}}(W)$,
$$\vec{f}=\sum_{\lambda\in\Lambda_{n+1}}
\sum_{Q\in\mathcal{D}(\mathbb{R}^{n+1})}\left\langle\vec{f},
\theta_Q^{(\lambda)}\right\rangle_*\theta_Q^{(\lambda)}$$
in $[\mathcal{S}'_{\infty}(\mathbb{R}^{n+1})]^m$,
where $\langle\cdot,\cdot \rangle_*$ is as in \eqref{eq-f-m}.
Thus, for any $\vec{f}\in\dot{B}_{p, q}^{s, \upsilon^{(n+1)}}(W)$,
we define the action of $\operatorname{Tr}$ on $\vec{f}$ by setting
\begin{align}\label{eq-def-Trf}
\operatorname{Tr}\vec{f}:&=\sum_{\lambda\in \Lambda_{n+1}}
\sum_{Q\in\mathcal{D}(\mathbb{R}^{n+1})}\left\langle\vec{f},
\theta_Q^{(\lambda)}\right\rangle_*\operatorname{Tr}
\theta_Q^{(\lambda)}.
\end{align}
To complete the proof of the sufficiency,
it suffices to show that, for any
$\vec{f}\in\dot{B}_{p, q}^{s, \upsilon^{(n+1)}}(W)$,
$\operatorname{Tr}\vec{f}$ is well-defined in
$[\mathcal{S}'_{\infty}(\mathbb{R}^{n})]^m$
and $\operatorname{Tr}$ is bounded from
$\dot{B}_{p, q}^{s, \upsilon^{(n+1)}}(W)$
to $\dot{B}_{p, q}^{s-\frac{\gamma}{p},\upsilon^{(n)}}(V)$.
For any given $\vec{f}\in\dot{B}_{p, q}^{s, \upsilon^{(n+1)}}(W)$
and for any $\lambda\in\Lambda_{n+1}$, let
\begin{align}\label{eq-t-lambda}
\vec{t}^{(\lambda)}:=\left\{\vec{t}_Q^{(\lambda)}
\right\}_{Q\in\mathcal{D}(\mathbb{R}^{n+1})}
:=\left\{[\ell(Q)]^{-\frac{1}{2}}
\left\langle\vec{f}, \theta_Q^{(\lambda)}\right\rangle_*
\right\}_{Q\in\mathcal{D}(\mathbb{R}^{n+1})}.
\end{align}
Since, for any $\lambda\in \Lambda_{n+1}$,
the support of $\theta^{(\lambda)}$ is bounded,
then there exists $M\in\mathbb{N}$ such that,
for any $\lambda\in \Lambda_{n+1}$,
$\operatorname{supp}\theta^{(\lambda)}
\subset B(\mathbf{0}, M)$. Applying this, \eqref{defTr},
and \eqref{eq-phi_Q}, we conclude that,
for any $\lambda\in \Lambda_{n+1}$,
$i\in\mathbb{Z}$ with $|i|>M$, $Q'\in\mathcal{D}(\mathbb{R}^n)$,
$Q:=P(Q', i)\in\mathcal{D}(\mathbb{R}^{n+1})$,
and $x'\in\mathbb{R}^n$,
$$\operatorname{Tr}\theta_Q^{(\lambda)}(x')
=\theta_Q^{(\lambda)}\left(x', 0\right)=
\left[\ell\left(Q'\right)\right]^{-\frac{n+1}{2}}
\theta^{(\lambda)}\left(\frac{x'-x_{Q'}}{\ell(Q')},-i\right)=0,$$
which, together with \eqref{eq-def-Trf}, \eqref{eq-t-lambda},
\eqref{defTr}, and the fact that $\mathcal{D}(\mathbb{R}^{n+1})=\{P(Q',i):
\,i\in\mathbb{Z},\,Q'\in\mathcal{D}(\mathbb{R}^n)\}$, further implies that, for any
$\vec{f}\in\dot{B}_{p, q}^{s, \upsilon^{(n+1)}}(W)$,
\begin{align}\label{eq-Trf}
\operatorname{Tr} \vec{f}
&=\sum_{\lambda\in \Lambda_{n+1}}
\sum_{i\in\mathbb{Z}}
\sum_{Q'\in\mathcal{D}(\mathbb{R}^n)}\left\langle\vec{f},
\theta_{P(Q',i)}^{(\lambda)}\right\rangle_*
\operatorname{Tr}\theta_{P(Q',i)}^{(\lambda)}\\
&=\sum_{\lambda\in \Lambda_{n+1}}\sum_{i=-M}^M
\sum_{Q'\in\mathcal{D}(\mathbb{R}^n)}
\vec{t}_{P(Q',i)}^{(\lambda)}
\left[\ell\left(Q'\right)\right]^{\frac{1}{2}}
\theta_{P(Q',i)}^{(\lambda)}\left(\cdot, 0\right).\nonumber
\end{align}

To prove $\operatorname{Tr} \vec{f}$ is well-defined
on $[\mathcal{S}'_{\infty}(\mathbb{R}^{n})]^m$,
we first show that, for any $\lambda\in\Lambda_{n+1}$
and $i\in\mathbb{Z}$ with $|i|\leq M$,
$\{\vec{t}_{P(Q', i)}^{(\lambda)}\}
_{Q'\in\mathcal{D}(\mathbb{R}^n)}\in
\dot{b}_{p, q}^{s-\frac{\gamma}{p}, \upsilon^{(n)}}(V)$.
For this purpose, let $\mathbb{A}(W):=\{A_{Q, W}\}_{Q\in\mathcal{D}(\mathbb{R}^{n+1})}$
and $\mathbb{A}(V):=\{A_{Q', V}\}_{Q'\in\mathcal{D}(\mathbb{R}^n)}$
be sequences of reducing operators of order $p$,
respectively, for $W$ and $V$.
Let $\beta_1 \in \llbracket d_{p, \infty}^{\text {lower}}(W),\infty)$
and $\beta_2\in \llbracket d_{p, \infty}^{\text {upper }}(W),\infty)$,
where $d_{p, \infty}^{\text {lower}}(W)$
and $d_{p, \infty}^{\text {upper }}(W)$ are as, respectively,
in \eqref{eq-low-dim} and \eqref{eq-upp-dim}.
By Definition \ref{def-red-ope}, \eqref{eq-tr-B-B},
and Lemma \ref{growEST}, we find that,
for any $\lambda\in\Lambda_{n+1}$,
$i\in\mathbb{Z}$ with $|i|\leq M$, and
$Q'\in\mathcal{D}(\mathbb{R}^n)$,
\begin{align}\label{eq-A_n-A_n+1}
\left|A_{Q', V}\vec{t}_{P(Q', i)}^{(\lambda)}\right|
&\sim2^{j_{Q'}\frac{n}{p}}\left[\int_{Q'}
\left|V^{\frac{1}{p}}(x)\vec{t}_{P(Q', i)}^{(\lambda)}\right|^p\,dx\right]^{\frac{1}{p}}\\
&\lesssim 2^{j_{Q'}(\frac{n}{p}+\frac{\gamma}{p})}
\left[\int_{P(Q',0)}\left|W^{\frac{1}{p}}(x)
\vec{t}_{P(Q', i)}^{(\lambda)}\right|^p\,dx\right]^{\frac{1}{p}}\nonumber\\
&\sim2^{j_{Q'}(\frac{\gamma}{p}-\frac{1}{p})}
\left|A_{P(Q', 0), W}\vec{t}_{P(Q', i)}^{(\lambda)}\right|\nonumber\\ &\lesssim(1+|i|)^{\beta_1+\beta_2}2^{j_{Q'}(\frac{\gamma}{p}-\frac{1}{p})}
\left|A_{P(Q', i), W}\vec{t}_{P(Q', i)}^{(\lambda)}\right|\nonumber\\
&\leq\left(1+M\right)^{\beta_1+\beta_2}2^{j_{Q'}(\frac{\gamma}{p}-\frac{1}{p})}
\left|A_{P(Q', i), W}\vec{t}_{P(Q', i)}^{(\lambda)}\right|\nonumber\\
&\sim2^{j_{Q'}(\frac{\gamma}{p}-\frac{1}{p})}\left|A_{P(Q', i), W}
\vec{t}_{P(Q', i)}^{(\lambda)}\right|\nonumber.
\end{align}
For any $i\in\mathbb{Z}$ with $|i|\leq M$ and
for any $R\in\mathcal{D}(\mathbb{R}^n)$,
let $R(i)\in\mathcal{D}(\mathbb{R}^{n+1})$ be
as in Lemma \ref{lem-P_R} with $Q'$ and $Q$
replaced, respectively, by $R$ and $R(i)$.
From the growth condition of $\upsilon^{(n+1)}$,
Lemma \ref{lem-P_R}, and the assumption that $\upsilon^{(n)}$
is the restriction of $\upsilon^{(n+1)}$,
we deduce that, for any $i\in\mathbb{Z}$ with $|i|\leq M$ and
for any $R\in\mathcal{D}(\mathbb{R}^n)$,
\begin{align}\label{eq-vRi}
\upsilon^{(n)}(R)=
\frac{\upsilon^{(n+1)}(P(R,0))}{\upsilon^{(n+1)}(P(R,i))}
\frac{\upsilon^{(n+1)}(P(R,i))}{\upsilon^{(n+1)}(R(i))}
\upsilon^{(n+1)}(R(i))\sim\upsilon^{(n+1)}(R(i)).
\end{align}
This, combined with the definitions of
$\|\cdot\|_{\dot{b}_{p, q}^{s-\frac{\gamma}{p},
\upsilon^{(n)}}(\mathbb{R}^n)}$ and
$\|\cdot\|_{\dot{b}_{p, q}^{s, \upsilon^{(n+1)}}
(\mathbb{A}(W))}$, Lemma \ref{lem-P_R}, and
\eqref{eq-t-lambda}, further implies that,
for any $\lambda\in\Lambda_{n+1}$
and $i\in\mathbb{Z}$ with $|i|\leq M$,
\begin{align}\label{eq-t-f-1}
&\left\|\left\{2^{j_{Q'}
(\frac{\gamma}{p}-\frac{1}{p})}
\left|A_{P(Q', i), W}\vec{t}
_{P(Q', i)}^{(\lambda)}\right|\right\}
_{Q'\in\mathcal{D}(\mathbb{R}^n)}\right\|
_{\dot{b}_{p, q}^{s-\frac{\gamma}{p},
\upsilon^{(n)}}(\mathbb{R}^n)}\\
&\quad=\sup_{R\in\mathcal{D}(\mathbb{R}^n)}
\frac{1}{\upsilon^{(n)}(R)}
\left\{\sum_{j=j_R}^{\infty}\left[\sum_{
\genfrac{}{}{0pt}{}{Q'\in\mathcal{D}_j(\mathbb{R}^n)}{Q'\subset R}}
2^{j(s+\frac{n}{2})p}\left|A_{P(Q', i), W}
\vec{t}_{P(Q', i)}^{(\lambda)}\right|^p
|Q'|^{\frac{n+1}{n}}\right]^{\frac{q}{p}}
\right\}^{\frac{1}{q}}\nonumber\\
&\quad\lesssim\sup_{R\in\mathcal{D}(\mathbb{R}^n)}
\frac{1}{\upsilon^{(n+1)}(R(i))}\left\{\sum_{j=j_{R(i)}}^{\infty}
\left[\sum_{\genfrac{}{}{0pt}{}{Q\in\mathcal{D}
_j(\mathbb{R}^{n+1})}{Q\subset R(i)}}
2^{j(s+\frac{n+1}{2}-\frac{1}{2})p}
\left|A_{Q, W}\vec{t}_{Q}^{(\lambda)}
\right|^p|Q|\right]^{\frac{q}{p}}\right\}^{\frac{1}{q}}\nonumber\\
&\quad\leq\sup_{P\in\mathcal{D}(\mathbb{R}^{n+1})}
\frac{1}{\upsilon^{(n+1)}(P)}\left\{\sum_{j=j_P}^{\infty}
\left[\sum_{\genfrac{}{}{0pt}{}{Q\in\mathcal{D}_j
(\mathbb{R}^{n+1})}{Q\subset P}}2^{j(s+\frac{n+1}{2}-\frac{1}{2})p}
\left|A_{Q, W}\vec{t}_{Q}^{(\lambda)}
\right|^p|Q|\right]^{\frac{q}{p}}\right\}^{\frac{1}{q}}\nonumber\\
&\quad=\left\|\left\{[\ell(Q)]^{\frac{1}{2}}\vec{t}_Q^{(\lambda)}
\right\}_{Q\in\mathcal{D}(\mathbb{R}^{n+1})}\right\|
_{\dot{b}_{p, q}^{s, \upsilon^{(n+1)}}
(\mathbb{A}(W))}=\left\|\left\{\left\langle\vec{f},
\theta_Q^{(\lambda)}\right\rangle_*\right\}
_{Q\in\mathcal{D}(\mathbb{R}^{n+1})}\right\|
_{\dot{b}_{p, q}^{s, \upsilon^{(n+1)}}
(\mathbb{A}(W))}.\notag
\end{align}
Using \eqref{eq-A_n-A_n+1}, \eqref{eq-t-f-1},
Remark \ref{rmk-a(A)-a}, and Lemmas \ref{a(A)=a(W)} and
\ref{lem-Dauwav decomp}, we conclude that, for any $\lambda\in\Lambda_{n+1}$
and $i\in\mathbb{Z}$ with $|i|\leq M$,

\begin{align}\label{eq-t-f}
\left\|\left\{\vec{t}_{P(Q', i)}^{(\lambda)}\right\}
_{Q'\in \mathcal{D}(\mathbb{R}^n)}\right\|
_{\dot{b}_{p, q}^{s-\frac{\gamma}{p}, \upsilon^{(n)}}(V)}
&\sim\left\|\left\{\vec{t}_{P(Q', i)}^{(\lambda)}
\right\}_{Q'\in \mathcal{D}(\mathbb{R}^n)}
\right\|_{\dot{b}_{p, q}^{s-\frac{\gamma}{p},
\upsilon^{(n)}}(\mathbb{A}(V))}\\
&=\left\|\left\{\left|A_{Q', V}\vec{t}
_{P(Q', i)}^{(\lambda)}\right|
\right\}_{Q'\in \mathcal{D}(\mathbb{R}^n)}\right\|
_{\dot{b}_{p, q}^{s-\frac{\gamma}{p},
\upsilon^{(n)}}(\mathbb{R}^n)}\nonumber\\
&\lesssim\left\|\left\{2^{j_{Q'}
(\frac{\gamma}{p}-\frac{1}{p})}
\left|A_{P(Q', i), W}\vec{t}
_{P(Q', i)}^{(\lambda)}\right|\right\}
_{Q'\in\mathcal{D}(\mathbb{R}^n)}\right\|
_{\dot{b}_{p, q}^{s-\frac{\gamma}{p},
\upsilon^{(n)}}(\mathbb{R}^n)}\notag\\
&\lesssim\left\|\left\{\left\langle\vec{f},
\theta_Q^{(\lambda)}\right\rangle_*\right\}
_{Q\in\mathcal{D}(\mathbb{R}^{n+1})}\right\|
_{\dot{b}_{p, q}^{s, \upsilon^{(n+1)}}
(\mathbb{A}(W))}\notag\\
&\sim\left\|\left\{\left\langle\vec{f},
\theta_Q^{(\lambda)}\right\rangle_*\right\}
_{Q\in\mathcal{D}(\mathbb{R}^{n+1})}\right\|
_{\dot{b}_{p, q}^{s, \upsilon^{(n+1)}}(W)}
\lesssim\left\|\vec{f}\right\|
_{\dot{B}^{s,\upsilon^{(n+1)}}_{p,q}(W)}\notag
\end{align}
and hence
$\{\vec{t}_{P(Q', i)}^{(\lambda)}\}_{Q'\in\mathcal{D}(\mathbb{R}^n)}
\in\dot{b}_{p, q}^{s-\frac{\gamma}{p}, \upsilon^{(n)}}(V)$.

Notice that, under the assumptions of the present theorem,
particularly $s\in(\frac{\gamma}{p}+E
+\frac{d_{p,\infty}^{\mathrm{upper}}(V)}{p},\infty)$,
there is no need for synthesis molecules for
$\dot{B}_{p, q}^{s-\frac{\gamma}{p},
\upsilon^{(n)}}(V)$ to satisfy Definition \ref{KLMNmole}(ii).
Moreover, for any $\lambda\in \Lambda_{n+1}$,
$\theta^{(\lambda)}\left(\cdot, 0\right)$ has compact support.
By these observations, Definition \ref{KLMNmole},
and the choice of $k$ at the beginning of the present proof,
it is easy to verify that, for any $\lambda\in \Lambda_{n+1}$
and $i\in\mathbb{Z}$ with $|i|\leq M$, there
exists a positive constant $C$ such that
$\{C[\ell(Q')]^{\frac{1}{2}}\theta_{P(Q',i)}^{(\lambda)}
(\cdot, 0)\}_{Q'\in\mathcal{D}(\mathbb{R}^n)}$
is a family of synthesis molecules for
$\dot{B}_{p, q}^{s-\frac{\gamma}{p}, \upsilon^{(n)}}(V)$.
From this, \eqref{eq-t-f}, and Lemma \ref{lem-moledecomp}(ii),
it follows that, for any $\lambda\in\Lambda_{n+1}$
and $i\in\mathbb{Z}$ with $|i|\leq M$,
$$
\sum_{Q'\in\mathcal{D}(\mathbb{R}^n)}\vec{t}
_{P(Q',i)}^{(\lambda)}[\ell(Q')]^{\frac{1}{2}}
\theta_{P(Q',i)}^{(\lambda)}\left(\cdot, 0\right)
$$
converges in $[\mathcal{S}'_{\infty}(\mathbb{R}^{n})]^m$ and
\begin{align*}
\left\|\sum_{Q'\in\mathcal{D}(\mathbb{R}^n)}\vec{t}
_{P(Q',i)}^{(\lambda)}[\ell(Q')]^{\frac{1}{2}}
\theta_{P(Q',i)}^{(\lambda)}\left(\cdot, 0\right)
\right\|_{\dot{B}^{s-\frac{\gamma}{p},\upsilon^{(n)}}_{p,q}(V)}
&\lesssim\left\|\left\{\vec{t}_{P(Q', i)}^{(\lambda)}\right\}
_{Q'\in\mathcal{D}(\mathbb{R}^n)}\right\|_{\dot{b}_{p, q}^{s-\frac{\gamma}{p},
\upsilon^{(n)}}(V)}\\
&\lesssim\left\|\left\{\left\langle\vec{f},
\theta_Q^{(\lambda)}\right\rangle_*\right\}_{Q\in\mathcal{D}(\mathbb{R}^{n+1})}
\right\|_{\dot{b}_{p, q}^{s, \upsilon^{(n+1)}}(W)}.	
\end{align*}
Moreover, applying this, \eqref{eq-Trf},
the quasi-triangle inequality of
$\|\cdot\|_{\dot{B}^{s-\frac{\gamma}{p},\upsilon^{(n)}}_{p,q}(V)}$,
and Lemma \ref{lem-Dauwav decomp}, we obtain,
for any $\vec{f}\in\dot{B}_{p, q}^{s, \upsilon^{(n+1)}}(W)$,
$\operatorname{Tr}\vec{f}$ is well-defined in
$[\mathcal{S}'_{\infty}(\mathbb{R}^{n})]^m$ and
\begin{align}\label{eq-Trf-f-B}
\left\|\operatorname{Tr}\vec{f}\right\|
_{\dot{B}^{s-\frac{\gamma}{p},\upsilon^{(n)}}_{p,q}(V)}
&\lesssim\sum_{\lambda\in \Lambda_{n+1}}
\sum_{i=-M}^M\left\|\sum_{Q'\in\mathcal{D}(\mathbb{R}^n)}
\vec{t}_{P(Q',i)}^{(\lambda)}[\ell(Q')]^{\frac{1}{2}} \theta_{P(Q',i)}^{(\lambda)}\left(\cdot, 0\right)\right\|_{\dot{B}^{s-\frac{\gamma}{p},
\upsilon^{(n)}}_{p,q}(V)}\\
&\lesssim\sum_{\lambda\in \Lambda_{n+1}}
\left\|\left\{\left\langle\vec{f}, \theta_Q^{(\lambda)}
\right\rangle_*\right\}_{Q\in\mathcal{D}(\mathbb{R}^{n+1})}
\right\|_{\dot{b}^{s,\upsilon^{(n+1)}}_{p,q}(W)}
\sim\left\|\vec{f}\right\|_{\dot{B}^{s,\upsilon^{(n+1)}}_{p,q}(W)}.	\nonumber
\end{align}
Using \eqref{eq-Trf-f-B}, we also conclude that
$\operatorname{Tr}$ is bounded from
$\dot{B}_{p, q}^{s, \upsilon^{(n+1)}}(W)$
to $\dot{B}_{p, q}^{s-\frac{\gamma}{p},\upsilon^{(n)}}(V)$.
This finishes the proof of the sufficiency
and hence Theorem \ref{thm-Tr-B}.
\end{proof}

\begin{remark}\label{rmk-Tr-B}
By checking the proof of Theorem \ref{thm-Tr-B},
we find that the assumption $\delta_1\in[0, \delta_2]$
is only used in the proof of necessity to
estimate $\|\vec{t}\|_{\dot{a}_{p,q}^{s, \upsilon}(W)}$
for any $\vec{t}$ as in \eqref{eq-nece-t}.
Moreover, one can also verify that
the proof of the sufficiency of Theorem \ref{thm-Tr-B}
is also valid for the case where $\delta_1\in(-\infty, \delta_2]$;
we omit the details.
\end{remark}

Next, we establish the boundedness of the extension operator
on generalized matrix-weighted Besov-type spaces.

\begin{theorem}\label{thm-Ext-B}
Let $s\in\mathbb{R}$, $p, \gamma\in(0,\infty)$, $q\in(0,\infty]$,
$W\in\mathcal{A}_{p,\infty}(\mathbb{R}^{n+1},\mathbb{C}^m)$,
and $V\in \mathcal{A}_{p,\infty}(\mathbb{R}^{n},\mathbb{C}^m)$.
Assume that $\delta_2\in[0, \infty)$, $\delta_1\in[0, \delta_2]$,
$\omega\in[0,(n+1)(\delta_2-\delta_1)]$,
$\upsilon^{(n+1)}$ is a $(\delta_1, \delta_2; \omega)$-order
growth function on $\mathcal{D}(\mathbb{R}^{n+1})$,
and ${\upsilon}^{(n)}$ is the restriction of
$\upsilon^{(n+1)}$ on $\mathcal{D}(\mathbb{R}^n)$.
If there exists a positive constant $C$ such that,
for any $Q'\in\mathcal{D}(\mathbb{R}^n)$ and $\vec{z}\in\mathbb{C}^m$,
\begin{align}\label{eq-Ext-B-B}
2^{j_{Q'}\gamma}\int_{P(Q', 0)}\left|W^{\frac{1}{p}}(x)
\vec z\right|^p\,dx\leq C \int_{Q'}\left|V^{\frac{1}{p}}(x)
\vec z\right|^p\,dx,
\end{align}
then the extension operator $\operatorname{Ext}$
in \eqref{defExt} can be extended
to a bounded linear operator from
$\dot{B}^{s-\frac{\gamma}{p},\upsilon^{(n)}}_{p,q}(V)$
to $ \dot{B}^{s,\upsilon^{(n+1)}}_{p,q}(W)$.
Furthermore, if $s\in(\frac{\gamma}{p}+E+
\frac{d_{p,\infty}^{\mathrm{upper}}(V)}{p},\infty)$
and \eqref{eq-tr-B-B} holds, where $E$ and $d_{p,\infty}^{\mathrm{upper}}(V)$
are as, respectively, in \eqref{eq-E-B} and \eqref{eq-upp-dim},
then $\operatorname{Tr}\circ\operatorname{Ext}$
is the identity on $\dot{B}^{s-\frac{\gamma}{p},\upsilon^{(n)}}_{p,q}(V)$.
\end{theorem}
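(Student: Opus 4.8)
The plan is to invert the wavelet characterization of $\dot{B}^{s-\frac{\gamma}{p},\upsilon^{(n)}}_{p,q}(V)$ from Lemma \ref{lem-Dauwav decomp}: expand $\vec g$ in Daubechies wavelets on $\mathbb{R}^n$, apply $\operatorname{Ext}$ termwise, recognize each $\operatorname{Ext}\theta^{(\lambda')}_{Q'}$ as a fixed multiple of an $(n+1)$-dimensional Daubechies wavelet hence of a synthesis molecule for $\dot{B}^{s,\upsilon^{(n+1)}}_{p,q}(W)$, and then invoke Lemma \ref{lem-moledecomp}(ii). Concretely, I would first fix $k\in\mathbb{N}$ large enough that \eqref{eq-k} holds for $\dot{B}^{s,\upsilon^{(n+1)}}_{p,q}(W)$ (and, for the last assertion, also for $\dot{B}^{s-\frac{\gamma}{p},\upsilon^{(n)}}_{p,q}(V)$), take $\varphi,\psi\in C^k(\mathbb{R})$ as in Lemma \ref{def-Dauwavv2}, and for $\vec g\in\dot{B}^{s-\frac{\gamma}{p},\upsilon^{(n)}}_{p,q}(V)$ write $\vec g=\sum_{\lambda'\in\Lambda_n}\sum_{Q'\in\mathcal{D}(\mathbb{R}^n)}t^{(\lambda')}_{Q'}\theta^{(\lambda')}_{Q'}$ with $t^{(\lambda')}_{Q'}:=\langle\vec g,\theta^{(\lambda')}_{Q'}\rangle_*$. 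Then I set $\operatorname{Ext}\vec g:=\sum_{\lambda'\in\Lambda_n}\sum_{Q'\in\mathcal{D}(\mathbb{R}^n)}t^{(\lambda')}_{Q'}\operatorname{Ext}\theta^{(\lambda')}_{Q'}$, which by \eqref{defExt} equals $\sum_{\lambda'\in\Lambda_n}\sum_{Q'}\vec u^{(\lambda')}_{P(Q',k_0)}\theta^{(\lambda',0)}_{P(Q',k_0)}$ with $\vec u^{(\lambda')}_{P(Q',k_0)}:=[\ell(Q')]^{\frac12}[\varphi(-k_0)]^{-1}t^{(\lambda')}_{Q'}$ and $\vec u^{(\lambda')}_R:=\mathbf 0$ for the remaining $R\in\mathcal{D}(\mathbb{R}^{n+1})$; note $(\lambda',0)\in\Lambda_{n+1}$, and this definition agrees with \eqref{defExt} on the wavelets themselves by orthonormality of the Daubechies basis.

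The crux is the norm transfer: for each $\lambda'\in\Lambda_n$,
\begin{align*}
\left\|\left\{\vec u^{(\lambda')}_R\right\}_{R\in\mathcal{D}(\mathbb{R}^{n+1})}\right\|_{\dot{b}^{s,\upsilon^{(n+1)}}_{p,q}(W)}\lesssim\left\|\left\{t^{(\lambda')}_{Q'}\right\}_{Q'\in\mathcal{D}(\mathbb{R}^n)}\right\|_{\dot{b}^{s-\frac{\gamma}{p},\upsilon^{(n)}}_{p,q}(V)},
\end{align*}
which is the mirror image, run backwards, of the chain \eqref{eq-A_n-A_n+1}--\eqref{eq-t-f} in the proof of Theorem \ref{thm-Tr-B}. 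I would pass to reducing operators $\mathbb{A}(W)$ for $W$ and $\mathbb{A}(V)$ for $V$; since $P(Q',k_0)$ and $P(Q',0)$ have the same side length $\ell(Q')$ and centres at distance $\lesssim|k_0|\ell(Q')$, Lemma \ref{growEST} gives $\|A_{P(Q',k_0),W}A_{P(Q',0),W}^{-1}\|\lesssim1$, after which the hypothesis \eqref{eq-Ext-B-B} and Definition \ref{def-red-ope} yield, exactly as in \eqref{eq-A_n-A_n+1}, $|A_{P(Q',k_0),W}\vec u^{(\lambda')}_{P(Q',k_0)}|\lesssim[\ell(Q')]^{\frac12}2^{j_{Q'}\frac{1-\gamma}{p}}|A_{Q',V}t^{(\lambda')}_{Q'}|$; the power of $2^{j_{Q'}}$ is precisely the one matching the $\dot{b}^{s,\upsilon^{(n+1)}}_{p,q}$-scale on $P(Q',k_0)$ with the $\dot{b}^{s-\frac{\gamma}{p},\upsilon^{(n)}}_{p,q}$-scale on $Q'$. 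Summing over the $Q'$ with $P(Q',k_0)\subset P$ for a given $P\in\mathcal{D}(\mathbb{R}^{n+1})$ is controlled by Lemma \ref{lem-P_R} (only $P$ within $O(\ell(P))$ of the hyperplane $x_{n+1}=0$ contribute), the weight comparison $\upsilon^{(n+1)}(P)\sim\upsilon^{(n)}(\text{its horizontal projection})$ follows from the growth condition and the restriction property as in \eqref{eq-vRi}, and the equivalences relating the $\dot{b}$-norms with reducing operators to those with $W$ and $V$ are Lemma \ref{a(A)=a(W)} and Remark \ref{rmk-a(A)-a}.

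Next I would check that each $\theta^{(\lambda',0)}(x',x_{n+1})=\theta^{(\lambda')}(x')\varphi(x_{n+1})$ is real-valued, of class $C^k$, compactly supported, and has vanishing moments up to order $k$: since $\theta^{(\lambda')}$ contains at least one factor $\psi$, \eqref{eq-int=0} gives $\int_{\mathbb{R}^n}\theta^{(\lambda')}(x')(x')^{\gamma'}\,dx'=0$ for all $|\gamma'|\leq k$, whence $\int_{\mathbb{R}^{n+1}}\theta^{(\lambda',0)}(x)x^{\gamma}\,dx=0$ for all $\gamma\in\mathbb{Z}_{+}^{n+1}$ with $|\gamma|\leq k$. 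By the choice of $k$ and Definition \ref{KLMNmole}, $\{C\theta^{(\lambda',0)}_R\}_{R\in\mathcal{D}(\mathbb{R}^{n+1})}$ is then a family of synthesis molecules for $\dot{B}^{s,\upsilon^{(n+1)}}_{p,q}(W)$, so Lemma \ref{lem-moledecomp}(ii) shows $\sum_R\vec u^{(\lambda')}_R\theta^{(\lambda',0)}_R$ converges in $[\mathcal{S}'_{\infty}(\mathbb{R}^{n+1})]^m$ with norm $\lesssim\|\{\vec u^{(\lambda')}_R\}_R\|_{\dot{b}^{s,\upsilon^{(n+1)}}_{p,q}(W)}$. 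Summing over the finitely many $\lambda'\in\Lambda_n$ and combining the quasi-triangle inequality with the norm transfer and Lemma \ref{lem-Dauwav decomp}, I would conclude that $\operatorname{Ext}\vec g$ converges in $[\mathcal{S}'_{\infty}(\mathbb{R}^{n+1})]^m$ and $\|\operatorname{Ext}\vec g\|_{\dot{B}^{s,\upsilon^{(n+1)}}_{p,q}(W)}\lesssim\|\vec g\|_{\dot{B}^{s-\frac{\gamma}{p},\upsilon^{(n)}}_{p,q}(V)}$; i.e.\ $\operatorname{Ext}$ extends to the claimed bounded operator.

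For the final assertion, the extra hypotheses are exactly those making Theorem \ref{thm-Tr-B} applicable, so $\operatorname{Tr}$ is bounded from $\dot{B}^{s,\upsilon^{(n+1)}}_{p,q}(W)$ to $\dot{B}^{s-\frac{\gamma}{p},\upsilon^{(n)}}_{p,q}(V)$ and acts on wavelet expansions by \eqref{eq-def-Trf}. Because $\operatorname{Ext}\vec g=\sum_{\lambda'}\sum_{Q'}\vec u^{(\lambda')}_{P(Q',k_0)}\theta^{(\lambda',0)}_{P(Q',k_0)}$ is, by uniqueness of the Daubechies expansion, already its own wavelet expansion, applying \eqref{eq-def-Trf} and then \eqref{TrExt} together with the definition of $\vec u^{(\lambda')}$ collapses $\operatorname{Tr}(\operatorname{Ext}\vec g)$ to $\sum_{\lambda'}\sum_{Q'}t^{(\lambda')}_{Q'}\theta^{(\lambda')}_{Q'}=\vec g$, the wavelet expansion of $\vec g$. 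I expect the main obstacle to be the norm transfer of the second paragraph: aligning the mixed $\ell^q(L^p)$ scales across the two dimensions when $W$ and $V$ are linked only through the integral inequality \eqref{eq-Ext-B-B} forces one to combine Lemma \ref{growEST}, Lemma \ref{a(A)=a(W)}, Lemma \ref{lem-P_R}, and the restriction property of the growth function in exactly the right order, and to bookkeep the powers of $2^{j_{Q'}}$ so that the two scales coincide — though this closely parallels the already-established computation \eqref{eq-A_n-A_n+1}--\eqref{eq-t-f}.
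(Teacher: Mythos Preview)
Your proposal is correct and follows essentially the same approach as the paper: wavelet expansion of $\vec g$, termwise extension via \eqref{defExt}, norm transfer through reducing operators and \eqref{eq-Ext-B-B} combined with Lemma \ref{growEST}, recognition of $\theta^{(\lambda',0)}=\theta^{(\lambda')}\otimes\varphi$ as a synthesis molecule via \eqref{eq-int=0}, and conclusion by Lemma \ref{lem-moledecomp}(ii) and Lemma \ref{lem-Dauwav decomp}. The only cosmetic difference is that for the supremum over $P\in\mathcal{D}(\mathbb{R}^{n+1})$ the paper introduces the horizontal projection $\mathcal{I}(P)$ and a growth-function comparison analogous to but distinct from \eqref{eq-vRi}, rather than invoking Lemma \ref{lem-P_R} directly; your parenthetical observation that only $P$ near the hyperplane contribute is exactly the content of that step.
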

\begin{proof}
To prove the present theorem, let $k\in\mathbb{N}$ satisfy \eqref{eq-k}
in Lemma \ref{lem-Dauwav decomp} on both $\dot{B}^{s,\upsilon^{(n+1)}}_{p,q}(W)$
and $\dot{B}^{s-\frac{\gamma}{p},\upsilon^{(n)}}_{p,q}(V)$
and $\varphi,\psi\in C^k(\mathbb{R})$
be as in Lemma \ref{def-Dauwavv2}.
For any $\lambda'\in\Lambda_{n}$
and $\lambda\in\Lambda_{n+1}$, let $\theta^{(\lambda')}$
and $\theta^{(\lambda)}$ be as in \eqref{eq-theta}.
Using Lemma \ref{lem-Dauwav decomp}, we conclude that,
for any $\vec{f}\in\dot{B}^{s-\frac{\gamma}{p}, \upsilon^{(n)}}_{p,q}(V)$,
\begin{align*}
\vec{f}=\sum_{\lambda'\in\Lambda_{n}}
\sum_{Q'\in\mathcal{D}(\mathbb{R}^n)}
\left\langle\vec{f},\theta^{(\lambda')}_{Q'}
\right\rangle_*\theta^{(\lambda')}_{Q'}
\end{align*}
in $[\mathcal{S}_\infty'(\mathbb{R}^{n})]^m$,
where $\langle\cdot,\cdot \rangle_*$ is as in \eqref{eq-f-m}.
Thus, for any $\vec{f}\in\dot{B}^{s-\frac{\gamma}{p},
\upsilon^{(n)}}_{p,q}(V)$, we define $\operatorname{Ext}\vec{f}$
by setting
\begin{align}\label{eq-Ext-f}
\operatorname{Ext}\vec{f}
:=\sum_{\lambda'\in\Lambda_{n}}\sum_{Q'\in\mathcal{D}(\mathbb{R}^n)}
\left\langle\vec{f},\theta^{(\lambda')}_{Q'}\right\rangle_*
\operatorname{Ext}\theta^{(\lambda')}_{Q'}.
\end{align}

Next, we claim that, for any $\vec{f}\in
\dot{B}^{s-\frac{\gamma}{p},\upsilon^{(n)}}_{p,q}(V)$,
$\operatorname{Ext}\vec{f}$ is well-defined in
$[\mathcal{S}_\infty'(\mathbb{R}^{n+1})]^m$
and $\operatorname{Ext}$ is bounded from
$\dot{B}^{s-\frac{\gamma}{p},\upsilon^{(n)}}_{p,q}(V)$
to $\dot{B}^{s,\upsilon^{(n+1)}}_{p,q}(W)$.
Let $k_0$ be as in Remark \ref{rmk-Dauwavv2}.
For any given $\vec{f}\in\dot{B}^{s-\frac{\gamma}{p},
\upsilon^{(n)}}_{p,q}(V)$ and for any $\lambda'\in\Lambda_{n}$,
we define the sequence $\vec{t}^{(\lambda')}:=
\{\vec{t}^{(\lambda')}_Q\}_{Q\in\mathcal{D}(\mathbb{R}^{n+1})}$
by setting, for any $Q\in\mathcal{D}(\mathbb{R}^{n+1})$,
\begin{equation}\label{eq-t}
\vec{t}^{(\lambda')}_Q:=
\begin{cases}
\left[\ell\left(Q'\right)\right]^{\frac{1}{2}}
\left\langle\vec{f},\theta^{(\lambda')}_{Q'}\right\rangle_*
&\text{if }Q=P(Q',k_0)\text{ for some }Q'\in\mathcal{D}(\mathbb{R}^n),\\
\mathbf{0}&\text{otherwise}.
\end{cases}
\end{equation}
By this, \eqref{defExt}, and \eqref{eq-Ext-f},
we find that, for any $\lambda'\in\Lambda_{n}$,
$Q'\in\mathcal{D}(\mathbb{R}^n)$, and $x\in\mathbb{R}^{n+1}$,
$$
\left\langle\vec{f},\theta^{(\lambda')}_{Q'}\right\rangle_*
\left[\operatorname{Ext}\theta^{(\lambda')}_{Q'}\right](x)
=\frac{1}{\varphi(-k_0)}\vec{t}^{(\lambda')}_{P(Q',k_0)}
\left[\theta^{(\lambda')}\otimes\varphi\right]_{P(Q',k_0)}(x)
$$
and hence
\begin{align}\label{eq-Ext-f-x}
\operatorname{Ext}\vec{f}&=\sum_{\lambda'\in\Lambda_{n}}
\sum_{Q'\in\mathcal{D}(\mathbb{R}^n)}\frac{1}{\varphi(-k_0)}
\vec{t}^{(\lambda')}_{P(Q',k_0)}
\left[\theta^{(\lambda')}\otimes\varphi\right]_{P(Q',k_0)}\\
&=\sum_{\lambda'\in\Lambda_{n}}\sum_{Q\in\mathcal{D}(\mathbb{R}^{n+1})}
\frac{1}{\varphi(-k_0)}\vec{t}^{(\lambda')}_{Q}
\left[\theta^{(\lambda')}\otimes\varphi\right]_{Q}.\nonumber
\end{align}
We now prove that, for any $\lambda'\in\Lambda_{n}$,
$\vec{t}^{(\lambda')}\in\dot{b}^{s,\upsilon^{(n+1)}}_{p,q}(W)$.
To this end, let $\mathbb{A}(W):=\{A_{Q, W}\}_{Q\in\mathcal{D}(\mathbb{R}^{n+1})}$
and $\mathbb{A}(V):=\{A_{Q', V}\}_{Q'\in\mathcal{D}(\mathbb{R}^n)}$
be sequences of reducing operators of order $p$, respectively,
for $W$ and $V$. Let $\beta_1 \in \llbracket
d_{p, \infty}^{\text {lower}}(W),\infty)$ and $\beta_2\in \llbracket
d_{p, \infty}^{\text {upper }}(W),\infty)$,
where $d_{p, \infty}^{\text {lower}}(W)$ and
$d_{p, \infty}^{\text {upper }}(W)$ are as, respectively,
in \eqref{eq-low-dim} and \eqref{eq-upp-dim}.
Applying Definition \ref{def-red-ope}, Lemma \ref{growEST},
and \eqref{eq-Ext-B-B}, we obtain,
for any $\lambda'\in\Lambda_{n}$ and $Q'\in\mathcal{D}(\mathbb{R}^n)$,
\begin{align}\label{eq-AV-AW}
\left|A_{P(Q',k_0), W}\vec{t}^{(\lambda')}_{P(Q',k_0)}\right|
&\lesssim(1+|k_0|)^{\beta_1+\beta_2}\left|A_{P(Q',0), W}\vec{t}^{(\lambda')}_{P(Q',k_0)}\right|\\
&\sim(1+|k_0|)^{\beta_1+\beta_2}2^{j_{Q'}\frac{n+1}{p}}
\left[\int_{P(Q',0)}\left|W^{\frac{1}{p}}(x)
\vec{t}^{(\lambda')}_{P(Q',k_0)}
\right|^p\,dx\right]^{\frac{1}{p}}\nonumber\\
&\lesssim(1+|k_0|)^{\beta_1+\beta_2}
2^{j_{Q'}(\frac{n+1}{p}-\frac{\gamma}{p})}
\left[\int_{Q'}\left|V^{\frac{1}{p}}(x)
\vec{t}^{(\lambda')}_{P(Q',k_0)}\right|^p\,dx
\right]^{\frac{1}{p}}\nonumber\\
&\sim2^{-j_{Q'}(\frac{\gamma}{p}-\frac{1}{p})}	
\left|A_{Q', V}\vec{t}_{P(Q', k_0)}^{(\lambda')}\right|\notag.
\end{align}
For any $P:=Q_{j,k}\in\mathcal{D}(\mathbb{R}^{n+1})$
with $j\in\mathbb{Z}$ and $k:=(k',k_{n+1})\in\mathbb{Z}^{n+1}$,
let
\begin{align}\label{eq-I(P)}
\mathcal{I}(P):=Q_{j,k'}\in\mathcal{D}(\mathbb{R}^n).
\end{align}
By the growth condition of $\upsilon^{(n+1)}$
and the assumption that $\upsilon^{(n)}$
is the restriction of $\upsilon^{(n+1)}$, we find that,
for any $P:=Q_{j,k}\in\mathcal{D}(\mathbb{R}^{n+1})$
with $j\in\mathbb{Z}$ and $k:=(k',k_{n+1})\in\mathbb{Z}^{n+1}$,
\begin{align}\label{eq-vIP}
\frac{\upsilon^{(n)}(\mathcal{I}(P))}{\upsilon^{(n+1)}(P)}
=\frac{\upsilon^{(n+1)}(Q_{j, (k',0)})}{\upsilon^{(n+1)}(P)}
=\frac{\upsilon^{(n+1)}(Q_{j, (k',0)})}
{\upsilon^{(n+1)}(Q_{j,(k',k_{n+1})})}
\sim(1+|k_{n+1}|)^{\omega}.
\end{align}
Observe that, for any $P:=Q_{j,k}\in\mathcal{D}(\mathbb{R}^{n+1})$
with $j\in\mathbb{Z}$, $k:=(k',k_{n+1})\in\mathbb{Z}^{n+1}$,
and $|k_{n+1}|\in(|k_0|+1, \infty)$, using the basic property
of dyadic cubes in $\mathbb{R}^n$, we conclude that,
for any $Q'\subset \mathcal{I}(P)$,
\begin{align}\label{eq-P}
P(Q',k_0)\cap P=\emptyset.
\end{align}
From this, \eqref{eq-vIP}, \eqref{eq-I(P)},
\eqref{eq-AV-AW}, \eqref{eq-t}, the definitions of $\|\cdot\|_{\dot{b}^{s,\upsilon^{(n+1)}}_{p,q}(\mathbb{R}^{n+1})}$
and $\|\cdot\|_{\dot{b}^{s-\frac{\gamma}{p},
\upsilon^{(n)}}_{p,q}(\mathbb{R}^n)}$, and Remark \ref{rmk-a(A)-a},
it follows that, for any $\lambda'\in\Lambda_{n}$,
\begin{align}\label{eq-t-f-2}
&\left\|\left\{\left|A_{Q, W}\vec{t}^{(\lambda')}_Q\right|
\right\}_{Q\in\mathcal{D}(\mathbb{R}^{n+1})}
\right\|_{\dot{b}^{s,\upsilon^{(n+1)}}_{p,q}(\mathbb{R}^{n+1})}\\
&\quad=\sup_{P\in\mathcal{D}(\mathbb{R}^{n+1})}
\frac{1}{\upsilon^{(n+1)}(P)}
\left\{\sum_{j=j_P}^{\infty}\left[\sum_{
\genfrac{}{}{0pt}{}{Q\in\mathcal{D}_j(\mathbb{R}^{n+1})}{Q\subset P}}
2^{j(s+\frac{n+1}{2})p}\left|A_{Q, W}
\vec{t}_{Q}^{(\lambda')}\right|^p
|Q|\right]^{\frac{q}{p}}
\right\}^{\frac{1}{q}}\nonumber\\
&\quad\lesssim\sup_{P\in\mathcal{D}(\mathbb{R}^{n+1})}
\frac{1}{\upsilon^{(n)}(\mathcal{I}(P))}\left\{\sum_{j=j_P}^{\infty}
\left[\sum_{\genfrac{}{}{0pt}{}{Q'\in\mathcal{D}_j
(\mathbb{R}^{n})}{Q'\subset \mathcal{I}(P)}}2^{j(s+\frac{n+1}{2})p}
\left|A_{P(Q',k_0), W}\vec{t}_{P(Q', k_0)}^{(\lambda')}
\right|^p|Q'|^{\frac{n+1}{n}}\right]^{\frac{q}{p}}
\right\}^{\frac{1}{q}}\nonumber\\
&\quad\lesssim\sup_{P\in\mathcal{D}(\mathbb{R}^{n+1})}
\frac{1}{\upsilon^{(n)}(\mathcal{I}(P))}\left\{\sum_{j=j_P}^{\infty}
\left[\sum_{\genfrac{}{}{0pt}{}{Q'\in\mathcal{D}_j
(\mathbb{R}^{n})}{Q'\subset \mathcal{I}(P)}}2^{j(s-\frac{\gamma}{p}+\frac{n+1}{2})p}
\left|A_{Q', V}\vec{t}_{P(Q', k_0)}^{(\lambda')}
\right|^p|Q'|\right]^{\frac{q}{p}}
\right\}^{\frac{1}{q}}\nonumber\\
&\quad=\sup_{R\in\mathcal{D}(\mathbb{R}^{n})}
\frac{1}{\upsilon^{(n)}(R)}\left\{\sum_{j=j_R}^{\infty}
\left[\sum_{\genfrac{}{}{0pt}{}{Q'\in\mathcal{D}_j
(\mathbb{R}^{n})}{Q'\subset R}}2^{j(s-\frac{\gamma}{p}+\frac{n+1}{2})p}
\left|A_{Q', V}\vec{t}_{P(Q', k_0)}^{(\lambda')}
\right|^p|Q'|\right]^{\frac{q}{p}}\right\}^{\frac{1}{q}}\nonumber\\
&\quad=\left\|\left\{\left|A_{Q', V}
\left\langle\vec{f},\theta^{(\lambda')}_{Q'}\right\rangle_*
\right|\right\}_{Q'\in\mathcal{D}(\mathbb{R}^n)}
\right\|_{\dot{b}^{s-\frac{\gamma}{p},
\upsilon^{(n)}}_{p,q}(\mathbb{R}^n)}\notag\\
&\quad=\left\|\left\{\left\langle\vec{f},\theta^{(\lambda')}_{Q'}
\right\rangle_*\right\}_{Q'\in\mathcal{D}(\mathbb{R}^n)}\right\|
_{\dot{b}^{s-\frac{\gamma}{p},\upsilon^{(n)}}_{p,q}
(\mathbb{A}(V))}.\notag
\end{align}
By Lemmas \ref{a(A)=a(W)} and \ref{lem-Dauwav decomp},
\eqref{eq-t-f-2}, and Remark \ref{rmk-a(A)-a},
we find that, for any $\lambda'\in\Lambda_{n}$,
\begin{align}\label{eq-t-f-x}
\left\|\vec{t}^{(\lambda')}\right\|
_{\dot{b}^{s,\upsilon^{(n+1)}}_{p,q}(W)}
&\sim\left\|\vec{t}^{(\lambda')}\right\|
_{\dot{b}^{s,\upsilon^{(n+1)}}_{p,q}(\mathbb{A}(W))}
=\left\|\left\{\left|A_{Q, W}\vec{t}^{(\lambda')}_Q\right|
\right\}_{Q\in\mathcal{D}(\mathbb{R}^{n+1})}
\right\|_{\dot{b}^{s,\upsilon^{(n+1)}}_{p,q}(\mathbb{R}^{n+1})}\\
&\lesssim\left\|\left\{\left\langle\vec{f},\theta^{(\lambda')}_{Q'}
\right\rangle_*\right\}_{Q'\in\mathcal{D}(\mathbb{R}^n)}\right\|
_{\dot{b}^{s-\frac{\gamma}{p},\upsilon^{(n)}}_{p,q}
(\mathbb{A}(V))}\nonumber\\
&\sim\left\|\left\{\left\langle\vec{f},
\theta^{(\lambda')}_{Q'}\right\rangle_*
\right\}_{Q'\in\mathcal{D}(\mathbb{R}^n)}
\right\|_{\dot{b}^{s-\frac{\gamma}{p},\upsilon^{(n)}}
_{p,q}(V)}\lesssim\left\|\vec{f}
\right\|_{\dot{B}^{s-\frac{\gamma}{p},\upsilon^{(n)}}_{p,q}(V)}\notag
\end{align}
and hence
\begin{equation}\label{eq-t-b}
\vec{t}^{(\lambda')}\in\dot{b}^{s,\upsilon^{(n+1)}}_{p,q}(W).
\end{equation}
Notice that, for any $\lambda'\in\Lambda_{n}$,
$\theta^{(\lambda')}\otimes\varphi$ has bounded support.
From this, Definition \ref{KLMNmole}, \eqref{eq-int=0},
and the choice of $k$ at the beginning of the present proof,
it follows that there exists a positive
constant $C$ such that, for any $\lambda'\in\Lambda_{n}$,
$\{C[\theta^{(\lambda')}\otimes
\varphi]_Q\}_{Q\in\mathcal{D}(\mathbb{R}^{n+1})}$
is a family of synthesis molecules
for $\dot{B}^{s,\upsilon^{(n+1)}}_{p,q}(W)$.
Applying this, Lemma \ref{lem-moledecomp}(ii),
and \eqref{eq-t-b}, we obtain,
for any $\lambda'\in\Lambda_{n}$,
\begin{align}\label{eq-conver}
\sum_{Q\in\mathcal{D}(\mathbb{R}^{n+1})}\vec{t}^{(\lambda')}_{Q}
\left[\theta^{(\lambda')}\otimes\varphi\right]_{Q}
\end{align}
converges in $[\mathcal{S}_\infty'(\mathbb{R}^{n+1})]^m$ and
\begin{align}\label{eq-t-B-B}
\left\|\sum_{Q\in\mathcal{D}(\mathbb{R}^{n+1})}\vec{t}^{(\lambda')}_Q
\left[\theta^{(\lambda')}\otimes \varphi\right]_Q\right\|
_{\dot{B}^{s,\upsilon^{(n+1)}}_{p,q}(W)}\lesssim
\left\|\vec{t}^{(\lambda')}\right\|
_{\dot{b}^{s,\upsilon^{(n+1)}}_{p,q}(W)}.
\end{align}
Using \eqref{eq-Ext-f-x}, \eqref{eq-conver},
\eqref{eq-t-B-B}, \eqref{eq-t-f-x}, the quasi-triangle
inequality of $\|\cdot\|_{\dot{B}^{s,\upsilon^{(n+1)}}_{p,q}(W)}$,
and Lemma \ref{lem-Dauwav decomp},
we conclude that, for any $\vec{f}\in\dot{B}^{s-\frac{\gamma}{p},
\upsilon^{(n)}}_{p,q}(V)$, $\operatorname{Ext}\vec{f}$
is well-defined in $[\mathcal{S}_\infty'(\mathbb{R}^{n+1})]^m$ and
\begin{align*}
\left\|\operatorname{Ext}\vec{f}\right\|
_{\dot{B}^{s,\upsilon^{(n+1)}}_{p,q}(W)}
&\lesssim\sum_{\lambda'\in\Lambda_{n}}
\left\|\sum_{Q\in\mathcal{D}(\mathbb{R}^{n+1})}\vec{t}^{(\lambda')}_Q
\left[\theta^{(\lambda')}\otimes \varphi\right]_Q\right\|
_{\dot{B}^{s,\upsilon^{(n+1)}}_{p,q}(W)}
\lesssim\sum_{\lambda'\in\Lambda_{n}}
\left\|\vec{t}^{(\lambda')}\right\|
_{\dot{b}^{s,\upsilon^{(n+1)}}_{p,q}(W)}\\
&\lesssim\sum_{\lambda'\in\Lambda_{n}}
\left\|\left\{\left\langle\vec{f},
\theta^{(\lambda')}_{Q'}\right\rangle_*
\right\}_{Q'\in\mathcal{D}(\mathbb{R}^n)}
\right\|_{\dot{b}^{s-\frac{\gamma}{p},
\upsilon^{(n)}}_{p,q}(V)}\sim\left\|\vec{f}\right\|
_{\dot{B}^{s-\frac{\gamma}{p},\upsilon^{(n)}}_{p,q}(V)},
\end{align*}
which further implies that $\operatorname{Ext}$ is bounded from
$\dot{B}^{s-\frac{\gamma}{p},\upsilon^{(n)}}_{p,q}(V)$
to $\dot{B}^{s,\upsilon^{(n+1)}}_{p,q}(W)$ and hence
completes the proof of the above claim.

To finish the proof, it remains to show that,
if $s\in(\frac{\gamma}{p}+E+\frac{d_{p,
\infty}^{\mathrm{upper}}(V)}{p},\infty)$ and
\eqref{eq-tr-B-B} holds, then $\operatorname{Tr}\circ
\operatorname{Ext}$ is the identity on
$\dot{B}^{s-\frac{\gamma}{p},\upsilon^{(n)}}_{p,q}(V)$.
From \eqref{eq-Ext-f}, it follows that,
for any $\vec{f}\in\dot{B}^{s-\frac{\gamma}{p},
\upsilon^{(n)}}_{p,q}(V)$,
\begin{align}\label{eq-Tr-Ext-f}
\left(\operatorname{Tr}\circ\operatorname{Ext}\right)\left(\vec{f}\right)
=\operatorname{Tr}\left(\sum_{\lambda'\in\Lambda_{n}}
\sum_{Q'\in\mathcal{D}(\mathbb{R}^n)}\left\langle\vec{f},
\theta^{(\lambda')}_{Q'}\right\rangle_*
\operatorname{Ext}\theta^{(\lambda')}_{Q'}\right).
\end{align}
Applying \eqref{defExt} and \eqref{eq-theta}, we obtain
$$
\left\{\frac{\varphi(-k_0)}{[\ell(Q')]^{\frac12}}
\operatorname{Ext}\theta^{(\lambda')}_{Q'}
\right\}_{\lambda'\in\Lambda_{n}, Q'\in\mathcal{D}(\mathbb{R}^n)}
\subset\left\{\theta^{(\lambda)}_Q\right\}
_{\lambda\in\Lambda_{n+1}, Q\in\mathcal{D}(\mathbb{R}^{n+1})}.
$$
By this, \eqref{eq-Tr-Ext-f}, \eqref{eq-def-Trf}, \eqref{TrExt},
and Lemma \ref{lem-Dauwav decomp}, we find that,
for any $\vec{f}\in\dot{B}^{s-\frac{\gamma}{p},
\upsilon^{(n)}}_{p,q}(V)$,
\begin{align*}
\left(\operatorname{Tr}\circ\operatorname{Ext}\right)\left(\vec{f}\right)
&=\sum_{\lambda'\in\Lambda_{n}}\sum_{Q'\in\mathcal{D}(\mathbb{R}^n)}
\left\langle\vec{f},\theta^{(\lambda')}_{Q'}\right\rangle_*
\left(\operatorname{Tr}\circ\operatorname{Ext}\right)
\theta^{(\lambda')}_{Q'}\\
&=\sum_{\lambda'\in\Lambda_{n}}\sum_{Q'\in\mathcal{D}(\mathbb{R}^n)}
\left\langle\vec{f},\theta^{(\lambda')}_{Q'}
\right\rangle_*\theta^{(\lambda')}_{Q'}=\vec{f}\notag,
\end{align*}
which further implies that
$\operatorname{Tr}\circ\operatorname{Ext}$
is the identity on $\dot{B}^{s-\frac{\gamma}{p},
\upsilon^{(n)}}_{p,q}(V)$.
This finishes the proof of Theorem \ref{thm-Ext-B}.
\end{proof}

\begin{remark}
Let all the symbols be the same as in Theorems \ref{thm-Tr-B} and
\ref{thm-Ext-B}. Let $\gamma=1$, $\tau\in[0, \infty)$, and,
for any $Q\in\mathcal{D}(\mathbb{R}^{n+1})$, let
$\upsilon^{(n+1)}(Q):=|Q|^{\tau}$. Observe that
the restriction $\upsilon^{(n)}$ of $\upsilon^{(n+1)}$
on $\mathcal{D}(\mathbb{R}^{n})$ satisfies that, for
any $Q\in\mathcal{D}(\mathbb{R}^{n})$, $\upsilon^{(n)}(Q)=|Q|^{\frac{n+1}{n}\tau}$.
In this case, the spaces $\dot{B}^{s,\upsilon^{(n+1)}}_{p,q}(W)$ and
$\dot{B}^{s-\frac{\gamma}{p},\upsilon^{(n)}}_{p,q}(V)$
in Theorems \ref{thm-Tr-B} and \ref{thm-Ext-B} are respectively
$\dot{B}^{s,\tau}_{p,q}(W)$ and
$\dot{B}^{s-\frac{1}{p},\frac{n+1}{n}\tau}_{p,q}(V)$.
On the one hand, Theorems \ref{thm-Tr-B} and
\ref{thm-Ext-B} are the homogeneous variants, respectively, of
\cite[Theorems 6.3 and 6.5]{bhyy5} in which Bu et al.
obtained the corresponding results for inhomogeneous spaces $B^{s,\tau}_{p,q}(W)$ and $B^{s-\frac{1}{p},\frac{n+1}{n}\tau}_{p,q}(V)$;
on the other hand, Bu et al. \cite[Theorems 5.6 and 5.10]{bhyy3}
also established the corresponding results in
Theorems \ref{thm-Tr-B} and \ref{thm-Ext-B} for
$\dot{B}^{s,\tau}_{p,q}(W)$ and
$\dot{B}^{s-\frac{1}{p},\frac{n+1}{n}\tau}_{p,q}(V)$
with $W, V$ being matrix $\mathcal{A}_p$ weights.
However, Theorems \ref{thm-Tr-B} and
\ref{thm-Ext-B} only respectively coincide with
\cite[Theorems 5.6 and 5.10]{bhyy3} for the case where $p\in(0, 1]$.
When $p\in(1, \infty)$, the range of the index $s$
in \cite[Theorems 5.6 and 5.10]{bhyy3} may be better than
that in Theorems \ref{thm-Tr-B} and \ref{thm-Ext-B};
see Remark \ref{rmk-compare} for the reason.
Furthermore, the spaces $\dot{B}^{s,0}_{p,q}(W)$
and $\dot{B}^{s-\frac{1}{p},0}_{p,q}(V)$ respectively
reduce to classical spaces $\dot{B}^{s}_{p,q}(W)$
and $\dot{B}^{s-\frac{1}{p}}_{p,q}(V)$
with $W, V$ being matrix $\mathcal{A}_{p,\infty}$ weights.
Even for these spaces,  Theorems \ref{thm-Tr-B} and
\ref{thm-Ext-B} are also new.

It is worth pointing out that
Frazier and Roudenko \cite[Theorems 1.2 and 1.3]{fr08}
also obtained the boundedness of the trace and the
extension operators for $\dot{B}^{s}_{p,q}(W)$ and
$\dot{B}^{s-\frac{1}{p}}_{p,q}(V)$ with
$W, V$ being ``$p$-admissible'' matrix weights, where $p\in (0,\infty)$
(see, for example, \cite[p.\,182]{fr08} and \cite[p.\,277]{rou03}).
We now claim that, for any $p\in(0,n]$, the matrix $\mathcal{A}_{p,\infty}$
class and the matrix $p$-admissible class can not contain each other.
Therefore, Theorems \ref{thm-Tr-B} and \ref{thm-Ext-B}
and \cite[Theorems 1.2 and 1.3]{fr08} have their own domains of applicability.
To see this, recall that the doubling exponent must be greater than
$n$ (see \cite[Proposition 2.10(iii)]{hs14}). As a result, in this case,
the matrix $p$-admissible class includes matrix $\mathcal{A}_p$ weights
and diagonal matrix weights with the doubling property.
Notice that the matrix weight $W_{\alpha,\beta}$ in \cite[Example 2.4]{byyz25}
with $\beta\in(n(p-1)_+,\infty)$ and $\alpha\in[\frac{|\beta|}{p},\infty)$
is exactly a matrix $\mathcal{A}_{p,\infty}$ weight but not
a matrix $p$-admissible weight. On the other hand, let $w$ be a
scalar doubling weight but not an $A_{\infty}$ weight (see \cite[(2.4)]{fm74} for such an example)
and $W:=wI_m$, where $I_m$ is the identity matrix of the order $m$.
From \cite[Lemma 3.2(ii)]{bhyy4}, it follows that $W$ is not a matrix $\mathcal{A}_{p,\infty}$
weight. However, $W$ is clearly a $p$-admissible matrix weight.
These two examples show the above claim.

In particular, when $m=1$ (the scalar-valued setting) and $W, V\equiv1$,
Theorem \ref{thm-Tr-B} improves \cite[Theorem 1.3]{syy10}
in which Sawano et al. established the boundedness of the trace
operator for $\dot{B}^{s,\tau}_{p,q}(\mathbb{R}^{n+1})$
with $\tau$ having an upper bound. Moreover,
we also refer to \cite{fra24} for a recent result
on the boundedness of the trace and the extension operators
on weighted mixed-norm Besov spaces.
\end{remark}

Next, we establish the boundedness of the trace operator on
generalized matrix-weighted Triebel--Lizorkin-type spaces.

\begin{theorem}\label{thm-Tr-F}
Let $p, \gamma\in(0,\infty)$, $q\in(0,\infty]$,
$W\in \mathcal{A}_{p,\infty}(\mathbb{R}^{n+1},\mathbb{C}^m)$,
and $V\in \mathcal{A}_{p,\infty}(\mathbb{R}^{n},\mathbb{C}^m)$.
Suppose that $\delta_2\in[0, \infty)$, $\delta_1\in[0, \delta_2]$,
$\omega\in[0,(n+1)(\delta_2-\delta_1)]$,
$\upsilon^{(n+1)}$ is a $(\delta_1, \delta_2; \omega)$-order
growth function on $\mathcal{D}(\mathbb{R}^{n+1})$,
and ${\upsilon}^{(n)}$ is the restriction of
${\upsilon}^{(n+1)}$ on $\mathcal{D}(\mathbb{R}^n)$.
If $s\in(\frac{\gamma}{p}+E+\frac{d_{p,\infty}
^{\mathrm{upper}}(V)}{p},\infty)$ with
\begin{align}\label{eq-E-F}
E:=\begin{cases}
\displaystyle n\left(\frac{1}{p}-\delta_1\right)
&\displaystyle\text{if }\delta_1>\frac{1}{p},\\
\displaystyle n\left(\frac{1}{p}-1\right)_+
&\text{otherwise}
\end{cases}
\end{align}
and $d_{p,\infty}^{\mathrm{upper}}(V)$ being as in \eqref{eq-upp-dim},
then the trace operator $\operatorname{Tr}$ in \eqref{defTr}
can be extended to a bounded linear operator from
$\dot{F}^{s,\upsilon^{(n+1)}}_{p,q}(W)$
to $\dot{B}^{s-\frac{\gamma}{p},\upsilon^{(n)}}_{p,p}(V)$
if and only if \eqref{eq-tr-B-B} holds.
\end{theorem}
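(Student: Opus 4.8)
The plan is to follow the template of the proof of Theorem~\ref{thm-Tr-B}, replacing the Besov source space by the Triebel--Lizorkin one and exploiting that, on the target side, $q=p$ forces $\dot{b}^{s-\frac{\gamma}{p},\upsilon^{(n)}}_{p,p}(V)=\dot{f}^{s-\frac{\gamma}{p},\upsilon^{(n)}}_{p,p}(V)$. First I would fix $k\in\mathbb{N}$ large enough to satisfy \eqref{eq-k} simultaneously for $\dot{F}^{s,\upsilon^{(n+1)}}_{p,q}(W)$ and $\dot{B}^{s-\frac{\gamma}{p},\upsilon^{(n)}}_{p,p}(V)$, take $\varphi,\psi\in C^{k}(\mathbb{R})$ and Daubechies wavelets $\{\theta^{(\lambda)}\}$ as in Lemmas~\ref{def-Dauwavv2} and \ref{lem-Dauwav decomp}, and build $\theta^{(\lambda')},\theta^{(\lambda)}$ via \eqref{eq-theta}. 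Note that the hypothesis $s>\frac{\gamma}{p}+E+\frac{d^{\mathrm{upper}}_{p,\infty}(V)}{p}$, with $E$ as in \eqref{eq-E-F}, is exactly the inequality $F_{\dot{a}^{s-\frac{\gamma}{p},\upsilon^{(n)}}_{p,p}(V)}<\frac{n}{2}$ (for $a\in\{b,f\}$, which agree here), so synthesis molecules for $\dot{B}^{s-\frac{\gamma}{p},\upsilon^{(n)}}_{p,p}(V)$ are allowed to have no vanishing moments.

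For the necessity I would use the same one-cube test sequence $\vec{t}$ as in \eqref{eq-nece-t}, supported on $P(Q',0)$ only, with $\lambda'_1:=(1,\dots,1)\in\Lambda_n$ and $\lambda_1:=(\lambda'_1,1)\in\Lambda_{n+1}$. Since $\vec{t}$ lives on a single dyadic cube, the inner $\ell^{q}$-sum defining $\|\vec{t}\|_{\dot{f}^{s,\upsilon^{(n+1)}}_{p,q}(W)}$ reduces to one term, so this norm equals, up to positive constants (the hypothesis $\delta_1\ge0$ being used to compare $\upsilon^{(n+1)}$ on nested cubes, as in \eqref{eq-t-z}), the quantity on the last line of \eqref{eq-t-z}; and, exactly as in the necessity part of Theorem~\ref{thm-Tr-B}, $\operatorname{Tr}(\theta^{(\lambda_1)}_{P(Q',0)})$ is a fixed constant multiple of $\theta^{(\lambda'_1)}_{Q'}$ on $\mathbb{R}^n$. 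Feeding these two facts into the assumed boundedness of $\operatorname{Tr}$ and Lemma~\ref{lem-Dauwav decomp} then reproduces the chain of estimates in that proof and yields \eqref{eq-tr-B-B}.

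For the sufficiency I would decompose $\operatorname{Tr}\vec{f}$ by wavelets exactly as in \eqref{eq-def-Trf}--\eqref{eq-Trf}: since $\operatorname{supp}\theta^{(\lambda)}\subset B(\mathbf{0},M)$, only columns with $|i|\le M$ survive, giving $\operatorname{Tr}\vec{f}=\sum_{\lambda\in\Lambda_{n+1}}\sum_{|i|\le M}\sum_{Q'}\vec{t}^{(\lambda)}_{P(Q',i)}[\ell(Q')]^{1/2}\theta^{(\lambda)}_{P(Q',i)}(\cdot,0)$, where $\vec{t}^{(\lambda)}_{Q}:=[\ell(Q)]^{-1/2}\langle\vec{f},\theta^{(\lambda)}_{Q}\rangle_{*}$. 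As in Theorem~\ref{thm-Tr-B}, the $\mathbb{R}^{n}$-functions $[\ell(Q')]^{1/2}\theta^{(\lambda)}_{P(Q',i)}(\cdot,0)$ have the correct $|Q'|^{-1/2}$ normalization, uniform $C^{k}$-bounds and compact support, and by the index condition above require no cancellation, so up to a uniform constant they form a family of synthesis molecules for $\dot{B}^{s-\frac{\gamma}{p},\upsilon^{(n)}}_{p,p}(V)$; hence, by Lemma~\ref{lem-moledecomp}(ii), the quasi-triangle inequality, and Lemma~\ref{lem-Dauwav decomp}, it suffices to show, for each $\lambda$ and each $|i|\le M$, that $\{\vec{t}^{(\lambda)}_{P(Q',i)}\}_{Q'}\in\dot{b}^{s-\frac{\gamma}{p},\upsilon^{(n)}}_{p,p}(V)$ with norm $\lesssim\|\vec{f}\|_{\dot{F}^{s,\upsilon^{(n+1)}}_{p,q}(W)}$. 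To prove this, I would pass to reducing operators $\mathbb{A}(W),\mathbb{A}(V)$ of order $p$ (Lemma~\ref{a(A)=a(W)} and Remark~\ref{rmk-a(A)-a}), and then the chain in \eqref{eq-A_n-A_n+1} (Definition~\ref{def-red-ope}, the hypothesis \eqref{eq-tr-B-B}, and Lemma~\ref{growEST}) gives $|A_{Q',V}\vec{t}^{(\lambda)}_{P(Q',i)}|\lesssim 2^{j_{Q'}(\frac{\gamma}{p}-\frac1p)}|A_{P(Q',i),W}\vec{t}^{(\lambda)}_{P(Q',i)}|$ uniformly in $Q'$ (the factor $(1+|i|)^{\beta_1+\beta_2}$ being absorbed since $|i|\le M$). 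Matching dyadic-scaling exponents as in \eqref{eq-t-f-1} (where $|Q'|^{\frac{n+1}{n}}=|P(Q',i)|$), the $R$-localized $\dot{b}_{p,p}(\mathbb{R}^{n})$-sum of a fixed column becomes $\sum_{j\ge j_R}\int[2^{js}|\mathbb{A}(W)_{j}u_{j}|]^{p}$ over the union of scale-$j$ column cubes inside $R$, with $u^{(\lambda)}_{Q}:=\langle\vec{f},\theta^{(\lambda)}_{Q}\rangle_{*}$; I would then dominate this by the $\ell^{q}$-in-$j$-then-$L^p$ Triebel--Lizorkin quantity $\int(\sum_{j}[2^{js}|\mathbb{A}(W)_{j}u_{j}|]^{q})^{p/q}$ over the $(n+1)$-dimensional localization cube $R(i)$ supplied by Lemma~\ref{lem-P_R}, for which $\upsilon^{(n)}(R)\sim\upsilon^{(n+1)}(R(i))$ by \eqref{eq-vRi}, and finally take the supremum over $R$ and invoke Lemmas~\ref{a(A)=a(W)} and \ref{lem-Dauwav decomp}.

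The main obstacle is precisely this last comparison, which is where the target integrability index becomes $p$ and not $q$: one must bound an $\ell^{p}$-sum over scales $j$ (the Besov-type sum for a column) by an $L^{p}$-norm of an $\ell^{q}$-sum over $j$ (the Triebel--Lizorkin norm upstairs). The mechanism is geometric: the scale-$j$ piece of the column $\{P(Q',i)\}_{Q'}$ occupies only the thin horizontal slab $\mathbb{R}^{n}\times\ell(Q')[i,i+1)$ of height $2^{-j}$, and one selects inside it a disjoint sub-band on which $j$ is the maximal relevant scale; for $i\notin\{-1,0\}$ the slabs at distinct scales are already pairwise disjoint, so at each point the inner $\ell^{q}$-sum has $O(1)$ terms and $\ell^{q}\simeq\ell^{p}$, while for $i\in\{-1,0\}$, where the slabs nest toward the hyperplane $x_{n+1}=0$, the $x_{n+1}$-integration over the band $\{|x_{n+1}|\sim2^{-j}\}$ contributes a factor $\sim2^{-j}$ that matches the dyadic weights and lets the $\ell^{q}$-sum be dominated from below by its diagonal term, which is the $\ell^{p}$-sum. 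The remaining care is bookkeeping: making this slab/sub-band splitting respect the growth-function ``$\sup$ over dyadic cubes'' structure, i.e.\ replacing each $R\in\mathcal{D}(\mathbb{R}^{n})$ by $R(i)\in\mathcal{D}(\mathbb{R}^{n+1})$ via Lemma~\ref{lem-P_R} with $\upsilon^{(n)}(R)\sim\upsilon^{(n+1)}(R(i))$ uniformly in $|i|\le M$. Everything else --- the molecular estimates, the reducing-operator bounds, and the wavelet synthesis --- transfers essentially verbatim from Theorem~\ref{thm-Tr-B}.
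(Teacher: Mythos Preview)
Your proposal is correct and follows essentially the same route as the paper. The one cosmetic difference is in the key $\ell^p$-to-$\ell^q$ comparison: rather than splitting into the cases $i\in\{-1,0\}$ versus $|i|\ge 1$, the paper handles all $i$ uniformly by introducing the sets $E_{P(Q',i)}:=Q'\times[\ell(Q')(i+\tfrac13),\ell(Q')(i+\tfrac23))$, which satisfy $|E_{P(Q',i)}|=\tfrac13|P(Q',i)|$ and are pairwise disjoint over \emph{all} $Q'\in\mathcal{D}(\mathbb{R}^n)$ for each fixed $i$; this disjointness makes the inner sum a single term at every point, so passing between $\ell^p$ and $\ell^q$ is immediate.
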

\begin{proof}
To prove the present theorem, let $k\in\mathbb{N}$
satisfy \eqref{eq-k} in Lemma \ref{lem-Dauwav decomp}
on both $\dot{F}^{s,\upsilon^{(n+1)}}_{p,q}(W)$
and $\dot{B}^{s-\frac{\gamma}{p},\upsilon^{(n)}}_{p,p}(V)$ and
let $\varphi,\psi\in C^k(\mathbb{R})$ be as in
Lemma \ref{def-Dauwavv2}. For any $\lambda\in\Lambda_{n+1}$,
let $\theta^{(\lambda)}$ be as in \eqref{eq-theta}.
The proof of the necessity is similar to that of the necessity
of Theorem \ref{thm-Tr-B}; we omit the details.

Next, we show the sufficiency.
By Lemma \ref{lem-Dauwav decomp}, we find that,
for any $\vec{f}\in\dot{F}^{s,\upsilon^{(n+1)}}_{p,q}(W)$,
\begin{align}\label{eq-expan-f}
\vec{f}=\sum_{\lambda\in\Lambda_{n+1}}
\sum_{Q\in\mathcal{D}(\mathbb{R}^{n+1})}\left\langle
\vec{f},\theta^{(\lambda)}_Q\right\rangle_*\theta^{(\lambda)}_Q
\end{align}
in $[\mathcal{S}_\infty'(\mathbb{R}^{n+1})]^m$,
where $\langle\cdot,\cdot \rangle_*$ is as in \eqref{eq-f-m}.
Based on the expansion \eqref{eq-expan-f},
for any $\vec{f}\in\dot{F}^{s,\upsilon^{(n+1)}}_{p,q}(W)$,
we define the action of $\operatorname{Tr}$ on $\vec{f}$ by
setting
\begin{align*}
\operatorname{Tr}\vec{f}:=\sum_{\lambda\in\Lambda_{n+1}}
\sum_{Q \in \mathcal{D}(\mathbb{R}^{n+1})}\left\langle\vec{f}, \theta_Q^{(\lambda)}\right\rangle_*\operatorname{Tr} \theta_Q^{(\lambda)},
\end{align*}
which, combined with \eqref{defTr} and \eqref{eq-Trf},
further implies that
\begin{align}\label{eq-Trf-F}
\operatorname{Tr}\vec{f}
=\sum_{\lambda\in \Lambda_{n+1}}\sum_{i=-M}^M
\sum_{Q'\in\mathcal{D}(\mathbb{R}^n)}
\vec{t}_{P(Q',i)}^{(\lambda)}
\left[\ell\left(Q'\right)\right]^{\frac{1}{2}}
\theta_{P(Q',i)}^{(\lambda)}\left(\cdot, 0\right),
\end{align}
where $M\in\mathbb{N}$ and $\{\vec{t}^{(\lambda)}_Q\}
_{Q\in\mathcal{D}(\mathbb{R}^{n+1})}$ are as,
respectively, in \eqref{eq-Trf} and \eqref{eq-t-lambda}.

We now prove that, for any $\vec{f}\in\dot{F}^{s,\upsilon^{(n+1)}}_{p,q}(W)$,
$\operatorname{Tr}\vec{f}$ is well-defined
in $[\mathcal{S}'_{\infty}(\mathbb{R}^{n})]^m$ and
$\operatorname{Tr}$ is bounded from
$\dot{F}^{s,\upsilon^{(n+1)}}_{p,q}(W)$
to $\dot{B}^{s-\frac{\gamma}{p},\upsilon^{(n)}}_{p,p}(V)$.
To this end, let $\mathbb{A}(W):=\{A_{Q, W}\}_{Q\in\mathcal{D}(\mathbb{R}^{n+1})}$
and $\mathbb{A}(V):=\{A_{Q', V}\}_{Q'\in\mathcal{D}(\mathbb{R}^n)}$
be sequences of reducing operators of order $p$, respectively,
for $W$ and $V$. From Lemma \ref{a(A)=a(W)}, Remark \ref{rmk-a(A)-a},
and \eqref{eq-A_n-A_n+1}, we deduce that,
for any $\lambda\in\Lambda_{n+1}$ and $i\in\mathbb{Z}$ with $|i|\leq M$,
\begin{align}\label{eq-t-x}
&\left\|\left\{\vec{t}^{(\lambda)}_{P(Q',i)}
\right\}_{Q'\in\mathcal{D}(\mathbb{R}^n)}
\right\|_{\dot{b}^{s-\frac{\gamma}{p},\upsilon^{(n)}}_{p,p}(V)}\\
&\quad\sim\left\|\left\{\vec{t}^{(\lambda)}_{P(Q',i)}
\right\}_{Q'\in\mathcal{D}(\mathbb{R}^n)}
\right\|_{\dot{b}^{s-\frac{\gamma}{p},\upsilon^{(n)}}
_{p,p}(\mathbb{A}(V))}
=\left\|\left\{\left|A_{Q', V}\vec{t}^{(\lambda)}
_{P(Q',i)}\right|\right\}_{Q'\in\mathcal{D}(\mathbb{R}^n)}
\right\|_{\dot{b}^{s-\frac{\gamma}{p},\upsilon^{(n)}}
_{p,p}(\mathbb{R}^{n})}\notag\\
&\quad\lesssim\left\|\left\{2^{j_{Q'}(\frac{\gamma}{p}
-\frac{1}{p})}\left|A_{P(Q',i), W}
\vec{t}^{(\lambda)}_{P(Q',i)}\right|\right\}
_{Q'\in\mathcal{D}(\mathbb{R}^n)}\right\|_{\dot{b}^{s-\frac{\gamma}{p},
\upsilon^{(n)}}_{p,p}(\mathbb{R}^{n})}.\nonumber
\end{align}
For any $i\in\mathbb{Z}$ with $|i|\leq M$ and
for any $R\in\mathcal{D}(\mathbb{R}^n)$,
let $R(i)\in\mathcal{D}(\mathbb{R}^{n+1})$ be
as in Lemma \ref{lem-P_R} with $Q'$ and $Q$
replaced, respectively, by $R$ and $R(i)$.
For any $Q'\in\mathcal{D}(\mathbb{R}^n)$ and $i\in\mathbb{Z}$, let
\begin{equation}\label{eq-E_PQ}
E_{P(Q',i)}:=Q'\times\left[\ell(Q')\left(i+\frac13\right),
\ell(Q')\left(i+\frac23\right)\right).
\end{equation}
Observe that, for any $i\in\mathbb{Z}$,
$\{E_{P(Q',i)}\}_{Q'\in\mathcal{D}(\mathbb{R}^n)}$
is a collection of pairwise disjoint sets.
This, combined with the definition of
$\|\cdot\|_{\dot{b}^{s-\frac{\gamma}{p},
\upsilon^{(n)}}_{p,p}(\mathbb{R}^n)}$,
Lemma \ref{lem-P_R}, and \eqref{eq-vRi},
further implies that,
for any $\lambda\in\Lambda_{n+1}$ and
$i\in\mathbb{Z}$ with $|i|\leq M$,
\begin{align}\label{eq-x}
&\left\|\left\{2^{j_{Q'}(\frac{\gamma}{p}
-\frac{1}{p})}\left|A_{P(Q',i), W}
\vec{t}^{(\lambda)}_{P(Q',i)}\right|\right\}
_{Q'\in\mathcal{D}(\mathbb{R}^n)}\right\|_{\dot{b}^{s-\frac{\gamma}{p},
\upsilon^{(n)}}_{p,p}(\mathbb{R}^n)}\\
&\quad=\sup_{R\in\mathcal{D}(\mathbb{R}^n)}\frac{1}{\upsilon^{(n)}(R)}
\left[\sum_{j=j_R}^\infty\sum_{\genfrac{}{}{0pt}{}{Q'\in\mathcal{D}_{j}
(\mathbb{R}^n)}{Q'\subset R}}
2^{j(s+\frac{n}{2})p}\left|A_{P(Q',i), W}
\vec{t}_{P(Q',i)}^{(\lambda)}\right|^p
|Q'|^{\frac{n+1}{n}}\right]^{\frac{1}{p}}\notag\\
&\quad\sim\sup_{R\in\mathcal{D}(\mathbb{R}^n)}
\frac{1}{\upsilon^{(n+1)}(R(i))}\left\{\sum_{j=j_R}^\infty\int_{R(i)}
\sum_{\genfrac{}{}{0pt}{}{Q'\in\mathcal{D}_{j}(\mathbb{R}^n)}{Q'\subset R}}
\left[|P(Q',i)|^{-\frac{s}{n+1}-\frac{1}{2}}\right.\right.\notag\\
&\quad\quad\times\left.\left|A_{P(Q',i), W}\left\langle\vec{f},
\theta^{(\lambda)}_{P(Q',i)}\right\rangle_*
\right|\mathbf{1}_{E_{P(Q',i)}}(x)\right]^p\,dx\Bigg\}^{\frac{1}{p}}\notag,
\end{align}
where the last equivalence follows from the fact that
$|E_{P(Q',i)}|=\frac{1}{3}|P(Q',i)|$ for any $i\in\mathbb{Z}$
and $Q'\in\mathcal{D}(\mathbb{R}^n)$.
Applying this again, \eqref{eq-t-x}, \eqref{eq-x},
the disjointness of $\{E_{P(Q',i)}\}_{Q'\in\mathcal{D}(\mathbb{R}^n)}$
for any $i\in\mathbb{Z}$, the definition of $\|\cdot\|_{\dot{f}^{s,\upsilon^{(n+1)}}_{p,q}(\mathbb{A}(W))}$,
and Lemmas \ref{lem-P_R}, \ref{a(A)=a(W)}, and \ref{lem-Dauwav decomp},
we obtain, for any $\lambda\in\Lambda_{n+1}$ and
$i\in\mathbb{Z}$ with $|i|\leq M$,
\begin{align}\label{eq-t-y}
&\left\|\left\{\vec{t}^{(\lambda)}_{P(Q',i)}
\right\}_{Q'\in\mathcal{D}(\mathbb{R}^n)}
\right\|_{\dot{b}^{s-\frac{\gamma}{p},\upsilon^{(n)}}_{p,p}(V)}\\
&\quad\lesssim\sup_{R\in\mathcal{D}(\mathbb{R}^n)}
\frac{1}{\upsilon^{(n+1)}(R(i))}
\left(\int_{R(i)}\Bigg\{\sum_{
\genfrac{}{}{0pt}{}{Q'\in\mathcal{D}
(\mathbb{R}^n)}{Q'\subset R}}
\left[|P(Q',i)|^{-\frac{s}{n+1}-\frac{1}{2}}
\right.\right.\notag\\
&\quad\quad\left.\left.\times\left|A_{P(Q',i), W}
\left\langle\vec{f},\theta^{(\lambda)}_{P(Q',i)}
\right\rangle_*\right|\mathbf{1}_{E_{P(Q',i)}}(x)
\right]^q\Bigg\}^{\frac{p}{q}}\,dx\right)^{\frac{1}{p}}\notag\\
&\quad\lesssim\sup_{R\in\mathcal{D}(\mathbb{R}^n)}
\frac{1}{\upsilon^{(n+1)}(R(i))}
\left(\int_{R(i)}\left\{\sum_{
\genfrac{}{}{0pt}{}{Q\in\mathcal{D}
(\mathbb{R}^{n+1})}{Q\subset R(i)}}
\left[|Q|^{-\frac{s}{n+1}}
\left|A_{Q, W}
\left\langle\vec{f},\theta^{(\lambda)}_{Q}
\right\rangle_*\right|\widetilde{\mathbf{1}}_{Q}(x)
\right]^q\right\}^{\frac{p}{q}}\,dx\right)^{\frac{1}{p}}\notag\\
&\quad\leq\sup_{P\in\mathcal{D}(\mathbb{R}^{n+1})}\frac{1}{\upsilon^{(n+1)}(P)}
\left(\int_P\left\{\sum_{\genfrac{}{}{0pt}{}{Q\in\mathcal{D}
(\mathbb{R}^{n+1})}{Q\subset P}}\left[|Q|^{-\frac{s}{n+1}}
\left|A_{Q,W}\left\langle\vec{f},\theta^{(\lambda)}_Q\right\rangle_*
\right|\widetilde{\mathbf{1}}_Q (x)\right]^q\right\}^{\frac{p}{q}}\,dx\right)^{\frac{1}{p}}\notag\\
&\quad=\left\|\left\{\left\langle\vec{f},
\theta^{(\lambda)}_Q \right\rangle_*\right\}_{Q\in\mathcal{D}(\mathbb{R}^{n+1})}
\right\|_{\dot{f}^{s,\upsilon^{(n+1)}}_{p,q}(\mathbb{A}(W))}\nonumber\\
&\quad\sim\left\|\left\{\left\langle\vec{f},
\theta^{(\lambda)}_Q \right\rangle_*\right\}_{Q\in\mathcal{D}(\mathbb{R}^{n+1})}
\right\|_{\dot{f}^{s,\upsilon^{(n+1)}}_{p,q}(W)}
\lesssim\left\|\vec{f}\right\|
_{\dot{F}^{s,\upsilon^{(n+1)}}_{p,q}(W)}\notag
\end{align}
and hence
\begin{align}\label{eq-t-b_pp}
\left\{\vec{t}^{(\lambda)}_{P(Q',i)}\right\}_{Q'\in\mathcal{D}
(\mathbb{R}^n)}\in\dot{b}^{s-\frac{\gamma}{p},\upsilon^{(n)}}_{p,p}(V).
\end{align}
By an argument similar to that used in the proof of Theorem \ref{thm-Tr-B},
we find that there exists a positive constant $C$
such that, for any $\lambda\in\Lambda_{n+1}$ and
$i\in\mathbb{Z}$ with $|i|\leq M$,
$\{C[\ell(Q')]^{\frac{1}{2}}\theta_{P(Q',i)}^{(\lambda)}
(\cdot, 0)\}_{Q'\in\mathcal{D}(\mathbb{R}^n)}$
is a family of synthesis molecules for
$\dot{B}_{p, p}^{s-\frac{\gamma}{p}, \upsilon^{(n)}}(V)$.
Using this, Lemma \ref{lem-moledecomp}(ii), and \eqref{eq-t-b_pp},
we conclude that, for any $\lambda\in\Lambda_{n+1}$
and $i\in\mathbb{Z}$ with $|i|\leq M$,
\begin{align*}
\sum_{Q'\in\mathcal{D}(\mathbb{R}^n)}
\vec{t}_{P(Q',i)}^{(\lambda)}
\left[\ell\left(Q'\right)\right]^{\frac{1}{2}}
\theta_{P(Q',i)}^{(\lambda)}\left(\cdot, 0\right)
\end{align*}
converges in $[\mathcal{S}'_{\infty}(\mathbb{R}^{n})]^m$ and
\begin{align*}
\left\|\sum_{Q'\in\mathcal{D}(\mathbb{R}^n)}
\vec{t}^{(\lambda)}_{P(Q',i)}[\ell(Q')]^{\frac12}
\theta^{(\lambda)}_{P(Q',i)}(\cdot,0)\right\|
_{\dot{B}^{s-\frac{\gamma}{p},\upsilon^{(n)}}_{p,p}(V)}
\lesssim\left\|\left\{\vec{t}^{(\lambda)}_{P(Q',i)}\right\}
_{Q'\in\mathcal{D}(\mathbb{R}^n)}\right\|_{\dot{b}^{s-\frac{\gamma}{p},
\upsilon^{(n)}}_{p,p}(V)}.
\end{align*}
From this, the quasi-triangle inequality of
$\|\cdot\|_{\dot{B}^{s-\frac{\gamma}{p},\upsilon^{(n)}}_{p,p}(V)}$,
\eqref{eq-t-y}, \eqref{eq-Trf-F}, and Lemma \ref{lem-Dauwav decomp},
we infer that, for any $\vec{f}\in\dot{F}^{s,\upsilon^{(n+1)}}_{p,q}(W)$,
$\operatorname{Tr}\vec{f}$ is well-defined in
$[\mathcal{S}'_{\infty}(\mathbb{R}^{n})]^m$ and
\begin{align}\label{eq-Trf-f-F}
\left\|\operatorname{Tr}\vec{f}\right\|_{\dot{B}^{s-\frac{\gamma}{p},
\upsilon^{(n)}}_{p,p}(V)}
&\lesssim\sum_{\lambda\in\Lambda_{n+1}}\sum_{i=-M}^M
\left\|\sum_{Q'\in\mathcal{D}(\mathbb{R}^n)}
\vec{t}^{(\lambda)}_{P(Q',i)}[\ell(Q')]^{\frac12}
\theta^{(\lambda)}_{P(Q',i)}(\cdot,0)\right\|
_{\dot{B}^{s-\frac{\gamma}{p},\upsilon^{(n)}}_{p,p}(V)}\\
&\lesssim\sum_{\lambda\in\Lambda_{n+1}}\sum_{i=-M}^M
\left\|\left\{\vec{t}^{(\lambda)}_{P(Q',i)}\right\}
_{Q'\in\mathcal{D}(\mathbb{R}^n)}\right\|_{\dot{b}^{s-\frac{\gamma}{p},
\upsilon^{(n)}}_{p,p}(V)}\nonumber\\
&\lesssim\sum_{\lambda\in\Lambda_{n+1}}
\left\|\left\{\left\langle\vec{f},\theta^{(\lambda)}_{Q}
\right\rangle_*\right\}_{Q\in\mathcal{D}(\mathbb{R}^{n+1})}
\right\|_{\dot{f}^{s,\upsilon^{(n+1)}}
_{p,q}(W)}\sim\left\|\vec{f}\right\|
_{\dot{F}^{s,\upsilon^{(n+1)}}_{p,q}(W)}.\nonumber
\end{align}
Applying \eqref{eq-Trf-f-F}, we also obtain
the trace operator $\operatorname{Tr}$ is bounded from
$\dot{F}^{s,\upsilon^{(n+1)}}_{p,q}(W)$ to $\dot{B}^{s-\frac{\gamma}{p},\upsilon^{(n)}}_{p,p}(V)$.
This finishes the proof of the
sufficiency and hence Theorem \ref{thm-Tr-F}.
\end{proof}

\begin{remark}
By Remark \ref{rmk-Tr-B},
we find that the assumption $\delta_1\in[0, \delta_2]$
is only used in the proof of necessity.
Moreover, one can also verify that
the proof of the sufficiency of Theorem \ref{thm-Tr-F}
is also valid for any $\delta_1\in(-\infty, \delta_2]$;
we omit the details.
\end{remark}

In order to obtain the boundedness of
the extension operator on generalized
matrix-weighted Triebel--Lizorkin-type spaces,
we need   the following lemma,
which is precisely \cite[Lemma 3.10]{byyz24}.

\begin{lemma}\label{lem-aLA-equi}
Let $(A, a)\in\{(B, b), (F, f)\}$, $s\in\mathbb{R}$,
$p, q\in(0,\infty]$ ($p<\infty$ if $a=f$), and
$\upsilon$ be a positive function defined on $\mathcal{D}(\mathbb{R}^n)$.
If $\epsilon\in(0,1]$ and $\{E_Q\}_{Q\in\mathcal{D}
(\mathbb{R}^n)}$ is a sequence of measurable sets with $E_Q \subset Q$
and $|E_Q|\geq\epsilon|Q|$ for any $Q\in\mathcal{D}
(\mathbb{R}^n)$, then, for any sequence
$t:=\{t_Q\}_{Q\in\mathcal{D}(\mathbb{R}^n)}$ in $\mathbb{C}$,
\begin{align*}
\left\|t\right\|_{\dot{a}_{p,q}^{s,\upsilon}}
\sim\left\|\left\{2^{js}\sum_{Q\in\mathcal{D}_j(\mathbb{R}^n)}
t_Q\widetilde{\mathbf{1}}_{E_Q}\right\}
_{j\in\mathbb{Z}}\right\|_{L\dot{A}_{p, q}^{\upsilon}},
\end{align*}
where $\widetilde{\mathbf{1}}_{E_Q}
:=|E_Q|^{-\frac{1}{2}}\mathbf{1}_{E_Q}$,
$\|\cdot\|_{L\dot{A}_{p, q}^{\upsilon}}$ is
as in \eqref{LA_nu} and
the positive equivalence
constants are independent of $t$.
\end{lemma}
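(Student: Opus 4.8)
The plan is to unwind the definition \eqref{LA_nu} of $\|\cdot\|_{L\dot{A}_{p,q}^{\upsilon}}$ and, writing $t_j:=\sum_{Q\in\mathcal{D}_j(\mathbb{R}^n)}\widetilde{\mathbf{1}}_Q t_Q$ and $\sigma_j:=2^{js}\sum_{Q\in\mathcal{D}_j(\mathbb{R}^n)}t_Q\widetilde{\mathbf{1}}_{E_Q}$, to establish
\begin{align*}
\left\|\{\sigma_j\}_{j\in\mathbb{Z}}\right\|_{L\dot{A}_{p,q}^{\upsilon}}
\lesssim\left\|t\right\|_{\dot{a}_{p,q}^{s,\upsilon}}
=\left\|\left\{2^{js}t_j\right\}_{j\in\mathbb{Z}}\right\|_{L\dot{A}_{p,q}^{\upsilon}}
\lesssim\left\|\{\sigma_j\}_{j\in\mathbb{Z}}\right\|_{L\dot{A}_{p,q}^{\upsilon}}.
\end{align*}
By \eqref{LA_nu}, each of these two inequalities reduces to the corresponding comparison of $\|\{\,\cdot\,\mathbf{1}_P\mathbf{1}_{j\geq j_P}\}_{j\in\mathbb{Z}}\|_{L\dot{A}_{p,q}}$ with a constant uniform in $P\in\mathcal{D}(\mathbb{R}^n)$, so I fix such a cube $P$. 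For $j\geq j_P$ and $x\in P$, let $Q^{(j)}(x)$ denote the unique cube of $\mathcal{D}_j(\mathbb{R}^n)$ containing $x$; since $\ell(Q^{(j)}(x))\leq\ell(P)$ we have $Q^{(j)}(x)\subset P$. The elementary facts driving the argument are that, on each $Q\in\mathcal{D}_j(\mathbb{R}^n)$, the function $2^{js}|t_j|$ equals the constant $2^{js}|Q|^{-1/2}|t_Q|$, while $|\sigma_j|$ equals the constant $2^{js}|E_Q|^{-1/2}|t_Q|$ on $E_Q$ and vanishes on $Q\setminus E_Q$; hence, because $\varepsilon|Q|\leq|E_Q|\leq|Q|$, on $E_Q$ one has $2^{js}|t_j|\leq|\sigma_j|\leq\varepsilon^{-1/2}2^{js}|t_j|$, while $\sigma_j\equiv0$ off $\bigcup_{Q\in\mathcal{D}_j(\mathbb{R}^n)}E_Q$. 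The first inequality above is then immediate: $|\sigma_j|\mathbf{1}_P\leq\varepsilon^{-1/2}2^{js}|t_j|\mathbf{1}_P$ pointwise for every $j\geq j_P$, so the monotonicity of $\|\cdot\|_{L\dot{B}_{p,q}}$ and of $\|\cdot\|_{L\dot{F}_{p,q}}$ under pointwise domination, together with dividing by $\upsilon(P)$ and taking the supremum over $P$, gives it.

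For the reverse inequality with $a=b$, the comparison is carried out scale by scale: for each $j\geq j_P$ a direct computation yields
\begin{align*}
\left\|2^{js}t_j\mathbf{1}_P\right\|_{L^p}^p
=\sum_{Q\in\mathcal{D}_j(\mathbb{R}^n),\,Q\subset P}2^{jsp}|Q|^{1-\frac{p}{2}}|t_Q|^p
\quad\text{and}\quad
\left\|\sigma_j\mathbf{1}_P\right\|_{L^p}^p
=\sum_{Q\in\mathcal{D}_j(\mathbb{R}^n),\,Q\subset P}2^{jsp}|E_Q|^{1-\frac{p}{2}}|t_Q|^p,
\end{align*}
and $\varepsilon|Q|\leq|E_Q|\leq|Q|$ makes $|E_Q|^{1-p/2}$ comparable to $|Q|^{1-p/2}$ with constants depending only on $p$ and $\varepsilon$, irrespective of the sign of $1-p/2$ (with the obvious replacement of the sum by a supremum when $p=\infty$). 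Comparing these two identities term by term, then taking $\ell^q$-norms over $j\geq j_P$, dividing by $\upsilon(P)$, and taking the supremum over $P$, completes this case.

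The case $a=f$ is the heart of the matter, because $2^{js}|t_j|$ occupies all of $Q^{(j)}(x)$ whereas $\sigma_j$ is supported only on $E_{Q^{(j)}(x)}$, so neither a pointwise estimate nor a monotonicity argument in the mixed norm survives $p,q\leq1$. I would invoke the power trick. Fix $r_0\in(0,\min\{p,q\})$. For $x\in P$ and $j\geq j_P$, averaging $|\sigma_j\mathbf{1}_P|^{r_0}$ over the cube $Q^{(j)}(x)\subset P$ and using that $|\sigma_j|^{r_0}$ equals the constant $\big(2^{js}|E_{Q^{(j)}(x)}|^{-1/2}|t_{Q^{(j)}(x)}|\big)^{r_0}$ on $E_{Q^{(j)}(x)}$, together with $|E_{Q^{(j)}(x)}|\geq\varepsilon|Q^{(j)}(x)|$, produces a constant $c=c(\varepsilon,r_0)>0$ such that
\begin{align*}
\left[M\big(|\sigma_j\mathbf{1}_P|^{r_0}\big)(x)\right]^{1/r_0}\geq c\,2^{js}|t_j(x)|
\qquad\text{for all }x\in P\text{ and }j\geq j_P,
\end{align*}
where $M$ denotes the Hardy--Littlewood maximal operator. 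Raising this to the $q$-th power, summing over $j\geq j_P$, taking $(1/q)$-th powers, and then applying the Fefferman--Stein vector-valued maximal inequality with the exponent pair $(p/r_0,\,q/r_0)\in(1,\infty)\times(1,\infty]$ (legitimate precisely because $r_0<\min\{p,q\}$ and $p<\infty$) yields
\begin{align*}
\left\|\left\{2^{js}t_j\mathbf{1}_P\mathbf{1}_{j\geq j_P}\right\}_{j\in\mathbb{Z}}\right\|_{L\dot{F}_{p,q}}
\lesssim\left\|\left\{\sigma_j\mathbf{1}_P\mathbf{1}_{j\geq j_P}\right\}_{j\in\mathbb{Z}}\right\|_{L\dot{F}_{p,q}}
\end{align*}
with implicit constant independent of $P$; dividing by $\upsilon(P)$ and taking the supremum over $P\in\mathcal{D}(\mathbb{R}^n)$ finishes the proof.

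I expect the only genuinely delicate step to be this last one: one must resist dominating $2^{js}|t_j|$ pointwise by the maximal function of $\sigma_j$, which would force $p,q>1$, and instead lower the integrability exponent to some $r_0<\min\{p,q\}$ before using the vector-valued maximal inequality. The localization to $P$ costs nothing, since every averaging cube $Q^{(j)}(x)$ that arises for $x\in P$ and $j\geq j_P$ already lies inside $P$, so $M(|\sigma_j\mathbf{1}_P|^{r_0})$ is no weaker than $M(|\sigma_j|^{r_0})$ there; the remaining ingredients, namely the exact Besov identities above and the pointwise monotonicity behind the easy inequality, are routine.
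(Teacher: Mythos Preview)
Your proof is correct. The paper does not give its own proof of this lemma; it merely quotes the result from \cite[Lemma 3.10]{byyz24}. Your argument---pointwise domination for the easy inequality, a termwise comparison at each scale for the Besov case, and the $r_0$-power trick combined with the Fefferman--Stein vector-valued maximal inequality for the Triebel--Lizorkin case---is the standard route for such equivalences, essentially the one used by Frazier and Jawerth for the classical spaces, so there is nothing to compare.
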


\begin{theorem}\label{thm-Ext-F}
Let $s\in\mathbb{R}$,  $p, \gamma\in(0,\infty)$, $q\in(0,\infty]$,
$W\in \mathcal{A}_{p, \infty}(\mathbb{R}^{n+1},\mathbb{C}^m)$,
and $V\in \mathcal{A}_{p, \infty}(\mathbb{R}^{n},\mathbb{C}^m)$.
Assume that $\delta_2\in[0, \infty)$, $\delta_1\in[0, \delta_2]$,
$\omega\in[0,(n+1)(\delta_2-\delta_1)]$,
$\upsilon^{(n+1)}$ is a $(\delta_1, \delta_2; \omega)$-order
growth function on $\mathcal{D}(\mathbb{R}^{n+1})$,
and ${\upsilon}^{(n)}$ is the restriction of
$\upsilon^{(n+1)}$ on $\mathcal{D}(\mathbb{R}^n)$.
If \eqref{eq-Ext-B-B} holds, then the extension operator
$\operatorname{Ext}$ in \eqref{defExt} can be extended
to a bounded linear operator from
$\dot{B}^{s-\frac{\gamma}{p},\upsilon^{(n)}}_{p,p}(V)$
to $\dot{F}^{s,\upsilon^{(n+1)}}_{p,q}(W)$.
Furthermore, if $s\in(\frac{\gamma}{p}+E+
\frac{d_{p,\infty}^{\mathrm{upper}}(V)}{p},\infty)$
and \eqref{eq-tr-B-B} holds,
where $E$ and $d_{p,\infty}^{\mathrm{upper}}(V)$ are as,
respectively, in \eqref{eq-E-F} and \eqref{eq-upp-dim},
then $\operatorname{Tr}\circ\operatorname{Ext}$ is the identity on
$\dot{B}^{s-\frac{\gamma}{p},\upsilon^{(n)}}_{p,p}(V)$.
\begin{proof}
To prove the present theorem, let $k\in\mathbb{N}$
satisfy \eqref{eq-k} in Lemma \ref{lem-Dauwav decomp}
on both $\dot{F}^{s,\upsilon^{(n+1)}}_{p,q}(W)$
and $\dot{B}^{s-\frac{\gamma}{p},\upsilon^{(n)}}_{p,p}(V)$ and
$\varphi,\psi\in C^k(\mathbb{R})$ be as in Lemma \ref{def-Dauwavv2}.
For any $\lambda'\in\Lambda_{n}$ and $\lambda\in\Lambda_{n+1}$,
let $\theta^{(\lambda')}$ and $\theta^{(\lambda)}$ be as in \eqref{eq-theta}.
Using Lemma \ref{lem-Dauwav decomp}, we conclude that,
for any $\vec{f}\in\dot{B}^{s-\frac{\gamma}{p},
\upsilon^{(n)}}_{p,p}(V)$,
$$
\vec{f}=\sum_{\lambda'\in\Lambda_{n}}\sum_{Q'\in\mathcal{D}(\mathbb{R}^n)}
\left\langle\vec{f},\theta^{(\lambda')}_{Q'}\right\rangle_*
\theta^{(\lambda')}_{Q'}
$$
in $[\mathcal{S}_\infty'(\mathbb{R}^{n})]^m$,
where $\langle\cdot,\cdot \rangle_*$ is as in \eqref{eq-f-m}.
For any $\vec{f}\in\dot{B}^{s-\frac{\gamma}{p},
\upsilon^{(n)}}_{p,p}(V)$,
let $\operatorname{Ext}\vec{f}$ be as in \eqref{eq-Ext-f}.

Next, we show that, for any $\vec{f}\in\dot{B}^{s-\frac{\gamma}{p},
\upsilon^{(n)}}_{p,p}(V)$, $\operatorname{Ext}\vec{f}$
is well-defined in $[\mathcal{S}_\infty'(\mathbb{R}^{n+1})]^m$
and $\operatorname{Ext}$ is bounded from
$\dot{B}^{s-\frac{\gamma}{p},\upsilon^{(n)}}_{p,p}(V)$
to $\dot{F}^{s,\upsilon^{(n+1)}}_{p,q}(W)$.
To this end, for any given $\vec{f}\in\dot{B}^{s-\frac{\gamma}{p},
\upsilon^{(n)}}_{p,p}(V)$ and for any $\lambda'\in\Lambda_{n}$,
let $\vec{t}^{(\lambda')}:=\{\vec{t}^{(\lambda')}_Q\}
_{Q\in\mathcal{D}(\mathbb{R}^{n+1})}$ be as in \eqref{eq-t}.
Let $k_0$ be as in Remark \ref{rmk-Dauwavv2}.
For any $Q'\in\mathcal{D}(\mathbb{R}^n)$,
let $E_{P(Q', k_0)}$ be as in \eqref{eq-E_PQ}.
Moreover, let $\mathbb{A}(W):=\{A_{Q, W}\}_{Q\in\mathcal{D}(\mathbb{R}^{n+1})}$
and $\mathbb{A}(V):=\{A_{Q', V}\}_{Q'\in\mathcal{D}(\mathbb{R}^n)}$
be sequences of reducing operators of order $p$,
respectively, for $W$ and $V$.

From \eqref{eq-AV-AW} and the
definition of $\|\cdot\|_{L\dot F_{p,q}^{\upsilon^{(n+1)}}}$,
it follows that, for any $\lambda'\in\Lambda_{n}$,
\begin{align*}
&\left\|\left\{2^{j(s+\frac{n+1}{2})}
\sum_{Q'\in\mathcal{D}_j(\mathbb{R}^n)}\left|A_{P(Q', k_0),W}
\vec{t}^{(\lambda')}_{P(Q', k_0)}\right|\mathbf{1}_{E_{P(Q', k_0)}}
\right\}_{j\in\mathbb Z}\right\|_{L\dot F_{p,q}^{\upsilon^{(n+1)}}}\\
&\quad\lesssim\left\|\left\{2^{j(s+\frac{n+1}{2})}
\sum_{Q'\in\mathcal{D}_j(\mathbb{R}^n)}
2^{-j(\frac{\gamma}{p}-\frac{1}{p})}	
\left|A_{Q', V}\vec{t}_{P(Q', k_0)}^{(\lambda)}
\right|\mathbf{1}_{E_{P(Q', k_0)}}
\right\}_{j\in\mathbb Z}\right\|_{L\dot F_{p,q}^{\upsilon^{(n+1)}}}\nonumber\\
&\quad=\sup_{P\in\mathcal{D}(\mathbb{R}^{n+1})}\frac{1}{\upsilon^{(n+1)}(P)}
\left(\int_P\Bigg\{\sum_{\genfrac{}{}{0pt}{}{Q'\in\mathcal{D}(\mathbb{R}^n)}{P(Q', k_0)
\subset P}}\left[|Q'|^{-\frac{s}{n}-\frac{1}{2}
+\frac{1}{p}(\frac{\gamma}{n}-\frac{1}{n})}\right.\right.\nonumber\\
&\quad\quad\quad\quad\times\left.\left.\left|A_{Q', V}
\left\langle\vec{f},\theta^{(\lambda')}_{Q'}
\right\rangle_*\right|\mathbf{1}_{E_{P(Q', k_0)}}(x)\right]^q
\Bigg\}^{\frac{p}{q}}\,dx\right)^{\frac{1}{p}}=:\Omega,
\end{align*}
which, together with the definition of $\|\cdot\|_{\dot{b}^{s-\frac{\gamma}{p},\upsilon^{(n)}}
_{p,p}(\mathbb{R}^n)}$, the construction of $\mathcal{I}(P)$ for any $P\in\mathcal{D}(\mathbb{R}^{n+1})$ [see \eqref{eq-I(P)}], the disjointness of $\{E_{P(Q',k_0)}\}_{Q'\in\mathcal{D}(\mathbb{R}^n)}$,
\eqref{eq-t}, \eqref{eq-vIP}, \eqref{eq-P},
and Remark \ref{rmk-a(A)-a}, further implies that
\begin{align}\label{eq-a}
\Omega&\lesssim\sup_{P\in\mathcal{D}(\mathbb{R}^{n+1})}
\frac{1}{\upsilon^{(n)}(\mathcal{I}(P))}
\left[\sum_{\genfrac{}{}{0pt}{}{Q'\in\mathcal{D}(\mathbb{R}^n)}{Q'\subset \mathcal{I}(P)}}
|Q'|^{-(\frac{s}{n}+\frac{1}{2})p+\frac{\gamma}{n}+1}
\left|A_{Q',V}\left\langle\vec{f},\theta^{(\lambda')}_{Q'}
\right\rangle_*\right|^p \right]^{\frac{1}{p}}\\
&=\sup_{R\in\mathcal{D}(\mathbb{R}^n)}\frac{1}{\upsilon^{(n)}(R)}
\left\{\int_R \sum_{\genfrac{}{}{0pt}{}{Q'\in\mathcal{D}(\mathbb{R}^n)}{Q'\subset R}}
\left[|Q'|^{-\frac{1}{n}(s-\frac{\gamma}{p})}
\left|A_{Q', V}\left\langle\vec{f},\theta^{(\lambda')}_{Q'}
\right\rangle_*\right|\widetilde{\mathbf{1}}_{Q'}(x)
\right]^p\,dx\right\}^{\frac{1}{p}}\notag\\
&=\left\|\left\{\left|A_{Q', V}\left\langle\vec{f},
\theta^{(\lambda')}_{Q'} \right\rangle_*\right|
\right\}_{Q'\in\mathcal{D}(\mathbb{R}^n)}
\right\|_{\dot{b}^{s-\frac{\gamma}{p},\upsilon^{(n)}}
_{p,p}(\mathbb{R}^n)}\notag=\left\|\left\{\left\langle\vec{f},
\theta^{(\lambda')}_{Q'}\right\rangle_*\right\}
_{Q'\in\mathcal{D}(\mathbb{R}^n)}\right\|_{\dot{b}^{s-\frac{\gamma}{p},
\upsilon^{(n)}}_{p,p}(\mathbb{A}(V))}\notag.
\end{align}
For any $Q\in\mathcal{D}(\mathbb{R}^{n+1})$,
let $E_Q$ be as in \eqref{eq-E_PQ}.
Applying this, the definition of $\|\cdot\|_{\dot{f}^{s,
\upsilon^{(n+1)}}_{p,q}(\mathbb{A}(W))}$, \eqref{eq-a}, \eqref{eq-t},
and Lemmas \ref{a(A)=a(W)}, \ref{lem-aLA-equi},
and \ref{lem-Dauwav decomp} with the fact that
$|E_Q|=\frac{1}{3}|Q|$
for any $Q\in\mathcal{D}(\mathbb{R}^{n+1})$, we obtain,
for any $\lambda'\in\Lambda_{n}$,
\begin{align}\label{eq-b}
&\left\|\left\{\vec{t}^{(\lambda')}_Q\right\}
_{Q\in\mathcal{D}(\mathbb{R}^{n+1})}\right\|
_{\dot{f}^{s,\upsilon^{(n+1)}}_{p,q}(W)}\\
&\quad\sim\left\|\left\{\vec{t}^{(\lambda')}_Q\right\}
_{Q\in\mathcal{D}(\mathbb{R}^{n+1})}\right\|
_{\dot{f}^{s,\upsilon^{(n+1)}}_{p,q}(\mathbb{A}(W))}
\sim\left\|\left\{2^{js}\sum_{Q\in\mathcal{D}_j
(\mathbb{R}^{n+1})}\left|A_{Q, W}
\vec{t}^{(\lambda')}_{Q}\right|\widetilde{\mathbf{1}}_{E_{Q}}
\right\}_{j\in\mathbb Z}\right\|_{L\dot F_{p,q}^{\upsilon^{(n+1)}}}\nonumber\\
&\quad=\left\|\left\{2^{js}\sum_{Q'\in\mathcal{D}_j
(\mathbb{R}^n)}\left|A_{P(Q', k_0),W}
\vec{t}^{(\lambda')}_{P(Q', k_0)}\right|
\widetilde{\mathbf{1}}_{E_{P(Q', k_0)}}
\right\}_{j\in\mathbb Z}\right\|_{L\dot{F}_{p,q}^{\upsilon^{(n+1)}}}\nonumber\\
&\quad\lesssim\left\|\left\{\left\langle\vec{f},
\theta^{(\lambda')}_{Q'}\right\rangle_*\right\}
_{Q'\in\mathcal{D}(\mathbb{R}^n)}\right\|_{\dot{b}^{s-\frac{\gamma}{p},
\upsilon^{(n)}}_{p,p}(\mathbb{A}(V))}
\sim\left\|\left\{\left\langle\vec{f},\theta^{(\lambda')}_{Q'} \right\rangle_*\right\}_{Q'\in\mathcal{D}(\mathbb{R}^n)}\right\|
_{\dot{b}^{s-\frac{\gamma}{p},\upsilon^{(n)}}_{p,p}(V)}\nonumber\\
&\quad\lesssim\left\|\vec{f}\right\|_{\dot{B}^{s-\frac{\gamma}{p},
\upsilon^{(n)}}_{p,p}(V)}\nonumber
\end{align}
and hence
\begin{align}\label{eq-t-f-xx}
\left\{\vec{t}^{(\lambda')}_Q\right\}_{Q\in\mathcal{D}(\mathbb{R}^{n+1})}
\in\dot{f}^{s,\upsilon^{(n+1)}}_{p,q}(W).
\end{align}
By an argument similar to that used in the proof of Theorem \ref{thm-Ext-B},
we find that there exists a positive constant $C$ such
that, for any $\lambda'\in\Lambda_{n}$,
$\{C[\theta^{(\lambda')}\otimes\varphi]_Q\}
_{Q\in\mathcal{D}(\mathbb{R}^{n+1})}$
is a family of synthesis molecules for
$\dot{F}^{s,\upsilon^{(n+1)}}_{p,q}(W)$.
Using this, \eqref{eq-t-f-xx}, and Lemma \ref{lem-moledecomp}(ii),
we conclude that, for any $\lambda'\in\Lambda_{n}$,
\begin{align*}
\sum_{Q\in\mathcal{D}(\mathbb{R}^{n+1})}\vec{t}^{(\lambda')}_Q
\left[\theta^{(\lambda')}\otimes \varphi \right]_Q
\end{align*}
converges in $[\mathcal{S}_\infty'(\mathbb{R}^{n+1})]^m$ and
\begin{align*}
\left\|\sum_{Q\in\mathcal{D}(\mathbb{R}^{n+1})}\vec{t}^{(\lambda')}_Q
\left[\theta^{(\lambda')}\otimes \varphi \right]_Q \right\|
_{\dot{F}^{s,\upsilon^{(n+1)}}_{p,q}(W)}
\lesssim\left\|\left\{\vec{t}^{(\lambda')}_Q\right\}
_{Q\in\mathcal{D}(\mathbb{R}^{n+1})}\right\|
_{\dot{f}^{s,\upsilon^{(n+1)}}_{p,q}(W)}\nonumber.
\end{align*}
From this, the quasi-triangle inequality of
$\|\cdot\|_{\dot{F}^{s,\upsilon^{(n+1)}}_{p,q}(W)}$,
\eqref{eq-Ext-f}, \eqref{eq-b}, and Lemma \ref{lem-Dauwav decomp},
it follows that, for any $\vec{f}\in\dot{B}^{s-\frac{\gamma}{p},
\upsilon^{(n)}}_{p,p}(V)$, $\operatorname{Ext}\vec{f}$ is well-defined
in $[\mathcal{S}_\infty'(\mathbb{R}^{n+1})]^m$ and
\begin{align*}
\left\|\operatorname{Ext}\vec{f}\right\|
_{\dot{F}^{s,\upsilon^{(n+1)}}_{p,q}(W)}
&\lesssim\sum_{\lambda'\in\Lambda_{n}}
\left\|\sum_{Q\in\mathcal{D}(\mathbb{R}^{n+1})}\vec{t}^{(\lambda')}_Q
\left[\theta^{(\lambda')}\otimes \varphi \right]_Q \right\|
_{\dot{F}^{s,\upsilon^{(n+1)}}_{p,q}(W)}\\
&\lesssim\sum_{\lambda'\in\Lambda_{n}}
\left\|\left\{\vec{t}^{(\lambda')}_Q\right\}
_{Q\in\mathcal{D}(\mathbb{R}^{n+1})}\right\|
_{\dot{f}^{s,\upsilon^{(n+1)}}_{p,q}(W)}\nonumber\\
&\lesssim\sum_{\lambda'\in\Lambda_{n}}\left
\|\left\{\left\langle\vec{f},\theta^{(\lambda')}_{Q'}
\right\rangle_*\right\}_{Q'\in\mathcal{D}(\mathbb{R}^n)}\right\|
_{\dot{b}^{s-\frac{\gamma}{p},\upsilon^{(n)}}_{p,p}
(V)}\sim\left\|\vec{f}\right\|
_{\dot{B}^{s-\frac{\gamma}{p},\upsilon^{(n)}}_{p,p}(V)},\nonumber
\end{align*}
which further implies that
$\operatorname{Ext}$ is bounded from
$\dot{B}^{s-\frac{\gamma}{p},\upsilon^{(n)}}_{p,p}(V)$
to $\dot{F}^{s,\upsilon^{(n+1)}}_{p,q}(W)$.

If $s\in(\frac{\gamma}{p}+E+\frac{d_{p,
\infty}^{\mathrm{upper}}(V)}{p},\infty)$ and
\eqref{eq-tr-B-B} holds, to prove $\operatorname{Tr}
\circ\operatorname{Ext}$ is the identity on
$\dot{B}^{s-\frac{\gamma}{p},\upsilon^{(n)}}_{p,p}(V)$,
it suffices to repeat an argument similar to that used in the proof
of Theorem \ref{thm-Ext-B} with $\vec{f}\in\dot{B}^{s-\frac{\gamma}{p},
\upsilon^{(n)}}_{p,q}(V)$ replaced by $\dot{B}^{s-\frac{\gamma}{p},\upsilon^{(n)}}_{p,p}(V)$;
we omit the details. This finishes the proof of Theorem \ref{thm-Ext-F}.
\end{proof}
\end{theorem}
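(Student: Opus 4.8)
The plan is to reduce the whole statement to a single estimate between sequence spaces, exactly as in the proof of Theorem~\ref{thm-Ext-B} but with the $\mathbb{R}^{n+1}$-side space now of Triebel--Lizorkin type. First I would fix $k\in\mathbb{N}$ satisfying \eqref{eq-k} simultaneously for $\dot{F}^{s,\upsilon^{(n+1)}}_{p,q}(W)$ and $\dot{B}^{s-\frac{\gamma}{p},\upsilon^{(n)}}_{p,p}(V)$, take $\varphi,\psi\in C^k(\mathbb{R})$ from Lemma~\ref{def-Dauwavv2}, and let $k_0$ be as in Remark~\ref{rmk-Dauwavv2}. For $\vec{f}\in\dot{B}^{s-\frac{\gamma}{p},\upsilon^{(n)}}_{p,p}(V)$, expand $\vec{f}$ in the Daubechies basis via Lemma~\ref{lem-Dauwav decomp}, define $\operatorname{Ext}\vec{f}$ by \eqref{eq-Ext-f}, and for each $\lambda'\in\Lambda_n$ form the $(n+1)$-dimensional coefficient sequence $\vec{t}^{(\lambda')}$ of \eqref{eq-t}, which lives only on the cubes $P(Q',k_0)$. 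By \eqref{defExt} and \eqref{eq-theta}, $\operatorname{Ext}\theta^{(\lambda')}_{Q'}$ equals the fixed multiple $\varphi(-k_0)^{-1}[\theta^{(\lambda')}\otimes\varphi]_{P(Q',k_0)}$ of a tensor wavelet, and since $\theta^{(\lambda')}\otimes\varphi$ has bounded support and, by \eqref{eq-int=0} and the choice of $k$, enough vanishing moments and smoothness, a suitable constant multiple of $\{[\theta^{(\lambda')}\otimes\varphi]_Q\}_{Q\in\mathcal{D}(\mathbb{R}^{n+1})}$ is a family of synthesis molecules for $\dot{F}^{s,\upsilon^{(n+1)}}_{p,q}(W)$ in the sense of Definition~\ref{def-anasynmole}. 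Hence Lemma~\ref{lem-moledecomp}(ii) gives the convergence of $\sum_{Q}\vec{t}^{(\lambda')}_Q[\theta^{(\lambda')}\otimes\varphi]_Q$ in $(\mathcal{S}'_\infty(\mathbb{R}^{n+1}))^m$ together with the bound by $\|\vec{t}^{(\lambda')}\|_{\dot{f}^{s,\upsilon^{(n+1)}}_{p,q}(W)}$, so everything comes down to proving $\|\vec{t}^{(\lambda')}\|_{\dot{f}^{s,\upsilon^{(n+1)}}_{p,q}(W)}\lesssim\|\{\langle\vec{f},\theta^{(\lambda')}_{Q'}\rangle_*\}_{Q'}\|_{\dot{b}^{s-\frac{\gamma}{p},\upsilon^{(n)}}_{p,p}(V)}$ and then summing the finitely many $\lambda'$.

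For that key estimate I would introduce sequences $\mathbb{A}(W)=\{A_{Q,W}\}$ and $\mathbb{A}(V)=\{A_{Q',V}\}$ of reducing operators of order $p$ for $W$ and $V$ respectively, and use Lemma~\ref{a(A)=a(W)} and Remark~\ref{rmk-a(A)-a} to replace both weighted norms by averaging-sequence norms. The first step is the pointwise inequality $|A_{P(Q',k_0),W}\vec{t}^{(\lambda')}_{P(Q',k_0)}|\lesssim 2^{-j_{Q'}(\frac{\gamma}{p}-\frac{1}{p})}|A_{Q',V}\vec{t}^{(\lambda')}_{P(Q',k_0)}|$, obtained by transferring from $P(Q',k_0)$ to $P(Q',0)$ via Lemma~\ref{growEST} (the fixed shift $k_0$ only contributes a bounded constant) and then applying the hypothesis \eqref{eq-Ext-B-B} through Definition~\ref{def-red-ope}. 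The second, and most delicate, step handles the mismatch that $\vec{t}^{(\lambda')}$ charges only the single vertical layer $\{P(Q',k_0)\}$ while the $\dot{f}$-norm aggregates over all dyadic cubes of $\mathbb{R}^{n+1}$ in $\ell^q$: here I would replace each $\widetilde{\mathbf{1}}_Q$ by $\widetilde{\mathbf{1}}_{E_Q}$ with the interior slab $E_Q$ of \eqref{eq-E_PQ} using Lemma~\ref{lem-aLA-equi}, and then exploit that, for fixed $k_0$, the slabs $\{E_{P(Q',k_0)}\}_{Q'\in\mathcal{D}(\mathbb{R}^n)}$ are pairwise disjoint, so the $\ell^q$ sum collapses to its one nonzero term and the $\dot{f}_{p,q}$ integral becomes an honest $L^p$ integral matching the $\dot{b}_{p,p}$ structure. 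Matching the growth weights is then routine: for $P\in\mathcal{D}(\mathbb{R}^{n+1})$ one has $\upsilon^{(n)}(\mathcal{I}(P))/\upsilon^{(n+1)}(P)\sim(1+|k_{n+1}|)^\omega$ as in \eqref{eq-vIP}, and \eqref{eq-P} shows that for $P$ whose vertical index exceeds $|k_0|+1$ no $P(Q',k_0)$ with $Q'\subset\mathcal{I}(P)$ meets $P$, so the supremum over $P\in\mathcal{D}(\mathbb{R}^{n+1})$ is controlled by the supremum over $R\in\mathcal{D}(\mathbb{R}^n)$, which recovers exactly $\|\{\langle\vec{f},\theta^{(\lambda')}_{Q'}\rangle_*\}_{Q'}\|_{\dot{b}^{s-\frac{\gamma}{p},\upsilon^{(n)}}_{p,p}(\mathbb{A}(V))}$.

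For showing that $\operatorname{Tr}\circ\operatorname{Ext}$ is the identity on $\dot{B}^{s-\frac{\gamma}{p},\upsilon^{(n)}}_{p,p}(V)$, I would argue exactly as for Theorem~\ref{thm-Ext-B} with $q$ replaced by $p$ on the $\mathbb{R}^n$ side: from \eqref{defExt} and \eqref{eq-theta} the rescaled functions $\varphi(-k_0)[\ell(Q')]^{-1/2}\operatorname{Ext}\theta^{(\lambda')}_{Q'}$ form a subfamily of the Daubechies wavelets on $\mathbb{R}^{n+1}$, and \eqref{TrExt} gives $\operatorname{Tr}\circ\operatorname{Ext}(\theta^{(\lambda')}_{Q'})=\theta^{(\lambda')}_{Q'}$. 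Under the extra hypotheses $s\in(\frac{\gamma}{p}+E+\frac{d_{p,\infty}^{\mathrm{upper}}(V)}{p},\infty)$ and \eqref{eq-tr-B-B}, with $E$ as in \eqref{eq-E-F}, Theorem~\ref{thm-Tr-F} shows $\operatorname{Tr}$ is well defined and bounded from $\dot{F}^{s,\upsilon^{(n+1)}}_{p,q}(W)$ to $\dot{B}^{s-\frac{\gamma}{p},\upsilon^{(n)}}_{p,p}(V)$, so applying it term by term to $\operatorname{Ext}\vec{f}$ and comparing with the wavelet expansion of $\vec{f}$ from Lemma~\ref{lem-Dauwav decomp} returns $\vec{f}$.

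The main obstacle is the sequence-space inequality in the second paragraph, namely bridging the genuine structural gap between the $\ell^q$-in-scale aggregation of the Triebel--Lizorkin norm on $\mathbb{R}^{n+1}$ and the $\ell^p$-in-scale structure of the Besov norm on $\mathbb{R}^n$. The disjointness of the fat slabs $E_{P(Q',k_0)}$ combined with Lemma~\ref{lem-aLA-equi} is the device that makes the $\ell^q$ sum collapse, but one must verify carefully that the vertical shift by $k_0$, the $(1+|k_{n+1}|)^\omega$ factor coming from the growth function, and the transfer between reducing operators at $P(Q',k_0)$ and $P(Q',0)$ all contribute only bounded constants, and that the two families of suprema over dyadic cubes of $\mathbb{R}^n$ and $\mathbb{R}^{n+1}$ are correctly aligned through $\mathcal{I}(\cdot)$ and \eqref{eq-P}.
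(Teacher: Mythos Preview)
Your proposal is correct and follows essentially the same route as the paper's proof: the wavelet expansion via Lemma~\ref{lem-Dauwav decomp}, the coefficient sequence $\vec{t}^{(\lambda')}$ of \eqref{eq-t}, the reducing-operator pointwise estimate \eqref{eq-AV-AW}, the passage to the disjoint slabs $E_{P(Q',k_0)}$ through Lemma~\ref{lem-aLA-equi} to collapse the $\ell^q$ sum, the growth-function bookkeeping via \eqref{eq-vIP} and \eqref{eq-P}, and the synthesis-molecule bound from Lemma~\ref{lem-moledecomp}(ii) are exactly the ingredients the paper uses, in the same order. Your identification of the slab-disjointness device as the key to bridging the $\dot f_{p,q}$ and $\dot b_{p,p}$ structures matches the paper's argument precisely.
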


\begin{remark}
Let all the symbols be the same as in Theorems \ref{thm-Tr-F} and
\ref{thm-Ext-F}. Let $\gamma=1$, $\tau\in[0, \infty)$, and,
for any $Q\in\mathcal{D}(\mathbb{R}^{n+1})$, let
$\upsilon^{(n+1)}(Q):=|Q|^{\tau}$. Observe that
the restriction $\upsilon^{(n)}$ of
$\upsilon^{(n+1)}$ on $\mathcal{D}(\mathbb{R}^{n})$ satisfies, for
any $Q\in\mathcal{D}(\mathbb{R}^{n})$, $\upsilon^{(n)}(Q)=|Q|^{\frac{n+1}{n}\tau}$.
In this case, the spaces $\dot{F}^{s,\upsilon^{(n+1)}}_{p,q}(W)$ and
$\dot{B}^{s-\frac{\gamma}{p},\upsilon^{(n)}}_{p,p}(V)$
respectively reduce to $\dot{F}^{s,\tau}_{p,q}(W)$
and $\dot{B}^{s-\frac{1}{p},\frac{n+1}{n}\tau}_{p,p}(V)$.
On the one hand, Theorems \ref{thm-Tr-F} and
\ref{thm-Ext-F} are homogeneous variants, respectively,
of \cite[Theorems 6.3 and 6.5]{bhyy5} in which Bu et al.
obtained the corresponding results in
Theorems \ref{thm-Tr-F} and
\ref{thm-Ext-F} for inhomogeneous spaces $F^{s,\tau}_{p,q}(W)$ and
$B^{s-\frac{1}{p},\frac{n+1}{n}\tau}_{p,p}(V)$;
on the other hand, Bu et al. \cite[Theorems 5.13 and 5.16]{bhyy3}
also established the corresponding results in
Theorems \ref{thm-Tr-F} and \ref{thm-Ext-F} for
$\dot{F}^{s,\tau}_{p,q}(W)$ and $\dot{B}^{s-\frac{1}{p},\frac{n+1}{n}\tau}_{p,p}(V)$
with $W, V$ being matrix $\mathcal{A}_p$ weights.
However, Theorems \ref{thm-Tr-F} and \ref{thm-Ext-F}
only respectively coincide with \cite[Theorems 5.13 and 5.16]{bhyy3}
for the case where $p\in(0, 1]$.
When $p\in(1, \infty)$, the range of the index $s$
in \cite[Theorems 5.13 and 5.16]{bhyy3} may be better than that
in Theorems \ref{thm-Tr-F} and \ref{thm-Ext-F};
see Remark \ref{rmk-compare} for the reason.
Notice that   $\dot{F}^{s,0}_{p,q}(W)$
and $\dot{B}^{s-\frac{1}{p},0}_{p,p}(V)$ respectively
reduce to   $\dot{F}^{s}_{p,q}(W)$
and $\dot{B}^{s-\frac{1}{p}}_{p,p}(V)$
with $W, V$ being matrix $\mathcal{A}_{p,\infty}$ weights.
Even for these spaces,
Theorems \ref{thm-Tr-F} and \ref{thm-Ext-F} are also new.

Furthermore, when $m=1$ (the scalar-valued setting)
and $W, V\equiv1$, Theorem \ref{thm-Tr-B} contains
\cite[Theorem 1.4]{syy10} in which Sawano et al. obtained
the corresponding result for
$\dot{F}^{s,\tau}_{p,q}(\mathbb{R}^{n+1})$ with $\tau$
having an upper bound.  In particular, when $\tau=0$,
Theorem \ref{thm-Tr-F} is exactly the classical result
\cite[Theorem 5.1]{jaw77}.
\end{remark}

\subsection{Calder\'{o}n--Zygmund Operators}\label{s-2-3}

The goal of this subsection is to establish the
boundedness of Calder\'{o}n--Zygmund operators
on $\dot{A}^{s,\upsilon}_{p,q}(W)$. To this end,
we first recall the concept of Calder\'on--Zygmund operators.
Let $T$ be a continuous linear operator from
$\mathcal{S}(\mathbb{R}^n)$ to $\mathcal{S}'(\mathbb{R}^n)$.
Using the well-known Schwartz kernel theorem,
we conclude that there exists a \emph{distribution kernel}
$\mathcal{K}\in\mathcal{S}'(\mathbb R^n\times\mathbb R^n)$
such that, for any $\varphi,\psi\in\mathcal{S}(\mathbb R^n)$,
$$
\langle T\varphi,\psi\rangle
=\langle\mathcal K,\varphi\otimes\psi\rangle,
$$
where $\varphi\otimes\psi$ is as in \eqref{eq-g-h}.
The following definition of Calder\'on--Zygmund operators
is precisely \cite[Definition 6.7]{bhyy3}.

\begin{definition}\label{def-CZO}
Let $T$ be a continuous linear operator from
$\mathcal{S}(\mathbb R^n)$ to $\mathcal{S}'(\mathbb R^n)$
with distribution kernel $\mathcal{K}$. For any $l\in(0,\infty)$,
we say that $T$ is a \emph{Calder\'on--Zygmund operator} of order $l$,
denoted by $T\in\operatorname{CZO}(l)$, if the restriction of
$\mathcal{K}$ on $\{(x,y)\in\mathbb{R}^n\times\mathbb{R}^n:\ x\neq y\}$
agrees with a continuous function $K$ whose
all derivatives  exist as
continuous functions and there exists a
positive constant $C$ such that
\begin{itemize}
\item[{\rm (i)}] for any $\gamma\in\mathbb{Z}_+^n$ with
$|\gamma|\leq\lfloor\!\lfloor l\rfloor\!\rfloor$
and for any $x,y\in\mathbb{R}^n$ with $x\neq y$,
\begin{align*}
\left|\partial_x^{\gamma}K(x,y)\right|
\leq C\frac{1}{|x-y|^{n+|\gamma|}};
\end{align*}
\item[{\rm (ii)}] for any $\gamma\in\mathbb{Z}_+^n$
with $|\gamma|=\lfloor\!\lfloor l\rfloor\!\rfloor$
and for any $x,y,h\in\mathbb{R}^n$ with $|h|<\frac12|x-y|$,
\begin{equation*}
\left|\partial_x^{\gamma}K(x,y)
-\partial_x^\gamma K(x+h,y)\right|
\leq C\frac{|h|^{l^{**}}}{|x-y|^{n+l}},
\end{equation*}
\end{itemize}
where $\lfloor\!\lfloor l\rfloor\!\rfloor $ and $l^{**}$
are as, respectively, in \eqref{eq-ceil} and \eqref{eq-r**}.
For any $l\in(-\infty, 0]$, the condition
$T\in\operatorname{CZO}(l)$ is void.
\end{definition}

The following type of Calder\'on--Zygmund operators
was introduced in \cite[Definition 6.11]{bhyy3}.

\begin{definition}\label{def-CZK}
Let $ E,F\in\mathbb{R}$ and $T$ be a continuous linear operator
from  $\mathcal{S}(\mathbb R^n)$ to $\mathcal{S}'(\mathbb R^n)$
with distribution kernel $\mathcal{K}$.
We call $T\in\operatorname{CZK}^0(E;F)$
if the restriction of $\mathcal K$ on
$\{(x,y)\in\mathbb R^n\times\mathbb R^n:\ x\neq y\}$
agrees with a continuous function $K$
whose all derivatives below exist as
continuous functions and there exists a
positive constant $C$ such that
\begin{itemize}
\item[{\rm (i)}] for any $\alpha\in\mathbb{Z}_+^n$
with $|\alpha|\leq\lfloor\!\lfloor E\rfloor\!\rfloor_+$
and for any $x,y\in\mathbb R^n$ with $x\neq y$,
\begin{align*}
|\partial_x^\alpha K(x,y)|
\leq C\frac{1}{|x-y|^{n+|\alpha|}};
\end{align*}
\item[{\rm (ii)}] for any $\alpha\in\mathbb{Z}_+^n$ with
$|\alpha|=\lfloor\!\lfloor E\rfloor\!\rfloor$
and for any $x,y,h\in\mathbb R^n$ with $|h|<\frac12|x-y|$,
\begin{align*}
|\partial_x^\alpha K(x,y)-\partial_x^\alpha K(x+h,y)|
\leq C\frac{|h|^{E^{**}}}{|x-y|^{n+E}};
\end{align*}
\item[{\rm (iii)}] for any $\alpha,\beta\in\mathbb{Z}_+^n$
with $|\alpha|\leq\lfloor\!\lfloor E\rfloor\!\rfloor_+$
and $|\beta|=\lfloor\!\lfloor F-|\alpha|\rfloor\!\rfloor$
and for any $x,y,h\in\mathbb R^n$ with $|h|<\frac12|x-y|$,
\begin{align*}
\left|\partial_x^\alpha\partial_y^\beta K(x,y)
-\partial_x^\alpha\partial_y^\beta K(x,y+h)\right|
\leq C\frac{|h|^{(F-|\alpha|)^{**}}}{|x-y|^{n+F}},
\end{align*}
where $\lfloor\!\lfloor E\rfloor\!\rfloor$ and $E^{**}$
are as, respectively, in \eqref{eq-ceil} and \eqref{eq-r**}.
\end{itemize}
Moreover, we say that $T\in\operatorname{CZK}^1(E;F)$
if $T\in\operatorname{CZK}^0(E;F)$ and there  exists
a positive constant $L$ such that, for
any $\alpha,\beta\in\mathbb{Z}_+^n$ with
$|\alpha|=\lfloor\!\lfloor E\rfloor\!\rfloor$
and $|\beta|=\lfloor\!\lfloor F-E\rfloor\!\rfloor$
and for any $x,y,u,v\in\mathbb R^n$ with $|u|+|v|<\frac12|x-y|$,
\begin{align*}
\begin{split}
&\left|\partial_x^\alpha\partial_y^\beta\mathcal K(x,y)
-\partial_x^\alpha\partial_y^\beta\mathcal K(x+u,y)
-\partial_x^\alpha\partial_y^\beta\mathcal K(x,y+v)
+\partial_x^\alpha\partial_y^\beta\mathcal K(x+u,y+v)\right|\\
&\quad\leq L\frac{|u|^{E^{**}}|v|^{(F-E)^{**}}}{|x-y|^{n+F}}.
\end{split}
\end{align*}
\end{definition}
For further discussions on the relation between Calder\'on--Zygmund operators
in Definitions \ref{def-CZO} and \ref{def-CZK},
we refer to \cite[Remark 6.13]{bhyy3}.
To further discuss the domain of operators in Definition \ref{def-CZO},
we give some concepts and symbols.
Let $\mathscr{D}:=C_{\mathrm{c}}^{\infty}$ be the
set of all infinitely differentiable functions on $\mathbb{R}^n$
with compact support, equipped with the usual inductive limit topology
(see, for example, \cite[Definition 2.3.1]{gra14a}).
Recall that a sequence of functions
$\{\varphi_j\}_{j\in\mathbb{N}}$ in $\mathscr{D}$
is called a \emph{uniform approximation to the identity}
(see \cite[(2.2.10) and (2.2.11)]{tor91}) if
\begin{itemize}
\item[{\rm (i)}] for any compact set $K\subset\mathbb{R}^n$,
there exists $j_K\in\mathbb{N}$ such that,
for any $j>j_K$ and $x\in K$, $\varphi_j(x)=1$;
\item[{\rm (ii)}] $\sup_{j\in\mathbb{N}}
\|\varphi_j\|_{L^\infty}<\infty$.
\end{itemize}
As in \cite[p.\,38]{tor91}, for any $k\in\mathbb{Z}_+$, let
\begin{align}\label{eq-Ok}
\mathscr{O}^{k}
&:=\left\{f\in C^\infty:\
\text{there exists a positive constant }C\text{ such that, }\right.\\
&\qquad\qquad\quad\left.\text{ for any }x\in\mathbb{R}^n,\
|f(x)|\leq C(1+|x|)^{k}\right\}\notag
\end{align}
and
\begin{align*}
\mathscr{D}_{k}
:=\left\{g\in\mathscr{D}:\
\int_{\mathbb R^n} g(x)x^\gamma\,dx=0\text{ for any }
\gamma\in\mathbb{Z}_+^n\text{ with }|\gamma|\leq
k \right\}.
\end{align*}
The following lemma gives a method to define the action of
Calder\'on--Zygmund operators on functions of
the polynomial growth as in \eqref{eq-Ok},
which is a special case of \cite[Lemma 2.2.12]{tor91}.

\begin{lemma}
Let $l\in(0,\infty)$, $T\in\operatorname{CZO}(l)$,
and $\varphi:=\{\varphi_j\}_{j\in\mathbb{N}}$
be a uniform approximation to the identity.
Then, for any $f\in\mathscr{O}^{\lfloor\!\lfloor l\rfloor\!\rfloor}$
and $g\in\mathscr{D}_{\lfloor\!\lfloor l\rfloor\!\rfloor}$,
the limit
\begin{equation}\label{eq-Tfg}
\langle T(f),g\rangle:=\lim_{j\to\infty}\langle T(\varphi_jf),g\rangle
\end{equation}
exists and its value is independent of the choice of $\varphi$.
In particular, for any $\gamma\in\mathbb{Z}_+^n$
with $|\gamma|\leq\lfloor\!\lfloor l\rfloor\!\rfloor$,
\eqref{eq-Tfg} gives a way to define $T(x^\gamma):
\mathscr{D}_{\lfloor\!\lfloor l\rfloor\!\rfloor}\to\mathbb{C}$.
\end{lemma}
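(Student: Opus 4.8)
The plan is to establish the existence of the limit in \eqref{eq-Tfg} by showing that the sequence $a_j:=\langle T(\varphi_j f),g\rangle$, $j\in\mathbb{N}$, is Cauchy, and then to deduce the independence of $\varphi$ and the ``in particular'' clause from the same estimate.

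First I would fix a compact neighbourhood $\Omega$ of $\operatorname{supp}g$ and, using property (i) of a uniform approximation to the identity, pick $j_0\in\mathbb{N}$ with $\varphi_j\equiv1$ on $\Omega$ for every $j>j_0$. For $j>j_0$, write $\varphi_j f=\varphi_{j_0}f+(\varphi_j-\varphi_{j_0})f$; since $\varphi_j f\in\mathscr{D}=C_{\mathrm{c}}^{\infty}$, the distribution $T(\varphi_j f)$ is unambiguously defined and $a_j=\langle T(\varphi_{j_0}f),g\rangle+\langle T((\varphi_j-\varphi_{j_0})f),g\rangle$, the first term being independent of $j$. More generally, for $j_2>j_1>j_0$ the function $u:=(\varphi_{j_2}-\varphi_{j_1})f$ lies in $C_{\mathrm{c}}^{\infty}$ and vanishes on $\Omega$, so $\operatorname{supp}u\cap\operatorname{supp}g=\emptyset$; applying property (i) once more to a large closed ball $\overline{B(\mathbf{0},R)}$, I can moreover arrange, by taking $j_1$ sufficiently large, that $\operatorname{supp}u\subset\{x\in\mathbb{R}^n:\,|x|\geq R\}$.

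The core is the following estimate. For any $u\in C_{\mathrm{c}}^{\infty}$ with $\operatorname{supp}u\cap\operatorname{supp}g=\emptyset$ and $\operatorname{supp}u\subset\{|x|\geq R\}$, $R$ larger than a suitable multiple of the diameter of $\operatorname{supp}g$, the pairing $\langle Tu,g\rangle$ is represented by an absolutely convergent double integral of the continuous function $K$ with which $\mathcal{K}$ agrees off the diagonal (Definition \ref{def-CZO}); since $g$ annihilates every polynomial of degree at most $N:=\lfloor\!\lfloor l\rfloor\!\rfloor$, in the variable occupied by $g$ one may subtract from $K$ its order-$N$ Taylor polynomial at a fixed point $x_0\in\operatorname{supp}g$. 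Taylor's theorem together with Definition \ref{def-CZO}(i) and (ii) and the identity $N+l^{**}=l$ (which covers integer and non-integer $l$ uniformly) then gives, for $x\in\operatorname{supp}g$ and $|y|\geq R$,
\[
\left|K(x,y)-\text{(order-}N\text{ Taylor polynomial in }x)\right|\lesssim_g|y|^{-n-l}.
\]
Using $|f(y)|\lesssim(1+|y|)^{N}$, $\sup_{k\in\mathbb{N}}\|\varphi_k\|_{L^\infty}<\infty$, and $n+l-N=n+l^{**}>n$, this yields
\[
\left|\langle Tu,g\rangle\right|\lesssim_g\int_{|y|\geq R}(1+|y|)^{N}|y|^{-n-l}\,dy\longrightarrow0\quad(R\to\infty).
\]
Feeding this into the decomposition of the preceding paragraph, with $u=(\varphi_{j_2}-\varphi_{j_1})f$, shows that $\{a_j\}_{j}$ is Cauchy, so the limit in \eqref{eq-Tfg} exists. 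Applying the same estimate to $u:=(\varphi_j-\widetilde{\varphi}_j)f$—which, for $j$ large, is supported in $\{|x|\geq R\}$ and disjoint from $\operatorname{supp}g$ when $\varphi,\widetilde{\varphi}$ are two uniform approximations to the identity—shows $\langle T(\varphi_j f),g\rangle-\langle T(\widetilde{\varphi}_j f),g\rangle\to0$, so the limit is independent of $\varphi$. Finally, for $|\gamma|\leq N$ we have $x^\gamma\in C^\infty$ and $|x^\gamma|\leq(1+|x|)^{|\gamma|}\leq(1+|x|)^{N}$, hence $x^\gamma\in\mathscr{O}^{N}$; applying the above with $f=x^\gamma$ and using the linearity of $T$ and of the limit produces a well-defined linear functional $g\mapsto\langle T(x^\gamma),g\rangle$ on $\mathscr{D}_{N}$.

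I expect the main obstacle to be the careful justification of the off-diagonal kernel representation of $\langle Tu,g\rangle$ for merely compactly supported smooth functions with disjoint supports—this is where one invokes the Schwartz kernel theorem and the hypothesis that $\mathcal{K}$ agrees with a continuous function on $\{x\neq y\}$—and the Taylor-remainder bookkeeping turning the H\"older condition Definition \ref{def-CZO}(ii) (not a bound on derivatives of order $N+1$, which is not assumed) into the decay $|y|^{-n-l}$; one must also check that the segment joining $x_0$ to $x$ never meets the singular set $\{x=y\}$, which holds once $R$ exceeds a suitable multiple of the diameter of $\operatorname{supp}g$. The remaining steps are routine, and indeed the statement is a special case of \cite[Lemma 2.2.12]{tor91}.
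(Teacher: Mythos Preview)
Your argument is correct and is precisely the standard proof (Cauchy sequence via off-diagonal kernel representation, Taylor subtraction in the $x$-variable using the moment conditions on $g$, and the H\"older remainder estimate from Definition~\ref{def-CZO}(ii)). The paper itself gives no proof of this lemma; it simply records that the statement is a special case of \cite[Lemma~2.2.12]{tor91}, which you also cite. So your proposal and the paper are in complete agreement on the approach, with your write-up supplying the details that the paper delegates to the reference.
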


Suppose that $T$ is a continuous linear operator
from $\mathcal{S}$ to $\mathcal{S}'$.
The operator $T^*$ is defined by setting,
for any $\varphi,\psi\in\mathcal{S}$,
$$\langle T^*\psi,\varphi\rangle:=\langle T\varphi,\psi\rangle.$$
We say that $T$ has the \emph{weak boundedness property},
denoted by $T\in\mathbf{WBP}$, if,
for any bounded subset $\mathscr{K}\subset\mathscr{D}$,
there exists a positive constant $C=C(\mathscr{K})$ such that,
for any $\varphi,\eta\in\mathscr{K}$, $h\in\mathbb R^n$,
and $r\in(0,\infty)$,
\begin{align*}
\left|\left\langle T \left[\varphi\left(\frac{\cdot-h}{r}\right)\right],
\eta \left(\frac{\cdot-h}{r}\right)\right\rangle\right|\leq Cr^n.
\end{align*}

Based on Definition \ref{def-CZK}, Bu et al.
\cite[Definition 6.17]{bhyy3} also introduced
the following type of Calder\'on--Zygmund operators.

\begin{definition}\label{def-CZOK}
Let $\sigma\in\{0,1\}$, $ E,F,G,H\in\mathbb{R}$, and
$T$ be a continuous linear operator from
$\mathcal{S}$ to $\mathcal{S}'$.
We say that $T\in\operatorname{CZO}^\sigma(E,F,G,H)$ if
\begin{itemize}
\item[{\rm (i)}] $T\in\mathbf{WBP}$;
\item[{\rm (ii)}] $T\in\operatorname{CZK}^\sigma(E;F)$;
\item[{\rm (iii)}] there exists $\epsilon\in(0, \infty)$ such that
$T\in\operatorname{CZO}(\lfloor G\rfloor+\epsilon)$ and
$T^*\in\operatorname{CZO}(\lfloor H\rfloor+\epsilon)$;
\item[{\rm (iv)}] for any $\gamma\in\mathbb{Z}_+^n$
with $|\gamma|\leq \lfloor G\rfloor$
and for any $\theta\in\mathbb{Z}_+^n$ with
$|\theta|\leq \lfloor H\rfloor$,
$T(x^\gamma)=0$ and $T^*(x^\theta)=0$.
\end{itemize}
\end{definition}

By the standard arguments used in \cite{bhyy3,fj90,ftw88,tor91},
to obtain the boundedness of Calder\'on--Zygmund operators
on $\dot{A}^{s,\upsilon}_{p,q}(W)$, it suffices to
prove that Calder\'on--Zygmund operators
map sufficiently regular atoms (see Definition \ref{def-atom})
to synthesis molecules for $\dot{A}^{s,\upsilon}_{p,q}(W)$.
Following this idea, we start with recalling \cite[Lemma 6.1]{bhyy3}.

\begin{lemma}\label{lem-hat-T}
Let $\varphi, \psi\in\mathcal{S}$
satisfy \eqref{cond1} and \eqref{cond3} and
$T$ be a continuous linear operator
from $\mathcal{S}_{\infty}$
to $\mathcal{S}'_{\infty}$.
Then $\widehat{T}:=\left\{\langle T \psi_R, \varphi_Q
\rangle\right\}_{Q, R\in\mathcal{D}}$ maps
$$S_{\varphi}\left[\mathcal{S}_{\infty}\left(\mathbb{R}^n\right)\right]:=
\left\{\left\{\left\langle f,
\varphi_Q\right\rangle\right\}_{Q\in\mathcal{D}}:
\,f\in\mathcal{S}_{\infty}\left(\mathbb{R}^n\right)\right\}$$
to
$$
S_{\varphi}\left[\mathcal{S}'_{\infty}\left(\mathbb{R}^n\right)\right]:=
\left\{\left\{\left\langle f, \varphi_Q
\right\rangle\right\}_{Q\in\mathcal{D}}:
f\in\mathcal{S}'_{\infty}\left(\mathbb{R}^n\right)\right\}
$$
and satisfies the identity $\widehat{T} \circ S_{\varphi}
=S_{\varphi} \circ T$ on $\mathcal{S}_{\infty}$.
\end{lemma}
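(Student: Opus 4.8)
The plan is to reduce both conclusions of the lemma to the single operator identity $\widehat{T}\circ S_{\varphi}=S_{\varphi}\circ T$ on $\mathcal{S}_{\infty}$, which is the substantive part; the mapping property then follows at once. First I would record that, since $\varphi$ and $\psi$ satisfy \eqref{cond1}, their Fourier transforms are supported in the annulus $\{\xi\in\mathbb{R}^n:\,\frac12\le|\xi|\le2\}$ and hence vanish to infinite order at the origin. Consequently $\varphi,\psi\in\mathcal{S}_{\infty}$ and, by \eqref{eq-phi_Q}, $\varphi_{Q},\psi_{R}\in\mathcal{S}_{\infty}$ for all $Q,R\in\mathcal{D}$; in particular every entry $\langle T\psi_{R},\varphi_{Q}\rangle$ of $\widehat{T}$ is a well-defined complex number, since $T\psi_{R}\in\mathcal{S}'_{\infty}$ and $\varphi_{Q}\in\mathcal{S}_{\infty}$.

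To prove the identity, fix $f\in\mathcal{S}_{\infty}$. Because $\varphi,\psi$ satisfy \eqref{cond1} and \eqref{cond3}, Lemma \ref{lem-Cdreproform} applies and yields $f=\sum_{R\in\mathcal{D}}\langle f,\varphi_{R}\rangle\psi_{R}$ with convergence in $\mathcal{S}_{\infty}$. Since $T$ is a continuous linear map from $\mathcal{S}_{\infty}$ to $\mathcal{S}'_{\infty}$, I may apply $T$ termwise to obtain $Tf=\sum_{R\in\mathcal{D}}\langle f,\varphi_{R}\rangle\,T\psi_{R}$ with convergence in $\mathcal{S}'_{\infty}$; pairing both sides with the fixed test function $\varphi_{Q}\in\mathcal{S}_{\infty}$ and using that $g\mapsto\langle g,\varphi_{Q}\rangle$ is continuous for the weak-$\ast$ topology of $\mathcal{S}'_{\infty}$, I get, for every $Q\in\mathcal{D}$,
\[
\left\langle Tf,\varphi_{Q}\right\rangle
=\sum_{R\in\mathcal{D}}\left\langle f,\varphi_{R}\right\rangle
\left\langle T\psi_{R},\varphi_{Q}\right\rangle
=\bigl(\widehat{T}\,S_{\varphi}f\bigr)_{Q}.
\]
Since the left-hand side is $(S_{\varphi}\,Tf)_{Q}$, this is precisely $\widehat{T}\circ S_{\varphi}=S_{\varphi}\circ T$ on $\mathcal{S}_{\infty}$. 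The mapping property is then immediate: any element of $S_{\varphi}[\mathcal{S}_{\infty}]$ has the form $S_{\varphi}f$ with $f\in\mathcal{S}_{\infty}$, and $\widehat{T}(S_{\varphi}f)=S_{\varphi}(Tf)\in S_{\varphi}[\mathcal{S}'_{\infty}]$ because $Tf\in\mathcal{S}'_{\infty}$.

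The one point that genuinely needs care — and the main, if routine, obstacle — is to check that the defining series $(\widehat{T}\,S_{\varphi}f)_{Q}=\sum_{R\in\mathcal{D}}\langle T\psi_{R},\varphi_{Q}\rangle\langle f,\varphi_{R}\rangle$ converges \emph{absolutely}, so that $\widehat{T}(S_{\varphi}f)$ is unambiguously defined in the sense used throughout Section \ref{s2} and thus coincides with the value computed above from the reproducing formula. For this I would pit two standard estimates against each other. First, for $f\in\mathcal{S}_{\infty}$ the coefficients $\langle f,\varphi_{R}\rangle$ decay faster than any power of $2^{|j_{R}|}$ and of $1+|x_{R}|$: one uses that $\widehat{f}$ is Schwartz to control the small-scale regime $j_{R}\to+\infty$, that $\widehat{f}$ vanishes to infinite order at the origin (because $f\in\mathcal{S}_{\infty}$) to control the large-scale regime $j_{R}\to-\infty$, and the moment and decay properties of $\varphi$ for the spatial localization. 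Second, for each fixed $Q$ the entry $\langle T\psi_{R},\varphi_{Q}\rangle$ grows at most polynomially in $2^{|j_{R}|}$ and $1+|x_{R}|$: by continuity of $T$, $|\langle T\psi_{R},\varphi_{Q}\rangle|$ is dominated by a fixed Schwartz seminorm of $\psi_{R}$, and a direct computation from \eqref{eq-phi_Q} bounds such seminorms by a constant times a power of $2^{|j_{R}|}$ and $1+|x_{R}|$. Combining the two, the product is summable over $R\in\mathcal{D}$ — the sum over locations at a fixed scale converging because of the spatial decay — which yields the absolute convergence and completes the argument.
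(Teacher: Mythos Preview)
Your argument is correct and is the natural one. The paper does not give its own proof of this lemma; it merely quotes it as \cite[Lemma 6.1]{bhyy3}, so there is nothing to compare against beyond noting that your route---apply the Calder\'on reproducing formula of Lemma~\ref{lem-Cdreproform}, push $T$ through by continuity, pair with $\varphi_Q$, and separately verify absolute convergence so that $(\widehat{T}S_\varphi f)_Q$ is defined in the sense of Section~\ref{s2}---is exactly the expected one and would be the standard proof in \cite{bhyy3} as well.

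One small remark on presentation: for the polynomial-growth estimate of $|\langle T\psi_R,\varphi_Q\rangle|$ you invoke continuity of $T$ to dominate by a single $\mathcal{S}$-seminorm of $\psi_R$; strictly speaking, continuity of $g\mapsto\langle Tg,\varphi_Q\rangle$ on $\mathcal{S}_\infty$ gives a bound by a finite sum of Schwartz seminorms of $\psi_R$ (the topology on $\mathcal{S}_\infty$ being the one induced from $\mathcal{S}$), but this is of course still polynomial in $2^{|j_R|}$ and $1+|x_R|$ and is dominated by the rapid decay of $\langle f,\varphi_R\rangle$. The proof goes through unchanged.
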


Based on Lemma \ref{lem-hat-T}, we next establish an
extension result related to $\dot{A}^{s,\upsilon}_{p,q}(W)$.
To this end, we first present the following definition,
which is exactly in \cite[Definition 5.7]{byyz24}.

\begin{definition}
Let $a\in\{b, f\}$, $s\in\mathbb{R}$, $p\in(0,\infty)$,
$q\in(0, \infty]$, and $W\in \mathcal{A}_{p,\infty}$.
Assume that $\delta_1,\delta_2$, and $\omega$ satisfy
\eqref{eq-delta1<0} and $\upsilon\in\mathcal{G}
(\delta_1, \delta_2; \omega)$.
An infinite matrix is said to be
$\dot{a}_{p,q}^{s,\upsilon}(W)$-\emph{almost diagonal}
if it is $(D, E, F)$-almost diagonal with
$D>D_{\dot{a}_{p,q}^{s,\upsilon}(W)}$,
$E>E_{\dot{a}_{p,q}^{s,\upsilon}(W)}$, and
$F>F_{\dot{a}_{p,q}^{s,\upsilon}(W)}$,	
where $D_{\dot{a}_{p,q}^{s,\upsilon}(W)}$,
$E_{\dot{a}_{p,q}^{s,\upsilon}(W)}$,
and $F_{\dot{a}_{p,q}^{s,\upsilon}(W)}$ are as in
Theorem \ref{a(W)adopebound}.
\end{definition}

In what follows, for any quasi-normed spaces $X$ and $Y$,
let $\mathcal{L}(X, Y)$ be the set of all bounded linear
operators from $X$ to $Y$. In particular, we simply denote
$\mathcal{L}(X, X)$ by $\mathcal{L}(X)$. Suppose that $T$
is a continuous linear operator
from $\mathcal{S}_{\infty}$
to $\mathcal{S}'_{\infty}$.
For any $\vec{f}:=(f_1,\dots,f_m)^T
\in(\mathcal{S}_{\infty})^m$,
let $T\vec{f}:=(T f_1,\dots, T f_m)^T
\in(\mathcal{S}'_{\infty})^m$.
The following lemma gives an extension result on
$\dot{A}^{s,\upsilon}_{p,q}(W)$.

\begin{lemma}\label{lem-ad-T}
Let $(A, a)\in\{(B, b), (F, f)\}$, $s\in\mathbb{R}$,
$p\in(0,\infty)$, $q\in(0, \infty]$, $W\in \mathcal{A}_{p,\infty}$,
and $\upsilon\in\mathcal{G}(\delta_1, \delta_2; \omega)$ with
$\delta_1,\delta_2,\omega$ as in \eqref{eq-delta1<0}.
Assume that $\varphi, \psi\in\mathcal{S}$
satisfy \eqref{cond1} and \eqref{cond3}
and $T$ is a continuous linear operator
from $\mathcal{S}_{\infty}$ to $\mathcal{S}'_{\infty}$.
If $\widehat{T}:=\left\{\left\langle T
\psi_R, \varphi_Q\right\rangle\right\}_{Q, R\in\mathcal{D}}$
is $\dot{a}_{p, q}^{s, \upsilon}(W)$-almost diagonal,
then there exists $\widetilde{T}\in\mathcal{L}
(\dot{A}_{p, q}^{s, \upsilon}(W))$ which agrees with
$T$ on $(\mathcal{S}_{\infty})^m
\cap\dot{A}^{s,\upsilon}_{p,q}(W)$.
\end{lemma}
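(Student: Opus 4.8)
The plan is to realize $\widetilde T$ by conjugating the sequence-space operator $\widehat T$ with the $\varphi$-transform, exactly in the spirit of the Frazier--Jawerth scheme and its matrix-weighted refinements in \cite{bhyy3,byyz24}. Concretely, I would set
\[
\widetilde T:=T_\psi\circ\widehat T\circ S_\varphi,
\]
interpreted componentwise on $\mathbb C^m$-valued objects, where $S_\varphi$ and $T_\psi$ are the $\varphi$-transform and its inverse from Lemma \ref{lem-phitransMWBTL}, and the infinite scalar matrix $\widehat T=\{\langle T\psi_R,\varphi_Q\rangle\}_{Q,R\in\mathcal D}$ acts on a $\mathbb C^m$-valued sequence $\vec t=\{\vec t_R\}_{R\in\mathcal D}$ entrywise, i.e. $(\widehat T\vec t)_Q:=\sum_{R\in\mathcal D}\langle T\psi_R,\varphi_Q\rangle\,\vec t_R$, in accordance with the notation $U\vec t$ fixed above Definition \ref{DEFadope}.

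First I would check boundedness. By hypothesis $\widehat T$ is $(D,E,F)$-almost diagonal with $D>D_{\dot a_{p,q}^{s,\upsilon}(W)}$, $E>E_{\dot a_{p,q}^{s,\upsilon}(W)}$, and $F>F_{\dot a_{p,q}^{s,\upsilon}(W)}$, so \eqref{DEFa(W)adope} holds and Theorem \ref{a(W)adopebound} yields $\widehat T\in\mathcal L(\dot a_{p,q}^{s,\upsilon}(W))$; since that theorem is stated for $\mathbb C^m$-valued sequences, it applies verbatim to our entrywise action. Combining this with the boundedness of $S_\varphi:\dot A_{p,q}^{s,\upsilon}(W)\to\dot a_{p,q}^{s,\upsilon}(W)$ and $T_\psi:\dot a_{p,q}^{s,\upsilon}(W)\to\dot A_{p,q}^{s,\upsilon}(W)$ from Lemma \ref{lem-phitransMWBTL}, the composition $\widetilde T$ is a bounded linear operator on $\dot A_{p,q}^{s,\upsilon}(W)$.

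Next I would verify that $\widetilde T$ agrees with $T$ on $(\mathcal S_\infty)^m\cap\dot A_{p,q}^{s,\upsilon}(W)$. Fix such an $\vec f$. Applying Lemma \ref{lem-hat-T} to each component gives $\widehat T(S_\varphi\vec f)=S_\varphi(T\vec f)$; in particular $S_\varphi(T\vec f)=\widehat T(S_\varphi\vec f)\in\dot a_{p,q}^{s,\upsilon}(W)$, because $S_\varphi\vec f\in\dot a_{p,q}^{s,\upsilon}(W)$ and $\widehat T$ preserves this space. Hence, unwinding the definition of $T_\psi$ as the $(\mathcal S'_\infty)^m$-convergent series $\sum_{Q\in\mathcal D}\vec t_Q\psi_Q$ and invoking the Calder\'on reproducing formula (Lemma \ref{lem-Cdreproform}) applied componentwise to $T\vec f\in(\mathcal S'_\infty)^m$, I obtain
\[
\widetilde T\vec f=T_\psi\bigl(S_\varphi(T\vec f)\bigr)
=\sum_{Q\in\mathcal D}\bigl\langle T\vec f,\varphi_Q\bigr\rangle\psi_Q=T\vec f
\]
in $(\mathcal S'_\infty)^m$, which is the claimed identity.

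The argument is mostly bookkeeping once the $\varphi$-transform and Theorem \ref{a(W)adopebound} are in hand; the step needing the most care is the final identification $\widetilde T\vec f=T\vec f$, where one must confirm that the series defining $T_\psi\bigl(S_\varphi(T\vec f)\bigr)$ and the Calder\'on reproducing series for $T\vec f$ are the same object converging in the same topology — this is precisely what the membership $S_\varphi(T\vec f)\in\dot a_{p,q}^{s,\upsilon}(W)$ together with Lemma \ref{lem-Cdreproform} guarantees. A minor additional point is to note that the entrywise action of the scalar matrix $\widehat T$ on $\mathbb C^m$-valued sequences is compatible with the componentwise definitions of $S_\varphi$, $T_\psi$, and $T\vec f$, so that all the cited lemmas apply unchanged.
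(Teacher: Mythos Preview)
Your proposal is correct and follows essentially the same approach as the paper: define $\widetilde T:=T_\psi\circ\widehat T\circ S_\varphi$, obtain boundedness from Lemma \ref{lem-phitransMWBTL} together with Theorem \ref{a(W)adopebound}, and verify agreement with $T$ on $(\mathcal S_\infty)^m$ via Lemma \ref{lem-hat-T} and the Calder\'on reproducing formula (Lemma \ref{lem-Cdreproform}). Your write-up is in fact more careful than the paper's terse proof in spelling out why the series defining $T_\psi(S_\varphi(T\vec f))$ coincides with the reproducing series for $T\vec f$; the paper simply cites the two lemmas and leaves this identification implicit.
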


\begin{proof}
From Lemmas \ref{lem-Cdreproform} and \ref{lem-hat-T},
it follows that $\widetilde{T}:=T_{\psi}\circ\widehat{T}
\circ S_{\varphi}$ agrees with $T$ on
$(\mathcal{S}_{\infty})^m$.
By Lemma \ref{lem-phitransMWBTL}, we find that
$S_\varphi\in\mathcal{L}(\dot{A}_{p, q}^{s, \upsilon}(W),
\dot{a}_{p, q}^{s, \upsilon}(W))$ and
$T_\psi \in \mathcal{L}(\dot{a}_{p, q}^{s, \upsilon}(W),
\dot{A}_{p, q}^{s, \upsilon}(W))$.
Applying these and Theorem \ref{a(W)adopebound},
we conclude that $\widetilde{T}\in\mathcal{L}
(\dot{A}_{p, q}^{s, \upsilon}(W))$.
This finishes the proof of Lemma \ref{lem-ad-T}.
\end{proof}

We now recall the concept of (smooth) atoms
(see, for instance, \cite[(4.1)-(4.3)]{fj90}
and \cite[Definition 6.4]{bhyy3}).

\begin{definition}\label{def-atom}
Let $L,N\in\mathbb R$ and $Q\subset\mathbb{R}^n$
be a cube. A function $a_Q\in
C_{\mathrm{c}}^{\infty}$ is called
an \emph{$(L,N)$-atom} on $Q$ if
\begin{itemize}
\item[{\rm (i)}] $\operatorname{supp}a_Q\subset3Q$;
\item[{\rm (ii)}] for any $\gamma\in\mathbb{Z}_+^n$
with $|\gamma|\leq L$,
$\int_{\mathbb{R}^n} a_Q(x)x^\gamma\,dx=0$;
\item[{\rm (iii)}]  for any $\gamma\in\mathbb{Z}_+^n$
with $|\gamma|\leq N$ and for any $x\in\mathbb{R}^n$,
$|\partial^\gamma a_Q(x)|\leq|Q|^{-\frac12-\frac{|\gamma|}{n}}$.
\end{itemize}
\end{definition}

The next decomposition lemma follows from
the proof of \cite[Proposition 6.5]{bhyy3};
we omit the details.

\begin{lemma}\label{lem-psi-sumatom}
Let $M,L,N\in(0, \infty)$ and $\varphi,
\psi\in\mathcal{S}$
satisfy \eqref{cond1} and \eqref{cond3}.
Then there exists a positive constant $C$ such that,
for any $R\in\mathcal{D}$, there is a sequence of
$(L,N)$-atoms $\{t_P\}_{P\in\mathcal{D}, \ell(P)=\ell(R)}$ such that
\begin{align}\label{eq-decom-atom}
\psi_R =C\sum_{P\in\mathcal{D}, \ell(P)=\ell(R)}
\left(1+[\ell(R)]^{-1}\left|x_R-x_P\right|\right)^{-M} t_P
\end{align}
in $\mathcal{S}$.
\end{lemma}

The following two lemmas are respectively
\cite[Lemma 5.15(i) and Proposition 5.8]{byyz24}.

\begin{lemma}\label{lem-anasynmole}
Let $(A, a)\in\{(B, b), (F, f)\}$, $s\in\mathbb{R}$, $p\in(0, \infty)$,
$q\in(0,\infty]$, and $W\in\mathcal{A}_{p, \infty}$.
Suppose that $\delta_1,\delta_2$, and $\omega$ satisfy \eqref{eq-delta1<0},
$\upsilon\in\mathcal{G}(\delta_1, \delta_2; \omega)$,
and $\varphi, \psi\in\mathcal{S}$ satisfy \eqref{cond1} and \eqref{cond3}.
Assume that $\{m_Q\}_{Q\in\mathcal{D}}$ and
$\{g_R\}_{R \in \mathcal{D}}$ are respectively families
of analysis and synthesis molecules for $\dot{A}_{p,q}^{s,\upsilon}(W)$.
Then $\{\langle m_Q, g_R\rangle\}_{Q, R\in\mathcal{D}}$
is $\dot{a}_{p,q}^{s,\upsilon}(W)$-almost diagonal.
\end{lemma}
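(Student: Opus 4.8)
The plan is to establish directly the pointwise estimate $|\langle m_Q,g_R\rangle|\lesssim u^{DEF}_{Q,R}$ for all $Q,R\in\mathcal{D}$, where $u^{DEF}_{Q,R}$ is as in \eqref{DEFmatrix} and $D$, $E$, $F$ are chosen slightly above $D_{\dot{a}_{p,q}^{s,\upsilon}(W)}$, $E_{\dot{a}_{p,q}^{s,\upsilon}(W)}$, and $F_{\dot{a}_{p,q}^{s,\upsilon}(W)}$, respectively. This is precisely the classical almost-orthogonality estimate for smooth molecules, with the regularity and moment thresholds matched to those recorded in Definition \ref{def-anasynmole}, so that it immediately yields that $\{\langle m_Q,g_R\rangle\}_{Q,R\in\mathcal{D}}$ is $\dot{a}_{p,q}^{s,\upsilon}(W)$-almost diagonal. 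Throughout I would freely use that, by \eqref{eq-phi_Q}, $(u_{M})_Q(x)\sim|Q|^{-1/2}(1+|x-x_Q|/\ell(Q))^{-M}$.

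First I would fix $Q,R\in\mathcal{D}$ and split into the two cases $\ell(Q)\le\ell(R)$ and $\ell(R)<\ell(Q)$, the latter following from the former by interchanging the roles of the analysis molecule $m_Q$ and the synthesis molecule $g_R$. In the case $\ell(Q)\le\ell(R)$, I would exploit the vanishing moments of $m_Q$: by Definition \ref{KLMNmole}(ii) for $m_Q$, $\langle m_Q,g_R\rangle=\int_{\mathbb{R}^n}m_Q(x)\,[g_R(x)-P(x)]\,dx$ for any polynomial $P$ of degree at most $L_m$. Taking $P$ to be the Taylor polynomial of $g_R$ at $x_Q$ of degree $d:=\min\{\lfloor\!\lfloor N_g\rfloor\!\rfloor,\lfloor L_m\rfloor\}$ and controlling the remainder through Definition \ref{KLMNmole}(iii) and (iv) for $g_R$ — using the derivative bound $[\ell(R)]^{-|\gamma|}(u_{M_g})_R$ and that $|x-x_Q|\lesssim\ell(Q)$ on the bulk of the support of $m_Q$ — produces the scale gain $[\ell(Q)/\ell(R)]^{E}$ with $E$ as large as $\tfrac{n}{2}+\min\{L_m+1,N_g\}$ up to an arbitrarily small loss. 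Since $L_m\ge E_{\dot{a}_{p,q}^{s,\upsilon}(W)}-\tfrac{n}{2}$ and $N_g>E_{\dot{a}_{p,q}^{s,\upsilon}(W)}-\tfrac{n}{2}$ by Definition \ref{def-anasynmole}, this $E$ can be chosen strictly above $E_{\dot{a}_{p,q}^{s,\upsilon}(W)}$.

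Next I would recover the spatial decay factor $[1+|x_Q-x_R|/(\ell(Q)\vee\ell(R))]^{-D}$ by decomposing the integral according to whether $|x-x_Q|<\tfrac{1}{2}|x_Q-x_R|$ or not: on the first region the quantities built from $g_R$ decay like $(u_{M_g})_R(x)\lesssim|R|^{-1/2}(1+|x_Q-x_R|/\ell(R))^{-M_g}$, while on the second $m_Q$ itself decays like $(u_{K_m})_Q(x)$; balancing these against the Taylor gain and using $K_m,M_m,M_g>D_{\dot{a}_{p,q}^{s,\upsilon}(W)}$ yields the claimed bound with some $D>D_{\dot{a}_{p,q}^{s,\upsilon}(W)}$, while when $|x_Q-x_R|\lesssim\ell(Q)\vee\ell(R)$ the decay factor is comparable to $1$ and only the previous scale estimate is needed. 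The case $\ell(R)<\ell(Q)$ is entirely symmetric: the vanishing moments of $g_R$ (valid up to order $L_g\ge F_{\dot{a}_{p,q}^{s,\upsilon}(W)}-\tfrac{n}{2}$) tested against the smoothness of $m_Q$ (valid up to $N_m>F_{\dot{a}_{p,q}^{s,\upsilon}(W)}-\tfrac{n}{2}$, via Definition \ref{KLMNmole}(iii) and (iv) for $m_Q$) give the gain $[\ell(R)/\ell(Q)]^{F}$ with $F>F_{\dot{a}_{p,q}^{s,\upsilon}(W)}$, together with the same decay factor. Combining the two cases gives that $\{\langle m_Q,g_R\rangle\}_{Q,R\in\mathcal{D}}$ is $(D,E,F)$-almost diagonal with $D>D_{\dot{a}_{p,q}^{s,\upsilon}(W)}$, $E>E_{\dot{a}_{p,q}^{s,\upsilon}(W)}$, and $F>F_{\dot{a}_{p,q}^{s,\upsilon}(W)}$, i.e. $\dot{a}_{p,q}^{s,\upsilon}(W)$-almost diagonal.

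The step I expect to be the main obstacle is the bookkeeping in the third paragraph: carrying out the Taylor subtraction and the region decomposition \emph{simultaneously} so that the scale gain and the spatial decay are not spent against one another, and treating with care the non-integer (Hölder) endpoint of the smoothness via Definition \ref{KLMNmole}(iv) as well as the mild loss in the gain exponents caused by extracting the polynomial decay. All of this is standard and can be taken from, or modeled on, the corresponding molecular estimates in \cite{byyz24} and \cite{fj90}.
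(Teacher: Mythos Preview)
Your approach is correct and is exactly the standard molecular almost-orthogonality estimate underlying this result; the paper itself does not give a proof but simply cites \cite[Lemma 5.15(i)]{byyz24}, whose argument (modeled on \cite{fj90} and \cite[Corollary 3.12]{bhyy3}) proceeds along the same lines you outline. Your identification of the main difficulty --- keeping the Taylor gain and the spatial decay decoupled, and handling the H\"older endpoint via Definition \ref{KLMNmole}(iv) --- is accurate, and the parameter bookkeeping you record (in particular that $K_m>E_{\dot{a}_{p,q}^{s,\upsilon}(W)}+\tfrac{n}{2}$ and $K_g>F_{\dot{a}_{p,q}^{s,\upsilon}(W)}+\tfrac{n}{2}$ are needed precisely so that the decay of the undifferentiated molecule can absorb the polynomial growth $|x-x_Q|^{\min\{L_m+1,N_g\}}$ of the Taylor remainder without losing the scale gain) matches the thresholds in Definition \ref{def-anasynmole}.
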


\begin{lemma}\label{lem-compadope}
Let $a\in\{b, f\}$, $s\in\mathbb{R}$, $p\in(0,\infty)$,
$q\in(0, \infty]$, and $W\in \mathcal{A}_{p,\infty}$.
Suppose that $\delta_1,\delta_2$, and $\omega$
satisfy \eqref{eq-delta1<0} and
$\upsilon\in\mathcal{G}(\delta_1, \delta_2; \omega)$.
If infinite matrices $U^{(1)}:=\{u^{(1)}_{QR}\}
_{Q,R\in\mathcal{D}}$ and $U^{(2)}
:=\{u^{(2)}_{QR}\}_{Q,R\in\mathcal{D}}$ are both
$\dot{a}_{p,q}^{s,\upsilon}(W)$-almost diagonal,
then the infinite matrix
$U:=U^{(1)}\circ U^{(2)}:=\{\sum_{P\in\mathcal{D}}u^{(1)}_{QP}
u^{(2)}_{PR}\}_{Q,R\in\mathcal{D}}$
is also $\dot{a}_{p,q}^{s,\upsilon}(W)$-almost diagonal.
\end{lemma}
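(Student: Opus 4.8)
The plan is to follow the classical scheme for composing almost diagonal matrices (as in \cite{fj90}): reduce to a pointwise estimate between the model matrices and then sum over an intermediate dyadic scale. Since $U^{(1)}$ and $U^{(2)}$ are $\dot{a}_{p,q}^{s,\upsilon}(W)$-almost diagonal, there exist, for $i\in\{1,2\}$, exponents $D_i>D_{\dot{a}_{p,q}^{s,\upsilon}(W)}$, $E_i>E_{\dot{a}_{p,q}^{s,\upsilon}(W)}$, $F_i>F_{\dot{a}_{p,q}^{s,\upsilon}(W)}$ such that $|u^{(i)}_{Q,R}|\lesssim u^{D_iE_iF_i}_{Q,R}$ for all $Q,R\in\mathcal{D}$, with $u^{D_iE_iF_i}_{Q,R}$ as in \eqref{DEFmatrix}. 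Because the model matrices have nonnegative entries,
\[
\left|\sum_{P\in\mathcal{D}}u^{(1)}_{Q,P}u^{(2)}_{P,R}\right|
\lesssim\sum_{P\in\mathcal{D}}u^{D_1E_1F_1}_{Q,P}u^{D_2E_2F_2}_{P,R},
\]
so it suffices (this also yields the absolute convergence of the series defining $(U^{(1)}\circ U^{(2)})_{Q,R}$) to produce $D>D_{\dot{a}_{p,q}^{s,\upsilon}(W)}$, $E>E_{\dot{a}_{p,q}^{s,\upsilon}(W)}$, $F>F_{\dot{a}_{p,q}^{s,\upsilon}(W)}$ with $\sum_{P\in\mathcal{D}}u^{D_1E_1F_1}_{Q,P}u^{D_2E_2F_2}_{P,R}\lesssim u^{DEF}_{Q,R}$ uniformly in $Q,R\in\mathcal{D}$.

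To establish this estimate I would first apply the standard quasi-triangle inequality for dyadic cubes, which controls $1+\frac{|x_Q-x_R|}{\ell(Q)\vee\ell(R)}$, up to a harmless power of $\max\{\ell(P)/(\ell(Q)\vee\ell(R)),(\ell(Q)\vee\ell(R))/\ell(P)\}$, by the product $[1+\frac{|x_Q-x_P|}{\ell(Q)\vee\ell(P)}][1+\frac{|x_P-x_R|}{\ell(P)\vee\ell(R)}]$; this lets one split off the polynomial decay factors. Next I would write $\sum_{P\in\mathcal{D}}=\sum_{\nu\in\mathbb{Z}}\sum_{P\in\mathcal{D}_\nu}$ and treat separately the three regimes $\ell(P)\le\ell(Q)\wedge\ell(R)$, $\ell(P)\ge\ell(Q)\vee\ell(R)$, and $\ell(Q)\wedge\ell(R)\le\ell(P)\le\ell(Q)\vee\ell(R)$. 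In each regime both $u^{D_1E_1F_1}_{Q,P}$ and $u^{D_2E_2F_2}_{P,R}$ collapse to a single branch of \eqref{DEFmatrix}, so the inner sum over $P\in\mathcal{D}_\nu$ reduces to the counting estimate $\sum_{P\in\mathcal{D}_\nu}[1+\frac{|x_Q-x_P|}{\ell(Q)\vee 2^{-\nu}}]^{-D_0}\lesssim\max\{1,(2^\nu\ell(Q))^n\}$, valid for any $D_0>n$ — and here one uses $D_i>D_{\dot{a}_{p,q}^{s,\upsilon}(W)}\ge J_{\dot{a}_{p,q}^{s,\upsilon}}\ge n$, with $J_{\dot{a}_{p,q}^{s,\upsilon}}$ as in \eqref{J_nu} — while the remaining sum over scales $\nu$ is a geometric series whose ratio is built from $E_1,E_2,F_1,F_2$ and $n$.

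Carrying out this accounting yields the desired estimate with $D$, $E$, $F$ that may be taken to be the minima of the corresponding input exponents, after first replacing $E_1,E_2$ (and $F_1,F_2$) by slightly smaller values that still exceed $E_{\dot{a}_{p,q}^{s,\upsilon}(W)}$ (respectively $F_{\dot{a}_{p,q}^{s,\upsilon}(W)}$) and, if necessary, are mutually distinct, so that the borderline scale-sum in the intermediate regime produces no logarithmic loss. Since $D_{\dot{a}_{p,q}^{s,\upsilon}(W)}\ge J_{\dot{a}_{p,q}^{s,\upsilon}}\ge n$, one also has $D_1+D_2-n>D_{\dot{a}_{p,q}^{s,\upsilon}(W)}$, so whichever of $\min\{D_1,D_2\}$ or $D_1+D_2-n$ occurs, the resulting $D$ still exceeds $D_{\dot{a}_{p,q}^{s,\upsilon}(W)}$; likewise $E>E_{\dot{a}_{p,q}^{s,\upsilon}(W)}$ and $F>F_{\dot{a}_{p,q}^{s,\upsilon}(W)}$ because these thresholds are strict. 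Hence $U=U^{(1)}\circ U^{(2)}$ is $(D,E,F)$-almost diagonal with super-threshold parameters, i.e., $\dot{a}_{p,q}^{s,\upsilon}(W)$-almost diagonal, which is the claim. The main obstacle is precisely this last accounting step: one must match the scale-dependent gains and losses in each of the three regimes and verify convergence of every geometric series in $\nu$ with the correct exponent, taking care that $E$ and $F$ never fall to or below their thresholds — which is exactly where $J_{\dot{a}_{p,q}^{s,\upsilon}}\ge n$ is used.
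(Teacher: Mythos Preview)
The paper does not give its own proof of this lemma; it simply quotes it from \cite[Proposition~5.8]{byyz24}. Your sketch is the classical Frazier--Jawerth composition argument (as in \cite[Theorem~3.3]{fj90} and its later refinements such as \cite{bhyy2}), and it is essentially the approach one would find in the cited reference: your observation that $D_{\dot{a}_{p,q}^{s,\upsilon}(W)}\ge J_{\dot{a}_{p,q}^{s,\upsilon}}\ge n$ is exactly the structural input that makes the spatial counting estimate and the scale-sums converge, and the perturbation of $E_i,F_i$ to avoid logarithmic borderline cases is the standard device.
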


We next establish an extension result related to
$\dot{A}_{p,q}^{s,\upsilon}(W)$, which is crucial for
proving the main result of this subsection.

\begin{lemma}\label{lem-Ext}
Let $A\in\{B, F\}$, $s\in\mathbb{R}$,
$p,L,N\in(0,\infty)$, $q\in(0,\infty]$,
and $W\in\mathcal{A}_{p,\infty}$. Assume that
$\delta_1, \delta_2,\omega$ satisfy \eqref{eq-delta1<0},
$\upsilon\in\mathcal{G}(\delta_1, \delta_2; \omega)$, and
$T$ is a continuous linear operator from
$\mathcal{S}$ to $\mathcal{S}'$.
Then the following two statements hold.
\begin{itemize}
\item[{\rm (i)}] If there exists a positive constant $C$ such that,
for any family of sufficiently regular atoms $\{t_P\}_{P\in\mathcal{D}}$,
$\{C T(t_P)\}_{P\in\mathcal{D}}$ is a family of $(K,L,M,N)$-molecules
with $K,L,M,N$ satisfying \eqref{eq-syn-mole},
then there exists an operator $\widetilde{T}
\in\mathcal{L}(\dot{A}^{s,\upsilon}_{p,q}(W))$
which agrees with $T$ on $(\mathcal{S}_{\infty})^m
\cap\dot{A}^{s,\upsilon}_{p,q}(W)$.	
\item[{\rm (ii)}] Moreover, if there exists an operator
$R\in\mathcal{L}(L^2)$
which agrees with $T$ on $\mathcal{S}$,
then the operator $\widetilde{T}$ in (i) also agrees with $T$ on
$(\mathcal{S})^m\cap\dot{A}^{s,\upsilon}_{p,q}(W)$.
\end{itemize}
\end{lemma}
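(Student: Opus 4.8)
The plan is to prove Lemma \ref{lem-Ext} by reducing everything to the almost-diagonal machinery already available. First I would set up the $\varphi$-transform framework: fix $\varphi,\psi\in\mathcal{S}$ satisfying \eqref{cond1} and \eqref{cond3}, and consider the infinite matrix $\widehat{T}:=\{\langle T\psi_R,\varphi_Q\rangle\}_{Q,R\in\mathcal{D}}$. The goal is to show that $\widehat{T}$ is $\dot{a}_{p,q}^{s,\upsilon}(W)$-almost diagonal, for then Lemma \ref{lem-ad-T} immediately produces the operator $\widetilde{T}\in\mathcal{L}(\dot{A}_{p,q}^{s,\upsilon}(W))$ agreeing with $T$ on $(\mathcal{S}_{\infty})^m\cap\dot{A}_{p,q}^{s,\upsilon}(W)$, which is exactly the conclusion of (i).

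To show $\widehat{T}$ is $\dot{a}_{p,q}^{s,\upsilon}(W)$-almost diagonal, I would factor it through atoms. By Lemma \ref{lem-psi-sumatom}, for each $R\in\mathcal{D}$ we can write $\psi_R=C\sum_{P\in\mathcal{D},\,\ell(P)=\ell(R)}(1+[\ell(R)]^{-1}|x_R-x_P|)^{-M}t_P$ in $\mathcal{S}$, where $\{t_P\}$ are $(L,N)$-atoms with $M,L,N$ taken large enough (this is where the freedom in the hypothesis of (i) enters — "sufficiently regular" atoms). Applying $T$ termwise and using the hypothesis, $\{CT(t_P)\}_P$ is a family of synthesis molecules for $\dot{A}_{p,q}^{s,\upsilon}(W)$. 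Pairing with $\varphi_Q$, which by Remark \ref{rmk-mole-phi} is (a constant multiple of) an analysis molecule, Lemma \ref{lem-anasynmole} gives that $\{\langle \varphi_Q, T(t_P)\rangle\}_{Q,P}$ is $\dot{a}_{p,q}^{s,\upsilon}(W)$-almost diagonal; the purely geometric factor $\{(1+[\ell(R)]^{-1}|x_R-x_P|)^{-M}\mathbf{1}_{\ell(P)=\ell(R)}\}_{P,R}$ is, for $M$ large, an almost-diagonal matrix of $U^{DEF}$ type. Composing these two via Lemma \ref{lem-compadope} yields that $\widehat{T}=\{\langle T\psi_R,\varphi_Q\rangle\}_{Q,R}$ is $\dot{a}_{p,q}^{s,\upsilon}(W)$-almost diagonal. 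A technical point to handle carefully is the interchange of $T$ with the (infinite) sum representing $\psi_R$: since the sum converges in $\mathcal{S}$ and $T$ is continuous from $\mathcal{S}$ to $\mathcal{S}'$, the interchange $T\psi_R=C\sum_P(\cdots)T(t_P)$ is legitimate in $\mathcal{S}'$, which suffices to evaluate the pairing with $\varphi_Q$.

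For part (ii), the extra hypothesis is that $T$ extends to $R\in\mathcal{L}(L^2)$ agreeing with $T$ on $\mathcal{S}$. The point is to upgrade the identity $\widetilde{T}=T$ from $(\mathcal{S}_{\infty})^m\cap\dot{A}_{p,q}^{s,\upsilon}(W)$ to $(\mathcal{S})^m\cap\dot{A}_{p,q}^{s,\upsilon}(W)$. I would argue: for $\vec{f}\in(\mathcal{S})^m\cap\dot{A}_{p,q}^{s,\upsilon}(W)$, apply the Calder\'on reproducing formula (Lemma \ref{lem-Cdreproform}) — but note $\vec f$ need only lie in $\mathcal{S}$, not $\mathcal{S}_\infty$, so one works modulo polynomials, i.e.\ in $\mathcal{S}'_\infty=\mathcal{S}'/\mathcal{P}$. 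Since $\widetilde{T}$ is built as $T_\psi\circ\widehat T\circ S_\varphi$ acting on the image of $\vec f$ in $(\mathcal{S}'_\infty)^m$, and since $S_\varphi\vec f=\{\langle\vec f,\varphi_Q\rangle\}_Q$ depends only on the class of $\vec f$ mod polynomials, we get $\widetilde T\vec f=T_\psi\widehat T S_\varphi\vec f$. On the other hand, using $R\in\mathcal{L}(L^2)$ together with the $L^2$-convergence of the Calder\'on decomposition of $\vec f$, one identifies $R\vec f$ (equivalently $T\vec f$) with $\sum_{Q}\langle\vec f,\varphi_Q\rangle\, T(\psi_Q)$ in $\mathcal{S}'$, whence the two agree modulo polynomials; combined with $\widehat T\circ S_\varphi=S_\varphi\circ T$ from Lemma \ref{lem-hat-T} (applied to $T$ restricted to $\mathcal{S}_\infty$-type test functions), this closes the identification.

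The main obstacle I anticipate is the bookkeeping in part (ii): reconciling the two a priori different actions of $T$ — the one coming from $L^2$-boundedness and the one coming from the molecular/almost-diagonal construction on $\dot{A}_{p,q}^{s,\upsilon}(W)$ — and making the polynomial ambiguity (working in $\mathcal{S}'/\mathcal{P}$ rather than $\mathcal{S}'$) completely rigorous, since Daubechies/Calder\'on reproducing formulae for general Schwartz functions only recover $\vec f$ up to a polynomial. In part (i), the only delicate step is verifying that the atoms $\{t_P\}$ produced by Lemma \ref{lem-psi-sumatom} are "sufficiently regular" in the precise sense demanded by the hypothesis, i.e.\ that $L$ and $N$ can be chosen simultaneously large while $M$ (the decay exponent in the geometric factor) is also large enough to survive the composition in Lemma \ref{lem-compadope}; this is a matter of chasing the indices $D_{\dot{a}_{p,q}^{s,\upsilon}(W)},E_{\dot{a}_{p,q}^{s,\upsilon}(W)},F_{\dot{a}_{p,q}^{s,\upsilon}(W)}$ from Theorem \ref{a(W)adopebound} through \eqref{eq-syn-mole}, and is routine given the results already cited.
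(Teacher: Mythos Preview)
Your proposal is correct and follows essentially the same route as the paper: for (i) you decompose $\psi_R$ into atoms via Lemma \ref{lem-psi-sumatom}, use the hypothesis together with Remark \ref{rmk-mole-phi} and Lemma \ref{lem-anasynmole} to make $\{\langle Tt_P,\varphi_Q\rangle\}$ almost diagonal, compose with the geometric factor via Lemma \ref{lem-compadope}, and invoke Lemma \ref{lem-ad-T}; for (ii) the paper simply refers to the argument of \cite[Proposition 6.5(ii)]{bhyy3}, which is precisely the $L^2$-plus-reproducing-formula identification you sketch.
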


\begin{proof}
To prove (i), let $\varphi, \psi\in\mathcal{S}$
satisfy \eqref{cond1} and \eqref{cond3}
and let $M\in(D_{\dot{a}_{p,q}^{s,\upsilon}(W)}, \infty)$,
where $D_{\dot{a}_{p,q}^{s,\upsilon}(W)}$ is as in
Theorem \ref{a(W)adopebound}. By Lemma \ref{lem-psi-sumatom},
we find that, for any $R\in\mathcal{D}$, there exist
a positive constant $C$ and a sequence $\{t_P\}_{P\in\mathcal{D}, \ell(P)=\ell(R)}$
of sufficiently regular atoms such that \eqref{eq-decom-atom} holds
in $\mathcal{S}$. Applying \eqref{eq-decom-atom} and the continuity
of $T$, we obtain, for any $Q, R\in\mathcal{D}$,
\begin{align}\label{eq-TpsiR-phiQ}
\left\langle T\psi_R, \varphi_Q\right\rangle
=C\sum_{P\in\mathcal{D}, \ell(P)=\ell(R)}
\left(1+[\ell(R)]^{-1}\left|x_R-x_P\right|\right)^{-M}
\left\langle T t_P, \varphi_Q\right\rangle.
\end{align}
Next, we define the matrix $\{h_{RP}\}_{R,P\in\mathcal{D}}$
by setting, for any $R,P\in\mathcal{D}$,
\begin{align*}
h_{RP}=
\begin{cases}
\left(1+[\ell(R)]^{-1}\left|x_R-x_P\right|\right)^{-M}
&\displaystyle\text{if } \ell(R)=\ell(P),\\
0  &\text{otherwise.}
\end{cases}
\end{align*}
Since $M\in(D_{\dot{a}_{p,q}^{s,\upsilon}(W)}, \infty)$,
it is easy to verify that $\{h_{RP}\}_{R,P\in\mathcal{D}}$
is $\dot{a}_{p,q}^{s,\upsilon}(W)$-almost diagonal.
Using the assumption on $T$ that,
for any family of sufficiently regular atoms
$\{t_P\}_{P\in\mathcal{D}}$, $\{C T(t_P)\}_{P\in\mathcal{D}}$
is a family of $(K,L,M,N)$-molecules
with $K,L,M,N$ satisfying \eqref{eq-syn-mole},
Remark \ref{rmk-mole-phi}, and
Lemma \ref{lem-anasynmole}, we conclude that
$\{\langle T t_P, \varphi_Q\rangle\}_{Q, P\in\mathcal{D}}$
is $\dot{a}_{p,q}^{s,\upsilon}(W)$-almost diagonal.
This, together with \eqref{eq-TpsiR-phiQ} and
Lemma \ref{lem-compadope}, further implies that
$\{\langle T\psi_R, \varphi_Q\rangle\}_{Q,R\in\mathcal{D}}$
is $\dot{a}_{p, q}^{s, \upsilon}(W)$-almost diagonal.
By this and Lemma \ref{lem-ad-T}, we find that
there exists an operator $\widetilde{T}
\in\mathcal{L}(\dot{A}^{s,\upsilon}_{p,q}(W))$
that agrees with $T$ on $(\mathcal{S}_{\infty})^m
\cap\dot{A}^{s,\upsilon}_{p,q}(W)$, which completes the proof of (i).
The proof of (ii) follows from  the same argument as that
used in the proof of \cite[Proposition 6.5(ii)]{bhyy3};
we omit the details. This finishes the proof of Lemma \ref{lem-Ext}.
\end{proof}

Based on Lemma \ref{lem-Ext}, to establish the boundedness of
Calder\'on--Zygmund operators on $\dot{A}^{s,\upsilon}_{p,q}(W)$,
it suffices to find   right assumptions on
Calder\'on--Zygmund operators ensuring  that they map sufficiently regular
atoms to synthesis molecules for $\dot{A}^{s,\upsilon}_{p,q}(W)$.
The next proposition,which is precisely \cite[Proposition 6.19]{bhyy3},
provides such  assumptions on Calder\'on--Zygmund operators
in Definition \ref{def-CZOK}.

\begin{proposition}\label{prop-EFGH-KLMN}
Let $\sigma\in\{0,1\}$, $E,F,G,H,L,N\in\mathbb{R}$,
$K,M\in[0, \infty)$, and $T\in\operatorname{CZO}^\sigma(E,F,G,H)$.
If
\begin{align*}
\sigma\geq\mathbf{1}_{(0,\infty)}(N),\
\begin{cases}
E\geq N,\\
E>\lfloor N\rfloor_+,
\end{cases}
\begin{cases}
F\geq (K\vee M)-n,\\
F>\lfloor L\rfloor,
\end{cases}
G\geq\lfloor N\rfloor_+,\ \text{and}\
H\geq\lfloor L\rfloor,
\end{align*}
then there exists a positive constant $C$ such that,
for any family of sufficiently regular atoms $\{t_P\}_{P\in\mathcal{D}}$,
$\{C T(t_P)\}_{P\in\mathcal{D}}$ is a family of $(K,L,M,N)$-molecules.
\end{proposition}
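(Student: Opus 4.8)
The plan is to fix an arbitrary family $\{t_P\}_{P\in\mathcal{D}}$ of sufficiently regular atoms (so each $t_P$ is an $(L',N')$-atom with $L',N'$ as large as the bookkeeping below requires, in particular $L'\geq\lfloor\!\lfloor F\rfloor\!\rfloor$ and $L'\geq\lfloor H\rfloor$) and to show that, after multiplication by one uniform constant, each $T(t_P)$ is a $(K,L,M,N)$-molecule supported near $P$ in the sense of Definition \ref{KLMNmole}. Every hypothesis on $T$ in Definition \ref{def-CZOK} and every defining property of a molecule is stable under the affine change of variables $x\mapsto\ell(P)x+x_P$ together with the attendant $L^2$-normalization of $t_P$, so the first step is to reduce to the model cube $P=[0,1)^n$; there $(u_{M})_P(x)\sim(1+|x|)^{-M}$ near $P$ and it remains to verify: (i) $|T(t_P)(x)|\lesssim(1+|x|)^{-K}$; (ii) $\int_{\mathbb{R}^n}T(t_P)(x)x^{\gamma}\,dx=0$ for $|\gamma|\leq L$; (iii) $|\partial^{\gamma}T(t_P)(x)|\lesssim(1+|x|)^{-M}$ for $|\gamma|<N$; and (iv) the $N^{**}$-Hölder bound on $\partial^{\gamma}T(t_P)$ for $|\gamma|=\lfloor\!\lfloor N\rfloor\!\rfloor$ (the last two being vacuous when $N\leq0$, which is exactly when $\mathbf{1}_{(0,\infty)}(N)=0$ and $\sigma=0$ is permitted).

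On the \emph{distant} region $|x|\gtrsim1$ I would use the kernel representation $T(t_P)(x)=\int_{\mathbb{R}^n}K(x,y)t_P(y)\,dy$, valid off $\operatorname{supp}t_P$, and subtract from $K(x,\cdot)$ its Taylor polynomial of degree $\lfloor\!\lfloor F\rfloor\!\rfloor$ about $x_P$ — legitimate because $F>\lfloor L\rfloor$ gives $\lfloor\!\lfloor F\rfloor\!\rfloor\geq\lfloor L\rfloor$ and the kernel has the $y$-regularity of Definition \ref{def-CZK}(iii), while the polynomial part integrates to zero against $t_P$ since $L'\geq\lfloor\!\lfloor F\rfloor\!\rfloor$. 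This produces decay of order $n+F$, and since $F\geq(K\vee M)-n$ it yields (i) and the decay half of (iii) there. Taking $x$-derivatives first — admissible for $|\gamma|<N$ because $E\geq N$ and $E>\lfloor N\rfloor_+$ force $|\gamma|\leq\lfloor\!\lfloor E\rfloor\!\rfloor_+$, so Definition \ref{def-CZK}(i) applies — and rerunning the same expansion handles the derivative bounds, while the top-order Hölder estimate (iv) on the distant region follows from Definition \ref{def-CZK}(i) and (ii). The \emph{local} region $|x|\lesssim1$ is where the $T1$-type hypotheses enter: from $T\in\mathbf{WBP}$, $T\in\operatorname{CZO}(\lfloor G\rfloor+\varepsilon)$, the cancellations $T(x^{\gamma})=0$ for $|\gamma|\leq\lfloor G\rfloor$ with $G\geq\lfloor N\rfloor_+$, and — when $N>0$ — the double-difference kernel bound of $\operatorname{CZK}^{1}$, the reduced $T1$ machinery (as in \cite{ftw88,tor91} and \cite[Section 6]{bhyy3}) shows that $\partial^{\gamma}T(t_P)$ is bounded for $|\gamma|<N$ and $N^{**}$-Hölder continuous for $|\gamma|=\lfloor\!\lfloor N\rfloor\!\rfloor$, all uniformly in $P$. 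Splicing the two regions by a fixed partition of unity gives (i), (iii), and (iv).

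The cancellation (ii) I would handle separately. The integral $\int_{\mathbb{R}^n}T(t_P)(x)x^{\gamma}\,dx$ converges absolutely for $|\gamma|\leq L$ by the decay of order $n+F>n+\lfloor L\rfloor\geq n+|\gamma|$ obtained above, and it equals $\langle t_P,T^{*}(x^{\gamma})\rangle$, which vanishes since $T^{*}(x^{\gamma})=0$ for $|\gamma|\leq\lfloor H\rfloor$ by Definition \ref{def-CZOK}(iv) and $H\geq\lfloor L\rfloor$ gives $\lfloor H\rfloor\geq\lfloor L\rfloor\geq|\gamma|$. The precise meaning of this pairing rests on the action of $T^{*}\in\operatorname{CZO}(\lfloor H\rfloor+\varepsilon)$ on polynomials of growth degree $|\gamma|$ tested against functions of the space $\mathscr{D}_{\lfloor H\rfloor}$ — which contains the sufficiently regular atom $t_P$ — together with a limiting argument $\langle T(t_P),x^{\gamma}\rangle=\lim_{j\to\infty}\langle T(t_P),\eta_{j}x^{\gamma}\rangle$ along cutoffs $\eta_{j}\uparrow1$.

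I expect the main obstacle to be the local estimate in the second paragraph: extracting the full set of uniform pointwise derivative and Hölder bounds on $T(t_P)$ near $P$ from the mixture of hypotheses ($\mathbf{WBP}$, the $\operatorname{CZO}$ membership of $T$ and $T^{*}$, the polynomial cancellations, and the $\operatorname{CZK}^{1}$ double-difference estimate) is the technically heaviest step, whereas matching the thresholds on $E,F,G,H$ to the orders $K,L,M,N$ prescribed by \eqref{eq-syn-mole} is delicate but mechanical once the two regional estimates are available. This is the content of \cite[Proposition 6.19]{bhyy3}, and I would follow its proof, which develops the approach of \cite{fj90,ftw88,tor91}, essentially line by line.
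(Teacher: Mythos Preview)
Your proposal is correct and aligns with the paper's treatment: the paper does not give its own proof of this proposition but simply records it as \cite[Proposition 6.19]{bhyy3}, and your sketch accurately outlines the argument there---scaling to the unit cube, far-field decay via Taylor expansion in $y$ using Definition \ref{def-CZK}(iii), near-field regularity via the $T1$-type package (WBP, $\operatorname{CZO}$ membership, polynomial cancellations, and the $\operatorname{CZK}^1$ double difference when $N>0$), and the moment conditions via $T^*(x^\gamma)=0$---following \cite{ftw88,tor91,bhyy3}. Since you explicitly identify the result as \cite[Proposition 6.19]{bhyy3} and propose to follow that proof line by line, there is nothing to add.
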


Applying Proposition \ref{prop-EFGH-KLMN},
we obtain the following result.

\begin{corollary}\label{cor-attosynmole}
Let $a\in\{b, f\}$, $s\in\mathbb{R}$,
$p\in(0,\infty)$, $q\in(0, \infty]$, and $W\in \mathcal{A}_{p,\infty}$.
Suppose that $\delta_1, \delta_2$, and $\omega$ satisfy \eqref{eq-delta1<0},
$\upsilon\in\mathcal{G}(\delta_1, \delta_2; \omega)$,
and $D_{\dot{a}_{p,q}^{s,\upsilon}(W)},
E_{\dot{a}_{p,q}^{s,\upsilon}(W)}$, and $F_{\dot{a}_{p,q}^{s,\upsilon}(W)}$
are as in Theorem \ref{a(W)adopebound}. If
\begin{align}\label{eq-CZ}
&\sigma\geq\mathbf{1}_{[0,\infty)}
\left(E_{\dot{a}_{p,q}^{s,\upsilon}(W)}-\frac{n}{2}\right),\
E>\left(E_{\dot{a}_{p,q}^{s,\upsilon}(W)}-\frac{n}{2}\right)_+,\
F>\left[D_{\dot{a}_{p,q}^{s,\upsilon}(W)}\vee
\left(F_{\dot{a}_{p,q}^{s,\upsilon}(W)}+\frac{n}{2}\right)\right]-n,\\\nonumber
&G\geq\left\lfloor
E_{\dot{a}_{p,q}^{s,\upsilon}(W)}-\frac{n}{2}\right\rfloor_+,
\text{ and }H\geq\left\lfloor F_{\dot{a}_{p,q}^{s,\upsilon}(W)}-\frac{n}{2}\right\rfloor,
\end{align}
then there exists a positive constant $C$ such that,
for any family of sufficiently regular atoms $\{t_P\}_{P\in\mathcal{D}}$,
$\{C T(t_P)\}_{P\in\mathcal{D}}$ is a family of $(K,L,M,N)$-molecules
with $K,L,M,N$ satisfying \eqref{eq-syn-mole}.
\end{corollary}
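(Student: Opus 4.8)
The plan is to derive Corollary \ref{cor-attosynmole} directly from Proposition \ref{prop-EFGH-KLMN}: one only has to select the molecular parameters $K,L,M,N$ in the conclusion of Proposition \ref{prop-EFGH-KLMN} so that, at the same time, they satisfy \eqref{eq-syn-mole} (hence the resulting family really is a family of synthesis molecules for $\dot{A}^{s,\upsilon}_{p,q}(W)$) and the seven compatibility inequalities in the hypothesis of Proposition \ref{prop-EFGH-KLMN} — relating $(\sigma,E,F,G,H)$ to $(K,L,M,N)$ — collapse into exactly the conditions \eqref{eq-CZ}. Throughout, $T\in\operatorname{CZO}^\sigma(E,F,G,H)$ denotes the operator implicit in \eqref{eq-CZ}, and we recall from Theorem \ref{a(W)adopebound} that $D_{\dot{a}_{p,q}^{s,\upsilon}(W)}\geq n>0$.

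First I would fix $L:=F_{\dot{a}_{p,q}^{s,\upsilon}(W)}-\frac{n}{2}$, so that $L\geq F_{\dot{a}_{p,q}^{s,\upsilon}(W)}-\frac{n}{2}$ and $\lfloor L\rfloor=\lfloor F_{\dot{a}_{p,q}^{s,\upsilon}(W)}-\frac{n}{2}\rfloor$. Since by \eqref{eq-CZ} we have $F+n>D_{\dot{a}_{p,q}^{s,\upsilon}(W)}\vee[F_{\dot{a}_{p,q}^{s,\upsilon}(W)}+\frac{n}{2}]\geq D_{\dot{a}_{p,q}^{s,\upsilon}(W)}$, the half-open intervals $(D_{\dot{a}_{p,q}^{s,\upsilon}(W)}\vee[F_{\dot{a}_{p,q}^{s,\upsilon}(W)}+\frac{n}{2}],\,F+n]$ and $(D_{\dot{a}_{p,q}^{s,\upsilon}(W)},\,F+n]$ are non-empty; I would pick $K$ in the first and $M$ in the second, so that $K>D_{\dot{a}_{p,q}^{s,\upsilon}(W)}\vee[F_{\dot{a}_{p,q}^{s,\upsilon}(W)}+\frac{n}{2}]$, $M>D_{\dot{a}_{p,q}^{s,\upsilon}(W)}\geq0$, $K\vee M\leq F+n$, and hence $K,M\in[0,\infty)$. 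Finally, using that $E>(E_{\dot{a}_{p,q}^{s,\upsilon}(W)}-\frac{n}{2})_+\geq E_{\dot{a}_{p,q}^{s,\upsilon}(W)}-\frac{n}{2}$, I would choose $N$ with $N>E_{\dot{a}_{p,q}^{s,\upsilon}(W)}-\frac{n}{2}$ and $N$ so close to $E_{\dot{a}_{p,q}^{s,\upsilon}(W)}-\frac{n}{2}$ that $N\leq E$, $\lfloor N\rfloor_+=\lfloor E_{\dot{a}_{p,q}^{s,\upsilon}(W)}-\frac{n}{2}\rfloor_+$, and, in addition, $N\leq0$ whenever $E_{\dot{a}_{p,q}^{s,\upsilon}(W)}-\frac{n}{2}<0$; all of these are constraints that hold on a one-sided neighbourhood of $E_{\dot{a}_{p,q}^{s,\upsilon}(W)}-\frac{n}{2}$, so such an $N$ exists. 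With these choices $(K,L,M,N)$ satisfies \eqref{eq-syn-mole}.

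Then I would check the seven compatibility conditions of Proposition \ref{prop-EFGH-KLMN}. The two conditions on $F$ follow from $F>D_{\dot{a}_{p,q}^{s,\upsilon}(W)}\vee[F_{\dot{a}_{p,q}^{s,\upsilon}(W)}+\frac{n}{2}]-n$ together with $K\vee M\leq F+n$ and $\lfloor L\rfloor=\lfloor F_{\dot{a}_{p,q}^{s,\upsilon}(W)}-\frac{n}{2}\rfloor\leq F_{\dot{a}_{p,q}^{s,\upsilon}(W)}-\frac{n}{2}<F$; the inequalities $E\geq N$ and $E>\lfloor N\rfloor_+$ follow from the choice of $N$ and from $E>(E_{\dot{a}_{p,q}^{s,\upsilon}(W)}-\frac{n}{2})_+\geq\lfloor E_{\dot{a}_{p,q}^{s,\upsilon}(W)}-\frac{n}{2}\rfloor_+=\lfloor N\rfloor_+$; the inequalities $G\geq\lfloor N\rfloor_+$ and $H\geq\lfloor L\rfloor$ are precisely the last two conditions in \eqref{eq-CZ}; and $\sigma\geq\mathbf{1}_{(0,\infty)}(N)$ holds because, when $E_{\dot{a}_{p,q}^{s,\upsilon}(W)}-\frac{n}{2}\geq0$ one has $N>0$, so $\mathbf{1}_{(0,\infty)}(N)=1=\mathbf{1}_{[0,\infty)}(E_{\dot{a}_{p,q}^{s,\upsilon}(W)}-\frac{n}{2})\leq\sigma$, while when $E_{\dot{a}_{p,q}^{s,\upsilon}(W)}-\frac{n}{2}<0$ one has $N\leq0$, so $\mathbf{1}_{(0,\infty)}(N)=0\leq\sigma$. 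Applying Proposition \ref{prop-EFGH-KLMN} with this $(K,L,M,N)$ then supplies a positive constant $C$ such that $\{CT(t_P)\}_{P\in\mathcal{D}}$ is a family of $(K,L,M,N)$-molecules for every family of sufficiently regular atoms $\{t_P\}_{P\in\mathcal{D}}$, and, by the previous step, $(K,L,M,N)$ satisfies \eqref{eq-syn-mole}; this is exactly the assertion of the corollary.

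I expect the only genuine difficulty to be this bookkeeping: the parameters $K,L,M,N$ must be placed inside the (partly half-open) windows prescribed by \eqref{eq-syn-mole} and, simultaneously, inside the windows that make Proposition \ref{prop-EFGH-KLMN} applicable, with careful attention to the floor quantities $\lfloor\cdot\rfloor$ and $\lfloor\cdot\rfloor_+$ and, above all, to the slight discrepancy between the indicator $\mathbf{1}_{[0,\infty)}(E_{\dot{a}_{p,q}^{s,\upsilon}(W)}-\frac{n}{2})$ appearing in \eqref{eq-CZ} and the indicator $\mathbf{1}_{(0,\infty)}(N)$ demanded by Proposition \ref{prop-EFGH-KLMN}; this last point is exactly what forces the choice of $N$ close enough to $E_{\dot{a}_{p,q}^{s,\upsilon}(W)}-\frac{n}{2}$ to guarantee $N\leq0$ in the borderline case $E_{\dot{a}_{p,q}^{s,\upsilon}(W)}-\frac{n}{2}<0$.
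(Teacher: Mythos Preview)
Your proposal is correct and follows essentially the same approach as the paper's own proof: both fix $L=F_{\dot{a}_{p,q}^{s,\upsilon}(W)}-\frac{n}{2}$, choose $K\in(D_{\dot{a}_{p,q}^{s,\upsilon}(W)}\vee[F_{\dot{a}_{p,q}^{s,\upsilon}(W)}+\frac{n}{2}],\,F+n]$ and $M\in(D_{\dot{a}_{p,q}^{s,\upsilon}(W)},\,F+n]$, pick $N$ just above $E_{\dot{a}_{p,q}^{s,\upsilon}(W)}-\frac{n}{2}$ (with $N=0$ or $N\leq 0$ in the negative case), and then feed these into Proposition~\ref{prop-EFGH-KLMN}. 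Your treatment of the choice of $N$ is in fact slightly more careful than the paper's, since you explicitly enforce $N>E_{\dot{a}_{p,q}^{s,\upsilon}(W)}-\frac{n}{2}$, whereas the paper's stated interval $(\lfloor E_{\dot{a}_{p,q}^{s,\upsilon}(W)}-\frac{n}{2}\rfloor,\,\lceil\!\lceil E_{\dot{a}_{p,q}^{s,\upsilon}(W)}-\frac{n}{2}\rceil\!\rceil\wedge E)$ technically contains points below $E_{\dot{a}_{p,q}^{s,\upsilon}(W)}-\frac{n}{2}$ when the latter is not an integer.
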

\begin{proof}
To prove the present corollary, let
$L:=F_{\dot{a}_{p,q}^{s,\upsilon}(W)}-\frac{n}{2}$,
$K\in(D_{\dot{a}_{p,q}^{s,\upsilon}(W)}\vee
(F_{\dot{a}_{p,q}^{s,\upsilon}(W)}+\frac{n}{2}), F+n]$, $M\in(D_{\dot{a}_{p,q}^{s,\upsilon}(W)}, F+n]$, and
\begin{align*}
\begin{cases}
N\in\left(\left\lfloor E_{\dot{a}_{p,q}^{s,\upsilon}(W)}
-\frac{n}{2}\right\rfloor, \left\lceil \! \left\lceil
E_{\dot{a}_{p,q}^{s,\upsilon}(W)}-\frac{n}{2}\right\rceil \! \right\rceil
\wedge E\right) &\displaystyle\text{if }
E_{\dot{a}_{p,q}^{s,\upsilon}(W)}-\frac{n}{2}\geq0,\\
N:=0\in \left(E_{\dot{a}_{p,q}^{s,\upsilon}(W)}-\frac{n}{2}, E\right)&\text{otherwise}.
\end{cases}
\end{align*}
There is no difficulty to verify that,
under the assumptions of \eqref{eq-CZ},
\begin{align*}
\sigma\geq\mathbf{1}_{(0,\infty)}(N),\
\begin{cases}
E\geq N,\\
E>\lfloor N\rfloor_+,
\end{cases}
\begin{cases}
F\geq (K\vee M)-n,\\
F>\lfloor L\rfloor,
\end{cases}
G\geq\lfloor N\rfloor_+,\ \text{and}\
H\geq\lfloor L\rfloor
\end{align*}
hold. By these and Proposition \ref{prop-EFGH-KLMN},
we find that there exists a positive constant $C$ such that,
for any family of sufficiently regular atoms $\{t_P\}_{P\in\mathcal{D}}$,
$\{C T(t_P)\}_{P\in\mathcal{D}}$ is a family of $(K,L,M,N)$-molecules
with $K,L,M,N$ satisfying \eqref{eq-syn-mole}.
This finishes the proof of Corollary \ref{cor-attosynmole}.
\end{proof}

Finally, we establish the boundedness of
Calder\'on--Zygmund operators on
$\dot{A}^{s,\upsilon}_{p,q}(W)$.

\begin{theorem}\label{thm-CZ}
Let $(A, a)\in\{(B, b), (F, f)\}$, $s\in\mathbb{R}$,
$p\in(0,\infty)$, $q\in(0, \infty]$, and $W\in \mathcal{A}_{p,\infty}$.
Assume that $\delta_1, \delta_2$, and $\omega$ satisfy
\eqref{eq-delta1<0}, $\upsilon\in\mathcal{G}(\delta_1,
\delta_2; \omega)$, and $D_{\dot{a}_{p,q}^{s,\upsilon}(W)},
E_{\dot{a}_{p,q}^{s,\upsilon}(W)}$, and $F_{\dot{a}_{p,q}^{s,\upsilon}(W)}$
are as in Theorem \ref{a(W)adopebound}. Suppose that
$T\in\operatorname{CZO}^\sigma(E,F,G,H)$ with $\sigma\in\{0,1\}$
and $E,F,G,H\in\mathbb{R}$ satisfying \eqref{eq-CZ}.
Then the following statements hold.
\begin{itemize}
\item[{\rm (i)}] There exists an operator
$\widetilde{T}\in\mathcal L(\dot{A}^{s,\upsilon}_{p,q}(W))$
which agrees with $T$ on $(\mathcal{S}_{\infty})^m
\cap\dot{A}^{s,\upsilon}_{p,q}(W)$.
\item[{\rm (ii)}] Furthermore, when $H\geq0$,
then the operator $\widetilde{T}$ in (i) also agrees with $T$
on $(\mathcal{S})^m\cap\dot{A}^{s,\upsilon}_{p,q}(W)$.	
\end{itemize}
\end{theorem}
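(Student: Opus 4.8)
The plan is to derive Theorem \ref{thm-CZ} almost immediately from the machinery already assembled: Lemma \ref{lem-Ext} (the abstract extension principle) together with Corollary \ref{cor-attosynmole} (which says that, under the hypothesis \eqref{eq-CZ}, a Calder\'on--Zygmund operator $T\in\operatorname{CZO}^\sigma(E,F,G,H)$ maps sufficiently regular atoms to synthesis molecules for $\dot{A}^{s,\upsilon}_{p,q}(W)$). First I would fix the same notation as in the statement, noting that $T$ is by Definition \ref{def-CZOK} a continuous linear operator from $\mathcal{S}$ to $\mathcal{S}'$, so in particular it restricts to a continuous linear operator on $\mathcal{S}_{\infty}$ into $\mathcal{S}'_{\infty}$, which is the setting required by Lemma \ref{lem-Ext}. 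Then I would invoke Corollary \ref{cor-attosynmole}: since $E,F,G,H$ satisfy \eqref{eq-CZ}, there is a positive constant $C$ such that, for every family of sufficiently regular atoms $\{t_P\}_{P\in\mathcal{D}}$, the family $\{C\,T(t_P)\}_{P\in\mathcal{D}}$ consists of $(K,L,M,N)$-molecules with $K,L,M,N$ obeying \eqref{eq-syn-mole}. This is exactly the hypothesis of Lemma \ref{lem-Ext}(i).

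For part (i), I would then apply Lemma \ref{lem-Ext}(i) to produce an operator $\widetilde{T}\in\mathcal{L}(\dot{A}^{s,\upsilon}_{p,q}(W))$ agreeing with $T$ on $(\mathcal{S}_{\infty})^m\cap\dot{A}^{s,\upsilon}_{p,q}(W)$; this directly yields the assertion. For part (ii), under the extra hypothesis $H\ge0$, the condition (iv) in Definition \ref{def-CZOK} with $|\theta|=0$ forces $T^*(1)=0$, and more importantly $T^*\in\operatorname{CZO}(\lfloor H\rfloor+\varepsilon)$ for some $\varepsilon\in(0,\infty)$ by Definition \ref{def-CZOK}(iii). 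The point is that when $H\ge0$ one can make sense of $T$ on all of $\mathcal{S}$ (not merely $\mathcal{S}_{\infty}$) via the duality pairing against $\mathcal{S}$-functions, using that $T^*$ maps $\mathcal{S}$ boundedly in the appropriate CZO sense, so that in particular $T$ extends to a bounded operator on $L^2$ that agrees with the original $T$ on $\mathcal{S}$; this is precisely the hypothesis needed to invoke Lemma \ref{lem-Ext}(ii). Concretely, I would observe that the argument establishing the existence of such an $L^2$-bounded representative $R$ is the one already carried out in \cite[Proposition 6.5(ii)]{bhyy3} (to which Lemma \ref{lem-Ext}(ii) itself defers); granting that $R\in\mathcal{L}(L^2)$ agrees with $T$ on $\mathcal{S}$, Lemma \ref{lem-Ext}(ii) gives that $\widetilde{T}$ from part (i) also agrees with $T$ on $(\mathcal{S})^m\cap\dot{A}^{s,\upsilon}_{p,q}(W)$.

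The main (and essentially the only) obstacle is a bookkeeping one: verifying that the indices $K,L,M,N$ produced by Corollary \ref{cor-attosynmole} really do satisfy \eqref{eq-syn-mole}, i.e.\ that they are compatible with the molecular thresholds $D_{\dot{a}_{p,q}^{s,\upsilon}(W)}$, $E_{\dot{a}_{p,q}^{s,\upsilon}(W)}$, $F_{\dot{a}_{p,q}^{s,\upsilon}(W)}$ appearing in Definition \ref{def-anasynmole}. But this compatibility is exactly what Corollary \ref{cor-attosynmole} asserts under the translated hypotheses \eqref{eq-CZ}, so once the corollary is in hand the two parts of the theorem are a direct chain of implications with no new estimates. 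I would therefore keep the proof short, essentially writing: "Part (i) follows by combining Corollary \ref{cor-attosynmole} with Lemma \ref{lem-Ext}(i). For part (ii), the additional assumption $H\ge0$ guarantees, as in \cite[Proposition 6.5(ii)]{bhyy3}, the existence of $R\in\mathcal{L}(L^2)$ agreeing with $T$ on $\mathcal{S}$, so Lemma \ref{lem-Ext}(ii) applies and completes the proof." Any remaining subtlety (for instance, checking that $T$ indeed maps $\mathcal{S}_{\infty}$ continuously into $\mathcal{S}'_{\infty}$, which is needed before Lemma \ref{lem-Ext} can even be stated for $T$) is routine and can be dispatched in one sentence by restricting the $\mathcal{S}\to\mathcal{S}'$ continuity of $T$ and passing to the quotient $\mathcal{S}'_{\infty}=\mathcal{S}'/\mathcal{P}$, using that the atoms $t_P$ and the bumps $\varphi_Q,\psi_R$ all lie in $\mathcal{S}_{\infty}$.
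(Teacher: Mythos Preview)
Your argument for part (i) is correct and matches the paper's approach exactly: Corollary \ref{cor-attosynmole} feeds into Lemma \ref{lem-Ext}(i) and the result follows.

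For part (ii), however, your justification of the $L^2$-boundedness is muddled. You attribute the existence of $R\in\mathcal{L}(L^2)$ to \cite[Proposition 6.5(ii)]{bhyy3}, but that proposition (like Lemma \ref{lem-Ext}(ii) here) \emph{assumes} such an $R$ exists; it does not produce one. The paper instead appeals to the David--Journ\'e $T(1)$ theorem \cite[Theorem 1]{dj84}. To invoke it one must check: $T\in\mathbf{WBP}$ (Definition \ref{def-CZOK}(i)); standard Calder\'on--Zygmund kernel estimates on both $T$ and $T^*$ (which follow from \eqref{eq-CZ} since $E,F>0$ and Definition \ref{def-CZOK}(ii),(iii) give $T\in\operatorname{CZO}(\lfloor G\rfloor+\varepsilon)$, $T^*\in\operatorname{CZO}(\lfloor H\rfloor+\varepsilon)$ with $G,H\geq 0$); and crucially both $T(1)=0$ and $T^*(1)=0$. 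You only mention $T^*(1)=0$; the condition $T(1)=0$ is equally essential and comes from Definition \ref{def-CZOK}(iv) using that $G\geq\lfloor E_{\dot{a}_{p,q}^{s,\upsilon}(W)}-n/2\rfloor_+\geq 0$ automatically. Your sentence ``using that $T^*$ maps $\mathcal{S}$ boundedly in the appropriate CZO sense, so that in particular $T$ extends to a bounded operator on $L^2$'' does not constitute a valid deduction of $L^2$-boundedness; the CZO kernel conditions alone are well known not to suffice without the cancellation conditions and WBP. Once David--Journ\'e is correctly invoked, Lemma \ref{lem-Ext}(ii) finishes the proof as you say.
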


\begin{proof}
The proof of (i) immediately follows from Lemma \ref{lem-Ext}(i)
and Proposition \ref{prop-EFGH-KLMN};
we omit the details. Next, we prove (ii).
Notice that, under the assumptions of (ii),
$E, F>0$ and $G, H\geq0$ hold.
From these and the assumption
$T\in\operatorname{CZO}^\sigma(E,F,G,H)$,
it follows that $T\in \mathbf{WPB}\cap {\rm CZO}(E)$,
$T^{*}\in {\rm CZO}(F)$, and $T(1)=T^{*}(1)=0$.
Thus, the operator $T$ also satisfies all the
assumptions of \cite[Theorem 1]{dj84}.
By \cite[Theorem 1]{dj84}, we find that $T$ can be extended
to a bounded linear operator on $L^2$.
Using this and Lemma \ref{lem-Ext}(ii), we conclude that
the operator $\widetilde{T}$ in (i) also agrees with $T$
on $(\mathcal{S})^m\cap\dot{A}^{s,\upsilon}_{p,q}(W)$.
This finishes the proof of Theorem \ref{thm-CZ}.
\end{proof}

\begin{remark}
Let all the symbols be the same as in Theorem \ref{thm-CZ}.
Let $\tau\in[0,\infty)$ and, for any $Q\in\mathcal{D}$,
let $\upsilon(Q):=|Q|^\tau$. In this case,
the space $\dot{A}_{p, q}^{s, \upsilon}(W)$
is exactly the matrix-weighted Besov--Triebel--Lizorkin-type space
$\dot{A}_{p, q}^{s,\tau}(W)$ with $W$ being a matrix
$\mathcal{A}_{p, \infty}$ weight. On the one hand,
Bu et al. \cite[Theorem 6.24]{bhyy5} obtained
the corresponding result in Theorem \ref{thm-CZ}(i)
for the inhomogeneous space $A^{s,\tau}_{p,q}(W)$;
on the other hand, Bu et al. \cite[Theorem 6.18]{bhyy3}
also established the corresponding results in Theorem \ref{thm-CZ}
for $\dot{A}^{s, \tau}_{p,q}(W)$ with
$W$ being a matrix $\mathcal{A}_p$ weight.
We need to point out that
Theorem \ref{thm-CZ} only coincides with
\cite[Theorem 6.18]{bhyy3} for the case where $p\in(0, 1]$.
However, when $p\in(1, \infty)$, the range of parameters in
\cite[Theorem 6.18]{bhyy3} may be better than those in
Theorem \ref{thm-CZ}; see Remark \ref{rmk-compare} for the reason.
Furthermore, the space $\dot{A}^{s,0}_{p,q}(W)$
is precisely the matrix-weighted Besov--Triebel--Lizorkin space
$\dot{A}^{s}_{p,q}(W)$ with $W$ being a matrix $\mathcal{A}_{p, \infty}$
weight. Even for this space, Theorem \ref{thm-CZ} is also new.

When $m=1$ (the scalar-valued setting) and $W\equiv1$ ,
Theorem \ref{thm-CZ} coincides with \cite[Theorem 6.18]{bhyy3}.
Moreover, Theorem \ref{thm-CZ}(i) also coincides with or improves
the classical results in \cite[Theorems 3.1 3.7 and 3.13]{ftw88};
see \cite[Subsection 6.4]{bhyy3} for the detailed discussions about
these coincidences and improvements.
\end{remark}

\bigskip

\noindent Dachun Yang (Corresponding author),
Wen Yuan and Mingdong Zhang.

\medskip

\noindent Laboratory of Mathematics and Complex Systems
(Ministry of Education of China),
School of Mathematical Sciences, Beijing Normal University,
Beijing 100875, The People's Republic of China

\smallskip

\noindent{{\it E-mails:}}
\texttt{dcyang@bnu.edu.cn} (D. Yang)

\noindent\phantom{{\it E-mails:}}
\texttt{wenyuan@bnu.edu.cn} (W. Yuan)

\noindent\phantom{\it E-mails:}
\texttt{mazhang@mail.bnu.edu.cn} (M. Zhang)
\end{document}